\documentclass[reqno,11pt]{amsart}
\usepackage{amssymb, amsmath,latexsym,amsfonts,amsbsy, amsthm}
\usepackage{graphicx}
\usepackage{multirow}
\usepackage{tikz}
\usepackage{mathrsfs}
\usepackage{soul}
\allowdisplaybreaks[4]

\newtheorem{theorem}{Theorem}[section]
\newtheorem{lemma}[theorem]{Lemma}
\newtheorem{proposition}[theorem]{Proposition}

\newtheorem{remark}[theorem]{Remark}

\numberwithin{equation}{section}

\makeatletter
\def\@settitle{\begin{center}%
  \baselineskip14\p@\relax
  \bfseries
  \@title
  \end{center}%
}
\def\@setauthors{%
  \begingroup
  \def\thanks{\protect\thanks@warning}%
  \trivlist
  \centering\footnotesize \@topsep30\p@\relax
  \advance\@topsep by -\baselineskip
  \item\relax
  \author@andify\authors
  \def\\{\protect\linebreak}%
  \authors
  \ifx\@empty\contribs
  \else
    ,\penalty-3 \space \@setcontribs
    \@closetoccontribs
  \fi
  \endtrivlist
  \endgroup
}
\makeatother

\begin{document}
\title[Dynamical selection and stability of semi-similar Prandtl flows]
{{Semi-similar Prandtl Flows near a Stagnation Point: \\
Dynamical Selection and Nonlinear Stability}}

\author{Chen Gao}
\address{School of Mathematical Sciences, University of Science and Technology of China, Hefei 230026, China}
\email{gaochen@amss.ac.cn}

\author{Tao Tao}
\address{School of Mathematics, Shandong University, Jinan, Shandong 250100, China}
\email{taotao@sdu.edu.cn}

\author{Zhouping Xin}
\address{The Institute of Mathematical Sciences and Department of Mathematics, The Chinese University of Hong Kong, Shatin, NT, Hong Kong}
\email{zpxin@ims.cuhk.edu.hk}

\author{Shing-Tung Yau}
\address{Yau Mathematical Sciences Center and Qiuzhen College,
Tsinghua University, Beijing 100084, China}
\email{styau@mail.tsinghua.edu.cn}

\author{Chuankai Zhao}
\address{School of Mathematics, Statistics and Mechanics, Beijing University of Technology, Beijing 100124,
China}
\email{chuankaizhao@amss.ac.cn}

\date{}

\begin{abstract}
{
This paper studies time-dependent semi-similar Prandtl flows with
outer flow $U(x)=x^m$, where $m\geq1$.  The normal coordinate is scaled by the
local boundary-layer thickness, and time by the associated diffusion scale.
Every time-monotone solution in the Oleinik class extends globally in time.
The Falkner--Skan boundary layer globally attracts every such unsteady
semi-similar profile.
The convergence rate is exponential for $m=1$ and algebraic for $m>1$.  We also prove
nonlinear stability of each admissible time-dependent semi-similar trajectory
under small compatible perturbations that break the semi-similar ansatz.
Together, these results provide a dynamical justification of the time-dependent semi-similar class within the full Prandtl theory.

For semi-similar flows, the Crocco transformation reduces the problem to a
one-dimensional degenerate parabolic equation.  Combined with weighted
barriers, comparison principles propagate time monotonicity and lead to global
continuation.  Zero-order convergence and a one-sided gradient estimate then
produce uniform strict concavity after a transient period; this concavity
supplies the damping needed for derivative decay.  The same damping mechanism
extends to the full ultraparabolic perturbation equation,
allowing us to close the nonlinear stability estimates.
}
\end{abstract}

\maketitle

\section{Introduction}

{In this paper, we study the two-dimensional unsteady Prandtl system}
\begin{equation}\label{eq:prandtl-euler}
\left\{
\begin{aligned}
    &\partial_t u+u\partial_xu+v\partial_yu-\partial_{yy}u
        =\partial_t U+ U \partial_x U,\\
    &\partial_xu+\partial_yv=0,\\
    &(u,v)(t,x,0)=(0,0),\qquad
      \lim_{y\to+\infty}u(t,x,y)=U(t,x),\\
    &u(0,x,y)=u_0(x,y),\qquad u(t,0,y)=u_1(t,y),
\end{aligned}
\right.
\end{equation}
for $(t,x,y)\in (0,+\infty)\times(0,L)\times(0,+\infty)$. This system describes the leading-order dynamics of a viscous fluid near a solid boundary at high Reynolds numbers.
In the boundary layer, the no-slip condition generates strong shear and vorticity, while viscosity acts primarily in the normal direction.
Although the Prandtl system is formally derived from the Navier--Stokes equations, its mathematical analysis is delicate: diffusion is present only in the normal variable, whereas nonlinear transport acts along the boundary.
This anisotropic structure leads to a loss of tangential derivatives and makes the well-posedness theory highly sensitive to the structure of the flow.

There is a large literature on the well-posedness and singularity theory of the
Prandtl equations. Among these developments, the part most closely related to the 
present work is Oleinik's monotone theory, where the Crocco transformation and the maximum
principle provide a fundamental local well-posedness framework; see the
monograph of Oleinik and Samokhin \cite{OS}. For favorable pressure gradients,
Xin and Zhang \cite{XZ1} constructed global weak solutions.
Related structural well-posedness results in three space variables were obtained
by Liu, Wang and Yang \cite{LWY1}.  In Sobolev spaces, local well-posedness
under suitable structural assumptions was obtained by Alexandre, Wang, Xu and
Yang \cite{AWXY} through a Nash--Moser scheme and by Masmoudi and Wong
\cite{MW} through nonlinear energy estimates. 
{
A decisive advance in the global monotone theory was made by Xin, Zhang, and
Zhao~\cite{XZZ}, who established a global well-posedness framework for general
monotone Prandtl flows under favorable pressure gradients.  Their results include
uniqueness and continuous dependence of global weak solutions, as well as an interior regularity theory.  Jia, Lei, and
Yuan~\cite{JLY-dyn} subsequently supplied an up-to-wall regularity argument
and a classical-solution formulation in the normalized setting $U\equiv1$.
} Other well-posedness theories
have been developed in analytic and Gevrey classes
\cite{SC,Maekawa,GM,LY1,LMY}.  On the negative side, without monotonicity or
sufficient regularity, the Prandtl equation is ill-posed in Sobolev spaces
\cite{GD,LWY2}, and finite-time singularity formation was studied in the
unsteady setting \cite{DS,EE}.

Besides the general well-posedness theory, special Prandtl profiles arising
from similarity or semi-similarity reductions have long played a central role
in boundary-layer theory.  Such profiles identify natural length scales and
provide canonical states against which more general flows can be compared.  
In the steady two-dimensional theory, the classical Blasius and Falkner--Skan reductions transform the Prandtl equations into ordinary differential equations and produce basic boundary-layer profiles \cite{B,FS}.  
Weyl \cite{W} established fundamental existence results for these ordinary differential
equations.  The importance of the Blasius profile is further reflected in
far-field asymptotics and stability: for the pressure-free flat-plate problem,
steady Prandtl solutions converge to the Blasius profile as $x\to+\infty$, and
quantitative stability theories have been developed in recent works; see, for
instance, \cite{Serrin,Peletier,Iyer1,WZ2,JLY,GZ}.  {
Guo, Wang, and Zhang~\cite{GWZ} studied dynamical Prandtl solutions near
Blasius-like steady states.
}
{ Favorable Falkner--Skan profiles and other steady geometric configurations have also been studied; see \cite{Iyer2,GZZ,GX}. }

The unsteady case is different in an essential way.  A fully self-similar
unsteady boundary layer would reduce the three variables $(x,y,t)$ to a single
similarity variable, and such reductions are highly restrictive.  Classically,
when a transformation reduces $(x,y,t)$ only to two independent variables, the
corresponding solution is called semi-similar; see Schlichting and Gersten
\cite{SG}.  This terminology is natural in the unsteady setting: the
boundary-layer thickness gives the normal length scale, while viscosity makes
the shear near the wall diffuse on the corresponding time scale.  Thus a
similarity reduction need not remove the time variable completely.  After the
streamwise dependence has been fixed by the similarity ansatz, the reduced
profile may still evolve in a rescaled time variable.

{
Unsteady similar and semi-similar boundary layers have long been studied in the physics and fluid mechanics literature.  
Hassan~\cite{Hassan} introduced transformations for unsteady laminar boundary layers under special classes of outer pressure distributions, while Hayasi~\cite{Hayasi1,Hayasi2} gave a systematic treatment of similar and semi-similar reductions for unsteady quasi-two-dimensional incompressible boundary-layer equations.  
These works make clear that the unsteady analogue of a steady self-similar profile is often not an ordinary differential equation, but a reduced two-variable evolution problem.
Within this tradition, Nanbu~\cite{Nanbu} computed the transient Falkner--Skan flow generated by an impulsively started wedge and studied the dependence of wall shear and displacement thickness on the pressure-gradient parameter.  
Williams and Johnson~\cite{WilliamsJohnson} used semi-similar variables to analyze unsteady boundary-layer separation, while Williams and Rhyne~\cite{WilliamsRhyne} introduced a scaling for impulsively started wedge flows that connects the short-time Rayleigh regime with the long-time Falkner--Skan regime.  
Wang~\cite{Wang1985} later formulated the Stewartson and Lam--Crocco problems in a unified semi-similar framework for singular parabolic boundary-layer equations.
}

A classical example illustrating the transient-to-steady nature of unsteady
boundary layers is the impulsively started flat-plate problem.  At early times,
the boundary layer is governed by the Rayleigh-type diffusion created by the
sudden motion of the wall, whereas at large times it is expected to approach
the steady Blasius profile.  This transition was studied by Stewartson
\cite{Stewartson1,Stewartson2}, and further investigated by Akamatsu
\cite{Akamatsu}, Tokuda \cite{Tokuda}, and Dennis \cite{Dennis} through
asymptotic and numerical methods.  This example suggests a natural PDE question
for unsteady similarity reductions: whether the reduced evolution exists
globally and converges, as time tends to infinity, to a steady self-similar
boundary layer.

Sun \cite{Sun} recently proposed a unified similarity transformation for a
class of unsteady laminar boundary layers with $U=U(x)$.  In physical variables,
the stream function is written as
\[
    \psi(t,x,y)=U(x){\Delta}(x) f(r,\tau),
    \qquad
    r=\frac{y}{{\Delta}(x)},
    \qquad
    \tau=\frac{t}{{\Delta}^2(x)},
\]
where ${\Delta}(x)$ is the boundary-layer thickness and $\tau$ is the time
normalized by the diffusion time ${\Delta}^2(x)$.  This time scale is the key
point: it couples the unsteady evolution to the normal thickness of the boundary
layer and reduces the original variables $(x,y,t)$ to $(r,\tau)$.
Substitution into the unsteady Prandtl equation leads to an equation of the
form
\[
    f_{rrr}
    +\alpha f f_{rr}
    +\beta(1-f_r^2)
    =
    f_{\tau r}
    +\gamma \tau
    \bigl(f_\tau f_{rr}
    -f_r f_{\tau r}\bigr),
\]
where $\alpha$, $\beta$, and $\gamma$ are determined by $U$ and ${\Delta}$.  When
these coefficients are constants, the admissible outer flows are power laws
\[
    U(x)=Cx^m,
    \qquad
    m=\frac{\beta}{2\alpha-\beta},
\]
with
\[
    {\Delta}(x)
    =
    \left|
    \frac{(2\alpha-\beta)x}{U(x)}
    \right|^{1/2}.
\]
Thus Sun's transformation provides a natural semi-similar framework for a family of unsteady Prandtl flows with power-law outer Euler flows. 

{
The preceding semi-similar literature primarily provides formal reductions,
explicit or series representations, asymptotic expansions, and numerical
descriptions of special unsteady boundary layers.  The associated PDE theory
must address the degenerate Crocco evolution, whose diffusion coefficient
vanishes at \(\eta=1\) and whose wall condition is nonlinear.  The first issue
is global existence and uniqueness in the natural weighted strong class,
without a perturbative smallness assumption on the semi-similar profile.  The
second is quantitative convergence, including weighted normal derivatives, to
the stationary Falkner--Skan state.  The third is nonlinear stability of this
time-dependent orbit in the full \((\tau,\xi,\eta)\)-dependent Crocco equation.
The final issue is global propagation of the edge-degenerate weighted
regularity required for continuation and inversion of the Crocco transformation.
}

{
The last question is intrinsic to boundary-layer theory rather than an
auxiliary strengthening of global existence.  A complete global theory should
determine not only whether a solution exists for all time, but whether the
uniform regularity encoded by the natural strong Oleinik class is propagated
by the dynamics.
}

{
For boundary-layer modeling, the reduced profiles should represent physically
admissible flows rather than formal shapes.  This is why a rigorous
justification of the semi-similar reduction matters.  Global solvability shows
that the velocity profile remains monotone from the no-slip wall to the outer
flow throughout the evolution.  Long-time selection predicts that both the
profile and the scaled wall shear relax toward the Falkner--Skan regime.
In practice, an exact semi-similar scaling is also subject to streamwise
disturbances.  Stability shows that the predicted evolution survives small
departures from this idealized scaling.  Uniform normal estimates preserve
controlled matching with the outer flow across the far field.
}

{
The purpose of the present paper is to answer these questions for every fixed $m\geq1$ in Sun's power-law class, corresponding, after normalization, to the outer flow $U(x)=x^m$.

\medskip
\noindent\textbf{Main result, informally.}
}

{
For semi-similar initial data satisfying a one-sided time-monotonicity
condition and the natural weighted derivative bounds, the reduced Crocco
evolution is global for every fixed $m\geq1$.  The initial profile is not
required to be a small perturbation of the stationary Falkner--Skan profile and
may be far from it in the weighted topology.  The evolving profile converges,
together with its controlled derivatives, to that profile, exponentially for
$m=1$ and algebraically for $m>1$.
}

{
More importantly, we prove nonlinear stability of the entire genuinely
time-dependent semi-similar trajectory under general perturbations that break
the semi-similar ansatz.  Smallness is imposed only on the perturbation relative
to the prescribed evolving background.  Thus every admissible semi-similar
trajectory generates a stable family of global, large-amplitude structured
solutions of the full Prandtl system, possibly far from the stationary
self-similar state, while retaining the natural strong Oleinik regularity.
}

The global semi-similar dynamics produce a uniform strict-concavity structure after a transient period. This dynamically generated coercivity is precisely what stabilizes the evolving background against perturbations outside the semi-similar class.
The rest of the introduction gives the precise formulation of these results and then explains the analytical mechanism behind the proof.
\subsection{Global existence and large-time behavior of semi-similar power-law flows}

Fix $m\geq1$ and consider the normalized power-law outer flow $U(x)=x^m$.
Set
\[
    {\Delta_m}(x)=x^{\frac{1-m}{2}},
    \qquad
    r=\frac{y}{{\Delta_m}(x)}=x^{\frac{m-1}{2}}y,
    \qquad
    \tau=\frac{t}{{\Delta_m(x)^2}}=t x^{m-1}.
\]
We seek semi-similar solutions of the form $u^s(t,x,y)=x^m\partial_r\psi^s(\tau,r)$.
Substituting the preceding ansatz into \eqref{eq:prandtl-euler}, we obtain the following initial-boundary-value problem:
\begin{equation}\label{eq:semi-self-similar-profile-m}
\left\{
\begin{aligned}
&\partial_{rrr}\psi^s+\frac{m+1}{2}\psi^s\partial_{rr}\psi^s+m\bigl(1-(\partial_r\psi^s)^2\bigr)=\partial_{\tau r}\psi^s+(m-1)\tau\bigl(\partial_r\psi^s\,\partial_{\tau r}\psi^s-\partial_\tau\psi^s\,\partial_{rr}\psi^s\bigr),\\
&\psi^s(\tau,0)=\partial_r\psi^s(\tau,0)=0,\qquad\lim_{r\to+\infty}\partial_r\psi^s(\tau,r)=1,\\
&\partial_r\psi^s(0,r)=\widetilde u_0^s(r),
\end{aligned}
\right.
\end{equation}
where $(\tau,r) \in (0,\infty)\times(0,\infty)$.

We restrict attention to profiles that are strictly monotone in the normal variable: $\partial_{rr}\psi^s(\tau,r)>0$. Let
\[
    (\tau,\eta):=\left(t x^{m-1},\frac{u^s(t,x,y)}{x^m}\right)=\left(t x^{m-1},\psi^s_r(\tau,r)\right),
\]
and
\[
    w^s(\tau,\eta):=\frac{\partial_y u^s(t,x,y)}{x^{\frac{3m-1}{2}}}=\psi^s_{rr}(\tau,r).
\]
Then \eqref{eq:semi-self-similar-profile-m} is transformed into
\begin{equation}\label{eq:semi-self-similar-crocco-m}
\left\{
\begin{aligned}
&\bigl[1+(m-1)\tau\eta\bigr]\partial_\tau w^s =(w^s)^2\partial_{\eta\eta}w^s +m(\eta^2-1)\partial_\eta w^s -\frac{3m-1}{2}\eta w^s,\\
&w^s(\tau,1)=0,\qquad w^s(\tau,0)\partial_\eta w^s(\tau,0)+m=0,\\
&w^s(0,\eta)=w_0^s(\eta),
\end{aligned}
\right.
\end{equation}
where $(\tau,\eta)\in (0,\infty)\times (0,1)$.

{
The restriction $m\geq1$ is also dictated by the evolution structure of
\eqref{eq:semi-self-similar-crocco-m}.  If $m<1$, the coefficient
$1+(m-1)\tau\eta$ reaches zero at $\eta=1$ when
$\tau=(1-m)^{-1}$; for larger $\tau$, it vanishes at an interior point and
changes sign across that point.  Consequently, the reduced equation is no
longer a forward degenerate parabolic problem on the whole strip for arbitrary
$\tau$.  The global-in-$\tau$ problem in this regime is therefore of a
different type and is not considered in this paper.
}

Furthermore, the stationary state of \eqref{eq:semi-self-similar-profile-m} is obtained by setting $\psi^s(\tau,r)=f_m(r)$.
The corresponding profile satisfies the favorable Falkner--Skan problem
\begin{equation}\label{eq:falkner-skan-family}
\left\{
\begin{aligned}
&f_m'''+\frac{m+1}{2}f_m f_m'' +m\bigl(1-(f_m')^2\bigr)=0,\\
&f_m(0)=f_m'(0)=0, \qquad \lim_{r\to+\infty}f_m'(r)=1.
\end{aligned}
\right.
\end{equation}
Applying the steady Crocco transformation
\[
    \eta=f_m'(r), \qquad \overline w_m(\eta)=f_m''(r),
\]
to \eqref{eq:falkner-skan-family}, we find that $\overline w_m$ satisfies
\begin{equation}\label{eq:steady-self-similar-m}
\left\{
\begin{aligned}
&\overline w_m^2\overline w_m''
 +m(\eta^2-1)\overline w_m'
 -\frac{3m-1}{2}\eta\overline w_m=0,
 \qquad 0<\eta<1,\\
&\overline w_m(1)=0,
 \qquad
 \overline w_m(0)\overline w_m'(0)+m=0.
\end{aligned}
\right.
\end{equation}

{
\begin{lemma}\label{self-similar-properties}
For every fixed $m\geq1$, let $\overline w_m$ be the unique positive solution of \eqref{eq:steady-self-similar-m}.  
There exist constants $M_i=M_i(m)>0$, $\eta_0=\eta_0(m)\in(0,1)$, and $\mu_m\in(0,1)$ such that
\begin{equation}\label{eq:steady-self-similar-properties-m}
  \begin{aligned}
    M_1(1-\eta)\sigma_m(\eta) \leq\overline w_m (\eta) \leq{}&M_2(1-\eta)\sigma_m(\eta),\\
 -M_3\sigma_m (\eta)\leq\overline w_m'(\eta) \leq{}&-M_4\sigma_m (\eta),\\
 -M_5 \leq\overline w_m (\eta)\overline w_m'' (\eta) \leq{}&-M_6.
  \end{aligned}
\end{equation}
Moreover,
\begin{equation}\label{eq:steady-edge-curvature-family}
\begin{aligned}
 \overline w_m(\eta)
 &=\sqrt{m+1}\,(1-\eta)\sigma_m(\eta)
 \bigl(1+O(\sigma_m(\eta)^{-1})\bigr),\\
 \overline w_m'(\eta)
 &=-\sqrt{m+1}\,\sigma_m(\eta)
 \bigl(1+O(\sigma_m(\eta)^{-1})\bigr),\\
 \overline w_m''(\eta)
 &=-\frac{\sqrt{m+1}}{2(1-\eta)\sigma_m(\eta)}
 \bigl(1+O(\sigma_m(\eta)^{-1})\bigr),
\end{aligned}
\qquad \eta\to1^-.
\end{equation}
Here $M_2=\sqrt{m+1}$ and $\sigma_m(\eta)=\sqrt{-\ln[\mu_m(1-\eta)]}$.
\end{lemma}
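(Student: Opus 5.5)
The plan is to work mostly in the physical variable $r$, where \eqref{eq:falkner-skan-family} is a well-understood ODE. The Crocco relations are $\eta=f_m'(r)$, $\overline w_m(\eta)=f_m''(r)$ and $d/dr=\overline w_m\, d/d\eta$. They give $f_m'''=\overline w_m\overline w_m'$ and
\[
 f_m''''=\overline w_m\bigl(\overline w_m'^2+\overline w_m\overline w_m''\bigr).
\]
I would prove the edge asymptotics \eqref{eq:steady-edge-curvature-family} first. The global bounds \eqref{eq:steady-self-similar-properties-m} then follow by combining the edge regime with positivity and continuity on compact subsets $[0,1-\delta]$. The constant $\mu_m\in(0,1)$ is chosen small so that $\sigma_m\ge \sigma_m(0)=\sqrt{-\ln\mu_m}>0$ is bounded below on $[0,1)$ and the asymptotic constants are absorbed.

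\emph{Step 1: signs.} Positivity $\overline w_m>0$ on $[0,1)$ is given. From the ODE,
\[
 f_m'''=-\tfrac{m+1}{2}f_mf_m''-m(1-f_m'^2)<0
\]
for $r>0$, since $f_m,f_m''>0$ and $0\le f_m'<1$. Hence $\overline w_m'=f_m'''/f_m''<0$ on $[0,1)$. At $\eta=0$ the boundary condition gives $\overline w_m'(0)=-m/\overline w_m(0)$. The equation then gives
\[
 \overline w_m\overline w_m''(0)=m\,\overline w_m'(0)/\overline w_m(0)=-m^2/\overline w_m(0)^2<0 .
\]

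\emph{Step 2: far field.} Set $h=1-f_m'$ and write $f_m(r)=r-\beta+o(1)$. Linearizing gives
\[
 h''+\tfrac{m+1}{2}(r-\beta)h'-2m h=\text{(quadratic, exponentially small)} .
\]
The decaying solution is $h(r)\sim C\,r^{k}e^{-\frac{m+1}{4}(r-\beta)^2}$ for an explicit $k$, with $-h'(r)=\tfrac{m+1}{2}r\,h(1+O(r^{-2}))$. Put $s=1-\eta=h(r)$. Then
\[
 \ln(1/s)=\tfrac{m+1}{4}r^2+O(r)+O(\ln r),
\]
hence $r=\tfrac{2}{\sqrt{m+1}}\sigma_m(1+O(\sigma_m^{-1}))$ for any fixed $\mu_m$. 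This yields $\overline w_m=-h'=\sqrt{m+1}\,s\,\sigma_m(1+O(\sigma_m^{-1}))$.

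The derivative expansions follow in one of two ways. One is differentiating $\overline w_m'=f_m'''/f_m''=h''/h'$, using the expansion of $h$. The other is substituting the ansatz into \eqref{eq:steady-self-similar-m}. Using $d\sigma_m/d\eta=1/(2s\sigma_m)$, the leading terms are
\[
 s\sigma_m c\bigl(-\tfrac{c^2}{2}+2m-\tfrac{3m-1}{2}\bigr),
\]
which vanishes exactly for $c=\sqrt{m+1}$. This is consistent with $M_2=\sqrt{m+1}$ and gives $\overline w_m\overline w_m''\to-\tfrac{m+1}{2}$ as $\eta\to1^-$. The $\mu_m$ dependence only shifts $\sigma_m^2$ by a constant, so it stays inside the $O(\sigma_m^{-1})$ error. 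To get the sharp upper bound $\overline w_m\le\sqrt{m+1}(1-\eta)\sigma_m$ globally, I would choose $\mu_m$ small enough. Then the correction term has a sign near the edge, by carrying the next-order term of the expansion, and the inequality holds on compact sets by enlarging $\sigma_m$.

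\emph{Step 3: global two-sided bounds.} On $[0,1-\delta]$ all three quantities $\overline w_m$, $-\overline w_m'$ and $-\overline w_m\overline w_m''$ are continuous, and $\sigma_m$, $1-\eta$ are bounded above and below there. So positivity gives the bounds, once Step 1 is upgraded to strict negativity of $g:=\overline w_m\overline w_m''$ on all of $[0,1)$. For this I would differentiate the equation to get a second-order ODE for $g$, or equivalently use the relation
\[
 g=m(1-\eta^2)\overline w_m'/\overline w_m+\tfrac{3m-1}{2}\eta .
\]
Then I would run a maximum-principle argument: $g<0$ at both ends, and at an interior first zero with $g'=0$ the derived equation forces a sign contradiction.

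\emph{Main obstacle.} I expect this interior sign of $\overline w_m\overline w_m''$ (equivalently $f_m''''<-f_m'''^2/f_m''$) to be the main difficulty. The first thing I would try is the zero-order term of the derived equation for $g$, using $\overline w_m'<0$. If that is inconclusive, I would fall back on a comparison argument in $r$ for $q=f_m'''/f_m''$, showing $q'<0$, i.e. $q$ strictly decreasing, via the ODE satisfied by $q$.
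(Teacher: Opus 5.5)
Your route is viable, but it differs from the paper's. You import the classical Gaussian far-field asymptotics of $f_m$ in the physical variable $r$ (the expansion the paper quotes in Remark~\ref{rem:oleinik-steady-tail}). From these you read off the Crocco edge expansions, and you close the global bounds by compactness with a non-explicit small $\mu_m$.

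The paper stays in Crocco variables and is self-contained. Your leading-order cancellation at $c=\sqrt{m+1}$ is exactly what it exploits. With the explicit choice $-\log\mu_m=\frac{3m+1}{2(m+1)}$ and $X=1-\eta$, the ansatz $\Phi=\sqrt{m+1}\,X\sigma_m$ satisfies $\mathcal Q[\Phi]/\Phi=\frac{m-1}{2}X-\frac{5m+1-2mX}{4\sigma_m^2}\leq0$ and $\Phi(0)\Phi'(0)+m=0$. So $\Phi$ is a global supersolution, and one comparison gives $M_2=\sqrt{m+1}$ on all of $[0,1)$. The remaining steps are:
\begin{itemize}
\item a subsolution $AX\sigma_m$ gives the lower bound;
\item the integrated identity $\overline w_m'=-m(1-\eta^2)/\overline w_m-\frac{m+1}{2}\int_0^\eta s\,ds/\overline w_m(s)$ gives the bounds on $\overline w_m'$;
\item the refined expansions come from a Riccati equation for $R=XW_X/W$ in the variable $\Sigma=\sigma_m^2$.
\end{itemize}
The paper thus obtains the sharp constant without any second-order asymptotics. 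Your approach gets $\overline w_m'<0$ more cheaply, directly from $f_m'''<0$, and makes the edge expansions transparent, but it leans on the far-field ODE theory.

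A few points need repair.

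\emph{The sharp constant.} In your route the delicate step is the sharp constant, not the sign of $\overline w_m\overline w_m''$. Your formula $-h'=\frac{m+1}{2}rh(1+O(r^{-2}))$ is off. With $h\sim Cr^{k}e^{-\frac{m+1}{4}(r-\beta)^2}$ and $k=-\frac{5m+1}{m+1}$, one has $-h'/h=\frac{m+1}{2}(r-\beta)-k/r+o(r^{-1})$. Also, $\ln(1/s)=\frac{m+1}{4}r^2+O(r)$ is far too coarse to fix any sign, and mixing your version with a $\beta$-shifted $\sigma_m$ gives the wrong sign. Done at the right precision, the $\beta$-shifts cancel and
\[
 \sqrt{m+1}\,\sigma_m-\frac{\overline w_m}{1-\eta}
 =\frac{-k(\ln r-1)-\ln(\mu_m C)+o(1)}{r}.
\]
This is positive for large $r$ precisely because $k<0$. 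Decreasing $\mu_m$ only helps on compact sets, since $-\ln\mu_m/r\to0$.

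\emph{Derivative expansions.} Obtain these from the equation rather than by differentiating an asymptotic formula. Use
\[
\overline w_m'=\frac{h''}{h'}=-\frac{m+1}{2}f_m+\frac{2mh-mh^2}{h'},
\qquad
\overline w_m\overline w_m''=\frac{d}{dr}\left(\frac{f_m'''}{f_m''}\right).
\]

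\emph{Interior sign of $g=\overline w_m\overline w_m''$.} This needs no second-order equation, and your ``first zero with $g'=0$'' is not the right contradiction. At a zero of $g$ one has $\overline w_m''=0$ and $\overline w_m'/\overline w_m=-\frac{(3m-1)\eta}{2m(1-\eta^2)}$. Differentiating your relation for $g$ then gives
\[
g'=\frac{3m-1}{2}+\frac{(3m-1)(m+1)}{4m}\frac{\eta^2}{1-\eta^2}>0.
\]
So every zero is an upward crossing, which is incompatible with $g(0)<0$ and $g\to-\frac{m+1}{2}$. The paper uses $\overline w_m'\to-\infty$ for the final contradiction instead.

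\emph{Uniqueness.} The paper also proves the uniqueness presupposed in the statement, by the maximum principle with negative zero-order coefficient coming from $(\overline w^{(2)})''<0$. You need this to identify your Falkner--Skan-derived profile with $\overline w_m$, and your proposal does not address it.
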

}

The proof of Lemma~\ref{self-similar-properties} is given in Appendix~\ref{app:steady-profile}. 
For $m=1$, $M_2=\sqrt2$, consistently with the classical estimate in \cite{OS} (p.~109). 
The first theorem gives both the global semi-similar dynamics and their large-time convergence to the steady Falkner--Skan profile. 
\begin{theorem}\label{thm:ws-regularity}
Fix $m\geq1$. Assume that $w_0^s\in C^2([0,1))$ and that there exist
$E_1,E_2,E_3>0$ such that $w_0^s(\eta)$ satisfies
\begin{equation}\label{eq:initial-data}
\begin{gathered}
 \frac{1}{E_1}\overline w_m(\eta)\leq w_0^s(\eta)\leq E_1\overline w_m(\eta),\\
 \left|\partial_\eta w^s_0(\eta)\right| \leq E_2(-\overline w_m'(\eta)),\qquad
 \left|\partial_{\eta\eta} w^s_0(\eta)\right| \leq E_3(-\overline w_m''(\eta)),\\
\left.\partial_\tau w^s\right|_{\tau=0}=(w_0^s)^2\partial_{\eta\eta}w_0^s(\eta)+m (\eta^2-1)\partial_\eta w_0^s(\eta)-\frac{3m-1}{2}\eta w^s_0(\eta)\leq 0,
\end{gathered}
\end{equation}
and the compatibility condition
\begin{equation*}
    \left.\left(w_0^s\partial_\eta w_0^s+m\right)\right|_{\eta=0}=0.
\end{equation*}

Then the equation \eqref{eq:semi-self-similar-crocco-m} has a unique solution $w^s(\tau, \eta)\in C^{1,2}_{\text{par}}\left([0,+\infty)\times [0,1)\right)$ and there exists a positive constant $C=C(m,E_1,E_2,E_3)$ such that
\begin{equation}\label{eq:ws-estimates-all}
\begin{aligned}
 &\overline w_m(\eta)\leq w^s(\tau,\eta)\leq C\overline w_m(\eta), \qquad |\partial_\eta w^s(\tau,\eta)|
 \leq-Ce^{2m\tau}\overline w_m'(\eta),\\
 &|\partial_{\eta\eta}w^s(\tau,\eta)|
 \leq-C[1+(m-1)\tau]e^{2m\tau}\overline w_m''(\eta),\qquad
 -C\overline w_m(\eta)
 \leq\partial_\tau w^s(\tau,\eta)\leq0.
\end{aligned}
\end{equation}
Moreover, there exists $\beta_m>0$ such that for any $(\tau,\eta)\in [0,+\infty)\times [0,1)$, the solution satisfies
\begin{equation}\label{eq:ws-large-time}
\begin{aligned}
 &\left|w^s(\tau,\eta)-\overline w_m(\eta)\right|\leq C\rho_m (\tau)\overline w_m (\eta),\\
 &\left[1+(m-1)\tau\right]|\partial_\tau w^s(\tau,\eta)|\leq C\rho_m(\tau)\overline w_m(\eta),\\
 &|\partial_\eta w^s(\tau,\eta)-\overline w_m'(\eta)|\leq -C\rho_m(\tau) \overline w_m'(\eta),\\
 &|\partial_{\eta\eta}w^s(\tau,\eta)-\overline w_m''(\eta)|\leq -C\rho_m (\tau) \overline w_m''(\eta),
\end{aligned}
\end{equation}
where 
\begin{equation}\label{eq:m-time-weights}
 \rho_m(\tau)=
 \begin{cases}
  e^{-\beta_1\tau},&m=1,\\
  (1+\tau)^{-\beta_m},&m>1.
 \end{cases}
\end{equation}
\end{theorem}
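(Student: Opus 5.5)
Write $A(\tau,\eta):=1+(m-1)\tau\eta\ge1$ and $\mathcal L w:=w^2\partial_{\eta\eta}w+m(\eta^2-1)\partial_\eta w-\frac{3m-1}{2}\eta w$, so that \eqref{eq:semi-self-similar-crocco-m} reads $A\,\partial_\tau w^s=\mathcal L w^s$. The plan has three stages: local existence with comparison, global continuation through time-monotonicity, and long-time convergence driven by a concavity-induced damping.

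\textbf{Local theory and comparison.} I would first obtain a local solution in $C^{1,2}_{\rm par}$ by a regularize-and-pass-to-the-limit scheme. On $[0,1-\delta]$ the diffusion is uniformly positive, and near $\eta=1$ I would use barriers of the form $k\overline w_m$. A comparison principle for $A\partial_\tau w=\mathcal L w$ holds with the nonlinear Robin condition $ww_\eta+m=0$ at $\eta=0$, since that condition is monotone when written as $(w^2/2)_\eta=-m$. Lemma~\ref{self-similar-properties} gives $\mathcal L\overline w_m=0$, and $\mathcal L(k\overline w_m)=(k^3-k)\overline w_m^2\overline w_m''$ has the sign of $1-k$, with the boundary flux $k^2(-m)$. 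Hence $k\overline w_m$ is a supersolution for $k\ge1$ and a subsolution for $k\le1$. This yields $E_1^{-1}\overline w_m\le w^s\le E_1\overline w_m$ for as long as the solution exists.

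\textbf{Time monotonicity and the lower bound $w^s\ge\overline w_m$.} Differentiating in $\tau$, $v=\partial_\tau w^s$ satisfies the linear equation
\[
A v_\tau+(m-1)\eta v=(w^s)^2v_{\eta\eta}+m(\eta^2-1)v_\eta+\bigl(2w^s w^s_{\eta\eta}-\tfrac{3m-1}{2}\eta\bigr)v,
\]
with $v(\tau,1)=0$ and $(w^sv)_\eta=0$ at $\eta=0$. Since $v(0)\le0$, the maximum principle, applied with a weight to absorb the zero-order term, gives $v\le0$. The bound $v\ge-C\overline w_m$ comes from comparison with $-C\overline w_m e^{\lambda\tau}$, improved later.

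Monotonicity then yields the sharp lower bound. Because $v\le0$, we have $\mathcal L w^s\le0$ at each time, so $w^s(\tau,\cdot)$ is a supersolution of the steady problem. Steady uniqueness and comparison on $[0,1]$ then give $w^s\ge\overline w_m$, and $w^s\le C\overline w_m$ follows from the first stage. Moreover, $w^s$ decreases in $\tau$ and is bounded below by $\overline w_m$. So it converges pointwise to a limit which, by the parabolic estimates, is a steady solution and therefore equals $\overline w_m$. This is the zero-order convergence.

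\textbf{Derivative bounds and global continuation.} I would differentiate in $\eta$ and compare $\partial_\eta w^s$ against $\pm Ce^{2m\tau}\overline w_m'$. The rate $e^{2m\tau}$ accounts for the coefficient $2m\eta$ produced by the term $m(\eta^2-1)\partial_\eta$. For the second derivative, I would use the equation itself:
\[
(w^s)^2\partial_{\eta\eta}w^s=A v-m(\eta^2-1)\partial_\eta w^s+\tfrac{3m-1}{2}\eta w^s .
\]
The factor $A\le1+(m-1)\tau$ is what produces the $[1+(m-1)\tau]$ in \eqref{eq:ws-estimates-all}. With these bounds the solution stays in the Oleinik class on every finite interval, so it continues globally. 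Uniqueness follows from the $L^1$ or comparison argument applied to the difference of two solutions.

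\textbf{Rates (main obstacle).} The hardest step is quantitative decay, because $A$ grows like $\tau$ when $m>1$. My first attempt would be to obtain a one-sided gradient bound and then, using the zero-order convergence, show that $w^s\partial_{\eta\eta}w^s\le-c<0$ uniformly for $\tau\ge\tau_0$; this is where Lemma~\ref{self-similar-properties} enters, through $\overline w_m\overline w_m''\le-M_6$. In the linearized equation for $z=(w^s-\overline w_m)/\overline w_m$, this concavity supplies a damping term $-cz$. The result is $A\partial_\tau z\lesssim-cz$:
\begin{itemize}
\item for $m=1$, $A=1$ and the decay is exponential;
\item for $m>1$, $A\le1+(m-1)\tau$ gives $\partial_\tau z\le-\frac{c}{1+(m-1)\tau}z$, hence decay like $(1+\tau)^{-\beta_m}$.
\end{itemize}
The same damped comparison, applied to $v$ (with weight $A$) and to the $\eta$-derivatives with weights $\overline w_m'$ and $\overline w_m''$, removes the growing factors $e^{2m\tau}$ for large $\tau$. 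Together these give \eqref{eq:ws-large-time}.
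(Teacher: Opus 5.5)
Your first two stages are sound and essentially match the paper. You obtain $w^s\ge\overline w_m$ from the steady supersolution property at every time, while the paper uses it at $\tau=0$ and then compares parabolically; both work.

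The genuine gap is the step you yourself call the main obstacle. You give no mechanism for the one-sided gradient bound, and the order you propose is circular. With $\partial_\tau w^s\le0$, the equation only yields
\[
 (w^s)^2\partial_{\eta\eta}w^s\le\overline w_m^2\overline w_m''+m(1-\eta^2)\partial_\eta\widetilde w+\tfrac{3m-1}{2}\eta\,\widetilde w,\qquad \widetilde w=w^s-\overline w_m .
\]
Uniform strict concavity on all of $[0,1)$ therefore needs $\widetilde w\le\varepsilon\overline w_m$ and $\partial_\eta\widetilde w\le\varepsilon(-\overline w_m')$, with $\varepsilon$ small, uniformly up to the degenerate edge $\eta=1$. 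You have neither:
\begin{itemize}
\item Your monotone pointwise convergence is uniform only on compact subsets of $[0,1)$. It says nothing about $\widetilde w/\overline w_m$ as $\eta\to1$.
\item Your derivative bounds $|\partial_\eta w^s|\le Ce^{2m\tau}(-\overline w_m')$ are not small.
\end{itemize}
You then derive the decay of $z$ from this concavity, while the concavity itself needs that decay.

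The paper breaks the circle in two steps.
\begin{itemize}
\item \emph{Zero-order rate first, from $\tau=0$, with no concavity of $w^s$.} On the barrier $D\rho_m(\tau)\overline w_m$, the $\widetilde w$-equation produces the zero-order term $w^s(w^s+\overline w_m)\overline w_m''\le2\overline w_m^2\overline w_m''\le-2M_6\overline w_m$. This uses only $w^s\ge\overline w_m$ and the steady concavity. Equivalently, let $k=k(\tau)\downarrow1$ in your own barrier $k\overline w_m$, with $k'=-(k^3-k)M_6/[1+(m-1)\tau]$.
\item \emph{Small one-sided gradient bound from a spatial Riccati inequality.} Since $\partial_\tau\widetilde w\le0$, the stationary linearized operator applied to $\widetilde w$ is nonpositive at each fixed $\tau$. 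Write $\partial_\eta\widetilde w=Y+\frac{K}{1-\eta}\widetilde w$, and use $\widetilde w\ge0$ with $K$ large. This gives $\partial_\eta Y+[\frac{K}{1-\eta}+\frac{m(\eta^2-1)}{(w^s)^2}]Y\le0$ with $Y(\tau,0)\le0$, hence $\partial_\eta\widetilde w\le\frac{K}{1-\eta}\widetilde w\le C\rho_m(-\overline w_m')$.
\end{itemize}
Only then does $w^s\partial_{\eta\eta}w^s\le-M_6/4$ follow for $\tau\ge\tau_0$. This concavity drives the decay of $[1+(m-1)\tau]\partial_\tau w^s$. It therefore also supplies the time-uniform lower bound on $\partial_\tau w^s$, which in your plan is only $-Ce^{\lambda\tau}\overline w_m$ until this point.

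Finally, the paper does not obtain derivative convergence by a further damped comparison with weights $\overline w_m',\overline w_m''$, and you do not carry that comparison out. Once the time derivative decays, the paper reads the equation at fixed $\tau$ as the ODE $\partial_\eta(\varrho\,\partial_\eta\widetilde w)=\varrho F/(w^s)^2$. Here $\varrho=\exp\int_0^\eta m(\zeta^2-1)(w^s)^{-2}\,d\zeta$, and $F$ collects the time-derivative and zero-order terms, with $|F|\le C\rho_m\overline w_m$. The paper integrates this ODE inward from $\eta=1$ and then recovers $\partial_{\eta\eta}\widetilde w$ algebraically from the equation.
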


{
\begin{theorem}[Finite-time loss of weighted $C^2$ edge control]
\label{thm:weighted-c2-blowup}
Fix $m\geq1$.  There exists a positive smooth initial datum $w_0^s$
satisfying all the hypotheses of Theorem~\ref{thm:ws-regularity} except the
initial time-monotonicity condition, such that the corresponding local
solution leaves the natural weighted $C^2$ class in finite time.  More
precisely, there exists $0<T^*<\infty$ such that
\[
 \limsup_{\tau\uparrow T^*}\quad \left(\bigg\|\frac{\partial_\eta w^s}{- \overline w_m'}\bigg\|_{L^\infty_\eta}+\bigg\|\frac{\partial_{\eta\eta} w^s}{- \overline w_m''}\bigg\|_{L^\infty_\eta}\right)=+\infty.
 \]
For the solution constructed in the proof, the relative amplitude remains
bounded up to $T^*$:
\[
 1\leq \frac{w^s(\tau,\eta)}{\overline w_m(\eta)}\leq3,
 \qquad 0\leq\tau<T^*,\quad 0\leq\eta<1.
\]
\end{theorem}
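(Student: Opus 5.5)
We construct the blow-up datum by exploiting the edge degeneracy at $\eta=1$. Near $\eta=1$ the diffusion coefficient $(w^s)^2\approx \overline w_m^2\sim(1-\eta)^2\sigma_m^2$ is degenerate, and the drift $m(\eta^2-1)\partial_\eta$ transports toward $\eta=1$ only weakly. The natural weighted $C^2$ class controls $\partial_{\eta\eta}w^s$ only relative to $-\overline w_m''\sim (1-\eta)^{-1}\sigma_m^{-1}$. The idea is to start from data that saturates these bounds with the \emph{wrong sign} of $\partial_\tau w^s$ near the edge. Concretely, we choose
\[
w_0^s=\overline w_m\bigl(1+\theta(\eta)\bigr),
\]
where $0\le\theta\le 2$ is smooth on $[0,1)$, vanishes identically near $\eta=0$ (so the compatibility condition holds), and has a controlled structure in the variable $s=\sigma_m(\eta)$. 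We design $\theta$ so that $(w_0^s)^2\partial_{\eta\eta}w_0^s+\dots$ is positive near $\eta=1$, violating only the monotonicity hypothesis. A comparison argument with the barriers $\overline w_m$ and $3\overline w_m$ (both are stationary sub/supersolutions, and the equation is parabolic) gives the amplitude bound $1\le w^s/\overline w_m\le 3$ for as long as the local solution exists. This uses only the comparison principles already underlying Theorem~\ref{thm:ws-regularity}.

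The core step is to exhibit a mechanism forcing the weighted derivative norms to blow up while the amplitude stays bounded. Write $w^s=\overline w_m g$ and pass to the edge variable $z=-\ln(1-\eta)$. To leading order as $z\to\infty$, the equation for $g$ becomes a degenerate transport--diffusion equation of the form
\[
\partial_\tau g \approx (m+1)\,z\,g^2\bigl(\partial_{zz}g+\partial_z g\bigr)+\text{(lower order)},
\]
with $\tau$-coefficient $1+(m-1)\tau$. The essential point is that this is a quasilinear problem in which the effective diffusion grows like $z$. Relative weighted bounds on $\partial_\eta w^s$ translate into bounds on $\partial_z g$ of size $O(z^{-1})$ (resp.\ $\partial_{zz}g$), since $\overline w_m'/\overline w_m\sim-(1-\eta)^{-1}$ with a $\sigma_m^{-2}$ correction. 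A datum $g_0$ with $\partial_z g_0\sim z^{-1}$ is admissible. We then show, via a maximum principle for $q=z\,\partial_z g$ (or a Riccati-type differential inequality for $\sup_z q$), that $q$ obeys
\[
\dot Q\ge cQ^2 - C,
\]
because differentiating the nonlinearity produces a term $2g\,\partial_z g\,(\partial_{zz}g+\partial_zg)\cdot z$, and the edge behaviour makes this term dominate. Choosing $Q(0)$ large but finite (allowed, since $E_2,E_3$ are arbitrary) yields blow-up of $Q$ before a time $T^*\lesssim 1/Q(0)$, while $g$ itself stays in $[1,3]$ by the barrier argument. Equivalently, one can seek an explicit subsolution in the edge region of traveling-front type $g=1+2\Phi\bigl(\lambda(\tau)(z-z_0)\bigr)$ with $\lambda(\tau)\to\infty$ at finite time, and compare.

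The main obstacle is the second step: making the Riccati inequality rigorous at the degenerate edge, where the supremum of $q$ may escape to $z=\infty$. We first attempt to localize with a cutoff in $z$ and exploit the fact that the sign of $\partial_\tau w^s$ is positive near the edge. This sign is precisely what is excluded by the hypothesis $\partial_\tau w^s|_{\tau=0}\le0$ in Theorem~\ref{thm:ws-regularity}, which explains why monotonicity rules out blow-up. Failing that, we use the explicit self-similar subsolution and the comparison principle for the differentiated equation. Finally, we argue by contradiction: if the weighted norms stayed bounded on $[0,T^*)$, the local theory would continue the solution past $T^*$, contradicting the construction.
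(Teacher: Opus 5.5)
Your amplitude bound via the barriers $\overline w_m$ and $3\overline w_m$ and your closing continuation argument match the paper. The core blow-up step, however, does not work, for two reasons.

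\textbf{The edge equation is mis-derived.} Write $w=\overline w_m g$. The cross term $2\overline w_m\overline w_m' g^2\partial_\eta g$ contributes $-2(m+1)zg^2\partial_z g$, which is the same order as the diffusion. So the leading operator is $(m+1)zg^2(\partial_{zz}g-\partial_z g)$. There is also an $O(1)$ reaction term $\overline w_m\overline w_m''g(g^2-1)\approx-\frac{m+1}{2}g(g^2-1)$, which you dropped. In the natural variable $\chi=\frac12\log(4z)$ (for which $h_\chi=2z\partial_z g$ is your $q$ up to a factor), the equation reads
$h_s=\nu h^2h_{\chi\chi}-\frac{m+1}{2}h^2h_\chi-\frac{m+1}{2}h(h^2-1)+\cdots$, with $\nu\approx(m+1)e^{-2\chi}\to0$. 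The mechanism is therefore Burgers-type steepening by an $O(1)$ nonlinear transport with vanishing viscosity. It is the negative slopes $h_\chi<0$ that steepen, not $\sup q$.

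\textbf{The maximum principle cannot give $\dot Q\geq cQ^2-C$.} This is the more serious problem. At a spatial maximum of $q$ the diffusion term is nonpositive, so you only get upper bounds on $\dot Q$; at a minimum you only get lower bounds. The inequality always points the wrong way for blow-up. Moreover, on every compact $\chi$-interval the equation is uniformly parabolic and the solution stays smooth. The weighted norm can therefore only be lost as $\chi\to\infty$, where the supremum is not attained. A cutoff removes exactly this region. Comparison with a steepening subsolution controls the values of $g$, not its slopes.

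\textbf{The missing idea is a compactness reduction to the inviscid edge equation.} The paper proceeds as follows.
\begin{enumerate}
\item It takes a datum whose tail is periodic in $\chi$: $h_0=2+\sin(3\chi)$ for large $\chi$. Negative slopes then recur at arbitrarily large $\chi$. A compactly supported structure would only produce large but finite slopes, since the viscosity is positive at every finite $\chi$.
\item It proves that $\|w\|_{\mathcal Y_m^2}$ is equivalent to $\|h\|_\infty+\|h_\chi\|_\infty+\|e^{-2\chi}h_{\chi\chi}\|_\infty$ plus a $C^2$ norm on $[0,\eta_b]$.
\item It assumes this quantity is bounded by $M$ up to some $T>s_*=\frac1{m+1}\log\frac32$. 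The translates $h(s,2\pi n/3+z)$ are then equi-Lipschitz, and a subsequence converges locally uniformly.
\item The diffusion term vanishes in $\mathcal D'$: integrate by parts and use $\nu_n,(\nu_n)_z\to0$. The limit is a Lipschitz solution of $H_s+\frac{m+1}{2}H^2H_z=\frac{m+1}{2}H(1-H^2)$ with $H(0,z)=2+\sin 3z$.
\item Solving along characteristics explicitly gives $\Gamma_z(s,\pi/3)<0$ for $s>s_*$. This contradicts the monotonicity of the flow map of a Lipschitz velocity.
\item Only then does the continuation criterion yield a finite $T^*$ at which the weighted norm is unbounded.
\end{enumerate}
The relevant design criterion for the datum is this recurrent negative slope in $\chi$, not a positive sign of $\partial_\tau w^s$ near $\eta=1$. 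In the paper's datum, $\partial_\tau w^s(0,\cdot)$ changes sign infinitely often.
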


{
\medskip
\noindent\textit{Interpretation.}
The point of the theorem is the failure of global propagation in the strong
Oleinik weighted class.  The degenerate edge $\eta=1$ corresponds to the
physical far-field boundary $y=+\infty$.  The theorem does not assert that the
Prandtl velocity becomes singular at this boundary; it asserts that the
weighted first- and second-derivative estimates can fail while the relative
amplitude remains bounded.  The global theories of Xin, Zhang, and
Zhao~\cite{XZZ} and Jia, Lei, and Yuan~\cite{JLY-dyn} do not provide such
uniform weighted far-field estimates.  The counterexample shows that these
estimates cannot, for general positive data, be recovered from global
solvability and relative-amplitude bounds alone.  Thus a global theory within
the strong Oleinik weighted class requires additional structure.
}

}
\begin{theorem}\label{thm:higher-order-regularity}
Fix $m\geq1$ and let $w_0^s$ satisfy the hypotheses of Theorem~\ref{thm:ws-regularity}.
Additionally, assume that there exist constants $E_4>0$ and $E_5>0$ such that
\begin{equation}\label{eq:initial-high}
  \left|\partial_\eta^3w_0^s(\eta)\right|\leq \frac{E_4}{(1-\eta)^2 \sigma_m(\eta)},\qquad
  \left|\partial_\eta^4w_0^s(\eta)\right|\leq \frac{E_5}{(1-\eta)^3 \sigma_m(\eta)}.
\end{equation}
Assume further that the higher-order compatibility conditions hold at the corner $(\tau,\eta) = (0, 0)$.

Then the semi-similar solution $w^s(\tau, \eta)$ obtained in
Theorem~\ref{thm:ws-regularity} admits higher-order regularity
\begin{equation}
    w^s \in C^{2, 4}_{\text{par}} \left([0,+\infty)\times [0,1)\right).
\end{equation}
Furthermore, there exists a positive constant $C=C(m,E_1,E_2,E_3,E_4,E_5)$ such that, with the same decay rate $\beta_m>0$ as in Theorem~\ref{thm:ws-regularity}, the following estimates hold:
\begin{equation}\label{eq:ws-high-large-time}
\begin{aligned}
 \left[1+(m-1)\tau\right]^2 |\partial_{\tau\tau}w^s(\tau,\eta)| & \leq C \rho_m(\tau) (1-\eta)\sigma_m(\eta),\\
 \left[1+(m-1)\tau\right] |\partial_{\tau\eta}w^s (\tau,\eta)| &\leq C\rho_m(\tau)\sigma_m(\eta),\\
 \left[1+(m-1)\tau\right] |\partial_{\tau\eta\eta}w^s (\tau,\eta)| &\leq \frac{C\rho_m(\tau)} {(1-\eta)\sigma_m(\eta)},\\
 |\partial_\eta^3(w^s-\overline w_m) (\tau,\eta)| & \leq \frac{C\rho_m(\tau)}{(1-\eta)^2\sigma_m(\eta)},\\
 |\partial_\eta^4(w^s-\overline w_m) (\tau,\eta)| & \leq\frac{C\rho_m(\tau)}{(1-\eta)^3\sigma_m(\eta)},
\end{aligned}
\end{equation}
where $\sigma_m(\eta)=\sqrt{-\ln [\mu_m(1-\eta)]}$.
\end{theorem}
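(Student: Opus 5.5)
We bootstrap from Theorem~\ref{thm:ws-regularity} by differentiating \eqref{eq:semi-self-similar-crocco-m} in $\tau$ and in $\eta$, and treating each derivative as the solution of a linear degenerate parabolic equation with coefficients already controlled. Write $a(\tau,\eta)=1+(m-1)\tau\eta$ and $L=(w^s)^2\partial_{\eta\eta}+m(\eta^2-1)\partial_\eta-\frac{3m-1}{2}\eta$.

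\emph{Time derivatives.} The function $q=\partial_\tau w^s$ satisfies
\[
a\,\partial_\tau q=(w^s)^2\partial_{\eta\eta}q+m(\eta^2-1)\partial_\eta q-\frac{3m-1}{2}\eta q+2w^s\partial_{\eta\eta}w^s\,q-(m-1)\eta q .
\]
The zeroth-order coefficient $2w^s\partial_{\eta\eta}w^s-(m-1)\eta-\frac{3m-1}{2}\eta$ is the key term. By \eqref{eq:ws-large-time} and Lemma~\ref{self-similar-properties}, $w^s\partial_{\eta\eta}w^s\le -M_6/2$ for $\tau\ge\tau_0$. So this coefficient is uniformly negative after a transient period; this is the concavity damping. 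Next set $p=\partial_\tau q$. It solves the same operator, plus a forcing built from $q$, $\partial_\eta q$ and $\partial_{\eta\eta}q$ multiplied by $w^s$ and its derivatives, plus the term $-(m-1)\eta p$.

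The wall condition differentiates to $w^s\partial_\eta q+q\,\partial_\eta w^s=0$ at $\eta=0$, a Robin condition with favorable sign since $\partial_\eta w^s<0$. For $q$ itself we already have the bound $[1+(m-1)\tau]|q|\le C\rho_m\overline w_m$ from Theorem~\ref{thm:ws-regularity}.

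\emph{Derivative estimates by barriers.} For the higher $\eta$-derivatives we use barriers of the form $\rho_m(\tau)(1-\eta)^{-k}\sigma_m^{-1}$, or $\rho_m\sigma_m$, for the appropriate order. Writing $v=w^s-\overline w_m$ and differentiating the difference equation three and four times in $\eta$ gives linear equations for $\partial_\eta^3v$ and $\partial_\eta^4v$, with forcing quadratic in lower-order quantities that are already bounded by $\rho_m$.

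Near $\eta=1$ we compute, as in the proof of Theorem~\ref{thm:ws-regularity}, that the drift $m(\eta^2-1)\partial_\eta$ together with the diffusion $(w^s)^2\sim(m+1)(1-\eta)^2\sigma_m^2$ maps $(1-\eta)^{-k}\sigma_m^{-1}$ to a negative multiple of itself. This uses the asymptotics \eqref{eq:steady-edge-curvature-family} and the fact that $w^s/\overline w_m\to1$ at the edge. In the interior and near $\eta=0$ we use the Robin condition, further differentiated via the equation, together with the concavity damping.

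The mixed derivatives $\partial_{\tau\eta}w^s$ and $\partial_{\tau\eta\eta}w^s$ are then recovered from the $\eta$-differentiated equation. One solves for the $\tau$-derivative in terms of spatial derivatives, or runs the same barrier argument on $\partial_\eta q$. The $\partial_{\tau\tau}$ bound comes from the $p$ equation with barrier $C\rho_m[1+(m-1)\tau]^{-2}(1-\eta)\sigma_m$.

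\emph{Constructing the solution.} Regularity in $C^{2,4}_{\mathrm{par}}$ is obtained on the truncated domains $\eta\in[0,1-\delta]$ by Schauder theory, using the corner compatibility conditions. Passing to the limit $\delta\to0$ is justified by the uniform weighted bounds. On a finite interval $[0,\tau_0]$ we use crude Gronwall-type barriers with growth $e^{C\tau}$. The decay rate $\rho_m$ is then propagated for $\tau\ge\tau_0$.

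\emph{Main obstacle.} The hardest step is the $m>1$ algebraic case. There the factor $a=1+(m-1)\tau\eta$ grows, so the effective damping rate behaves like $1/(1+(m-1)\tau)$. This must be balanced against the polynomial weights $[1+(m-1)\tau]^j$, and the weights also generate terms $(m-1)\eta$ when differentiated. I would first verify that for $\beta_m$ small the barrier $[1+(m-1)\tau]^{-j}(1+\tau)^{-\beta_m}$ is a supersolution. This uses the uniform concavity $-M_6$ to dominate the terms $j(m-1)\eta/a$, possibly after shrinking $\beta_m$.

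The second delicate point is the edge $\eta\to1$ for the fourth derivative. There the forcing contains products such as $\partial_\eta^2w^s\,\partial_\eta^3v\cdot w^s$, which must stay below the barrier order $(1-\eta)^{-3}\sigma_m^{-1}$. Only the logarithmic gain from $\sigma_m$ makes these terms subcritical.
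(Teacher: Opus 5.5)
\textbf{There is a genuine gap, and it sits in the step that obtains $\partial_\eta^3 v$ and $\partial_\eta^4 v$ by comparison with barriers $\rho_m(1-\eta)^{-k}\sigma_m^{-1}$.}

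The sign computation you rely on is wrong. Take $X=1-\eta$ and $\phi=X^{-k}\sigma_m^{-1}$ with $k\geq1$. Then $\partial_{\eta\eta}\phi\approx k(k+1)X^{-2}\phi$, so $(w^s)^2\partial_{\eta\eta}\phi\gtrsim k(k+1)\sigma_m^2\phi$. The drift only gives $m(\eta^2-1)\partial_\eta\phi\approx-2mk\phi$. So the diffusion dominates and $L\phi>0$ near the edge, not a negative multiple of $\phi$.

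In the $k$-times differentiated operator the situation is different but no better. There, the $O(\sigma_m^2)$ parts of the diffusion, of the new drift $2k\,w^s\partial_\eta w^s\,\partial_\eta$, and of the new zero-order term $k(k-1)(\partial_\eta w^s)^2$ cancel at leading order. The sign is then decided by lower-order corrections. The available weighted bounds fix these coefficients only up to $O(\rho_m\sigma_m^2)$, so they cannot determine that sign.

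Several further points are unavailable or missing:
\begin{itemize}
\item You use $w^s/\overline w_m\to1$ at the edge, but Theorem~\ref{thm:ws-regularity} only gives $1\leq w^s/\overline w_m\leq 1+C\rho_m$.
\item Your barriers blow up at $\eta=1$. The comparison therefore presupposes edge control of the very quantities being estimated.
\item The $\eta$-differentiated equations contain $(m-1)\tau\,\partial_\tau\partial_\eta^{k-1}w^s$, coming from $\partial_\eta[1+(m-1)\tau\eta]$. Your plan does not account for this term.
\item You have no boundary condition at $\eta=0$ for $\partial_\eta^3v$ or $\partial_\eta^4v$.
\end{itemize}

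\textbf{Second, your ordering loses the time weights when $m>1$.} Suppose the spatial bounds were granted. ``Solving for the $\tau$-derivative'' then gives only $[1+(m-1)\tau\eta]\,|\partial_{\tau\eta}w^s|\lesssim\rho_m\sigma_m$. The reason is that the spatial bounds carry no factor $[1+(m-1)\tau]^{-1}$, and $(m-1)\tau\,\partial_\tau w^s$ is only $O(\rho_m\overline w_m)$. Near $\eta=0$ the factor $1+(m-1)\tau$ required by the theorem is lost.

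This in turn breaks your $\partial_{\tau\tau}$ barrier. Its source contains $\partial_\tau w^s$ times a mixed derivative: $\partial_{\tau\eta\eta}w^s$ as you set it up, or $\partial_{\tau\eta}w^s$ after elimination through the $q$-equation. That source must be $O(\rho_m\overline w_m[1+(m-1)\tau]^{-2})$, which the unweighted mixed bound cannot give. A barrier argument on $\partial_\eta q$ is circular as written, because its equation contains $(m-1)\tau\,\partial_{\tau\tau}w^s$. It also faces the same edge problem, since the barrier $\sigma_m$ blows up at $\eta=1$.

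\textbf{The paper reverses your order.}
\begin{enumerate}
\item It first applies the maximum principle to $\partial_\tau(w^s\partial_\eta w^s)$.
\begin{itemize}
\item Its equation is written with $\partial_{\tau\tau}w^s$ eliminated through the $\partial_\tau w^s$ equation.
\item It has zero Dirichlet data at both ends, because $w^s\partial_\eta w^s\equiv-m$ at $\eta=0$.
\item It is compared with $-\overline w_m\overline w_m'/[1+(m-1)\tau]$, a barrier that vanishes at the edge.
\end{itemize}
This gives $[1+(m-1)\tau]|\partial_{\tau\eta}w^s|\leq C\sigma_m$.
\item That bound closes the $\partial_{\tau\tau}$ estimate with essentially the barrier you proposed.
\item At fixed $\tau$, the $\partial_\tau w^s$ equation is a second-order ODE in $\eta$ with source $O(\rho_m\overline w_m/[1+(m-1)\tau])$. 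The integrating-factor argument of Proposition~\ref{prop:optimal-derivative-convergence} then gives the decaying bound for $\partial_{\tau\eta}w^s$.
\item Finally, $\partial_{\tau\eta\eta}w^s$, $\partial_\eta^3v$ and $\partial_\eta^4v$ are read off algebraically from the $\eta$-differentiated equations, after dividing by $(w^s)^2\gtrsim(1-\eta)^2\sigma_m^2$.
\end{enumerate}
No barrier for pure normal derivatives and no logarithmic gain are needed: every term is exactly of critical size.
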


Through the inverse Crocco transformation, the preceding Crocco estimates yield the following Eulerian formulation.  
Let $\overline u_m$ be the Eulerian Falkner--Skan flow corresponding to $\overline w_m$.

\begin{theorem}\label{thm:eulerian-semi-self-similar}
Fix $m\geq1$.  
Let $u_0^s$ be semi-similar initial data and let $w_0^s$ be its Crocco profile.  Assume that $w_0^s$ satisfies the assumptions of Theorem~\ref{thm:ws-regularity} and the higher-order assumptions of Theorem~\ref{thm:higher-order-regularity}.
Then the Prandtl system \eqref{eq:prandtl-euler}, with $U(x)=x^m$, has a global semi-similar solution $u^s(t,x,y)$ with initial data $u_0^s$.  
It is reconstructed for $x>0$ by
\begin{equation}\label{eq:inverse-m-eulerian}
 y=x^{\frac{1-m}{2}} \int_0^{u^s/U}\frac{d\zeta}{w^s(\tau,\zeta)},
 \qquad \tau=t x^{m-1},
\end{equation}
and extends continuously to $x=0$ with the prescribed wall and initial conditions. 
Moreover, there exists a constant $C=C(m,E_1,E_2,E_3,E_4,E_5)>0$ such that, with the same decay rate $\beta_m$ as in Theorem~\ref{thm:ws-regularity}, the following estimates hold for $t\geq0$, $x>0$, and $y\geq0$:
\begin{equation}\label{eq:eulerian-high}
\begin{aligned}
\left|\partial_y^\ell
 \left(u^s(t,x,y)-\overline u_m(x,y)\right)\right|
&\leq C \rho_m(\tau)
 x^{\frac{(m-1)\ell}{2}}  \\
&\quad {}\times
 \left(x^m-\overline u_m(x,y)\right)
 \overline\sigma_m^{\ell+2}(x,y),
 \qquad 0\leq\ell\leq5, \\[0.4em]
\left[1+(m-1)\tau\right]
\left|\partial_t\partial_y^\ell
 \left(u^s(t,x,y)-\overline u_m(x,y)\right)\right|
&\leq C \rho_m(\tau)
 x^{\frac{(m-1)(\ell+2)}{2}} \\
&\quad {}\times
 \left(x^m-\overline u_m(x,y)\right)
 \overline\sigma_m^{\ell+2}(x,y),
 \qquad 0\leq\ell\leq3, \\[0.4em]
\left[1+(m-1)\tau\right]^2
\left|\partial_t^2\partial_y^\ell
 \left(u^s(t,x,y)-\overline u_m(x,y)\right)\right|
&\leq C \rho_m(\tau)
 x^{\frac{(m-1)(\ell+4)}{2}} \\
&\quad {}\times
 \left(x^m-\overline u_m(x,y)\right)
 \overline\sigma_m^{\ell+4}(x,y),
 \qquad 0\leq\ell\leq1,
\end{aligned}
\end{equation}
where
\begin{align*}
  \overline\sigma_m(x,y)=\sqrt{-\ln \left[\mu_m \left(1-\frac{\overline u_m (x,y)}{x^m}\right)\right]},
\end{align*}
\begin{equation*}
 \rho_m(tx^{m-1})=
 \begin{cases}
  e^{-\beta_1 t},&m=1,\\
  (1+tx^{m-1})^{-\beta_m},&m>1.
 \end{cases}
\end{equation*}
\end{theorem}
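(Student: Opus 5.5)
We would prove Theorem~\ref{thm:eulerian-semi-self-similar} by inverting the Crocco transformation and transferring the Crocco estimates of Theorems~\ref{thm:ws-regularity} and \ref{thm:higher-order-regularity} to Eulerian variables.

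\emph{Construction of the Eulerian solution.} For each $\tau\geq0$, Theorem~\ref{thm:ws-regularity} gives $w^s(\tau,\cdot)\geq\overline w_m>0$ on $[0,1)$. Hence the map $\eta\mapsto r(\tau,\eta)=\int_0^\eta d\zeta/w^s(\tau,\zeta)$ is strictly increasing and $C^1$. It tends to $+\infty$ as $\eta\to1$, because $w^s\leq C\overline w_m\leq C(1-\eta)\sigma_m$ and $\int^1 d\zeta/((1-\zeta)\sigma_m)=\infty$. Inverting it defines $\psi^s_r(\tau,r)=\eta$, and \eqref{eq:inverse-m-eulerian} defines $u^s=x^m\psi^s_r(tx^{m-1},x^{(m-1)/2}y)$. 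We then verify that $u^s$ solves \eqref{eq:prandtl-euler}, reversing the derivation of \eqref{eq:semi-self-similar-crocco-m}, i.e.\ using the chain rule
\[
\partial_r=w\,\partial_\eta,\qquad \partial_\tau|_r=\partial_\tau|_\eta-\Bigl(w\int_0^\eta \frac{\partial_\tau w}{w^2}\,d\zeta\Bigr)\partial_\eta ,
\]
and we recover $\psi^s$ by integration with $\psi^s(\tau,0)=0$. The wall condition $w\partial_\eta w+m=0$ becomes the momentum balance at $r=0$. The $C^{2,4}_{\rm par}$ regularity of Theorem~\ref{thm:higher-order-regularity} ensures that all the required derivatives exist classically. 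For continuity up to $x=0$, we take $m=1$ to be trivial. For $m>1$, $\tau\to0$ as $x\to0$ and $y/\Delta_m\to0$ with $u^s\leq x^m\to0$. The prescribed boundary data are understood as those of the semi-similar ansatz, and we check that they are attained in the limit.

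\emph{Transfer of estimates.} We write $\eta=u/U$ and $\bar\eta=\overline u_m/U$ at the same $(x,y)$. The key step is to compare the two inverse maps, setting
\[
r(\tau,\eta)-\bar r(\eta)=\int_0^\eta \frac{\overline w_m-w^s}{w^s\,\overline w_m}\,d\zeta .
\]
By \eqref{eq:ws-large-time}, the integrand is bounded by $C\rho_m/\overline w_m$, so $|r-\bar r|\leq C\rho_m\int_0^\eta d\zeta/((1-\zeta)\sigma_m)\leq C\rho_m\sigma_m(\eta)$. Since $d\eta/dr=\overline w_m\approx(1-\eta)\sigma_m$, a shift of size $\delta r\lesssim\rho_m\sigma_m$ changes $1-\eta$ by a factor $\exp(O(\rho_m))$, uniformly in $\eta$. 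Therefore $|\eta-\bar\eta|\leq C\rho_m(1-\bar\eta)\sigma_m^2$. This gives the case $\ell=0$; the weight $\overline\sigma_m^{2}$ arises exactly here. For $\ell\geq1$, we use $\partial_y=x^{(m-1)/2}w\,\partial_\eta$ iteratively: $\partial_y u=x^{(3m-1)/2}w$, $\partial_y^2u=x^{2m-1}w\partial_\eta w$, and so on. Each further $y$-derivative costs a factor $w\partial_\eta\sim\sigma_m\cdot\sigma_m$-type weights, obtained from $w\,\partial_\eta^k w\sim(1-\eta)^{2-k}$ up to powers of $\sigma_m$. We split each difference as
\[
\bigl[F(w^s)(\eta)-F(\overline w_m)(\eta)\bigr]+\bigl[F(\overline w_m)(\eta)-F(\overline w_m)(\bar\eta)\bigr].
\]
The first bracket is controlled by \eqref{eq:ws-large-time} and \eqref{eq:ws-high-large-time}, which require up to four $\eta$-derivatives; this is why $\ell\leq5$. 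The second bracket is controlled by the mean value theorem together with the $\eta$-shift bound and the asymptotics \eqref{eq:steady-edge-curvature-family}. Time derivatives follow from $\partial_t=x^{m-1}\partial_\tau$ together with the formula for $\partial_\tau|_r$, where the nonlocal term $w\int_0^\eta\partial_\tau w/w^2$ contributes another factor $\sigma_m^2$. The $[1+(m-1)\tau]$ weights are inherited from Theorem~\ref{thm:higher-order-regularity}.

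\emph{Main obstacle.} We expect the uniform bookkeeping of logarithmic weights near $\eta\to1$ (i.e.\ $y\to\infty$) to be the hard part. Specifically, we must show that the mismatch between $\sigma_m(\eta)$ and $\sigma_m(\bar\eta)$, and between $1-\eta$ and $1-\bar\eta$, stays bounded by constants, so that all weights can be expressed through $\overline u_m$. We would first establish the two-sided comparability
\[
C^{-1}\leq\frac{1-\eta}{1-\bar\eta}\leq C
\]
from the shift estimate above; this uses only the bounds $\overline w_m\leq w^s\leq C\overline w_m$, without smallness of $\rho_m$. All other steps are then routine chain-rule estimates.
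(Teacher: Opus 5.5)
Your overall route is the paper's. You invert the Crocco map, compare $\eta^s=u^s/x^m$ with $\bar\eta=\overline u_m/x^m$ at the same Eulerian point, split each difference at a common Crocco coordinate with a mean-value step, and treat $\partial_t$ through $\partial_t\eta^s=x^{m-1}w^s\int_0^{\eta^s}\partial_\tau w^s/(w^s)^2\,d\zeta$. The gap is in the coordinate-shift step, which is the step you single out as key.

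\textbf{The shift factor is wrong.} An $r$-shift of size $C\rho_m\sigma_m$ does not change $1-\eta$ by a factor $e^{O(\rho_m)}$. Since $d\log(1-\eta)/d\bar r=-\overline w_m/(1-\eta)\approx-\sqrt{m+1}\,\sigma_m$, the factor is $e^{O(\rho_m\sigma_m^2)}$, which is unbounded as $\eta\to1$ at fixed $\tau$. An $e^{O(\rho_m)}$ factor would not even produce the $\sigma_m^2$ you then claim.

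\textbf{The comparability lemma you plan to prove is false.} The bound $C^{-1}\le(1-\eta^s)/(1-\bar\eta)\le C$ fails already at $\tau=0$ for the admissible datum $\overline w_mG_{c,\delta}$ of Appendix~\ref{app:edge-counterexamples}. There $w^s/\overline w_m\to1+c\delta$ at the edge, so $\sigma_m(\eta^s)\approx(1+c\delta)\sigma_m(\bar\eta)$. Hence $(1-\eta^s)/(1-\bar\eta)=e^{-z}$ with $z=\sigma_m(\eta^s)^2-\sigma_m(\bar\eta)^2\approx(2c\delta+c^2\delta^2)\sigma_m(\bar\eta)^2\to\infty$. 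In Eulerian terms, the two Gaussian tails have different rates.

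\textbf{An unsigned bound is not enough.} If you only had $|z|\le C\rho_m\sigma_m^2$, then $|\eta^s-\bar\eta|=(1-\bar\eta)|1-e^{-z}|$ could be of size $(1-\bar\eta)(e^{C\rho_m\sigma_m^2}-1)$. The estimate would then not close with the steady weight $x^m-\overline u_m$. Compare Theorem~\ref{thm:eulerian-stability}, where the perturbation has no sign and the Gaussian rate weakens to $\frac{m+1}{4}-C\delta\rho_m$.

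\textbf{The missing ingredient is the ordering $w^s\ge\overline w_m$} from Proposition~\ref{prop:estimate-ws}, which gives $\eta^s\ge\bar\eta$. Then
$\int_{\bar\eta}^{\eta^s}d\zeta/\overline w_m=\int_0^{\eta^s}\frac{w^s-\overline w_m}{w^s\overline w_m}\,d\zeta\le C\rho_m\int_0^{\bar\eta}d\zeta/\overline w_m$.
This yields $0\le\sigma_m(\eta^s)-\sigma_m(\bar\eta)\le C\rho_m\sigma_m(\bar\eta)$, hence $0\le z\le C\rho_m\sigma_m(\bar\eta)^2$. The inequality $1-e^{-z}\le z$ then gives $0\le\eta^s-\bar\eta\le C\rho_m(1-\bar\eta)\sigma_m(\bar\eta)^2$. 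In every later step you need only one-sided facts for $\theta\in[\bar\eta,\eta^s]$: $1-\theta\le1-\bar\eta$ and $\sigma_m(\theta)\le C\sigma_m(\bar\eta)$. These are true and suffice to express every bound through $\overline u_m$.

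\textbf{Two smaller corrections.} First, the transport term enters with a plus sign: $\partial_\tau|_r=\partial_\tau|_\eta+\bigl(w\int_0^\eta\partial_\tau w/w^2\,d\zeta\bigr)\partial_\eta$. Second, for $\ell=5$ the mean-value step needs $|\partial_\eta^k\overline w_m|\le C(1-\eta)^{1-k}\sigma_m^{-1}$ for all $k\le5$. These bounds come from differentiating the steady equation; the second-order asymptotics of Lemma~\ref{self-similar-properties} alone do not suffice.
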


{
\begin{remark}[Meaning of the physical-time decay]
For $m>1$, the decay in Theorem~\ref{thm:eulerian-semi-self-similar} is
governed by $\tau=tx^{m-1}$.  Thus the convergence holds as $t\to+\infty$ for
each fixed $x>0$, and it is uniform on every region $x\geq x_0>0$.  It is not
a uniform-in-$x$ convergence statement up to the stagnation point $x=0$.
For $m=1$, the rescaled time is simply $\tau=t$.
\end{remark}
}

\begin{remark}\label{rem:oleinik-steady-tail}
The classical Falkner--Skan asymptotics presented in Oleinik--Samokhin~\cite[Section~2.4, formula~(2.4.12)]{OS} and Hartman~\cite[Chapter~XIV, Part~II, Section~9]{Hartman2002} imply the following formula for every fixed $m\geq1$:
\begin{equation}\label{eq:oleinik-steady-tail}
 1-\frac{\overline u_m(x,y)}{U(x)}
 \sim \overline M_m r^{-\frac{5m+1}{m+1}}
 \exp\left(-\frac{m+1}{4}r^2
 +\frac{m+1}{2}\bar d_m r\right),
 \qquad r\to+\infty,
\end{equation}
where $\overline M_m>0$, $r=x^{\frac{m-1}{2}}y$, and 
\[
 \bar d_m:=\int_0^{+\infty}\bigl(1-f_m'(r)\bigr)\,dr>0
\]
is the dimensionless displacement thickness. Since
$\overline\sigma_m/(\sqrt{m+1}\,r/2)\to1$ as $r\to+\infty$, after enlarging
the constant on bounded $r$-intervals,
\begin{equation}\label{eq:oleinik-weighted-tail}
 \bigl(x^m-\overline u_m(x,y)\bigr)\overline\sigma_m^\ell
 \leq C x^m (1+r)^{\ell-\frac{5m+1}{m+1}} \exp\left(-\frac{m+1}{4}r^2 +\frac{m+1}{2}\bar d_m r\right),
 \qquad 0\leq\ell\leq7.
\end{equation}
These sharp steady-profile tails will be used in the proof of Theorem~\ref{thm:eulerian-stability}.
\end{remark}

\subsection{Stability of semi-similar power-law flows}

We next allow perturbations which depend on the tangential variable and hence leave the semi-similar class.
Thus the stability problem is not restricted to the reduced semi-similar dynamics: the perturbations may depend on all Crocco variables and need not preserve the semi-similar ansatz.  
Set
\begin{equation}\label{eq:full-crocco-variables-m}
 (\tau,\xi,\eta)
 =\left(tx^{m-1},\ln x,\frac{u(t,x,y)}{x^m}\right),
\end{equation}
and
\[
 w(\tau,\xi,\eta)
 =x^{(1-m)/2}\frac{\partial_yu(t,x,y)}{x^m}.
\]
Then the Prandtl equation \eqref{eq:prandtl-euler} reduces to
\begin{equation}\label{eq:prandtl-crocco-m}
\left\{
\begin{aligned}
 &w^2\partial_{\eta\eta} w+m(\eta^2-1)\partial_\eta w -\frac{3m-1}{2}\eta w-\eta \partial_\xi w -[1+(m-1)\tau\eta]\partial_\tau w=0,\\
 &w(\tau,\xi,1)=0,\qquad
 w(\tau,\xi,0) \partial_\eta w(\tau,\xi,0) + m=0,\\
 &w(0,\xi,\eta)=w_0(\xi,\eta),\qquad
 \lim_{\xi\to-\infty}w(\tau,\xi,\eta)=w^-(\tau,\eta).
\end{aligned}
\right.
\end{equation}
Here $(\tau,\xi,\eta)\in(0,+\infty)\times(-\infty,\ln L)\times(0,1)$ and $w^{-}(\tau,\eta)$ satisfies
\begin{equation}
\left\{
\begin{aligned}
&(w^-)^2\partial_{\eta\eta}w^-+ m (\eta^2-1)\partial_\eta w^- -\frac{3m-1}{2} \eta w^- -[1+(m-1)\tau\eta]\partial_\tau w^-=0, \\
 &w^-(\tau,1)=0,\qquad w^-(\tau,0)\partial_\eta w^-(\tau,0)+m=0, \\
&w^-(0,\eta)=w^-_0(\eta),
\end{aligned}
\right.
\end{equation}
where $(\tau,\eta)\in (0,+\infty)\times (0,1)$.

For simplicity, in this paper we consider only $\lim_{\xi\to-\infty}w_0(\xi,\eta)=w_0^s(\eta)$ and $w^{-}(\tau,\eta)=w^s(\tau,\eta)$.

For a function $\varphi=\varphi(\tau,\xi,\eta)$, define the time-dependent
weighted norm
\begin{equation}\label{eq:X-norm-m}
\begin{aligned}
 \|\varphi(\tau,\cdot)\|_{\mathbb X_m}:={}&
 \left\|\frac{\varphi}{\overline w_m}\right\|_{L^\infty_{\xi,\eta}}
 +\left\|\frac{\partial_\eta\varphi}{-\overline w_m'}\right\|_{L^\infty_{\xi,\eta}}
 +\left\|\frac{\partial_{\eta\eta}\varphi}{-\overline w_m''}\right\|_{L^\infty_{\xi,\eta}}\\
 &+\left\|\frac{[1+(m-1)\tau]\partial_\tau\varphi}{\overline w_m}\right\|_{L^\infty_{\xi,\eta}}
 +\left\|\frac{\partial_\xi\varphi}{\overline w_m}\right\|_{L^\infty_{\xi,\eta}}.
\end{aligned}
\end{equation}
When the $\mathbb X_m$-norm is applied to an initial perturbation, the time derivative is understood as the initial time derivative induced by the equations. Specifically,
\begin{align*}
\partial_\tau w|_{\tau=0}:={}& w_0^2\partial_{\eta\eta}w_0 +m(\eta^2-1)\partial_\eta w_0-\frac{3m-1}{2}\eta w_0-\eta \partial_\xi w_0,\\
\partial_\tau w^s|_{\tau=0}:={}& (w_0^s)^2\partial_{\eta\eta}w_0^s +m(\eta^2-1)\partial_\eta w_0^s-\frac{3m-1}{2}\eta w_0^s.
\end{align*}

We now state the nonlinear stability result in Crocco variables.

\begin{theorem}\label{thm:semi-self-similar-stability}
Fix $m\geq1$ and $E^*\geq1$.  
Let $w_0^s(\eta)$ be admissible semi-similar initial data satisfying the assumptions of Theorems~\ref{thm:ws-regularity} and~\ref{thm:higher-order-regularity}, with $E_i\leq E^*$, $1\leq i\leq 5$.
Then there exist constants $\delta_0=\delta_0(m,E^*)>0$ and $C=C(m,E^*)>0$ such that the following holds.

For any initial data $w_0(\xi,\eta)$ compatible with the full initial-boundary value problem \eqref{eq:prandtl-crocco-m} and satisfying
\begin{equation}
    \left\|w_0-w_0^s\right\|_{\mathbb X_m} \leq \delta,
    \qquad 0<\delta\leq\delta_0,
    \nonumber
\end{equation}
where the initial time derivative appearing in the $\mathbb X_m$-norm is understood as above, \eqref{eq:prandtl-crocco-m} admits a global solution $w(\tau,\xi,\eta)$ satisfying
\begin{equation}\label{eq:smallness-condition}
    \left\|w(\tau,\cdot)-w^s(\tau,\cdot)\right\|_{\mathbb X_m}
    \leq C\delta \rho_m(\tau), \qquad \tau\geq 0,
\end{equation}
where 
\begin{equation*}
 \rho_m(\tau)=
 \begin{cases}
  e^{-\beta_1\tau},&m=1,\\
  (1+\tau)^{-\beta_m},&m>1.
 \end{cases}
\end{equation*}
\end{theorem}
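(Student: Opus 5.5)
We write $w=w^s+\phi$ and study the perturbation equation obtained by subtracting \eqref{eq:semi-self-similar-crocco-m} from \eqref{eq:prandtl-crocco-m}. Since $w^s$ does not depend on $\xi$, we get
\begin{equation*}
\begin{aligned}
 &[1+(m-1)\tau\eta]\partial_\tau\phi+\eta\partial_\xi\phi
 =(w^s+\phi)^2\partial_{\eta\eta}\phi+m(\eta^2-1)\partial_\eta\phi\\
 &\qquad -\frac{3m-1}{2}\eta\phi+\bigl(2w^s+\phi\bigr)\phi\,\partial_{\eta\eta}w^s,
\end{aligned}
\end{equation*}
with $\phi(\tau,\xi,1)=0$ and the linearized-plus-quadratic wall condition $(w^s+\phi)\partial_\eta\phi+\phi\,\partial_\eta w^s=0$ at $\eta=0$. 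This is an ultraparabolic equation: it is degenerate parabolic in $\eta$ with transport in $(\tau,\xi)$, and the transport coefficient $\eta\ge0$ points out of the left boundary $\xi=-\infty$, where $\phi\to0$. Local existence in the $\mathbb X_m$ class is standard in Oleinik's framework, so everything reduces to a continuation argument. We assume $\|\phi(\tau)\|_{\mathbb X_m}\le 2C\delta\rho_m(\tau)$ on $[0,T]$ and improve this to $C\delta\rho_m(\tau)$.

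The damping comes from the zero-order coefficient. We divide by $\overline w_m$ and set $\phi=\overline w_m\,h$. The term $2w^s\partial_{\eta\eta}w^s\,\phi$ then contributes $2w^s\partial_{\eta\eta}w^s\,h$. By Lemma~\ref{self-similar-properties} and the convergence statement \eqref{eq:ws-large-time}, we have $w^s\partial_{\eta\eta}w^s\le -M_6/2$ for $\tau\ge\tau_0$. This is the dynamically generated uniform strict concavity. The remaining zero-order terms, $\overline w_m^{-1}\bigl[(w^s)^2\overline w_m''+m(\eta^2-1)\overline w_m'-\tfrac{3m-1}{2}\eta\overline w_m\bigr]$, equal $\bigl((w^s)^2-\overline w_m^2\bigr)\overline w_m''/\overline w_m$ by \eqref{eq:steady-self-similar-m}, and this is $O(\rho_m(\tau))$.

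The weight therefore yields a strictly negative zero-order coefficient $-c_0$ for $\tau\ge\tau_0$. A maximum principle argument using the barrier $\overline w_m\,\rho_m(\tau)$ then controls $\phi/\overline w_m$. For $m=1$ it gives exponential decay. For $m>1$, dividing by $1+(m-1)\tau\eta$ turns the damping into roughly $c_0/(1+(m-1)\tau)$, which produces the algebraic rate $(1+\tau)^{-\beta_m}$ provided $\beta_m$ is small.

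On $[0,\tau_0]$ we only need growth bounded by $e^{C\tau_0}$, and these constants depend on $E^*$ through Theorem~\ref{thm:ws-regularity}. The boundary $\eta=0$ is handled by the oblique condition: at a positive maximum of $h$ there, $\partial_\eta h\le0$ must be compatible with $\partial_\eta\phi=-\phi\partial_\eta w^s/w^s+O(\phi^2)$. We absorb this by multiplying the barrier by a factor $(1+\epsilon\eta)$ or $e^{-k\eta}$ near the wall. At $\eta=1$ the barrier vanishes at the same rate as $\overline w_m$. The quadratic terms $\phi^2\partial_{\eta\eta}(\cdot)$ carry an extra factor $\delta$ and are absorbed.

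Next we differentiate the equation. First, $\partial_\xi$ commutes with all coefficients because they are $\xi$-independent. Hence $\partial_\xi\phi$ satisfies the same linearized equation plus quadratic terms, and the same barrier applies to it. Second, $\partial_\tau\phi$ satisfies the same equation with extra source terms $\partial_\tau w^s\cdot(\ldots)$, which are $O(\rho_m)$ by \eqref{eq:ws-large-time}, plus the commutator $(m-1)\eta\partial_\tau\phi$. We handle the commutator by working with $[1+(m-1)\tau]\partial_\tau\phi$, which is exactly the weight in $\mathbb X_m$. Third, for $\partial_\eta\phi$ and $\partial_{\eta\eta}\phi$ we differentiate in $\eta$, following the proof of Theorem~\ref{thm:ws-regularity}. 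This is where the third- and fourth-order bounds of Theorem~\ref{thm:higher-order-regularity} enter as coefficients. Alternatively, we recover $\partial_{\eta\eta}\phi$ algebraically from the equation once $\partial_\tau\phi$, $\partial_\xi\phi$ and $\partial_\eta\phi$ are controlled:
\[
(w^s+\phi)^2\partial_{\eta\eta}\phi=[1+(m-1)\tau\eta]\partial_\tau\phi+\eta\partial_\xi\phi-\ldots,
\]
and dividing by $(w^s)^2\sim\overline w_m^2$ gives the weight $1/\overline w_m\sim-\overline w_m''$. That leaves $\partial_\eta\phi$. We bound it with weight $-\overline w_m'\sim\sigma_m$ by interpolating between $\phi/\overline w_m$ and $\partial_{\eta\eta}\phi/\overline w_m''$ on the logarithmic scale near $\eta=1$, or by a barrier for the equation satisfied by $\partial_\eta\phi$, whose zero-order part again gains $2w^s\partial_{\eta\eta}w^s<0$ plus $2m\eta$.

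We expect this last step to be the main obstacle: closing the weighted $\partial_\eta$ estimate up to the degenerate edge $\eta=1$ and at the nonlinear wall condition. Theorem~\ref{thm:weighted-c2-blowup} shows that edge control can fail, and here it is only rescued by the strict concavity and the time-monotone background. We would first try the algebraic recovery of $\partial_{\eta\eta}\phi$ described above, together with a barrier of the form $-\overline w_m'(\eta)\rho_m(\tau)(A-\eta)$ for $\partial_\eta\phi$.
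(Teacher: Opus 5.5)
Your overall architecture agrees with the paper's. It is a bootstrap, with barriers of the form $\rho_m(\tau)\times$(profile weight), damped by the dynamically generated concavity $w^s\partial_{\eta\eta}w^s\le -C_*$ for $\tau\ge\tau_0$. Exponential barriers handle $[0,\tau_0]$, and $\partial_{\eta\eta}\widehat w$ is recovered algebraically from the equation. Your zero-order and tangential steps are essentially Propositions~\ref{prop:stability-zero} and~\ref{prop:stability-tangential}. The factor $(1+\epsilon\eta)$ is not needed: the Robin condition $\partial_\eta\widehat w=\frac{m}{ww^s}\widehat w$ already gives the barrier the favorable sign at $\eta=0$.

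The genuine gap is the step you flag yourself. The weighted bound for $\partial_\eta\widehat w$ is never closed, and the proposal misses why it is delicate. Every route to it feeds on the weighted time derivative with an $O(1)$ gain:
\begin{itemize}
\item Differentiating in $\eta$ creates the source $(m-1)\tau\,\partial_\tau\widehat w$ from the coefficient $1+(m-1)\tau\eta$.
\item The algebraic recovery of $\partial_{\eta\eta}\widehat w$ contains $[1+(m-1)\tau\eta]\partial_\tau\widehat w$. It also contains $m(1-\eta^2)\partial_\eta\widehat w$ with an $O(1)$ coefficient, so your interpolation route is circular unless you use a weighted Landau inequality with a small parameter to absorb that term.
\end{itemize}
Conversely, the $\tau$-differentiated equation contains $2w\,\partial_\tau w^s\,\partial_{\eta\eta}\widehat w$. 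After eliminating $\partial_{\eta\eta}\widehat w$, this becomes the term $\frac{2m(1-\eta^2)(-\partial_\tau w^s)}{w^2}\,\widehat z$ with $\widehat z=w\partial_\eta\widehat w$. So the normal-derivative and time-derivative estimates form a loop. The bootstrap ``$2C\delta\rho_m\to C\delta\rho_m$'' cannot be closed by treating each quantity separately, because the gain from $\partial_\tau$ to $\partial_\eta$ is of order one. You have to exploit the fact that the reverse gain is small.

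The paper resolves this in Proposition~\ref{prop:stability-normal} and in the final choice of constants. It estimates the product $\widehat z=w\partial_\eta\widehat w$ against the weight $-\overline w_m\overline w_m'$, which has three advantages:
\begin{itemize}
\item $\widehat z$ vanishes at the degenerate edge $\eta=1$, whereas $\partial_\eta\widehat w/(-\overline w_m')$ has no prescribed edge value.
\item Its wall value $\frac{m}{w^s}\widehat w$ is Dirichlet data controlled by the zero-order bound.
\item Its zero-order coefficient $2w\partial_{\eta\eta}w^s-(m-1)\eta$ is negative after $\tau_0$.
\end{itemize}
This yields the output $(K_3+K_4N_3)\delta\rho_m$ for the normal derivative and $[K_5+K_6\rho_m(\tau_0)N_2]\delta\rho_m$ for the time derivative. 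Here $K_4,K_6$ do not depend on $\tau_0$, and the reverse gain carries $\rho_m(\tau_0)$ because $\partial_\tau w^s=O(\rho_m/[1+(m-1)\tau])$. The order of choices matters:
\begin{enumerate}
\item Choose $\tau_0$ so large that $16C_0^2K_4K_6\rho_m(\tau_0)\le\frac12$.
\item Fix the $\tau_0$-dependent constants $K_1,K_2,K_3,K_5$ and the hierarchy $N_1,\dots,N_4$.
\item Only then choose $\delta_0$.
\end{enumerate}
On $[0,\tau_0]$ no such smallness is available. The paper does not bound the growth of each quantity separately there. Instead it treats $(\widehat z,\partial_\tau\widehat w)$ simultaneously as a cooperative system, since both coupling coefficients $(m-1)\tau w$ and $\frac{2m(1-\eta^2)(-\partial_\tau w^s)}{w^2}$ are nonnegative. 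It then uses joint barriers $D\delta e^{\kappa\tau}(\cdot)$ and a first-contact argument. With these ingredients your outline would go through.
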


\begin{remark}\label{rem:away-from-sss}
{
The semi-similar solution $w^s$ is not assumed to be close to the stationary
profile $\overline w_m$.  Smallness is imposed only on $w_0-w_0^s$, and the
allowed size may depend on the weighted bounds for $w^s$.  Thus
Theorem~\ref{thm:semi-self-similar-stability} establishes nonlinear stability
of each prescribed admissible time-dependent semi-similar solution, not only
of the stationary Falkner--Skan profile.  The choice $w^-=w^s$ as
$\xi\to-\infty$ is made for simplicity.  The same conclusion holds if the
initial entrance profile is sufficiently close to $w_0^s$; see
Remark~\ref{rem:relaxed-entrance}.
}
\end{remark}

{
\begin{remark}\label{rem:nonuniform-solution-map}
The initial datum $w_0^s$ for the semi-similar solution is assumed to be more
regular than the initial perturbation.  This distinction is necessary.  If
$w_0^s$ were assumed to have only the same regularity as the perturbation, the
stability estimate would imply uniform continuity of the solution map on
bounded sets in the weighted $C^2$ norm.
Theorem~\ref{thm:edge-nonuniformity-eta} constructs pairs of time-monotone
global semi-similar solutions for which the initial distance tends to zero but
the distance at every fixed positive time that is sufficiently small remains bounded
from below.
\end{remark}
}

The preceding theorem is formulated in Crocco variables, where the Oleinik weights and the perturbation norm are most natural.  
Returning to the original variables gives the following stability result for the Prandtl system.

\begin{theorem}\label{thm:eulerian-stability}
Fix $m\geq1$ and let $u^s(t,x,y)$ be the semi-similar Prandtl flow obtained in Theorem~\ref{thm:eulerian-semi-self-similar}. 
Assume that $w_0$ satisfies the hypotheses of Theorem~\ref{thm:semi-self-similar-stability} with $0<\delta\leq\delta_0$, and let $u_0(x,y)$ be the corresponding Eulerian initial datum under the inverse Crocco transformation.

Then the Prandtl system \eqref{eq:prandtl-euler} with $U(x)=x^m$ admits a global solution $u(t,x,y)$ with initial data $u_0$. 
Moreover, there exists a constant $C=C(m, E^*)>0$ such that the following estimates hold for all $t\geq0$, $0\leq x<L$, and $y\geq0$:
\begin{align*}
    |u(t,x,y)-u^s(t,x,y)|&\leq C\delta\rho_m(tx^{m-1})x^{\frac{3m-1}{2}}y \left(1+x^{\frac{m-1}{2}}y \right)\\
&\quad {}\times \exp\left\{-\left[\frac{m+1}{4}
 -C\delta\rho_m(tx^{m-1})\right]x^{m-1}y^2
 +\frac{m+1}{2}\bar d_m x^{\frac{m-1}{2}}y\right\}, \\[0.4em]
 \left|\partial_y^\ell\bigl(u(t,x,y)-u^s(t,x,y)\bigr)\right|
 &\leq C\delta\rho_m(tx^{m-1})
 x^{m+\frac{\ell(m-1)}{2}}
 \left(1+x^{\frac{m-1}{2}}y\right)^{\ell+2}\\
 &\quad {}\times\exp\left\{-\left[\frac{m+1}{4}
 -C\delta\rho_m(tx^{m-1})\right]x^{m-1}y^2\right.\\
 &\qquad\qquad\left.
 +\frac{m+1}{2}\bar d_m x^{\frac{m-1}{2}}y\right\},
 \qquad1\leq\ell\leq3, \\[0.4em]
 \left|\partial_x\partial_y^\ell
 \bigl(u(t,x,y)-u^s(t,x,y)\bigr)\right|
 &\leq C\delta\rho_m(tx^{m-1})
 x^{m-1+\frac{\ell(m-1)}{2}}
 \bigl(1+x^{\frac{m-1}{2}}y\bigr)^{\ell+2}\\
 &\quad {}\times\exp\left\{-\left[\frac{m+1}{4}
 -C\delta\rho_m(tx^{m-1})\right]x^{m-1}y^2\right.\\
 &\qquad\qquad\left.
 +\frac{m+1}{2}\bar d_m x^{\frac{m-1}{2}}y\right\},
 \qquad0\leq\ell\leq1, \\[0.4em]
 \left|\partial_t\partial_y^\ell \left(u(t,x,y)-u^s(t,x,y)\right)\right| &\leq C\delta\rho_m(tx^{m-1})
 [1+(m-1)tx^{m-1}]^{-1} x^{m+\frac{(\ell+2)(m-1)}{2}}  \bigl(1+x^{\frac{m-1}{2}}y\bigr)^{\ell+4}\\
  &\quad  {}\times \exp\left\{-\left[\frac{m+1}{4}
  -C\delta\rho_m(tx^{m-1})\right]x^{m-1}y^2 \right.\\
  &\qquad\qquad\left.
  +\frac{m+1}{2}\bar d_m x^{\frac{m-1}{2}}y\right\},\\
  &\qquad 0\leq\ell\leq1.
\end{align*}
Here 
\begin{equation*}
 \rho_m(tx^{m-1})=
 \begin{cases}
  e^{-\beta_1 t},&m=1,\\
  (1+tx^{m-1})^{-\beta_m},&m>1,
 \end{cases}
\end{equation*}
$\beta_m$ is the decay rate from Theorem~\ref{thm:semi-self-similar-stability}, and $\bar d_m>0$ is the displacement-thickness constant defined in Remark~\ref{rem:oleinik-steady-tail}.
\end{theorem}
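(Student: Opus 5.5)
The plan is to obtain the solution $u$ by inverting the Crocco transformation applied to the global solution $w$ of Theorem~\ref{thm:semi-self-similar-stability}. The Eulerian bounds then come from the weighted Crocco estimates combined with the sharp steady tails of Remark~\ref{rem:oleinik-steady-tail}.

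\textbf{Step 1: reconstruction.} Since $\|w-w^s\|_{\mathbb X_m}\le C\delta\rho_m$ and, by \eqref{eq:ws-estimates-all}, $w^s\ge\overline w_m$, we get $w\ge(1-C\delta)\overline w_m>0$ on $[0,1)$. Hence
\[
 y=x^{\frac{1-m}{2}}\int_0^{u/x^m}\frac{d\zeta}{w(\tau,\ln x,\zeta)}
\]
defines $u$ implicitly, with $\partial_y u = x^{\frac{3m-1}{2}}w$. The standard Oleinik argument (as in \cite{OS}) shows that $u$ solves \eqref{eq:prandtl-euler}. The normal velocity $v$ is recovered from the divergence-free condition. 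The regularity needed to justify this uses the $\partial_\xi,\partial_\tau$ control in $\mathbb X_m$.

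\textbf{Step 2: comparing $u$ and $u^s$ at fixed $y$.} Write $\eta=u/x^m$ and $\eta^s=u^s/x^m$, and set $r=x^{\frac{m-1}{2}}y$. Subtracting the two inversion formulas gives
\[
 \int_{\eta^s}^{\eta}\frac{d\zeta}{w}=\int_0^{\eta^s}\frac{w-w^s}{w\,w^s}\,d\zeta .
\]
The right side is bounded by $C\delta\rho_m\int_0^{\eta^s}\overline w_m^{-1}d\zeta\le C\delta\rho_m\, r$, because $\int_0^{\eta^s}d\zeta/w^s=r$ and $w^s\ge\overline w_m$. This shows that the map $\eta\mapsto r$ for $w$ is a multiplicative perturbation $(1+O(\delta\rho_m))$ of the one for $\overline w_m$. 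Combining this with $\overline w_m\sim\sqrt{m+1}(1-\eta)\sigma_m$ from Lemma~\ref{self-similar-properties}, integrating $d\sigma/d r$-type relations shows that $1-\eta$ and $1-\eta^s$ both lie within the tail \eqref{eq:oleinik-steady-tail}, with exponent $\frac{m+1}{4}r^2$ replaced by $\bigl(\frac{m+1}{4}-C\delta\rho_m\bigr)r^2$. This is exactly the origin of the degraded Gaussian rate in the statement. The difference $|\eta-\eta^s|$ is then at most $\sup w\cdot C\delta\rho_m r$. Near the wall one can use $w\lesssim\overline w_m$ directly, which gives the factor $y$; in the tail one uses the bound $(1-\eta)\sigma\cdot r$. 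Multiplying by $x^m$ yields the first estimate, with the prefactor $x^{\frac{3m-1}{2}}y(1+r)$.

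\textbf{Step 3: derivatives.} Each $\partial_y$ of $u$ produces $x^{\frac{3m-1}{2}}x^{-m}\partial_\eta$ acting on $w$ via $\partial_y=x^{\frac{m-1}{2}}w\partial_\eta$. For example, $\partial_y^2u=x^{\frac{3m-1}{2}}x^{\frac{m-1}{2}}w\,\partial_\eta w$. The difference $\partial_y^\ell(u-u^s)$ therefore splits into two parts:
\begin{itemize}
\item[(a)] Crocco differences evaluated at the same $\eta$. These are controlled by $\mathbb X_m$ up to $\partial_{\eta\eta}$, and by Theorem~\ref{thm:higher-order-regularity} for the third-order background terms.
\item[(b)] Differences caused by $\eta\neq\eta^s$. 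These are handled by the mean value theorem using the next $\eta$-derivative of $w^s$, whose bounds also come from Theorem~\ref{thm:higher-order-regularity}.
\end{itemize}
The weights satisfy $\overline w_m^{k}|\overline w_m^{(j)}|\lesssim (1-\eta)\sigma_m^{\ell+2}$, and $\sigma_m\sim r$, which produces $(1+r)^{\ell+2}$. The $\partial_x$ and $\partial_t$ estimates follow the same pattern. One uses $\partial_x=x^{-1}\partial_\xi+(m-1)x^{-1}\tau\partial_\tau+\dots$ and $\partial_t=x^{m-1}\partial_\tau$, with the weighted $\partial_\tau$ and $\partial_\xi$ bounds from $\mathbb X_m$. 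The extra factors $(1+r)^2$ arise from $\partial_\tau\eta$, which involves $\int d\zeta/w^2$-type integrals.

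\textbf{Main obstacle.} The hardest part is the far-field (tail) bookkeeping. Because of the implicit inversion, an $O(\delta)$ relative error in $w$ becomes an $O(\delta r^2)$ error in the exponent. This must be absorbed by decreasing $\frac{m+1}{4}$ by $C\delta\rho_m$, while keeping the polynomial prefactors at most $(1+r)^{\ell+2}$. To do this, I would treat $r\le R$ directly and handle large $r$ with the asymptotics \eqref{eq:steady-edge-curvature-family} and the tail bound \eqref{eq:oleinik-weighted-tail}.
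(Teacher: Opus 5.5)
Your route is the paper's route. You reconstruct $u$ by the inverse Crocco map and subtract the two inverse relations to get $|\sigma_m(\eta)-\sigma_m(\eta^s)|\le C\delta\rho_m r$. You bound the weights $(1-\theta)\sigma_m(\theta)^j$ at intermediate coordinates by the steady tail. You split each derivative difference at a common coordinate with the mean value theorem, using $\partial_y=x^{\frac{m-1}{2}}w\partial_\eta$, $\partial_t=x^{m-1}\partial_\tau$ and the formula for $x\partial_x$.

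However, the step that produces the Gaussian constant $\frac{m+1}{4}-C\delta\rho_m$ rests on a false claim. The inverse map $\eta\mapsto r$ of $w$ is a $(1+O(\delta\rho_m))$-perturbation of the map of $w^s$, not of the map of $\overline w_m$. The background only satisfies $\overline w_m\le w^s\le C\overline w_m$ with $C=C(m,E^*)$ not close to $1$; it may be far from Falkner--Skan (Remark~\ref{rem:away-from-sss}). So the inverse maps of $w^s$ and $\overline w_m$ differ by an $O(1)$ factor. The same conflation appears in your bound $\int_0^{\eta^s}\overline w_m^{-1}\,d\zeta\le Cr$. That bound follows from $w^s\le C\overline w_m$, whereas $w^s\ge\overline w_m$ gives the reverse inequality. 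It is simpler to use $|w-w^s|\le C\delta\rho_m w$ and integrate against $1/w^s$. This is not cosmetic: two-sided comparability $C^{-1}\overline w_m\le w\le C\overline w_m$ alone would only give the Gaussian rate $\frac{m+1}{4C^2}$.

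The missing point is that only a \emph{lower} comparison with constant exactly one is needed, and it is available. Proposition~\ref{prop:estimate-ws} gives $w^s\ge\overline w_m$ (a consequence of time monotonicity), and hence $w\ge(1-C\delta\rho_m)\overline w_m$. For $\theta=\eta^s$ this yields $\overline r_\theta:=\int_0^\theta d\zeta/\overline w_m\ge r$, and for $\theta=\eta$ it yields $\overline r_\theta\ge(1-C\delta\rho_m)r$. By monotonicity in $\theta$, the same lower bound holds for every intermediate coordinate, so the steady profile needs at least height $(1-C\delta\rho_m)r$ to reach $\theta$. The function $(1+s)^b\exp\bigl(-\frac{m+1}{4}s^2+\frac{m+1}{2}\bar d_m s\bigr)$ is decreasing for large $s$. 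So evaluating \eqref{eq:oleinik-weighted-tail} at height $\overline r_\theta$ and completing the square gives the bound on $(1-\theta)\sigma_m(\theta)^j$ with exponent $-\bigl(\frac{m+1}{4}-C\delta\rho_m\bigr)r^2+\frac{m+1}{2}\bar d_m r$. The upper bound $\overline r_\theta\le Cr$, from $w^s\le C\overline w_m$, enters only the polynomial prefactors. With this replacement, your Steps 2 and 3 go through exactly as in the paper.
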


{
\begin{remark}[Uniform normal far-field regularity]
\label{rem:normal-far-field-regularity}
The estimates above are uniform on the full Eulerian normal half-line
$y\in[0,+\infty)$ and give quantitative decay as $y\to+\infty$ for the normal
derivatives controlled by the theorem.  Under the Crocco transformation, this
is precisely the degenerate limit $\eta\uparrow1$.  Such uniform weighted
far-field control is stronger than global solvability in a weaker solution
class and is not supplied by the available global existence results.  The
counterexample in Theorem~\ref{thm:weighted-c2-blowup} shows that positivity
and relative-amplitude bounds alone do not propagate this control.  The
present theorem proves that, under the one-sided time-monotonicity and weighted
regularity assumptions, the corresponding solutions remain in the strong
Oleinik weighted class for all time.
\end{remark}
}

\vspace{2mm}
{
\medskip
\noindent\textit{Proof strategy.}
The proof has three parts.  First, comparison with the stationary profile
controls $w^s$, and the maximum principle preserves
$\partial_\tau w^s\leq0$.  Weighted barriers then control the first and second
normal derivatives on every finite time interval.  These estimates give the
global continuation of the semi-similar solution.

Second, write
$\widetilde w=w^s-\overline w_m$.  A weighted comparison gives
$0\leq\widetilde w\leq C\rho_m\overline w_m$.  The main edge estimate is
obtained from the logarithmic derivative
$P=\partial_\eta\widetilde w/\widetilde w$, which
satisfies
\[
\begin{aligned}
 &(w^s)^2(\partial_\eta P+P^2)+m(\eta^2-1)P
 +(w^s+\overline w_m)\overline w_m''-\frac{3m-1}{2}\eta\\
 &\qquad=[1+(m-1)\tau\eta]
 \frac{\partial_\tau\widetilde w}{\widetilde w}\leq0.
\end{aligned}
\]
This spatial Riccati inequality yields
\begin{equation}\label{eq:intro-riccati-gradient}
 \partial_\eta\widetilde w
 \leq\frac{K}{1-\eta}\widetilde w
 \leq C\rho_m(\tau)(-\overline w_m').
\end{equation}
This rules out the formation of a sharp transition layer near $\eta=1$.  Substitution in the
equation shows that, after a finite time,
\[
 w^s\partial_{\eta\eta}w^s\leq-c_*<0.
\]
This is the point where time monotonicity becomes coercivity.

The strict-concavity estimate gives a negative zero-order term in the equation
for $[1+(m-1)\tau]\partial_\tau w^s$.  The maximum principle then yields the
decay of the time derivative.  An integrating-factor argument gives the
two-sided first-normal-derivative estimate, and the equation recovers the
second derivative.  The remaining finite-order estimates follow from the
differentiated equations.

Third, the stability proof uses the same division into an initial finite-time
interval and a large-time interval.  Growing barriers control the initial
interval.  On the large-time interval, the perturbation equations inherit the
strict-concavity term and the barriers decay at the rate $\rho_m$.  The only
coupled estimates are those for the weighted normal and time derivatives.  We
first choose the transition time large and then the perturbation small, which
closes the bootstrap.
}

{
The rest of this paper is organized as follows.  Section~2 develops the
semi-similar Crocco theory, proving global well-posedness, large-time
convergence, and higher-order regularity, and also shows that weighted edge
control may be lost in finite time when the initial time-monotonicity condition
is removed.  Section~3 proves nonlinear stability of the semi-similar flows in
Crocco variables for every fixed $m\geq1$.  Section~4 reconstructs the
corresponding Eulerian Prandtl flows and derives the Eulerian forms of the main
estimates.  Appendix~A establishes the required estimates for the steady
favorable Falkner--Skan profiles, while Appendix~B constructs critical edge
packets and proves nonuniform dependence in the weighted topology.
}

\section{Semi-similar Crocco flows}

This section establishes global existence and large-time behavior of solutions to the semi-similar equation \eqref{eq:semi-self-similar-crocco-m}.
Throughout this section, write $\widetilde w(\tau,\eta)=w^s(\tau,\eta)-\overline w_m(\eta)$.

\subsection{A priori estimates and global well-posedness}

The following propositions give a priori estimates for equation \eqref{eq:semi-self-similar-crocco-m}.
Let $w^s\in C^{1,2}_{\text{par}}([0,+\infty)\times [0,1))\cap C([0,+\infty)\times [0,1])$ be a solution of \eqref{eq:semi-self-similar-crocco-m} whose initial data $w_0^s(\eta)$ satisfy \eqref{eq:initial-data}.

\begin{proposition}\label{prop:estimate-ws}
There exists a constant $D_1>1$, depending only on $E_1$, such that
\[
 \overline w_m(\eta)\leq w^s(\tau,\eta)
 \leq D_1\overline w_m(\eta)
\]
for all $(\tau,\eta)\in[0,+\infty)\times[0,1)$.
\end{proposition}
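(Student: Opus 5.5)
The plan is to get both bounds from comparison principles, using the initial time-monotonicity in \eqref{eq:initial-data} in two different ways. The hypothesis does not give $w_0^s\geq\overline w_m$ directly, only $w_0^s\geq E_1^{-1}\overline w_m$. So the lower bound must come from the fact that $w_0^s$ is a stationary supersolution. The upper bound comes from the propagation of $\partial_\tau w^s\leq0$, which gives $w^s(\tau,\cdot)\leq w_0^s\leq E_1\overline w_m$. I expect to be able to take $D_1=\max\{E_1,2\}$.

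\emph{Step 1 (stationary comparison at $\tau=0$).} Write $\mathcal N[w]=w^2w''+m(\eta^2-1)w'-\frac{3m-1}{2}\eta w$. The last line of \eqref{eq:initial-data} says $\mathcal N[w_0^s]\leq0=\mathcal N[\overline w_m]$. Both functions satisfy $w(1)=0$ and $w(0)w'(0)=-m$. I would show $z_0=w_0^s/\overline w_m\geq1$ as follows. Suppose $\min z_0<1$. By $z_0\geq E_1^{-1}$ and the edge asymptotics of Lemma~\ref{self-similar-properties}, the infimum is either attained at an interior point or at $\eta=0$, or approached as $\eta\to1$. At an interior minimum, $\mathcal N[w_0^s]-\mathcal N[\overline w_m]$ has the sign of $\overline w_m^2\overline w_m''(z_0^3-z_0)>0$, because $\overline w_m''<0$ and $z_0<1$. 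This contradicts $\mathcal N[w_0^s]\leq0$. At $\eta=0$, the boundary relation gives $z_0'(0)=-m(1-z_0^{-2})/\overline w_m(0)^2\cdot z_0^{-1}\cdots>0$ when $z_0(0)<1$, so $\eta=0$ is not a minimum. The edge $\eta=1$ is the delicate case. There I would instead compare with $\varepsilon$-shifted profiles $\overline w_m(\eta)-\varepsilon\phi(\eta)$, or with the scaled barriers $k\overline w_m$, $k<1$, and use a sliding argument in $k$. The computation $\mathcal N[k\overline w_m]=k(k^2-1)\overline w_m^2\overline w_m''>0$ for $k<1$, together with $(k\overline w_m)(k\overline w_m)'(0)+m=m(1-k^2)>0$, shows that $k\overline w_m$ is a strict subsolution. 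Let $k^*=\sup\{k: k\overline w_m\leq w_0^s\}\geq E_1^{-1}$. If $k^*<1$, the two functions touch. A touching at an interior point or at $0$ is excluded by strict subsolution. Touching "at $\eta=1$" in the ratio sense is excluded using the precise asymptotics $\overline w_m\sim\sqrt{m+1}(1-\eta)\sigma_m$ and the analogous first-order behavior forced on $w_0^s$ by the equation, since $w_0^s$ is comparable to $\overline w_m$ with weighted derivative bounds.

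\emph{Step 2 (parabolic comparison).} Since $\overline w_m$ is a stationary solution of \eqref{eq:semi-self-similar-crocco-m} and $k\overline w_m$ ($k<1$) are strict subsolutions, the same sliding argument in the parabolic setting propagates $w^s\geq\overline w_m$ for all $\tau$. I would apply it to the ratio $z=w^s/\overline w_m$ on $[0,T]\times[0,1)$. The coefficient $1+(m-1)\tau\eta\geq1$ keeps the problem forward parabolic. The nonlinear Robin condition at $\eta=0$ is again handled by the sign of $m(1-k^2)$.

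\emph{Step 3 (monotonicity and upper bound).} I would differentiate \eqref{eq:semi-self-similar-crocco-m} in $\tau$. The function $v=\partial_\tau w^s$ satisfies a linear degenerate parabolic equation:
\[
[1+(m-1)\tau\eta]\partial_\tau v=(w^s)^2\partial_{\eta\eta}v+m(\eta^2-1)\partial_\eta v+\Bigl(2w^s\partial_{\eta\eta}w^s-\tfrac{3m-1}{2}\eta-(m-1)\eta\Bigr)v.
\]
Its boundary conditions are $v(\tau,1)=0$ and the homogeneous Robin condition $w^s\partial_\eta v+\partial_\eta w^s\,v=0$ at $\eta=0$. The initial data satisfy $v(0,\cdot)\leq0$. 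The maximum principle on finite time intervals gives $v\leq0$. To do this I would work with $e^{-\lambda\tau}v/\overline w_m$ to absorb the bounded zero-order coefficient, which is bounded on $[0,T]$ by the local $C^{1,2}$ regularity of $w^s$. Hence $w^s(\tau,\eta)\leq w_0^s(\eta)\leq E_1\overline w_m(\eta)$.

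The main obstacle is the degenerate edge $\eta=1$. There the diffusion $(w^s)^2$ vanishes and both functions vanish, so the extremum of a ratio may escape to the boundary. I would first try to control the ratio near $\eta=1$ by a barrier of the form $1\pm\varepsilon\sigma_m(\eta)^{-1}$, built from Lemma~\ref{self-similar-properties}, and then let $\varepsilon\to0$.
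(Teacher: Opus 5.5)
Your lower-bound mechanism matches the paper's: at $\tau=0$, $w_0^s$ is a stationary supersolution, and then parabolic comparison propagates the bound. Your upper bound, however, has a genuine gap.

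\textbf{Step 3 (upper bound).} The maximum principle for $v=\partial_\tau w^s$ needs an upper bound on the zero-order coefficient $2w^s\partial_{\eta\eta}w^s-\frac{5m-3}{2}\eta$ on $[0,T]\times[0,1)$. The class $C^{1,2}_{\text{par}}([0,+\infty)\times[0,1))\cap C([0,+\infty)\times[0,1])$ only gives local control away from the degenerate edge. Boundedness of $w^s\partial_{\eta\eta}w^s$ up to $\eta=1$ is exactly the weighted second-derivative estimate. The paper obtains it only through the continuation bootstrap, which itself uses this proposition; Theorem~\ref{thm:weighted-c2-blowup} shows such control is not automatic. Dividing by $\overline w_m$ does not remove this term. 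This is why Proposition~\ref{prop:time-monotonicity} carries the hypothesis $|w^s\partial_{\eta\eta}w^s|\le M(\tau)$. With that hypothesis your route would work inside the bootstrap, but it does not prove the proposition as stated.

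The fix is to avoid time monotonicity altogether. The barrier $\psi_1=D_1\overline w_m$ satisfies
\[
\psi_1^2\psi_1''+m(\eta^2-1)\psi_1'-\tfrac{3m-1}{2}\eta\psi_1=D_1(D_1^2-1)\overline w_m^2\overline w_m''\le0.
\]
Then $\Psi_1=w^s-\psi_1$ satisfies a linear inequality whose zero-order coefficient $(w^s+\psi_1)\psi_1''-\frac{3m-1}{2}\eta$ involves only the barrier's curvature and is negative. At the wall, $\partial_\eta\Psi_1\ge\frac{m}{D_1w^s\overline w_m}\Psi_1$, and $\Psi_1\le0$ at $\tau=0$ once $D_1\ge E_1$. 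No information on $\partial_{\eta\eta}w^s$ is needed.

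\textbf{Steps 1--2 (lower bound).} The edge difficulty you single out comes from working with the ratio $w^s/\overline w_m$ and a sliding family. Your treatment of it is not a proof: the ``first-order behavior forced by the equation'' is never established. In Step~2 you would also need it at positive times, before any derivative bounds are available.

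Compare the difference instead. The function $\widetilde w=w^s-\overline w_m$ solves a linear equation with zero-order coefficient $(w^s+\overline w_m)\overline w_m''-\frac{3m-1}{2}\eta<0$ and wall condition $\partial_\eta\widetilde w=\frac{m}{w^s\overline w_m}\widetilde w$. Since $\widetilde w$ is continuous on $[0,T]\times[0,1]$ and vanishes at $\eta=1$, any negative minimum is attained with $\eta<1$ and is excluded in the usual way. The same argument at $\tau=0$ gives $w_0^s\ge\overline w_m$.

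You also have a sign slip at the wall. The relation gives $z_0'(0)=m(z_0(0)^2-1)/(z_0(0)\overline w_m(0)^2)$, which is negative, not positive, when $z_0(0)<1$. That negative sign is what rules out a minimum at $\eta=0$.
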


\begin{proof}
Since \(\left.\partial_\tau w^s\right|_{\tau=0}\leq0\), the equation at
\(\tau=0\) shows that \(w_0^s\) is a supersolution of the steady problem
\eqref{eq:steady-self-similar-m}.  The one-dimensional maximum principle
therefore gives \(w_0^s\geq\overline w_m\).
The difference $\widetilde w$ satisfies
\begin{equation*}
\left\{
\begin{aligned}
 &(w^s)^2\partial_{\eta\eta}\widetilde w
 +m(\eta^2-1)\partial_\eta\widetilde w
 +\left[(w^s+\overline w_m)\overline w_m''
 -\frac{3m-1}{2}\eta\right]\widetilde w -[1+(m-1)\tau\eta]\partial_\tau\widetilde w=0,\\
 &\widetilde w|_{\tau=0} \geq0,
 \qquad \widetilde w|_{\eta=1}=0, \qquad \left.\left(\partial_\eta\widetilde w
 -\frac{m}{w^s\overline w_m}\widetilde w\right)\right|_{\eta=0}=0.
\end{aligned}
\right.
\end{equation*}
Note that the two coefficients $(w^s+\overline w_m)\overline w_m''-\frac{3m-1}{2}\eta$ and $-\frac{m}{w^s\overline w_m}$ are strictly negative for $m\geq 1$.
The maximum principle gives $\widetilde w\geq0$, that is $w^s\geq \overline w_m$.

Let $D_1>1$ and $\psi_1=D_1 \overline w_m$.
Direct substitution and the steady equation give
\[
\psi_1^2\psi_1''+m(\eta^2-1)\psi_1'
-\frac{3m-1}{2}\eta\psi_1
=D_1(D_1^2-1)\overline w_m^2\overline w_m''\leq0.
\]
Choose $D_1>1$ sufficiently large that $w_0^s\leq D_1\overline w_m$. 
By \eqref{eq:initial-data}, it may be chosen depending only on $E_1$.  
Set $\Psi_1=w^s-\psi_1$.  
It satisfies
\[
\left\{
\begin{aligned}
 &(w^s)^2\partial_{\eta\eta}\Psi_1
 +m(\eta^2-1)\partial_\eta\Psi_1
 +\left[(w^s+\psi_1)\psi_1''-\frac{3m-1}{2}\eta\right]\Psi_1
 -[1+(m-1)\tau\eta]\partial_\tau\Psi_1\geq0,\\
 &\Psi_1|_{\tau=0}=w_0^s-D_1\overline w_m\leq0,
 \qquad \Psi_1 |_{\eta=1} =0,\\
 &\partial_\eta\Psi_1 |_{\eta=0}
 =\left.\left(-\frac{m}{w^s}+\frac{D_1m}{\overline w_m}\right)\right|_{\eta=0}
 \geq \left. \frac{m\Psi_1} {D_1w^s\overline w_m}\right|_{\eta=0}.
\end{aligned}
\right.
\]
The zero-order coefficient is strictly negative.  
The maximum principle gives $\Psi_1\leq0$, and therefore
\begin{equation*}
\overline w_m\leq w^s\leq D_1\overline w_m.
\end{equation*}
\end{proof}

\begin{proposition}
\label{prop:time-monotonicity}
Assume that $|w^s\partial_{\eta\eta}w^s(\tau,\eta)|\leq M(\tau)<+\infty$.
Then
\[
 \partial_\tau w^s\leq0
\]
for all $(\tau,\eta)\in [0,+\infty)\times [0,1)$.
\end{proposition}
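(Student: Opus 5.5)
The plan is to differentiate the equation \eqref{eq:semi-self-similar-crocco-m} in $\tau$ and apply the maximum principle to $q:=\partial_\tau w^s$. Since $q|_{\tau=0}\le0$ by \eqref{eq:initial-data}, it suffices to show that $q$ satisfies a linear parabolic problem whose zero-order coefficients are bounded (thanks to the hypothesis $|w^s\partial_{\eta\eta}w^s|\le M(\tau)$) and whose boundary conditions are of the correct sign.

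\emph{Derivation of the equation for $q$.} Differentiating in $\tau$ gives
\begin{equation*}
[1+(m-1)\tau\eta]\partial_\tau q=(w^s)^2\partial_{\eta\eta}q+m(\eta^2-1)\partial_\eta q+\Bigl[2w^s\partial_{\eta\eta}w^s-\frac{3m-1}{2}\eta-(m-1)\eta\Bigr]q .
\end{equation*}
By assumption, the bracket is bounded above by $2M(\tau)$ on each finite interval $[0,T]$.

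\emph{Boundary conditions.} At $\eta=1$ we have $w^s(\tau,1)=0$ for all $\tau$, so $q(\tau,1)=0$. At $\eta=0$, differentiating $w^s\partial_\eta w^s+m=0$ in $\tau$ gives $q\,\partial_\eta w^s+w^s\partial_\eta q=0$. Since $\partial_\eta w^s=-m/w^s$ there, this becomes
\begin{equation*}
\partial_\eta q=\frac{m}{(w^s)^2}\,q\quad\text{at }\eta=0.
\end{equation*}
This is a Robin condition of the favorable sign: at a positive boundary maximum one would need $\partial_\eta q\le0$, but the condition forces $\partial_\eta q>0$.

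\emph{Maximum principle.} Fix $T$ and set $\tilde q=e^{-\lambda\tau}q$ with $\lambda>2M(T)$ (after dividing by the positive time coefficient, which is at least $1$ for $m\ge1$). Then $\tilde q$ satisfies the same type of equation with a strictly negative zero-order coefficient. Suppose $\tilde q$ attains a positive maximum on $[0,T]\times[0,1]$.
\begin{itemize}
\item[(i)] It cannot lie at $\tau=0$, since $q(0,\cdot)\le0$.
\item[(ii)] It cannot lie at $\eta=1$, where $q=0$.
\item[(iii)] It cannot lie at $\eta=0$, because there $\partial_\eta\tilde q>0$, so the value is not a maximum in $\eta$.
\item[(iv)] It cannot lie at an interior point: there $\partial_\eta\tilde q=0$, $\partial_{\eta\eta}\tilde q\le0$ and $\partial_\tau\tilde q\ge0$, which contradicts the negative zero-order term.
\end{itemize}
Hence $q\le0$ on $[0,T]\times[0,1)$, and since $T$ is arbitrary, $\partial_\tau w^s\le0$ everywhere.

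\emph{Main obstacle.} The difficulty is regularity and degeneracy. The equation for $q$ requires $\partial_\tau w^s\in C^{1,2}$, which is more than the class $C^{1,2}_{\rm par}$ provides. Moreover, the diffusion $(w^s)^2$ vanishes at $\eta=1$, so one must check that $q$ is continuous up to $\eta=1$ and that no maximum escapes there.

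To handle regularity, I would work with difference quotients $q_h=(w^s(\tau+h,\cdot)-w^s(\tau,\cdot))/h$. These satisfy a linear equation whose coefficients involve the averaged quantity $w^s(\tau+h)+w^s(\tau)$ times $\partial_{\eta\eta}w^s$. The zero-order coefficient is again bounded, since by Proposition~\ref{prop:estimate-ws} $w^s(\tau+h)$ and $w^s(\tau)$ are comparable, so the bound $M$ controls it. Running the argument above for $q_h$ and letting $h\to0$ gives the claim.

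For the edge $\eta=1$, use $|q_h|\le 2D_1\overline w_m/h\to0$ as $\eta\to1$ (again from Proposition~\ref{prop:estimate-ws}). This shows that $q_h$ vanishes continuously at the degenerate boundary, so the maximum principle applies on $[0,T]\times[0,1]$ without needing a boundary condition from the degenerate equation.
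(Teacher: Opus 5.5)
Your core argument is the paper's proof. You differentiate \eqref{eq:semi-self-similar-crocco-m} in $\tau$ and note three facts about $q=\partial_\tau w^s$: its zero-order coefficient $2w^s\partial_{\eta\eta}w^s-\frac{5m-3}{2}\eta$ is bounded above by $2M$; it satisfies $q=0$ at $\eta=1$; and it satisfies the favorable Robin condition $\partial_\eta q=\frac{m}{(w^s)^2}q$ at $\eta=0$. The maximum principle then gives $q\le0$ from $q|_{\tau=0}\le0$. The paper carries out this differentiation only formally and says nothing about regularity, so at that level your proposal agrees with it and is complete.

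The difference-quotient step you add to handle regularity does not work as written. Put $w_1=w^s(\tau+h,\cdot)$, $w_2=w^s(\tau,\cdot)$ and $q_h=(w_1-w_2)/h$. Because the time coefficient $1+(m-1)\tau\eta$ depends on $\tau$, subtracting the two equations gives
\begin{multline*}
\bigl[1+(m-1)(\tau+h)\eta\bigr]\partial_\tau q_h
=w_1^2\,\partial_{\eta\eta}q_h+m(\eta^2-1)\partial_\eta q_h\\
+\Bigl[(w_1+w_2)\partial_{\eta\eta}w_2-\frac{3m-1}{2}\eta\Bigr]q_h
-(m-1)\eta\,\partial_\tau w^s(\tau,\eta).
\end{multline*}
For $m>1$ the last term is not a multiple of $q_h$ and has no known sign. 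So $q_h$ does not satisfy the closed homogeneous problem you claim; the reduced equation has no time-translation symmetry when $m>1$. Separately, step (i) applied to $q_h$ needs $q_h(0,\cdot)=(w^s(h,\cdot)-w_0^s)/h\le0$. This is not supplied by \eqref{eq:initial-data}; it is part of what you are proving, and the issue arises even for $m=1$.

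Both points can be repaired with a perturbative version of your argument. Set $\omega(h)=\sup_{[0,T]\times[0,1)}|q_h-\partial_\tau w^s|$. It tends to $0$ for two reasons:
\begin{itemize}
\item $\partial_\tau w^s$ is uniformly continuous on compact sets away from $\eta=1$.
\item Near $\eta=1$ you have $|\partial_\tau w^s|\le C_T\overline w_m$. Indeed, the hypothesis and Proposition~\ref{prop:estimate-ws} give $|\partial_{\eta\eta}w^s|\le C/((1-\eta)\sigma_m)$, hence $|\partial_\eta w^s|\le C\sigma_m$, and then the equation bounds $\partial_\tau w^s$.
\end{itemize}
Write the extra term as $-(m-1)\eta\,q_h$ plus an error of size at most $(m-1)\omega(h)$, and use $q_h(0,\cdot)\le\partial_\tau w^s(0,\cdot)+\omega(h)\le\omega(h)$. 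Comparison with $K\omega(h)e^{\lambda\tau}$, with $K\ge2$ and $\lambda$ large, is compatible with the Robin condition $\partial_\eta q_h=\frac{m}{w_1w_2}q_h$ at $\eta=0$. It yields $q_h\le K\omega(h)e^{\lambda\tau}$ on $[0,T]\times[0,1)$, and letting $h\to0$ gives $\partial_\tau w^s\le0$.
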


\begin{proof}
Differentiating \eqref{eq:semi-self-similar-crocco-m} in $\tau$ gives
\begin{equation*}
\left\{
\begin{aligned}
 &\left[(w^s)^2\partial_{\eta\eta}
 +m(\eta^2-1)\partial_\eta
 +\left(2w^s\partial_{\eta\eta}w^s-\frac{5m-3}{2}\eta\right)
 -[1+(m-1)\tau\eta]\partial_{\tau}\right](\partial_\tau w^s)=0,\\
 &\partial_\tau w^s |_{\eta=1}=0,\qquad
 \left.\left(\partial_{\tau\eta}w^s
 -\frac{m}{(w^s)^2}\partial_\tau w^s\right)\right|_{\eta=0}=0,\\
 &\left.\partial_\tau w^s\right|_{\tau=0}=(w_0^s)^2\frac{d^2w_0^s}{d\eta^2}(\eta)+m (\eta^2-1)\frac{dw_0^s}{d\eta}(\eta)-\frac{3m-1}{2}\eta w^s_0(\eta)\leq 0.
\end{aligned}
\right.
\end{equation*}
The maximum principle gives $\partial_\tau w^s\leq0$.
\end{proof}

\begin{proposition}
\label{prop:weighted-first-derivative}
Assume that $|w^s\partial_{\eta\eta}w^s(\tau,\eta)|\leq M(\tau)<+\infty$.
Then there exists a constant $D_2>0$, depending only on $m,E_1,E_2$, such that
\[
 D_2e^{2m\tau}\overline w_m'(\eta)
 \leq\partial_\eta w^s(\tau,\eta)
 \leq-D_2\overline w_m'(\eta)
\]
for all $(\tau,\eta)\in [0,+\infty)\times [0,1)$.
\end{proposition}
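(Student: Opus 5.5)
The plan is to split the two-sided bound into a one-sided \emph{upper} estimate that follows from time monotonicity alone, and a \emph{lower} estimate obtained by a weighted barrier for the differentiated equation. Throughout I use Proposition~\ref{prop:estimate-ws} ($\overline w_m\leq w^s\leq D_1\overline w_m$), Proposition~\ref{prop:time-monotonicity} ($\partial_\tau w^s\leq0$), and the steady profile bounds of Lemma~\ref{self-similar-properties}. The standing assumption $|w^s\partial_{\eta\eta}w^s|\leq M(\tau)$ is used only qualitatively, to justify the maximum principle on each finite time interval.

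\emph{Upper bound.} Since $\partial_\tau w^s\leq0$, the equation gives, for each fixed $\tau$, the spatial inequality
\[
 (w^s)^2\partial_{\eta\eta}w^s\leq m(1-\eta^2)\partial_\eta w^s+\tfrac{3m-1}{2}\eta w^s .
\]
This prevents $\partial_\eta w^s$ from becoming large and positive. At an interior point where $\partial_\eta w^s$ is positive and large, the right-hand side is controlled by $\partial_\eta w^s$ itself, so $\partial_\eta w^s$ can decrease only slowly. Integrating backwards from such a point toward $\eta=1$ then forces $w^s$ to stay above a linear function, which contradicts $w^s\leq D_1\overline w_m$ together with $w^s(\tau,1)=0$. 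Concretely, I would compare $w^s$ with $D_1\overline w_m$ on $[\eta,1]$ and use the profile bounds for $\overline w_m$, $\overline w_m'$ from Lemma~\ref{self-similar-properties} to convert this into $\partial_\eta w^s\leq -D_2\overline w_m'$. At $\eta=0$ the bound is immediate, because $\partial_\eta w^s(\tau,0)=-m/w^s(\tau,0)<0$. This argument gives a $\tau$-independent constant, consistent with the statement.

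\emph{Lower bound.} Set $q=\partial_\eta w^s$. Differentiating \eqref{eq:semi-self-similar-crocco-m} in $\eta$ gives
\[
 [1+(m-1)\tau\eta]\partial_\tau q=(w^s)^2\partial_{\eta\eta}q+m(\eta^2-1)\partial_\eta q+\Bigl(2w^s\partial_{\eta\eta}w^s+\tfrac{m+1}{2}\eta\Bigr)q-\tfrac{3m-1}{2}w^s-(m-1)\tau\partial_\tau w^s .
\]
The last source term is nonnegative by Proposition~\ref{prop:time-monotonicity}, so it helps the lower bound. The term $-\frac{3m-1}{2}w^s$ is bounded below by $-\frac{3m-1}{2}D_1\overline w_m$. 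I would take the barrier $\phi=D_2e^{2m\tau}\overline w_m'$ and show that $q-\phi$ satisfies a differential inequality to which the minimum principle applies.

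\begin{itemize}
\item The steady equation differentiated in $\eta$ gives $\overline w_m^2\overline w_m'''$ in terms of lower-order quantities of $\overline w_m$.
\item Using $w^s\geq\overline w_m$ and the sign of $\overline w_m''$, the diffusion term acting on $\phi$ can be compared with the steady one.
\item The time derivative of the barrier produces $-2m[1+(m-1)\tau\eta]\,|\phi|$. Since $1+(m-1)\tau\eta\geq1$, this dominates the bounded positive part $\frac{m+1}{2}\eta\leq m$ of the zero-order coefficient.
\item The remaining part $2w^s\partial_{\eta\eta}w^s$ is handled through the relation $2w^s\partial_{\eta\eta}w^s\,q=2(w^s)^{-1}q\,\bigl([\dots]\partial_\tau w^s-m(\eta^2-1)q+\frac{3m-1}{2}\eta w^s\bigr)$, which is read off from the equation.
\end{itemize}

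The boundary conditions are then checked as follows. At $\eta=0$, $q=-m/w^s\geq -m/\overline w_m(0)=\overline w_m'(0)\geq\phi$ once $D_2\geq1$. Near $\eta=1$, both $q$ and $\phi$ are compared through the ratio $q/\overline w_m'$, whose boundedness near the edge is guaranteed on finite time intervals by the qualitative assumption. At $\tau=0$ the ordering holds by choosing $D_2\geq E_2$.

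\emph{Main obstacle.} The hardest step is the edge $\eta=1$, where $\overline w_m'\sim-\sqrt{m+1}\,\sigma_m$ blows up logarithmically and the diffusion degenerates. The comparison must be run for the ratio $Q=q/(-\overline w_m')$, or on truncated domains $[0,1-\varepsilon]$ with $\varepsilon\to0$. The key point is to verify, using the asymptotics \eqref{eq:steady-edge-curvature-family}, that the extra first-order and zero-order terms generated by dividing by $-\overline w_m'$ are bounded, so that the $e^{2m\tau}$ factor absorbs them. I would try the ratio formulation first, and fall back to an exhaustion argument with a small additional barrier term $\varepsilon(1-\eta)^{-1/2}$ if the ratio coefficients turn out not to be bounded.
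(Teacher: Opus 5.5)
Your split is the paper's: an upper bound from $\partial_\tau w^s\leq0$ at fixed $\tau$, and a lower bound from a barrier growing like $e^{2m\tau}$. Neither half goes through as you describe it.

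\emph{Upper bound.} The inequality $(w^s)^2\partial_{\eta\eta}w^s\leq m(1-\eta^2)\partial_\eta w^s+\frac{3m-1}{2}\eta w^s$ bounds $\partial_{\eta\eta}w^s$ only from above. So it cannot stop $\partial_\eta w^s$ from dropping quickly as $\eta$ increases, and nothing forces $w^s$ to stay above a rising line on $[\eta,1]$. The contradiction with $w^s(\tau,1)=0$ is therefore unavailable, and comparing $w^s$ with $D_1\overline w_m$ on $[\eta,1]$ gives no pointwise derivative information. An upper bound on $\partial_{\eta\eta}w^s$ propagates upper bounds on $\partial_\eta w^s$ only in the direction of increasing $\eta$, starting from the wall value $-m/w^s(\tau,0)<0$. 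Even then, a linear Gronwall argument loses the weight, because $\exp\int_0^\eta m(1-\zeta^2)(w^s)^{-2}\,d\zeta$ grows like a power of $\sigma_m$ that in general exceeds the first. The missing idea is the paper's logarithmic derivative $Z=(1-\eta^2)\partial_\eta w^s/w^s$. Differentiating $1/w^s$ produces a Riccati term, and the inequality becomes $(1-\eta^2)Z'\leq-Z^2+C|Z|+C$ with $Z(0)<0$. Hence $Z$ never crosses a large constant level, and $Z\leq D_2$ is exactly $\partial_\eta w^s\leq C\sigma_m\leq-C\overline w_m'$. Your picture could be salvaged only by running it toward the wall on windows $[2\eta_1-1,\eta_1]$, where the Gronwall factor is bounded, and contradicting $w^s(\eta_1)\leq D_1\overline w_m(\eta_1)$ with $w^s>0$; that is a different argument from the one you wrote.

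\emph{Lower bound.} With $q=\partial_\eta w^s$ and $\phi=D_2e^{2m\tau}\overline w_m'$, the factor $e^{2m\tau}$ does not absorb the error terms, for two reasons.
\begin{itemize}
\item The diffusion of the barrier is $D_2e^{2m\tau}(w^s)^2\overline w_m'''$. The differentiated steady equation gives $\overline w_m^2\overline w_m'''\approx-\frac{m+1}{2}|\overline w_m'|$ near the edge, so this term is about $-(w^s/\overline w_m)^2\frac{m+1}{2}|\phi|$. That is up to $D_1^2\frac{m+1}{2}|\phi|$, which is not covered by the margin $(2m-\frac{m+1}{2})|\phi|$. Note also that $w^s\geq\overline w_m$ gives this comparison in the wrong direction.
\item After your substitution for $2w^s\partial_{\eta\eta}w^s$, the linear zero-order part becomes $\frac{7m-1}{2}\eta$, which exceeds $2m$ near $\eta=1$.
\end{itemize}
A $q$-based proof can close only through the term $2m(1-\eta^2)q^2/w^s\geq0$ that the substitution produces. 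At a first contact point $q=\phi$ it is at least of order $D_2^2e^{4m\tau}\sigma_m/D_1$, which beats the $O(D_2)$ errors, and there $2[1+(m-1)\tau\eta]\partial_\tau w^s\,q/w^s\geq0$. You never invoke this term.

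The edge is also unresolved. The qualitative hypothesis gives only $|q|\leq C(\tau)\sigma_m$ with $C(\tau)$ depending on $M(\tau)$. This orders $q$ and $\phi$ near $\eta=1$ only if $D_2$ depends on $M(\tau)$, so an auxiliary term such as $\varepsilon e^{K\tau}\sigma_m^2$ is needed. Its $(w^s)^2\partial_{\eta\eta}$ is comparable to itself, unlike that of $(1-\eta)^{-1/2}$.

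The paper avoids all of this by working with $z=w^s\partial_\eta w^s$.
\begin{itemize}
\item In its equation the coefficient $w^s\partial_{\eta\eta}w^s$ cancels identically.
\item It has Dirichlet data $z=-m$ at $\eta=0$ and $z=0$ at $\eta=1$.
\item The barrier $\psi_2=D_2e^{2m\tau}(1-\eta)\sigma_m^2$ satisfies $\partial_{\eta\eta}\psi_2=-D_2e^{2m\tau}/(1-\eta)$. Its diffusion therefore contributes $-(w^s)^2D_2e^{2m\tau}/(1-\eta)\leq-M_1^2\psi_2$, which only improves as $w^s$ grows.
\item The rate $2m$ is exactly what absorbs $m(1+\eta)(1-\sigma_m^{-2})\psi_2$.
\end{itemize}
This yields $\partial_\eta w^s\geq-\psi_2/w^s\geq-(D_2/M_1)e^{2m\tau}\sigma_m$.
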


\begin{proof}
For the lower bound, apply
\eqref{eq:semi-self-similar-crocco-m} to
$w^s\partial_\eta w^s$ and use the same equation to eliminate
$\partial_\tau w^s$.  This gives
\begin{equation*}
\begin{aligned}
 &(w^s)^2\partial_{\eta\eta}(w^s\partial_\eta w^s)
 +\left[m(\eta^2-1)
 -\frac{(m-1)\tau(w^s)^2}{1+(m-1)\tau\eta}\right]
 \partial_\eta(w^s\partial_\eta w^s)\\
 &\quad-[1+(m-1)\tau\eta]
 \partial_\tau(w^s\partial_\eta w^s)-\left[(m-1)\eta
 +\frac{m(m-1)\tau(\eta^2-1)}{1+(m-1)\tau\eta}\right]
 w^s\partial_\eta w^s\\
 &=\frac{3m-1}{2[1+(m-1)\tau\eta]}(w^s)^2
 -\frac{(m-1)\tau}{1+(m-1)\tau\eta}
 (w^s\partial_\eta w^s)^2.
\end{aligned}
\end{equation*}
The corresponding boundary conditions are
\[
 (w^s\partial_\eta w^s)|_{\eta=0}=-m,
 \qquad (w^s\partial_\eta w^s)|_{\eta=1}=0.
\]
The initial assumptions also give
\[
 |w_0^s(w_0^s)'|\leq C(1-\eta)\sigma_m^2(\eta).
\]

Let
\[
 \psi_2=D_2e^{2m\tau}(1-\eta)\sigma_m^2(\eta),
 \qquad
 \Psi_2=w^s\partial_\eta w^s+\psi_2,
\]
where $D_2>0$ will be fixed below.  Substitution of
$\Psi_2$ into the equation for
$w^s\partial_\eta w^s$ gives
\begin{align*}
  &\Bigg[(w^s)^2\partial_{\eta\eta}
 +\left(m(\eta^2-1)
 -\frac{(m-1)\tau(w^s)^2}{1+(m-1)\tau\eta}\right)\partial_\eta
 -[1+(m-1)\tau\eta]\partial_\tau\\
&\qquad -(m-1)\eta
 -\frac{(m-1)\tau}{1+(m-1)\tau\eta}
 \left[m(\eta^2-1)+2\psi_2\right]
 \Bigg]\Psi_2\\
=&\Big[(w^s)^2\partial_{\eta\eta}
 +m(\eta^2-1)\partial_\eta
 -[1+(m-1)\tau\eta]\partial_\tau-(m-1)\eta\Big]\psi_2
+\frac{3m-1}{2[1+(m-1)\tau\eta]}(w^s)^2\\
&-\frac{(m-1)\tau}{1+(m-1)\tau\eta}
 \left[(w^s)^2\partial_\eta\psi_2
 +m(\eta^2-1)\psi_2+\psi_2^2\right]
-\frac{(m-1)\tau}{1+(m-1)\tau\eta}\Psi_2^2.
\end{align*}
The terms on the right-hand side are estimated separately. 
\begin{align*}
&\Big[(w^s)^2\partial_{\eta\eta}
 +m(\eta^2-1)\partial_\eta
 -[1+(m-1)\tau\eta]\partial_\tau-(m-1)\eta\Big]\psi_2+\frac{3m-1}{2[1+(m-1)\tau\eta]}(w^s)^2\\
=&-\frac{D_2e^{2m\tau}}{1-\eta}(w^s)^2
 +mD_2e^{2m\tau}(\eta^2-1)(1-\sigma_m^2)\\
&\qquad-2m[1+(m-1)\tau\eta]\psi_2-(m-1)\eta\psi_2
 +\frac{3m-1}{2[1+(m-1)\tau\eta]}(w^s)^2\\
\leq&\left[-M_1^2
 +m(1+\eta)\left(1-\frac1{\sigma_m^2}\right)
 -2m[1+(m-1)\tau\eta]-(m-1)\eta
 +\frac{(3m-1)D_1^2M_2^2}{2D_2}\right]\psi_2\\
 \leq &-\frac{M_1^2}{2}\psi_2<0
\end{align*}
for sufficiently large $D_2$.  
Moreover, for sufficiently large $D_2$,
\begin{align*}
 &(w^s)^2\partial_\eta\psi_2
 +m(\eta^2-1)\psi_2+\psi_2^2\\
=&\psi_2\left[
 (w^s)^2\frac{1-\sigma_m^2}
 {(1-\eta)\sigma_m^2}
 +m(\eta^2-1)+\psi_2\right]\\
\geq & \psi_2(1-\eta)\sigma_m^2
 \bigl(D_2e^{2m\tau}-C\bigr)\geq0.
\end{align*}
 Hence the entire right-hand side is nonpositive.
Increasing $D_2$ once more,
\begin{align*}
 2\psi_2+m(\eta^2-1)&=(1-\eta)\bigl[2D_2e^{2m\tau}\sigma_m^2-m(1+\eta)\bigr]\\
 &\geq2(1-\eta)\bigl[D_2\sigma_m^2(0)-m\bigr]\geq0.
\end{align*}
Then the zero-order coefficient on the left-hand side of that identity is nonpositive.
For the same choice of $D_2$, the initial and boundary values satisfy
\begin{equation*}
\left\{
\begin{aligned}
&\Psi_2|_{\tau=0}
 =w_0^s(w_0^s)'+D_2(1-\eta)\sigma_m^2(\eta)\geq0,\\
&\Psi_2|_{\eta=1}=0,
 \qquad
 \Psi_2|_{\eta=0}
 =-m+D_2 e^{2m\tau}\sigma_m^2(0)\geq0.
\end{aligned}
\right.
\end{equation*}
The maximum principle gives $-\psi_2\leq w^s\partial_\eta w^s$.  Hence
\[
 \partial_\eta w^s
 \geq-\frac{D_2}{M_1}e^{2m\tau}\sigma_m
 \geq\frac{D_2}{M_1M_4}e^{2m\tau}\overline w_m'.
\]
After relabeling $D_2$,
\begin{equation}\label{eq:weighted-gradient-lower-m}
 \partial_\eta w^s
 \geq D_2e^{2m\tau}\overline w_m'.
\end{equation}

For the upper bound, fix $\tau$ and set
{
\[
 Z(\eta)=(1-\eta^2)\frac{\partial_\eta w^s}{w^s}.
\]
A direct differentiation, followed by the use of
\eqref{eq:semi-self-similar-crocco-m}, gives
\begin{align*}
(1-\eta^2)Z'=&-Z^2
+\left[-2\eta+\frac{m(1-\eta^2)^2}{(w^s)^2}\right]
Z\\
&+\frac{(1-\eta^2)^2}{(w^s)^2}
\left[\frac{3m-1}{2}\eta
+\frac{[1+(m-1)\tau\eta]\partial_\tau w^s}{w^s}\right].
\end{align*}
By Proposition~\ref{prop:estimate-ws},
\eqref{eq:steady-self-similar-properties-m}, and
Proposition~\ref{prop:time-monotonicity},
\[
 \frac{3m-1}{2}\eta
 +\frac{[1+(m-1)\tau\eta]\partial_\tau w^s}{w^s}
 \leq\frac{3m-1}{2}\eta.
\]
Consequently,
\begin{equation*}
 (1-\eta^2)Z'
 \leq -Z^2
 +\left(2+\frac{4m}{M_1^2\sigma_m^2(0)}\right)|Z|
 +\frac{2(3m-1)}{M_1^2\sigma_m^2(0)}.
\end{equation*}
At the wall,
\[
Z(0)=\left.\frac{\partial_\eta w^s}{w^s}\right|_{\eta=0}
 =-\frac{m}{w^s(\tau,0)^2}<0,
\]
and its absolute value is uniformly bounded by Proposition~\ref{prop:estimate-ws}.  
Choose $D_2$ so large that the right-hand side of the differential inequality for $Z$ is negative for $Z\geq D_2$ and that $Z(0)<D_2$.
The function $Z$ cannot cross the level $D_2$ from below: at a first crossing one would have $Z'\geq0$, whereas $Z'<0$. 
Thus $Z\leq D_2$.
After increasing $D_2$ by a factor depending only on $m,E_1$, the profile comparison gives
\begin{equation}\label{eq:weighted-gradient-upper-m}
 \partial_\eta w^s \leq-D_2\overline w_m'
\end{equation}
}
on $[0,+\infty)\times[0,1)$.  Increasing $D_2$ once more to combine
\eqref{eq:weighted-gradient-lower-m} and
\eqref{eq:weighted-gradient-upper-m} gives the asserted two-sided estimate.
\end{proof}

\begin{proposition}
\label{prop:time-derivative-bound}

Assume that $|w^s\partial_{\eta\eta}w^s(\tau,\eta)|\leq M(\tau)<+\infty$.
Then there exists a constant $D_3>0$, depending only on $m,E_1,E_2,E_3$, such that
\[
 -D_3w^s\leq\partial_\tau w^s\leq0
\]
for all $(\tau,\eta)\in [0,+\infty)\times [0,1)$.
\end{proposition}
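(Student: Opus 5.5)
The upper bound $\partial_\tau w^s\leq0$ is Proposition~\ref{prop:time-monotonicity}, so only the lower bound $\partial_\tau w^s\geq-D_3w^s$ needs proof. I would use a comparison argument for $\partial_\tau w^s$ with a barrier proportional to $w^s$ itself, since $w^s$ solves a closely related linear problem. Write $q=\partial_\tau w^s$ and denote by
\[
 \mathcal L_\tau=(w^s)^2\partial_{\eta\eta}+m(\eta^2-1)\partial_\eta-[1+(m-1)\tau\eta]\partial_\tau .
\]
From the proof of Proposition~\ref{prop:time-monotonicity}, $q$ satisfies
\[
 \mathcal L_\tau q+\Bigl(2w^s\partial_{\eta\eta}w^s-\tfrac{5m-3}{2}\eta\Bigr)q=0 .
\]
The boundary data are $q|_{\eta=1}=0$ and $\partial_\eta q=\frac{m}{(w^s)^2}q$ at $\eta=0$. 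Take $\Psi_3=q+D_3w^s$. The plan is to show that $\Psi_3$ is a supersolution of this same linear operator, so that the maximum principle gives $\Psi_3\geq0$.

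To compute the operator on $w^s$, use the equation $\mathcal L_\tau w^s=\frac{3m-1}{2}\eta w^s$, which follows from \eqref{eq:semi-self-similar-crocco-m}. This gives
\[
 \mathcal L_\tau w^s+\Bigl(2w^s\partial_{\eta\eta}w^s-\tfrac{5m-3}{2}\eta\Bigr)w^s
 =2(w^s)^2\partial_{\eta\eta}w^s-(m-1)\eta w^s .
\]
Now substitute $(w^s)^2\partial_{\eta\eta}w^s=[1+(m-1)\tau\eta]q-m(\eta^2-1)\partial_\eta w^s+\frac{3m-1}{2}\eta w^s$. With this, the operator applied to $\Psi_3$ equals
\[
 D_3\Bigl(2[1+(m-1)\tau\eta]q+2m(1-\eta^2)\partial_\eta w^s+2m\eta w^s\Bigr).
\]
Since $q\leq0$, a clean sign is not available directly. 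Instead I would work with the quotient $Q=q/w^s$, or equivalently apply the maximum principle at a first negative minimum of $\Psi_3$.

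At such a point $q=-D_3w^s$. The first term then becomes $-2D_3[1+(m-1)\tau\eta]w^s$, which is bad. A cleaner route is to take the quotient $Q=q/w^s$ directly. Dividing the equation, $Q$ satisfies a linear parabolic equation whose first-order drift is modified by $2(w^s)^2\partial_\eta w^s/w^s$. Its zero-order term is
\[
 \frac{\mathcal L_\tau w^s}{w^s}+2w^s\partial_{\eta\eta}w^s-\tfrac{5m-3}{2}\eta
 =2w^s\partial_{\eta\eta}w^s-(m-1)\eta ,
\]
and this is where a bound on $w^s\partial_{\eta\eta}w^s$ enters. I would rewrite $w^s\partial_{\eta\eta}w^s=\bigl([1+(m-1)\tau\eta]Q\,w^s-m(\eta^2-1)\partial_\eta w^s+\tfrac{3m-1}{2}\eta w^s\bigr)/w^s$. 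This gives an equation for $Q$ with a quadratic term $2[1+(m-1)\tau\eta]Q^2$, which has a favorable sign for a lower bound: the reaction is $+2[\cdots]Q^2-\ldots$. The remaining term $2m(1-\eta^2)\partial_\eta w^s/w^s$ is bounded by Propositions~\ref{prop:estimate-ws} and~\ref{prop:weighted-first-derivative} together with Lemma~\ref{self-similar-properties}. Indeed, $(1-\eta)\sigma_m/((1-\eta)\sigma_m)$ is bounded, and the upper gradient bound $\partial_\eta w^s\leq -D_2\overline w_m'$ is uniform in time, which is the direction needed here. The lower gradient bound carries the factor $e^{2m\tau}$, so I must arrange that only the upper bound is used. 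That is the main delicate point.

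With this structure, $\underline Q=-D_3$ is a subsolution for $D_3$ large. The initial value is handled by \eqref{eq:initial-data}: $|q(0)|\leq C\overline w_m\leq Cw^s$ by the weighted bounds on $w_0^s$ and $\overline w_m|\overline w_m''|\leq M_5$. At $\eta=1$, $Q$ must be handled by the limit behavior. The wall condition transforms into $\partial_\eta Q=Q\bigl(\frac{m}{(w^s)^2}-\frac{\partial_\eta w^s}{w^s}\bigr)=\frac{2m}{(w^s)^2}Q$ at $\eta=0$, which has the correct Robin sign.

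The main obstacle is justifying the maximum principle near the degenerate edge $\eta=1$ for the quotient. I would use the a priori hypothesis $|w^s\partial_{\eta\eta}w^s|\leq M(\tau)$ to guarantee that $Q$ is bounded on finite time intervals. This allows a standard argument with a small penalization $\varepsilon e^{\lambda\tau}$ to exclude minima approaching $\eta=1$.
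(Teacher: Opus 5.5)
Your analytic core is the paper's. You compare $\partial_\tau w^s$ with $-D_3w^s$ and eliminate $\partial_{\eta\eta}w^s$ through the equation, so that the quadratic term $\frac{2[1+(m-1)\tau\eta]}{w^s}(\partial_\tau w^s)^2$ appears with a favorable sign. You also use only the time-uniform upper bound $\partial_\eta w^s\le -D_2\overline w_m'$ to control $2m(1-\eta^2)\partial_\eta w^s/w^s$. Your equation for $Q=\partial_\tau w^s/w^s$ is exactly the paper's equation for $\Psi_3=\partial_\tau w^s+D_3w^s=w^s(Q+D_3)$ divided by $w^s$. Two points need correcting, however.

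\emph{First, a sign error.} You abandoned the direct route because of a sign misreading. At a first zero of $\Psi_3$ one has $\partial_\tau w^s=-D_3w^s$, so the term $2D_3[1+(m-1)\tau\eta]\partial_\tau w^s$ becomes $-2D_3^2[1+(m-1)\tau\eta]w^s$. This is negative, which is what a supersolution needs. It dominates $2mD_3[(1-\eta^2)\partial_\eta w^s+\eta w^s]\le 2mD_3(1+2M_3D_2/M_1)w^s$ once $D_3$ is large. This is precisely how the paper closes. It writes the same identity as a linear inequality for $\Psi_3$ whose right-hand side is nonpositive and whose zero-order coefficient, $-2m(\eta^2-1)\partial_\eta w^s/w^s+\frac{m+1}{2}\eta-4D_3[1+(m-1)\tau\eta]$, is nonpositive.

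\emph{Second, the edge step fails as written.} $Q$ is not known to be continuous up to $\eta=1$, so a minimizing sequence for $Q$ may escape to $\eta\to1$. A penalization $\varepsilon e^{\lambda\tau}$ that is constant in $\eta$ does nothing to prevent this. There are two repairs.
\begin{itemize}
\item Use a penalization that blows up at the edge, e.g.\ $Q+\varepsilon e^{\lambda\tau}\sigma_m(\eta)$. This is admissible because $|(w^s)^2\sigma_m''|+|(2w^s\partial_\eta w^s+m(\eta^2-1))\sigma_m'|\le C_T\sigma_m$ on $[0,T]$. You then take $\lambda$ large and $\varepsilon$ small enough to preserve the Robin sign at $\eta=0$.
\item More simply, multiply back by $w^s$. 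The hypothesis $|w^s\partial_{\eta\eta}w^s|\le M(\tau)$ and the finite-time gradient bounds give $|\partial_\tau w^s|\le C_Tw^s\to0$ uniformly as $\eta\to1$. Hence $\Psi_3$ is continuous on $[0,T]\times[0,1]$ and vanishes at $\eta=1$. A negative minimum must then occur where the equation is nondegenerate, or at the wall. The wall is excluded by $\bigl(\partial_\eta\Psi_3-\frac{m}{(w^s)^2}\Psi_3\bigr)|_{\eta=0}=-2mD_3/w^s<0$.
\end{itemize}
The second repair is the paper's route, and it removes the ``main obstacle'' you identified.
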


\begin{proof}
Using \eqref{eq:semi-self-similar-crocco-m} to eliminate
$\partial_{\eta\eta}w^s$ from the equation for $\partial_\tau w^s$ in the
proof of Proposition~\ref{prop:time-monotonicity} gives
\begin{equation*}
\left\{
\begin{aligned}
&\Bigg[(w^s)^2\partial_{\eta\eta}
+m(\eta^2-1)\partial_\eta
-[1+(m-1)\tau\eta]\partial_\tau
\Bigg](\partial_\tau w^s)\\
&\qquad
 +\left[-2m(\eta^2-1)\frac{\partial_\eta w^s}{w^s}
 +\frac{m+1}{2}\eta\right]\partial_\tau w^s
=-\frac{2[1+(m-1)\tau\eta]}{w^s}
 \left(\partial_\tau w^s\right)^2\leq0,\\
&\partial_\tau w^s|_{\eta=1}=0,\qquad
\left.\left(\partial_{\tau\eta}w^s
-\frac{m}{(w^s)^2}\partial_\tau w^s\right)\right|_{\eta=0}=0,\\
& \left. \left|\partial_\tau w^s\right| \right|_{\tau=0} \leq C(1-\eta)\sigma_m(\eta).
\end{aligned}
\right.
\end{equation*}

 Set $\Psi_3=\partial_\tau w^s+D_3w^s$.  Substitution into the displayed
 equation for $\partial_\tau w^s$, together with
 \eqref{eq:semi-self-similar-crocco-m}, gives
 \begin{align*}
 &\Big[(w^s)^2\partial_{\eta\eta}
 +m(\eta^2-1)\partial_\eta
 -[1+(m-1)\tau\eta]\partial_\tau\Big]\Psi_3\\
 &\qquad+\left[-2m(\eta^2-1)\frac{\partial_\eta w^s}{w^s}
 +\frac{m+1}{2}\eta
 -4D_3[1+(m-1)\tau\eta]\right]\Psi_3\\
 =&-\frac{2[1+(m-1)\tau\eta]}{w^s}\Psi_3^2
 +2mD_3\big[(1-\eta^2)\partial_\eta w^s+\eta w^s\big]-2D_3^2[1+(m-1)\tau\eta]w^s\\
\leq & 2 D_3 w^s\left[m\left(1+\frac{2M_3D_2}{M_1}\right)-D_3\right]\leq 0
 \end{align*}
after increasing $D_3$.  
The same choice makes the zero-order coefficient on the left-hand side of that identity nonpositive.
Increasing $D_3$ once more, the initial data give $\Psi_3|_{\tau=0}\geq0$.  Moreover,
 \[
 \left\{
 \begin{aligned}
  &\Psi_3|_{\eta=1}=0,\\
  &\left.\left(\partial_\eta\Psi_3
  -\frac{m}{(w^s)^2}\Psi_3\right)\right|_{\eta=0}
  =-\frac{2mD_3}{w^s(\tau,0)}<0.
 \end{aligned}
 \right.
 \]
 The maximum principle gives $\Psi_3\geq0$.  Together with
 Proposition~\ref{prop:time-monotonicity}, this proves
 \[
  -D_3w^s\leq\partial_\tau w^s\leq0.
 \]
\end{proof}

\subsection*{Proof of the well-posedness assertion in Theorem~\ref{thm:ws-regularity}}
\begin{proof}
Set
\[
 \mathcal D=2 \left(D_3+\frac{2mM_3D_2}{M_1} +\frac{3m-1}{2}\right)
\]
and assume that 
\begin{equation*}
 \sup_{(\tau,\eta)\in[0,T]\times[0,1)}
 \frac{e^{-2m\tau}|w^s\partial_{\eta\eta}w^s(\tau,\eta)|}
 {1+(m-1)\tau}
 \leq\mathcal D.
\end{equation*}
Equation~\eqref{eq:semi-self-similar-crocco-m} gives
\begin{align*}
 w^s\partial_{\eta\eta}w^s
 ={}&[1+(m-1)\tau\eta]\frac{\partial_\tau w^s}{w^s}
 -m(\eta^2-1)\frac{\partial_\eta w^s}{w^s}
 +\frac{3m-1}{2}\eta.
\end{align*}
Propositions~\ref{prop:estimate-ws},~\ref{prop:weighted-first-derivative}, and~\ref{prop:time-derivative-bound} give
\[
 |w^s\partial_{\eta\eta}w^s(\tau,\eta)|
 \leq D_3 [1+(m-1)\tau]
 +\frac{2mM_3D_2}{M_1}e^{2m\tau}+\frac{3m-1}{2}
 \leq\frac{\mathcal D}{2} [1+(m-1)\tau]e^{2m\tau}
\]
for every $(\tau,\eta)\in[0,T]\times[0,1)$.  Hence
\[
 \sup_{(\tau,\eta)\in[0,T]\times[0,1)}
 \frac{e^{-2m\tau}|w^s\partial_{\eta\eta}w^s(\tau,\eta)|}
 {1+(m-1)\tau}
 \leq\frac{\mathcal D}{2}.
\]
{
Local existence and continuation are obtained by adapting the
argument of Oleinik--Samokhin~\cite[Section~4.4]{OS}, rather than by directly
invoking a theorem stated there.  Their regularization and compactness scheme,
together with the maximum-principle estimates, applies to
\eqref{eq:semi-self-similar-crocco-m} after replacing the coefficients and the
boundary operator by the present ones.  Positivity of \(w^s\), the wall
compatibility condition, and the weighted bounds above provide the uniform
estimates needed in this adaptation.  Continuity and the strict bootstrap
improvement then show that the bound cannot fail at a finite time.  The
corresponding continuation argument therefore yields a solution
\[
 w^s\in C^{1,2}_{\text{par}}([0,+\infty)\times[0,1))
\]
of \eqref{eq:semi-self-similar-crocco-m}.
}

It remains to recover the estimates in \eqref{eq:ws-estimates-all}.  The
profile and first-derivative estimates follow from
Propositions~\ref{prop:estimate-ws} and~
\ref{prop:weighted-first-derivative}.  Proposition~\ref{prop:time-derivative-bound}
and $w^s\leq D_1\overline w_m$ give
\[
 -D_1D_3\overline w_m(\eta)
 \leq\partial_\tau w^s(\tau,\eta)\leq0.
\]
Moreover, the estimate for $w^s\partial_{\eta\eta}w^s$ obtained from
Propositions~\ref{prop:estimate-ws},~\ref{prop:weighted-first-derivative},
and~\ref{prop:time-derivative-bound}, together with
$w^s\geq\overline w_m$ and
$-\overline w_m\overline w_m''\geq M_6$, gives
\[
 |\partial_{\eta\eta}w^s(\tau,\eta)|
 \leq-\frac{\mathcal D}{2M_6}
 [1+(m-1)\tau]e^{2m\tau}
 \overline w_m''(\eta).
\]
Thus \eqref{eq:ws-estimates-all} follows after enlarging $C$.

It remains to prove uniqueness in the positive classical class satisfying the
preceding weighted bounds.
If $w_1^s,w_2^s$ are two solutions, their difference satisfies
\begin{align*}
[1+(m-1)\tau\eta]\partial_\tau(w_1^s-w_2^s)
={}&(w_1^s)^2\partial_{\eta\eta}(w_1^s-w_2^s)
+m(\eta^2-1)\partial_\eta(w_1^s-w_2^s)\\
&+\left[(w_1^s+w_2^s)\partial_{\eta\eta}w_2^s
-\frac{3m-1}{2}\eta\right](w_1^s-w_2^s),
\end{align*}
with
\[
\left.\left[\partial_\eta(w_1^s-w_2^s)-\frac{m(w_1^s-w_2^s)}{w_1^sw_2^s}\right]\right|_{\eta=0}=0,
\qquad
  (w_1^s-w_2^s)|_{\eta=1}=0.
\]
The preceding estimates imply that the positive part of
\[
 (w_1^s+w_2^s)\partial_{\eta\eta}w_2^s
 -\frac{3m-1}{2}\eta
\]
is bounded on $[0,T]\times[0,1)$. 
Choosing $\Lambda>0$ sufficiently large and applying the maximum principle to $e^{-\Lambda\tau}(w_1^s-w_2^s)$ and $e^{-\Lambda\tau}(w_2^s-w_1^s)$ gives $w_1^s=w_2^s$ on $[0,T]$.

Thus the solution is global and unique in the positive classical class, which proves the global well-posedness assertion in Theorem~\ref{thm:ws-regularity}.
\end{proof}

\subsection{Large-time behavior}

The global semi-similar solution converges to the steady Falkner--Skan profile as time tends to infinity.

\begin{proposition}
\label{prop:zero-order-convergence}
Let $w^s$ be the global solution of \eqref{eq:semi-self-similar-crocco-m} whose initial data satisfy \eqref{eq:initial-data}. 
Then there exist positive constants $D_4=D_4(m,E_1)$ and $\beta_m=\beta_m(m)$ such that
\[
 \left|w^s(\tau,\eta)-\overline w_m(\eta)\right| \leq D_4\rho_m(\tau)\overline w_m(\eta)
\]
for all $(\tau,\eta)\in[0,+\infty)\times[0,1)$.
\end{proposition}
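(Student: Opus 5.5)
Since $\widetilde w=w^s-\overline w_m\geq0$ by Proposition~\ref{prop:estimate-ws}, only the upper bound $\widetilde w\leq D_4\rho_m(\tau)\overline w_m$ remains. The plan is a comparison argument for the linear problem satisfied by $\widetilde w$ (written in the proof of Proposition~\ref{prop:estimate-ws}), using a supersolution of the form
\[
 \psi_4(\tau,\eta)=A(\tau)\,\overline w_m(\eta)^{1+\kappa}
 \quad\text{or}\quad
 \psi_4(\tau,\eta)=A(\tau)\,\overline w_m(\eta),
\]
with $A(\tau)$ decaying like $\rho_m$. The key point is that the zero-order coefficient
\[
 c(\tau,\eta)=(w^s+\overline w_m)\overline w_m''-\tfrac{3m-1}{2}\eta
\]
satisfies $c\leq 2\overline w_m\overline w_m''\leq-2M_6$, because $w^s\geq\overline w_m$ and $\overline w_m''<0$ (Lemma~\ref{self-similar-properties}). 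This gives a uniform negative damping.

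First I would compute $L\psi$ for $\psi=A(\tau)\overline w_m$. Here
\[
 L=(w^s)^2\partial_{\eta\eta}+m(\eta^2-1)\partial_\eta+c-[1+(m-1)\tau\eta]\partial_\tau .
\]
Using the steady equation to replace $m(\eta^2-1)\overline w_m'$, one obtains
\[
 L\psi=A\Bigl[\bigl((w^s)^2-\overline w_m^2\bigr)\overline w_m''+\tfrac{3m-1}{2}\eta\overline w_m+c\,\overline w_m\Bigr]-[1+(m-1)\tau\eta]A'\overline w_m .
\]
The bracket equals $\bigl((w^s)^2-\overline w_m^2\bigr)\overline w_m''+(w^s+\overline w_m)\overline w_m\overline w_m''$. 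This is $(w^s+\overline w_m)w^s\overline w_m''\leq 2\overline w_m^2\overline w_m''\leq -2M_6\overline w_m$. Hence
\[
 L\psi\leq\overline w_m\bigl[-2M_6A-[1+(m-1)\tau\eta]A'\bigr].
\]
This is $\leq0$ as soon as $-A'\leq 2M_6A/[1+(m-1)\tau]$, using $\eta\leq1$ and $A'\leq0$.
\begin{itemize}
\item For $m=1$, take $A=D_4e^{-2M_6\tau}$, which gives $\beta_1=2M_6$.
\item For $m>1$, take $A=D_4(1+(m-1)\tau)^{-2M_6/(m-1)}$, which is comparable to $(1+\tau)^{-\beta_m}$ with $\beta_m=2M_6/(m-1)$.
\end{itemize}
The initial condition $\widetilde w(0)\leq(E_1-1)\overline w_m\leq\psi(0)$ fixes $D_4$ depending on $E_1$, and the constant in $A$ depends only on $m$.

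Next comes the boundary comparison. At $\eta=1$ both $\widetilde w$ and $\psi$ vanish. At $\eta=0$, $\widetilde w$ satisfies $\partial_\eta\widetilde w=\frac{m}{w^s\overline w_m}\widetilde w$. For $\psi$ we have
\[
 \partial_\eta\psi=A\overline w_m'(0)=-\frac{mA}{\overline w_m(0)},
\]
so
\[
 \partial_\eta\psi-\frac{m}{w^s\overline w_m}\psi=-mA\Bigl(\frac1{\overline w_m}+\frac1{w^s}\Bigr)<0 .
\]
This is the correct sign for a supersolution against the Robin-type condition, whose coefficient has a favorable sign. Setting $\Psi=\widetilde w-\psi$, we get $L\Psi\geq0$, $\Psi(0)\leq0$, $\Psi(\tau,1)=0$, and $\partial_\eta\Psi-\frac{m}{w^s\overline w_m}\Psi\geq0$ at $\eta=0$ (because $\Psi\leq0$ near a positive maximum forces the right sign). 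The maximum principle, applied as in Proposition~\ref{prop:estimate-ws}, then yields $\Psi\leq0$.

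The main obstacle is justifying the maximum principle at the degenerate edge $\eta=1$, where the diffusion $(w^s)^2$ vanishes. I would handle it as the paper already does, using $\Psi\in C([0,\infty)\times[0,1])$ with $\Psi=0$ at $\eta=1$ and the strictly negative zero-order coefficient $c\leq-2M_6$. A positive interior maximum then contradicts $L\Psi\geq0$. If $\Psi$ had a positive maximum at $\eta=0$, the Robin condition would give $\partial_\eta\Psi>0$ there, which is impossible at a maximum.

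A secondary point is verifying that $c\leq -2M_6$ holds uniformly up to $\eta=1$. It does: $\overline w_m\overline w_m''\leq-M_6$ holds on all of $[0,1)$ by \eqref{eq:steady-self-similar-properties-m}, and $w^s\overline w_m''\leq\overline w_m\overline w_m''$ because $w^s\geq\overline w_m$.
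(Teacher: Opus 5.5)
Your proposal is correct and is essentially the paper's proof. The paper also compares $\widetilde w$ with $\psi_4=D_4\rho_m(\tau)\overline w_m(\eta)$, reduces the spatial part via the steady equation to $w^s(w^s+\overline w_m)\overline w_m''\leq 2\overline w_m^2\overline w_m''\leq-2M_6\overline w_m$, and checks the same Robin sign at $\eta=0$. The only difference is cosmetic: the paper takes $\beta_m$ small so that $-[1+(m-1)\tau\eta]\rho_m'/\rho_m\leq M_6$ leaves a margin, whereas you saturate the damping exactly with $-[1+(m-1)\tau]A'=2M_6A$ and then compare $(1+(m-1)\tau)^{-\beta}$ with $(1+\tau)^{-\beta}$ at an $m$-dependent cost.
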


\begin{proof}
For $\widetilde w=w^s-\overline w_m$, one has
\begin{equation*}
\left\{
\begin{aligned}
 &(w^s)^2\partial_{\eta\eta}\widetilde w
 +m(\eta^2-1)\partial_\eta\widetilde w
 +\left[(w^s+\overline w_m)\overline w_m''
 -\frac{3m-1}{2}\eta\right]\widetilde w
 -[1+(m-1)\tau\eta]\partial_\tau\widetilde w=0,\\
 &\widetilde w|_{\eta=1}=0,\qquad
 \left.\left(\partial_\eta\widetilde w
 -\frac{m}{w^s\overline w_m}\widetilde w\right)\right|_{\eta=0}=0,\\
 &0\leq\widetilde w |_{\tau=0}\leq D_4\overline w_m(\eta),
\end{aligned}
\right.
\end{equation*}
where $D_4\geq E_1-1$.
Let $\psi_4=D_4\rho_m(\tau)\overline w_m(\eta)$ and choose $\beta_m>0$ so small that
\[
 0\leq-[1+(m-1)\tau\eta]
 \frac{\partial_\tau\rho_m(\tau)}{\rho_m(\tau)}\leq M_6.
\]
A direct calculation gives
\begin{align*}
 &(w^s)^2\partial_{\eta\eta}\psi_4
 +m(\eta^2-1)\partial_\eta\psi_4
 +\left[(w^s+\overline w_m)\overline w_m''
       -\frac{3m-1}{2}\eta\right]\psi_4
       -[1+(m-1)\tau\eta]\partial_\tau\psi_4\nonumber\\
 =&D_4\rho_m\left[
 w^s(w^s+\overline w_m)\overline w_m''
 -[1+(m-1)\tau\eta]
  \frac{\partial_\tau\rho_m}{\rho_m}\overline w_m
 \right]\\
\leq & \psi_4\left[2\overline w_m\overline w_m''-[1+(m-1)\tau\eta]
  \frac{\partial_\tau\rho_m}{\rho_m}\right]\\
\leq & \psi_4 (-2M_6+M_6) \leq 0.
\end{align*}
Set $\Psi_4=\widetilde w-\psi_4$.  Then
\begin{equation*}
\left\{
\begin{aligned}
 &(w^s)^2\partial_{\eta\eta}\Psi_4
 +m(\eta^2-1)\partial_\eta\Psi_4
 +\left[(w^s+\overline w_m)\overline w_m''
 -\frac{3m-1}{2}\eta\right]\Psi_4
 -[1+(m-1)\tau\eta]\partial_\tau\Psi_4\geq0,\\
 &\Psi_4|_{\eta=1}=0,\qquad \Psi_4 |_{\tau=0} \leq 0,\\
 &\left.\left(\partial_\eta\Psi_4
 -\frac{m}{w^s\overline w_m}\Psi_4\right)\right|_{\eta=0}
 =\left.D_4m\rho_m(\tau)
 \left(\frac1{w^s}+\frac1{\overline w_m}\right)\right|_{\eta=0}>0.\\
\end{aligned}
\right.
\end{equation*}
The maximum principle gives $\Psi_4\leq0$, that is $\widetilde w \leq \psi_4$. 
Combining this with $w^s\geq\overline w_m$ gives
\begin{equation}\label{eq:m-zero-decay}
\left|\widetilde w(\tau,\eta)\right| \leq D_4\rho_m(\tau)\overline w_m(\eta).
\end{equation}
\end{proof}

The following one-sided derivative estimate is used to obtain eventual strict concavity.

\begin{proposition}[One-sided gradient estimate]
\label{prop:riccati-gradient}
Let $w^s$ be the global solution of \eqref{eq:semi-self-similar-crocco-m} whose initial data satisfy \eqref{eq:initial-data}. 
Then there exist positive constants $D_5=D_5(m,E_1)$ and $\beta_m=\beta_m(m)$ such that
\begin{equation}\label{eq:m-riccati-gradient}
 \partial_\eta\bigl(w^s(\tau,\eta)-\overline w_m(\eta)\bigr)
 \leq-D_5\rho_m(\tau)\overline w_m'(\eta)
\end{equation}
for all $(\tau,\eta)\in[0,+\infty)\times[0,1)$.
\end{proposition}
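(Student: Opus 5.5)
We follow the route sketched in the introduction: pass to the logarithmic derivative $P=\partial_\eta\widetilde w/\widetilde w$ of $\widetilde w=w^s-\overline w_m\geq0$ and exploit the sign $\partial_\tau\widetilde w=\partial_\tau w^s\leq0$ given by Proposition~\ref{prop:time-monotonicity}. Where $\widetilde w=0$ at an interior point, $\partial_\eta\widetilde w=0$ there (interior minimum of a nonnegative function), so the estimate is trivial. By the strong maximum principle applied to the linear equation for $\widetilde w$ in the proof of Proposition~\ref{prop:zero-order-convergence}, either $\widetilde w\equiv0$ on a time slice or $\widetilde w>0$ on $[0,1)$. We may therefore work on $\{\widetilde w>0\}$. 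Dividing the $\widetilde w$-equation by $\widetilde w$ gives the Riccati identity displayed in the introduction,
\[
 (w^s)^2(\partial_\eta P+P^2)+m(\eta^2-1)P+(w^s+\overline w_m)\overline w_m''-\frac{3m-1}{2}\eta=[1+(m-1)\tau\eta]\frac{\partial_\tau\widetilde w}{\widetilde w}\leq0 .
\]
This is a pure ODE inequality in $\eta$ for each fixed $\tau$.

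The first step is to bound $P$ from above by $K/(1-\eta)$ with $K=K(m,E_1)$. We set $Z=(1-\eta)P$ and compute $(1-\eta)Z'=Z+(1-\eta)^2P'$. Using the inequality above, we get
\[
(1-\eta)Z'\leq Z-Z^2+\frac{(1-\eta)^2}{(w^s)^2}\Big[m(1-\eta^2)P-(w^s+\overline w_m)\overline w_m''+\tfrac{3m-1}{2}\eta\Big].
\]
By Proposition~\ref{prop:estimate-ws} and Lemma~\ref{self-similar-properties}, $(1-\eta)^2/(w^s)^2\leq C/\sigma_m^2\leq C$. Also $-(w^s+\overline w_m)\overline w_m''\leq (1+D_1)M_5$, and the term $m(1-\eta^2)P(1-\eta)^2/(w^s)^2$ is $\leq C|Z|/\sigma_m^2$. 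Hence $(1-\eta)Z'\leq -Z^2+C_1|Z|+C_2$. At $\eta=0$ the boundary condition gives $P(\tau,0)=m/(w^s\overline w_m)(\tau,0)\leq m/\overline w_m(0)^2$, which is bounded. The same first-crossing argument used for $Z$ in Proposition~\ref{prop:weighted-first-derivative} then shows $Z\leq K$ for $K$ larger than the positive root and $Z(0)$. This yields $\partial_\eta\widetilde w\leq K\widetilde w/(1-\eta)$.

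The second step combines this with Proposition~\ref{prop:zero-order-convergence}:
\[
\partial_\eta\widetilde w\leq\frac{K}{1-\eta}D_4\rho_m\overline w_m\leq KD_4M_2\rho_m\sigma_m\leq\frac{KD_4M_2}{M_4}\rho_m(-\overline w_m').
\]
Here we use the bounds $\overline w_m\leq M_2(1-\eta)\sigma_m$ and $-\overline w_m'\geq M_4\sigma_m$ from \eqref{eq:steady-self-similar-properties-m}. Setting $D_5=KD_4M_2/M_4$ gives the claim.

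The main obstacle is making the Riccati step rigorous near the degenerate edge and at points where $\widetilde w$ vanishes. Near $\eta=1$, $Z$ has no boundary value to control, but the argument only needs a forward first-crossing from $\eta=0$, so this is harmless. Where $\widetilde w$ vanishes, we first try the dichotomy from the strong maximum principle described above. If interior zeros on a time slice cannot be excluded cleanly, we instead replace $\widetilde w$ by $\widetilde w+\epsilon\overline w_m$. This is legitimate because $\overline w_m$ itself gives a nonpositive right-hand side: its Riccati residual involves $(w^s+\overline w_m)\overline w_m''$, which has a favorable sign. We then run the same argument and let $\epsilon\to0$.
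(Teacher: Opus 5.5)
Your proposal is correct and is essentially the paper's argument. The paper derives the same spatial inequality from $\partial_\tau w^s\leq0$ and proves the same intermediate bound $\partial_\eta\widetilde w\leq\frac{K}{1-\eta}\widetilde w$ from the same wall value and a large $K=K(m,E_1)$; it then combines this with Proposition~\ref{prop:zero-order-convergence} exactly as you do. The only difference is that the paper works with the linear quantity $Y=\partial_\eta\widetilde w-\frac{K}{1-\eta}\widetilde w$ and an integrating factor, where $Y\leq0$ is your condition $Z\leq K$, so it never divides by $\widetilde w$ and needs neither the strong maximum principle nor your $\widetilde w+\epsilon\overline w_m$ regularization. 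There is also a harmless sign slip: $(1-\eta)Z'=-Z+(1-\eta)^2P'$, not $+Z$, which does not affect the bound $-Z^2+C_1|Z|+C_2$.
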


\begin{proof}
Propositions~\ref{prop:estimate-ws} and~\ref{prop:time-monotonicity} give
$\widetilde w\geq0$ and $\partial_\tau\widetilde w=\partial_\tau w^s\leq0$.
Subtracting the
steady equation from \eqref{eq:semi-self-similar-crocco-m} gives
\[
 [1+(m-1)\tau\eta]\partial_\tau\widetilde w=(w^s)^2\partial_{\eta\eta}\widetilde w
 +m(\eta^2-1)\partial_\eta\widetilde w
 +\left[(w^s+\overline w_m)\overline w_m''
 -\frac{3m-1}{2}\eta\right]\widetilde w.
\]
Consequently,
\begin{equation}\label{eq:riccati-spatial-inequality}
 (w^s)^2\partial_{\eta\eta}\widetilde w
 +m(\eta^2-1)\partial_\eta\widetilde w
 +\left[(w^s+\overline w_m)\overline w_m''
 -\frac{3m-1}{2}\eta\right]\widetilde w\leq0.
\end{equation}

Choose $K>0$ below and put
\[
 Y=\partial_\eta\widetilde w-\frac{K}{1-\eta}\widetilde w.
\]
Then
\[
 \partial_{\eta\eta}\widetilde w
 =\partial_\eta Y+\frac{K}{1-\eta}Y
 +\frac{K(K+1)}{(1-\eta)^2}\widetilde w.
\]
Substitution in \eqref{eq:riccati-spatial-inequality} gives
\begin{align}
 &(w^s)^2\partial_\eta Y
 +\left[\frac{K(w^s)^2}{1-\eta}
       +m(\eta^2-1)\right]Y\nonumber\\
 &\quad+\left[
 \frac{K(K+1)(w^s)^2}{(1-\eta)^2}
 -mK(1+\eta)
 +(w^s+\overline w_m)\overline w_m''
 -\frac{3m-1}{2}\eta\right]\widetilde w\leq0.
 \label{eq:riccati-Y-inequality}
\end{align}
By Proposition~\ref{prop:estimate-ws} and
\eqref{eq:steady-self-similar-properties-m},
\[
 \frac{(w^s)^2}{(1-\eta)^2}
 \geq M_1^2\sigma_m(0)^2,
 \qquad
 \left|(w^s+\overline w_m)\overline w_m''\right|
 \leq(D_1+1)M_5.
\]
Choose $K$, depending only on $m$ and $E_1$, so large that
\[
 M_1^2\sigma_m(0)^2K(K+1)-2mK-(D_1+1)M_5
 -\frac{3m-1}{2}>0.
\]
Then the coefficient of $\widetilde w$ in
\eqref{eq:riccati-Y-inequality} is strictly
positive for every $\tau\geq0$ and $0\leq\eta<1$.

The wall conditions give
\[
 \partial_\eta\widetilde w(\tau,0)
 =\frac{m}{w^s(\tau,0)\overline w_m(0)}\widetilde w(\tau,0).
\]
Since $w^s(\tau,0)\geq\overline w_m(0)$, increasing $K$ if necessary so
that $K\geq m/\overline w_m(0)^2$ yields $Y(\tau,0)\leq0$.
Because $\widetilde w\geq0$, \eqref{eq:riccati-Y-inequality} now implies
\[
 \partial_\eta Y+
 \left[\frac{K}{1-\eta}
 +\frac{m(\eta^2-1)}{(w^s)^2}\right]Y\leq0.
\]
Fix $\tau\geq0$ and $0<r<1$.  Multiplication by the positive integrating
factor
\[
 \exp\left\{\int_0^\eta
 \left[\frac{K}{1-s}
 +\frac{m(s^2-1)}{(w^s(\tau,s))^2}\right]\,ds\right\}
\]
shows that $Y(\tau,\eta)\leq0$ for $0\leq\eta\leq r$.  Since $r<1$ is
arbitrary,
\[
 \partial_\eta\widetilde w(\tau,\eta)
 \leq\frac{K}{1-\eta}\widetilde w(\tau,\eta),
 \qquad 0\leq\eta<1.
\]
Equations~\eqref{eq:m-zero-decay} and
\eqref{eq:steady-self-similar-properties-m} give
\[
 \partial_\eta\widetilde w(\tau,\eta)
 \leq\frac{KD_4\rho_m(\tau)}{1-\eta}\overline w_m(\eta)
 \leq-\frac{KD_4M_2}{M_4}\rho_m(\tau)\overline w_m'(\eta).
\]
Increasing $D_5$ proves \eqref{eq:m-riccati-gradient}.
\end{proof}

{
\begin{proposition}[Dynamically generated strict concavity]
\label{prop:dynamical-strict-concavity}
Let $w^s$ be the global solution of
\eqref{eq:semi-self-similar-crocco-m} whose initial data satisfy
\eqref{eq:initial-data}.  There exist
$\tau_0=\tau_0(m,E_1,E_2,E_3)>0$ and $C_*=C_*(m)>0$ such that
\begin{equation}
 w^s(\tau,\eta)\partial_{\eta\eta}w^s(\tau,\eta)
 \leq-C_*,
 \qquad (\tau,\eta)\in[\tau_0,+\infty)\times[0,1).
 \label{eq:m-strict-concavity}
\end{equation}
In particular, strict concavity is not assumed at the initial time; it is
produced by the time-monotone evolution after a finite transient.
\end{proposition}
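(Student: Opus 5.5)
Write $w^s\partial_{\eta\eta}w^s = (w^s)^2\partial_{\eta\eta}w^s/w^s$ and use equation \eqref{eq:semi-self-similar-crocco-m} to express it through lower-order quantities:
\[
 w^s\partial_{\eta\eta}w^s
 =[1+(m-1)\tau\eta]\frac{\partial_\tau w^s}{w^s}
 +m(1-\eta^2)\frac{\partial_\eta w^s}{w^s}
 +\frac{3m-1}{2}\eta .
\]
By Proposition~\ref{prop:time-monotonicity} the first term is nonpositive and can simply be dropped. This is exactly where time monotonicity is used, and it avoids needing any decay of $\partial_\tau w^s$ at this stage. It therefore suffices to show that the remaining two terms are bounded by $-C_*$ once $\tau\ge\tau_0$.

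Compare these terms with the stationary identity
\[
 \overline w_m\overline w_m''=m(1-\eta^2)\frac{\overline w_m'}{\overline w_m}+\frac{3m-1}{2}\eta\le -M_6
\]
from Lemma~\ref{self-similar-properties}. The difference of the first-order terms is
\[
 m(1-\eta^2)\left[\frac{\partial_\eta w^s}{w^s}-\frac{\overline w_m'}{\overline w_m}\right]
 = m(1-\eta^2)\left[\frac{\partial_\eta \widetilde w}{w^s}-\frac{\overline w_m'\,\widetilde w}{w^s\overline w_m}\right].
\]
The second piece is handled by three facts: $\overline w_m'<0$, $\widetilde w\ge0$, and $w^s,\overline w_m>0$. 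Together they give $-\overline w_m'\widetilde w/(w^s\overline w_m)\ge0$, which has the wrong sign for us. So instead we bound its size. Proposition~\ref{prop:zero-order-convergence} gives $\widetilde w\le D_4\rho_m\overline w_m$, hence
\[
 (1-\eta^2)\frac{|\overline w_m'|\,\widetilde w}{w^s\overline w_m}\le D_4\rho_m(1-\eta^2)\frac{M_3\sigma_m}{M_1(1-\eta)\sigma_m}\le 2D_4M_3M_1^{-1}\rho_m(\tau).
\]
For the first piece only an upper bound is needed, since a negative value helps. This is precisely what the one-sided Riccati estimate supplies. Proposition~\ref{prop:riccati-gradient} gives
\[
 m(1-\eta^2)\frac{\partial_\eta\widetilde w}{w^s}\le mD_5\rho_m(1-\eta^2)\frac{M_3\sigma_m}{M_1(1-\eta)\sigma_m}\le 2mD_5M_3M_1^{-1}\rho_m(\tau),
\]
using $w^s\ge\overline w_m\ge M_1(1-\eta)\sigma_m$.

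Combining these bounds gives
\[
 w^s\partial_{\eta\eta}w^s\le -M_6+C(m,E_1)\rho_m(\tau).
\]
Since $\rho_m(\tau)\to0$, choose $\tau_0$ with $C\rho_m(\tau_0)\le M_6/2$, and set $C_*=M_6/2$. This $C_*$ depends only on $m$, as required.

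The main obstacle is the first-order term near the degenerate edge $\eta\to1$. There $\partial_\eta w^s$ and $w^s$ both vanish or degenerate, and a two-sided gradient bound with decay is not yet available; Proposition~\ref{prop:weighted-first-derivative} even allows growth like $e^{2m\tau}$. The key point to verify carefully is the sign bookkeeping: dropping $\partial_\tau w^s\le0$ and the one-sided gradient bound (rather than a two-sided one) are both in the favorable direction, and every ratio is uniform up to $\eta=1$ because the factor $(1-\eta^2)/((1-\eta)\sigma_m)\cdot\sigma_m$ stays bounded. The constant $\tau_0$ inherits its dependence only through $D_4,D_5$ and $\beta_m$.
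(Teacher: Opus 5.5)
Your proposal is correct and follows the paper's argument. You drop the $\partial_\tau w^s$ term by time monotonicity, compare with the steady equation, and control the deviation using the zero-order decay (Proposition~\ref{prop:zero-order-convergence}) and the one-sided Riccati gradient bound (Proposition~\ref{prop:riccati-gradient}). The only difference is that you divide by $w^s$ at the outset. As a result the $\frac{3m-1}{2}\eta$ terms cancel exactly, and you avoid the paper's final step $w^s\le 2\overline w_m$, obtaining $C_*=M_6/2$ instead of the paper's $M_6/4$.
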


\begin{proof}
Proposition~\ref{prop:time-monotonicity} and the equation for $w^s$ give
\begin{align*}
 (w^s)^2\partial_{\eta\eta}w^s
 &\leq m(1-\eta^2)\partial_\eta w^s
 +\frac{3m-1}{2}\eta w^s\\
 &=\overline w_m^2\overline w_m''
 +m(1-\eta^2)\partial_\eta\widetilde w
 +\frac{3m-1}{2}\eta\widetilde w\\
 &\leq\overline w_m^2\overline w_m''
 +C\rho_m(\tau)\overline w_m.
\end{align*}
Here the last inequality follows from
Propositions~\ref{prop:zero-order-convergence} and~
\ref{prop:riccati-gradient}.  Taking $\tau_0$ sufficiently large gives
\[
 (w^s)^2\partial_{\eta\eta}w^s
 \leq-\frac{M_6}{2}\overline w_m.
\]
After increasing $\tau_0$, Proposition~\ref{prop:zero-order-convergence}
also gives $w^s\leq2\overline w_m$.  Hence
\eqref{eq:m-strict-concavity} holds with $C_*=M_6/4$.
\end{proof}
}

\begin{proposition}
\label{prop:optimal-derivative-convergence}
Let $w^s$ be the global solution of \eqref{eq:semi-self-similar-crocco-m} whose initial data satisfy \eqref{eq:initial-data}. 
Then there exist positive constants $D_6=D_6(m,E_1,E_2,E_3)$ and $\beta_m=\beta_m(m)$ such that
\begin{align}\label{eq:m-optimal-derivative-convergence}
 &[1+(m-1)\tau]|\partial_\tau w^s(\tau,\eta)|
 \leq D_6\rho_m(\tau)\overline w_m(\eta),\nonumber\\
 &|\partial_\eta w^s(\tau,\eta)-\overline w_m'(\eta)|
 \leq-D_6\rho_m(\tau)\overline w_m'(\eta),\nonumber\\
 &|\partial_{\eta\eta}w^s(\tau,\eta)-\overline w_m''(\eta)|
 \leq-D_6\rho_m(\tau)\overline w_m''(\eta)
\end{align}
for all $(\tau,\eta)\in[0,+\infty)\times[0,1)$.
\end{proposition}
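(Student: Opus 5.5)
The proof splits at the time $\tau_0$ from Proposition~\ref{prop:dynamical-strict-concavity}. On $[0,\tau_0]$ all three estimates follow from \eqref{eq:ws-estimates-all}, since $\rho_m(\tau)\geq\rho_m(\tau_0)>0$ there, $e^{2m\tau}\leq e^{2m\tau_0}$, and $\overline w_m$, $-\overline w_m'$, $-\overline w_m''$ are comparable to the corresponding weights. It therefore suffices to work on $[\tau_0,\infty)$. There we estimate three quantities in order: the time derivative, then the first normal derivative, then the second, which is recovered from the equation.

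\textbf{Time derivative.} Set $V=[1+(m-1)\tau]\partial_\tau w^s$, which satisfies $V\leq0$ by Proposition~\ref{prop:time-monotonicity}. Starting from the differentiated equation in the proof of Proposition~\ref{prop:time-monotonicity}, I derive the equation for $V$. The zero-order coefficient is
\[
 2w^s\partial_{\eta\eta}w^s-\frac{5m-3}{2}\eta+\frac{(m-1)[1+(m-1)\tau\eta]}{1+(m-1)\tau}.
\]
The last term is at most $m-1$ at $\eta=1$, so I need to check that it is dominated by $-2C_*$ from \eqref{eq:m-strict-concavity} together with $-\frac{5m-3}{2}\eta$. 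I expect this balance near $\eta=1$ to be the main obstacle. The first thing to try is the barrier $\psi=-D\rho_m(\tau)\overline w_m$, applied with the operator of Proposition~\ref{prop:zero-order-convergence}, whose zero-order term contains $(w^s+\overline w_m)\overline w_m''$. In that version I would write the $\tau$-derivative equation as $L(\partial_\tau w^s)=-2\partial_{\eta\eta}w^s\,\widetilde w\,\partial_\tau w^s$-type corrections, so that it is controlled by the same coercive term $2\overline w_m\overline w_m''\leq-2M_6$. The $(m-1)$ defect produced by the weight should be absorbed once $\tau_0$ is enlarged and $\beta_m$ is made small. If this fails, the fallback is a barrier $-D\rho_m(\tau)(1-\eta)^{\theta}\sigma_m$ with a small power correction. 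The boundary condition at $\eta=0$ has the favourable sign and $V=0$ at $\eta=1$, so the maximum principle gives $V\geq-D\rho_m\overline w_m$.

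\textbf{First normal derivative.} For $\partial_\eta\widetilde w$ the upper bound is already Proposition~\ref{prop:riccati-gradient}, so only the lower bound is needed. Write the subtracted equation as a first-order ODE in $\eta$ for $\partial_\eta\widetilde w$, with $\partial_{\eta\eta}\widetilde w$ as the unknown's derivative, exactly as in the integrating-factor argument with $Y$:
\[
 (w^s)^2\partial_\eta(\partial_\eta\widetilde w)+m(\eta^2-1)\partial_\eta\widetilde w
 =[1+(m-1)\tau\eta]\partial_\tau w^s-\Big[(w^s+\overline w_m)\overline w_m''-\tfrac{3m-1}{2}\eta\Big]\widetilde w .
\]
By the time-derivative bound and Proposition~\ref{prop:zero-order-convergence}, the right-hand side is $O(\rho_m\overline w_m)$. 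I then integrate from $\eta=1$ backward, using $\partial_\eta\widetilde w\to$ a finite weighted limit. Alternatively I integrate forward from $\eta=0$, where $\partial_\eta\widetilde w(\tau,0)=m\widetilde w/(w^s\overline w_m)=O(\rho_m)$. In either direction the integrating factor is $\exp(\int m(s^2-1)/(w^s)^2\,ds)$. Its comparability to the steady factor, which is computable from \eqref{eq:steady-edge-curvature-family}, gives $|\partial_\eta\widetilde w|\leq C\rho_m\sigma_m\leq -C\rho_m\overline w_m'$.

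\textbf{Second normal derivative.} Finally I solve the equation for $\partial_{\eta\eta}\widetilde w$:
\[
 (w^s)^2\partial_{\eta\eta}\widetilde w=[1+(m-1)\tau\eta]\partial_\tau w^s-m(\eta^2-1)\partial_\eta\widetilde w-\Big[(w^s+\overline w_m)\overline w_m''-\tfrac{3m-1}{2}\eta\Big]\widetilde w .
\]
Each term on the right is $O(\rho_m\overline w_m)$. Dividing by $(w^s)^2\geq\overline w_m^2$ and using $-\overline w_m\overline w_m''\geq M_6$ yields $|\partial_{\eta\eta}\widetilde w|\leq-C\rho_m\overline w_m''$.
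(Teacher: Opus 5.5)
Your outline matches the paper's. You split at $\tau_0$, prove the lower bound $V=[1+(m-1)\tau]\partial_\tau w^s\geq-D\rho_m\overline w_m$ (exactly the paper's barrier $\psi_6=D_6\rho_m\overline w_m/[1+(m-1)\tau]$), integrate $\partial_\eta(\varrho\,\partial_\eta\widetilde w)=AF$, and recover $\partial_{\eta\eta}\widetilde w$ from the equation. The last step is fine, but two steps do not close as you describe them.

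\textbf{Time derivative.} There is no obstacle at $\eta=1$: there the zero-order coefficient you wrote equals $2w^s\partial_{\eta\eta}w^s-\frac{3m-1}{2}<0$. You also should not route through the operator of Proposition~\ref{prop:zero-order-convergence}. Instead, apply the operator $\mathcal L_V$ of your own $V$-equation to $\psi=D\rho_m\overline w_m$, and use the steady equation
$(w^s)^2\overline w_m''+m(\eta^2-1)\overline w_m'=((w^s)^2-\overline w_m^2)\overline w_m''+\frac{3m-1}{2}\eta\overline w_m$.
Since $\frac{3m-1}{2}\eta-\frac{5m-3}{2}\eta+\frac{(m-1)[1+(m-1)\tau\eta]}{1+(m-1)\tau}=\frac{(m-1)(1-\eta)}{1+(m-1)\tau}$, this gives
\[
 \mathcal L_V\psi=D\rho_m\Big[\bigl((w^s)^2-\overline w_m^2\bigr)\overline w_m''+\Big(2w^s\partial_{\eta\eta}w^s+\frac{(m-1)(1-\eta)}{1+(m-1)\tau}-[1+(m-1)\tau\eta]\frac{\rho_m'}{\rho_m}\Big)\overline w_m\Big]\leq-C_*\psi
\]
for $\tau\geq\tau_0$ large and $\beta_m$ small. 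The only defect sits at the wall and decays in $\tau$, and the damping comes entirely from $2w^s\partial_{\eta\eta}w^s\leq-2C_*$.

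Your route via the operator of Proposition~\ref{prop:zero-order-convergence} instead produces the correction $-[2w^s\partial_{\eta\eta}\widetilde w+\widetilde w\,\overline w_m''-(m-1)\eta]\partial_\tau w^s$, not a $\widetilde w\,\partial_{\eta\eta}w^s$-type term. This cannot be treated as a perturbation, for two reasons. First, $\partial_{\eta\eta}\widetilde w$ is your third conclusion, which depends on this one, and the only available bound \eqref{eq:ws-estimates-all} grows like $[1+(m-1)\tau]e^{2m\tau}$. Second, its piece $-2w^s\overline w_m''>0$ cancels the steady coercivity $2\overline w_m\overline w_m''\leq-2M_6$ up to $O(\rho_m)$. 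That route therefore closes only after folding the correction back into the zero-order coefficient, which is the computation above. The fallback barrier $(1-\eta)^\theta\sigma_m$ with $\theta\neq1$ would not give the $\overline w_m$-weight in the statement.

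\textbf{First normal derivative.} The two directions of integration are not equivalent. For $\tau\geq\tau_0$, $\varrho$ behaves like $\sigma_m^{-p}$ near $\eta=1$ with $p$ close to $4m/(m+1)\geq2$. Forward integration from $\eta=0$ therefore only yields $|\varrho\,\partial_\eta\widetilde w|\leq C\rho_m$, i.e.\ $|\partial_\eta\widetilde w|\leq C\rho_m\sigma_m^{p}$, which loses at least one power of $\sigma_m$.

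You must integrate from $\eta=1$. This needs $\varrho\,\partial_\eta\widetilde w\to0$ (from Proposition~\ref{prop:weighted-first-derivative}) and
\[
 \int_\eta^1\frac{\varrho(\zeta)}{(1-\zeta)\sigma_m(\zeta)}\,d\zeta\leq C\varrho(\eta)\sigma_m(\eta),
\]
and both hold precisely because $p>1$. This is a second place where $\tau\geq\tau_0$ is used: you need $w^s$ close to $\overline w_m$ with the sharp edge constant $\sqrt{m+1}$. With only $w^s\leq D_1\overline w_m$, the guaranteed exponent is $4m/[D_1^2(m+1)]$, which may be below $1$. The paper packages this as $-\partial_\eta(\varrho\sigma_m)\geq c\,\varrho/[(1-\eta)\sigma_m]$ on $[\eta_*,1)$.

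Finally, $\varrho$ is not uniformly comparable to the steady integrating factor. The ratio is $\geq1$ because $w^s\geq\overline w_m$, but in general it is only bounded by $\sigma_m^{C\rho_m}$, which is unbounded as $\eta\to1$. So ``comparability'' should be replaced by the direct inequality above.
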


\begin{proof}
{
Let $\tau_0$ and $C_*$ be given by
Proposition~\ref{prop:dynamical-strict-concavity}.  We use
\eqref{eq:m-strict-concavity} to obtain the decay of the time derivative.
}

Differentiating the equation for \(w^s\) with respect to \(\tau\) gives
\begin{equation*}
\left\{
\begin{aligned}
 &\left[(w^s)^2\partial_{\eta\eta}
 +m(\eta^2-1)\partial_\eta
 +\left(2w^s\partial_{\eta\eta}w^s-\frac{5m-3}{2}\eta\right)
 -[1+(m-1)\tau\eta]\partial_{\tau}\right](\partial_\tau w^s)=0,\\
 &\partial_\tau w^s |_{\eta=1}=0,\qquad
 \left.\left(\partial_{\tau\eta}w^s
 -\frac{m}{(w^s)^2}\partial_\tau w^s\right)\right|_{\eta=0}=0,\\
 &\left.\partial_\tau w^s\right|_{\tau=0}=(w_0^s)^2\frac{d^2w_0^s}{d\eta^2}(\eta)+m (\eta^2-1)\frac{dw_0^s}{d\eta}(\eta)-\frac{3m-1}{2}\eta w^s_0(\eta)\leq 0.
\end{aligned}
\right.
\end{equation*}

Let
\[
 \psi_6=\frac{D_6\rho_m(\tau)}{1+(m-1)\tau}\overline w_m(\eta)
\]
and choose \(\beta_m>0\) such that for \(\tau\geq\tau_0\),
\[
 \frac{m-1}{1+(m-1)\tau}
 -[1+(m-1)\tau\eta]
 \frac{\partial_\tau\rho_m(\tau)}{\rho_m(\tau)}
 \leq C_*.
\]
A direct calculation gives
\begin{align*}
 &(w^s)^2\partial_{\eta\eta}\psi_6+m(\eta^2-1)\partial_\eta\psi_6-[1+(m-1)\tau\eta]\partial_\tau\psi_6\\
 &+\left(2w^s\partial_{\eta\eta}w^s-\frac{5m-3}{2}\eta\right)\psi_6\\
 =&\frac{D_6\rho_m}{1+(m-1)\tau}\bigl[
 \bigl((w^s)^2-\overline w_m^2\bigr)\overline w_m''
 +2w^s\partial_{\eta\eta}w^s\overline w_m\\
 &+\frac{(m-1)(1-\eta)}{1+(m-1)\tau}\overline w_m
 -[1+(m-1)\tau\eta]
 \frac{\partial_\tau\rho_m}{\rho_m}\overline w_m
 \bigr]\\
 \leq & \psi_6 (-2C_*+C_*)\leq0.
\end{align*}
Take \(D_6\) sufficiently large and set
\[
 \Psi_6=\partial_\tau w^s+\psi_6.
\]
Then
\begin{equation*}
\left\{
\begin{aligned}
 &(w^s)^2\partial_{\eta\eta}\Psi_6
 +m(\eta^2-1)\partial_\eta\Psi_6
 -[1+(m-1)\tau\eta]\partial_\tau\Psi_6 +\left[
 2w^s\partial_{\eta\eta}w^s-\frac{5m-3}{2}\eta
 \right]\Psi_6\leq0,\\
 &\Psi_6 |_{\eta=1}=0,\\
 &\left.\left(\partial_\eta\Psi_6-\frac{m}{(w^s)^2}\Psi_6\right)
 \right|_{\eta=0}
 =-\left.\frac{D_6m\rho_m}{1+(m-1)\tau}
 \left(\frac1{\overline w_m}
 +\frac{\overline w_m}{(w^s)^2}\right)\right|_{\eta=0}<0,\\
 &\Psi_6|_{\tau=\tau_0} \geq0.
\end{aligned}
\right.
\end{equation*}
The maximum principle gives \(\Psi_6\geq0\).  Enlarging \(D_6\) on
\(0\leq\tau\leq\tau_0\) gives
\begin{equation}\label{eq:m-time-decay}
 [1+(m-1)\tau]|\partial_\tau w^s(\tau,\eta)|
 \leq D_6\rho_m(\tau)\overline w_m(\eta).
\end{equation}

It suffices to prove the remaining estimates for \(\tau\geq\tau_0\), since
on \(0\leq\tau\leq\tau_0\) they follow from the a priori estimates above
after increasing \(D_6\); here \(\rho_m\) is bounded from below by a positive
constant.  The difference equation can be written as
\begin{equation}\label{eq:m-first-derivative-equation}
 (w^s)^2\partial_{\eta\eta}\widetilde w
 +m(\eta^2-1)\partial_\eta\widetilde w=F,
\end{equation}
where
\[
 F=[1+(m-1)\tau\eta]\partial_\tau w^s
 +\frac{3m-1}{2}\eta\widetilde w
 -(w^s+\overline w_m)\overline w_m''\widetilde w.
\]
By Proposition~\ref{prop:zero-order-convergence} and
\eqref{eq:m-time-decay},
\[
 |F(\tau,\eta)|\leq C\rho_m(\tau)\overline w_m(\eta).
\]
Rewrite \eqref{eq:m-first-derivative-equation} as
\begin{equation}\label{eq:ode-pre}
    \partial_\eta (\varrho  \partial_\eta \widetilde{w})= A F,
\end{equation}
where
\begin{equation*}
  A=\frac{\varrho}{(w^s)^2}, \qquad \varrho=\exp\left(\int_{0}^{\eta} \frac{m(\zeta^2-1)}{(w^s)^2}\,d\zeta \right).
\end{equation*}
By increasing \(\tau_0\) if necessary, there exist \(\eta_*\in(0,1)\) and \(c>0\), independent of \(\tau\geq\tau_0\), such that
\begin{align*}
  -\partial_\eta(\varrho\sigma_m)&=\frac{\varrho}{(1-\eta)\sigma_m} \left(\frac{m(1+\eta)(1-\eta)^2\sigma_m^2}{(w^s)^2}-\frac{1}{2}\right)\\
  &\geq c\, \frac{\varrho}{(1-\eta)\sigma_m}
\end{align*}
for $\eta_*\leq\eta<1$. 
On the compact interval \(0\leq\eta\leq\eta_*\), the profile comparison implies that \(\varrho\), \(\varrho^{-1}\), and \(A\) are uniformly bounded.  The endpoint bounds in Proposition~\ref{prop:weighted-first-derivative} give
\[
 \lim_{\eta\to1^-}\varrho\,\partial_\eta\widetilde w=0.
\]
Splitting the integral at \(\eta_*\), using the differential inequality above
on \([\eta_*,1)\), using the uniform bounds on \([0,\eta_*]\), and integrating
\eqref{eq:ode-pre}, we obtain
\begin{align*}
  \left|\varrho \partial_\eta \widetilde{w} \right|&\leq  \int_{\eta}^{1} \left|A F\right|\,d\zeta\\
  &\leq C \rho_m(\tau) \int_{\eta}^{1} \frac{\varrho}{(1-\zeta)\sigma_m(\zeta)}\,d\zeta \\
  &\leq C \rho_m(\tau) \varrho \sigma_m,
\end{align*}
which implies
\begin{equation}\label{eq:derivative-space-decay}
        \left|\partial_\eta \widetilde{w}\right| \leq C \sigma_m \rho_m(\tau)\leq -D_6 \rho_m(\tau) \overline w_m'(\eta).
    \end{equation}
Finally, \eqref{eq:m-first-derivative-equation} yields
    \begin{equation*}
        \left|\partial_{\eta\eta}\widetilde{w}\right|=\left|\frac{F+m(1-\eta^2)\partial_\eta \widetilde{w}}{(w^s)^2}\right|\leq -D_6\rho_m(\tau) \overline w_m''(\eta).
    \end{equation*}
\end{proof}

\subsection*{Proof of the large-time behavior in Theorem~\ref{thm:ws-regularity}}

\begin{proof}
The large-time behavior \eqref{eq:ws-large-time} follows directly from Proposition~\ref{prop:zero-order-convergence} and Proposition~\ref{prop:optimal-derivative-convergence}.  The decay rate \(\beta_m\) is taken to be the minimum of the rates chosen in Propositions~\ref{prop:zero-order-convergence},~\ref{prop:riccati-gradient}, and~\ref{prop:optimal-derivative-convergence}.
\end{proof}

\subsection{Proof of Theorem~\ref{thm:higher-order-regularity}}

\begin{proof}
\medskip\noindent\textbf{Mixed time-normal derivative.}
For \(\partial_\tau(w^s\partial_\eta w^s)\), direct differentiation and
elimination of \(\partial_{\tau\tau}w^s\) through the equation for
\(\partial_\tau w^s\) give
\begin{equation*}
\left\{
\begin{aligned}
&\Bigg[(w^s)^2\partial_{\eta\eta}
+m(\eta^2-1)\partial_\eta -\frac{(m-1)\tau(w^s)^2}{1+(m-1)\tau\eta}\partial_\eta
-[1+(m-1)\tau\eta]\partial_\tau -2(m-1)\eta \\
&\quad+\frac{2[1+(m-1)\tau\eta]\partial_\tau w^s}{w^s}+\frac{(m-1)\tau}{1+(m-1)\tau\eta}
\left(m(1-\eta^2)+2w^s\partial_\eta w^s\right)\Bigg]
\partial_\tau(w^s\partial_\eta w^s)=f_1,\\
&\left.\partial_\tau(w^s\partial_\eta w^s)\right|_{\eta=0}=0, \qquad \left.\partial_\tau(w^s\partial_\eta w^s)\right|_{\eta=1}=0,\\
&\left.\left|\partial_\tau(w^s\partial_\eta w^s)\right|\right|_{\tau=0} \leq -C\overline w_m \overline w_m',
\end{aligned}
\right.
\end{equation*}
where
\begin{align*}
f_1=&(m-1)w^s\partial_\tau w^s +\frac{2\partial_\tau w^s}{w^s}\left[m(\eta^2-1)\partial_\eta(w^s\partial_\eta w^s) -(m-1)\eta w^s\partial_\eta w^s\right]\\
&+\frac{(m-1)\tau w^s\partial_\tau w^s} {1+(m-1)\tau\eta} \left[2(\partial_\eta w^s)^2 -\frac{2m(\eta^2-1)}{w^s}\partial_\eta w^s -(m-1)\eta\right].
\end{align*}
Propositions~\ref{prop:estimate-ws},~\ref{prop:time-derivative-bound}, and~
\ref{prop:optimal-derivative-convergence}, together with
Lemma~\ref{self-similar-properties}, give
\[
|f_1(\tau,\eta)|\leq \frac{C\rho_m(\tau)}{1+(m-1)\tau} (-\overline w_m\overline w_m').
\]
For sufficiently large \(\tau\), the zero-order coefficient is nonpositive.
Indeed, the Falkner--Skan equation gives
 \[
  \overline w_m\overline w_m'
  =-\frac{m+1}{2}f_m\overline w_m-m(1-\eta^2).
 \]
 The preceding estimates imply
 \begin{align*}
 \left|w^s\partial_\eta w^s
 -\overline w_m\overline w_m'\right|&\leq
|w^s-\overline w_m|\,|\partial_\eta w^s|
 +\overline w_m|\partial_\eta(w^s-\overline w_m)|\\
 &\leq C\rho_m(\tau)(-\overline w_m\overline w_m').
 \end{align*}
 Consequently,
 \begin{equation*}
 m(1-\eta^2)+2w^s\partial_\eta w^s\leq-\frac{m+1}{2}f_m\overline w_m
 +\overline w_m\overline w_m'+2C\rho_m(\tau)(-\overline w_m\overline w_m')\leq0
 \end{equation*}
 after fixing \(\tau_0\) so large that \(2C\rho_m(\tau)\leq1\) for
 \(\tau\geq\tau_0\).  Since \(\partial_\tau w^s\leq0\), every term in the
 zero-order coefficient
 \[
 -2(m-1)\eta
 +\frac{2[1+(m-1)\tau\eta]\partial_\tau w^s}{w^s}
 +\frac{(m-1)\tau}{1+(m-1)\tau\eta}
 \left[m(1-\eta^2)+2w^s\partial_\eta w^s\right]
 \]
 is nonpositive for \(\tau\geq\tau_0\).

On the fixed slab \(0\leq\tau\leq\tau_0\),
Propositions~\ref{prop:estimate-ws},~\ref{prop:weighted-first-derivative},
and~\ref{prop:time-derivative-bound} control
the coefficients, the source \(f_1\), and the initial and boundary data in the
weight \(-\overline w_m\overline w_m'\).  Comparing
\(\pm\partial_\tau(w^s\partial_\eta w^s)\) with
\(-Ce^{\kappa\tau}\overline w_m\overline w_m'\), where \(\kappa\)
is sufficiently large, and applying the maximum principle gives
\begin{equation}\label{eq:weighted-mixed-product-finite-time}
[1+(m-1)\tau]
\left|\partial_\tau(w^s\partial_\eta w^s)\right|
\leq C(-\overline w_m\overline w_m'),
\qquad 0\leq\tau\leq\tau_0.
\end{equation}

Consider the comparison function
\[
\psi_7=-\frac{D_7e^\Lambda}{1+(m-1)\tau}
\overline w_m\overline w_m'
e^{-\Lambda\rho_m(\tau)}.
\]
A direct calculation, followed by the steady equation and its derivative,
gives
\begin{align*}
&\Bigg[(w^s)^2\partial_{\eta\eta}
+\left(m(\eta^2-1)
-\frac{(m-1)\tau(w^s)^2}{1+(m-1)\tau\eta}\right)\partial_\eta
-[1+(m-1)\tau\eta]\partial_\tau\\
&\quad-2(m-1)\eta
+\frac{2[1+(m-1)\tau\eta]\partial_\tau w^s}{w^s}
+\frac{(m-1)\tau}{1+(m-1)\tau\eta}
 \left(m(1-\eta^2)+2w^s\partial_\eta w^s\right)
\Bigg]\psi_7\\
&=\psi_7\Bigg[
 \frac{\bigl((w^s)^2-\overline w_m^2\bigr)
 (\overline w_m\overline w_m')''}
 {\overline w_m\overline w_m'}
 +\frac{3m-1}{2}\frac{\overline w_m}{\overline w_m'}
 +\frac{(m-1)(1-\eta)}{1+(m-1)\tau}
 +\frac{2[1+(m-1)\tau\eta]\partial_\tau w^s}{w^s}\\
&\qquad
 +\frac{(m-1)\tau}{1+(m-1)\tau\eta}
 \left(m(1-\eta^2)+2w^s\partial_\eta w^s
 -(w^s)^2
 \frac{(\overline w_m\overline w_m')'}
 {\overline w_m\overline w_m'}\right)
 +\Lambda[1+(m-1)\tau\eta]\partial_\tau\rho_m(\tau)
 \Bigg]\\
&\leq\psi_7\Bigg[
-\frac{(m-1)\tau(-\overline w_m\overline w_m')}
 {1+(m-1)\tau\eta}
-\frac{m+1}{2[1+(m-1)\tau]}(1-\eta)
+C\rho_m(\tau)\\
&\qquad+C\rho_m(\tau)
\frac{(m-1)\tau(-\overline w_m\overline w_m')}
 {1+(m-1)\tau\eta}
+\Lambda[1+(m-1)\tau\eta]\partial_\tau\rho_m(\tau)
\Bigg]\\
&\leq-\frac{cD_7\rho_m(\tau)}{1+(m-1)\tau}
 (-\overline w_m\overline w_m').
\end{align*}
Increase \(\tau_0\) so that \(C\rho_m(\tau)\leq 1/2\). If \(1+(m-1)\tau\eta\geq [1+(m-1)\tau]/2\), the term containing \(\partial_\tau\rho_m\) absorbs \(C\rho_m\) for sufficiently large \(\Lambda\). Otherwise, \(\eta<1/2\) and \((m-1)\tau/[1+(m-1)\tau\eta]>(1-\eta)^{-1}\); hence the first negative term absorbs the remaining errors because \(-\overline w_m\overline w_m'/(1-\eta)\) has a positive lower bound on \([0,1/2]\). For \(m=1\), the conclusion follows directly from \(\partial_\tau\rho_1=-\beta_1\rho_1\).

Set
\[
\Psi_{7,\pm}=\psi_7
\pm\partial_\tau(w^s\partial_\eta w^s),
\]
and take \(D_7\) sufficiently large. Then 
\begin{equation*}
\left\{
\begin{aligned}
&\Bigg[(w^s)^2\partial_{\eta\eta}
+\left(m(\eta^2-1)
-\frac{(m-1)\tau(w^s)^2}{1+(m-1)\tau\eta}\right)\partial_\eta
-[1+(m-1)\tau\eta]\partial_\tau\\
&\quad-2(m-1)\eta
+\frac{2[1+(m-1)\tau\eta]\partial_\tau w^s}{w^s}
+\frac{(m-1)\tau}{1+(m-1)\tau\eta}
 \left(m(1-\eta^2)+2w^s\partial_\eta w^s\right)
\Bigg]\Psi_{7,\pm}\leq0,\\
&\Psi_{7,\pm}|_{\eta=0}\geq0,
\qquad \Psi_{7,\pm}|_{\eta=1}=0,
\qquad \Psi_{7,\pm}|_{\tau=\tau_0}\geq0.
\end{aligned}
\right.
\end{equation*}
The maximum principle gives \(\Psi_{7,\pm}\geq0\), and therefore
\begin{equation}\label{eq:weighted-mixed-product-high-m}
[1+(m-1)\tau]
\left|\partial_\tau(w^s\partial_\eta w^s)\right|
\leq C(-\overline w_m\overline w_m'),
\qquad \tau\geq\tau_0.
\end{equation}
Combining \eqref{eq:weighted-mixed-product-finite-time} and
\eqref{eq:weighted-mixed-product-high-m} gives, for every \(\tau\geq0\),
\begin{align*}
[1+(m-1)\tau]|w^s\partial_{\tau\eta}w^s|&\leq[1+(m-1)\tau]
\left|\partial_\tau(w^s\partial_\eta w^s)\right|
+[1+(m-1)\tau]|\partial_\tau w^s\partial_\eta w^s|\\
&\leq C(-\overline w_m\overline w_m').
\end{align*}

\medskip\noindent\textbf{Second time derivative.}
For \(\partial_{\tau\tau}w^s\), differentiating the equation for
\(\partial_\tau w^s\) once more gives
\begin{equation*}
\left\{
\begin{aligned}
 &\bigg[(w^s)^2\partial_{\eta\eta}
 +m(\eta^2-1)\partial_\eta
 -[1+(m-1)\tau\eta]\partial_\tau\bigg]
 \partial_{\tau\tau}w^s\\
 &\qquad+\bigg[2w^s\partial_{\eta\eta}w^s
 -\frac{7m-5}{2}\eta
 +\frac{4[1+(m-1)\tau\eta]\partial_\tau w^s}{w^s}\bigg]
 \partial_{\tau\tau}w^s=f_2,\\
 &\partial_{\tau\tau}w^s|_{\eta=1}=0,\qquad
 \left.\left(\partial_\eta\partial_{\tau\tau}w^s
 -\frac{m}{(w^s)^2}\partial_{\tau\tau}w^s\right)\right|_{\eta=0}
 =-\left.\frac{2m(\partial_\tau w^s)^2}{(w^s)^3}\right|_{\eta=0},
\end{aligned}
\right.
\end{equation*}
where
\[
 f_2=\frac{4m(\eta^2-1)}{w^s}
 \partial_\tau w^s\partial_{\tau\eta}w^s
 +\left(6\partial_{\eta\eta}w^s
 -\frac{2(5m-3)\eta}{w^s}\right)(\partial_\tau w^s)^2.
\]
The preceding estimates,
together with \eqref{eq:weighted-mixed-product-finite-time} and
\eqref{eq:weighted-mixed-product-high-m}, give
\begin{equation*}
  |f_2(\tau,\eta)|\leq\frac{C\rho_m(\tau)}{[1+(m-1)\tau]^2}\overline w_m(\eta).
\end{equation*}

On \(0\leq\tau\leq\tau_0\), the preceding estimates,
\eqref{eq:initial-high}, and the corner compatibility conditions control the
coefficients, the source, and the initial and boundary data.  Comparison with
\(C e^{\kappa\tau}\overline w_m\), for sufficiently large \(\kappa\),
gives
 \begin{equation}\label{eq:second-time-finite-m}
 |\partial_{\tau\tau}w^s(\tau,\eta)|
 \leq C e^{\kappa\tau}\overline w_m(\eta),
 \qquad0\leq\tau\leq\tau_0.
 \end{equation}

 For \(\tau\geq\tau_0\), set
 \[
  \psi_8=\frac{D_8\rho_m(\tau)}{[1+(m-1)\tau]^2}
  \overline w_m(\eta).
\]
A direct calculation, followed by the steady equation, gives
\begin{align*}
 &\bigg[(w^s)^2\partial_{\eta\eta}
 +m(\eta^2-1)\partial_\eta
 -[1+(m-1)\tau\eta]\partial_\tau\\
 &\qquad +2w^s\partial_{\eta\eta}w^s
 -\frac{7m-5}{2}\eta
 +\frac{4[1+(m-1)\tau\eta]\partial_\tau w^s}{w^s}\bigg]\psi_8\\
 =&\frac{D_8\rho_m}{[1+(m-1)\tau]^2}\Bigg[
 \bigl((w^s)^2-\overline w_m^2\bigr)\overline w_m''\\
 &\qquad+\Bigg(2w^s\partial_{\eta\eta}w^s
 +\frac{2(m-1)(1-\eta)}{1+(m-1)\tau}
 -[1+(m-1)\tau\eta]\frac{\partial_\tau\rho_m}{\rho_m}
 +\frac{4[1+(m-1)\tau\eta]\partial_\tau w^s}{w^s}\Bigg)
 \overline w_m\Bigg]\\
 \leq&\frac{D_8\rho_m}{[1+(m-1)\tau]^2}\overline w_m
 \Bigg[2w^s\partial_{\eta\eta}w^s
 +\frac{2(m-1)}{1+(m-1)\tau}
 -[1+(m-1)\tau\eta]\frac{\partial_\tau\rho_m}{\rho_m}\Bigg]\\
 \leq&-\frac{C_*D_8\rho_m(\tau)}{[1+(m-1)\tau]^2}
 \overline w_m(\eta).
\end{align*}
Here the common decay rate from Theorem~\ref{thm:ws-regularity} is decreased,
if necessary, and \(\tau_0\) is increased so that
\[
 \frac{2(m-1)}{1+(m-1)\tau}
 -[1+(m-1)\tau\eta]\frac{\partial_\tau\rho_m}{\rho_m}
 \leq C_*,\qquad \tau\geq\tau_0;
\]
together with \eqref{eq:m-strict-concavity}, this gives the last inequality.
Set
 \[
 \Psi_{8,\pm}=\psi_8\pm\partial_{\tau\tau}w^s,
 \]
and take \(D_8\) sufficiently large.  
Then
 \begin{equation*}
 \left\{
 \begin{aligned}
 &\Bigg[(w^s)^2\partial_{\eta\eta}
 +m(\eta^2-1)\partial_\eta
 -[1+(m-1)\tau\eta]\partial_\tau\\
 &\qquad
 +2w^s\partial_{\eta\eta}w^s
 -\frac{7m-5}{2}\eta
 +\frac{4[1+(m-1)\tau\eta]\partial_\tau w^s}{w^s}\Bigg]
 \Psi_{8,\pm}\leq0,\\
 &\Psi_{8,\pm}|_{\eta=1}=0,
 \qquad \Psi_{8,\pm}|_{\tau=\tau_0}\geq0,\\
 &\left.\left(\partial_\eta\Psi_{8,\pm}
 -\frac{m}{(w^s)^2}\Psi_{8,\pm}\right)\right|_{\eta=0} \leq0.
 \end{aligned}
 \right.
 \end{equation*}
The maximum principle gives
 \[
 |\partial_{\tau\tau}w^s(\tau,\eta)|
 \leq\frac{D_8\rho_m(\tau)}{[1+(m-1)\tau]^2}\overline w_m(\eta),
 \qquad \tau\geq\tau_0.
 \]
 Combining this estimate with \eqref{eq:second-time-finite-m} and enlarging
 the constant gives, for every \(\tau\geq0\),
 \begin{equation}\label{eq:second-time-closed-high-m}
  [1+(m-1)\tau]^2|\partial_{\tau\tau}w^s(\tau,\eta)|
 \leq C\rho_m(\tau)\overline w_m(\eta).
\end{equation}

{
\medskip\noindent\textbf{Remaining normal derivatives.}
The equation for \(\partial_\tau w^s\) can now be written as
\begin{equation}\label{eq:time-normal-ode-high}
 \begin{aligned}
  &(w^s)^2\partial_{\eta\eta}\partial_\tau w^s
  +m(\eta^2-1)\partial_\eta\partial_\tau w^s\\
  &\qquad=[1+(m-1)\tau\eta]\partial_{\tau\tau}w^s
  -\left(2w^s\partial_{\eta\eta}w^s
  -\frac{5m-3}{2}\eta\right)\partial_\tau w^s.
 \end{aligned}
\end{equation}
Its right-hand side satisfies
\begin{equation}\label{eq:time-normal-source-high}
 \begin{aligned}
 &\left|[1+(m-1)\tau\eta]\partial_{\tau\tau}w^s
 -\left(2w^s\partial_{\eta\eta}w^s
 -\frac{5m-3}{2}\eta\right)\partial_\tau w^s\right|\\
 &\qquad\leq \frac{C\rho_m(\tau)}{1+(m-1)\tau}\overline w_m(\eta).
 \end{aligned}
\end{equation}
Indeed, \(1+(m-1)\tau\eta\leq1+(m-1)\tau\), and
\eqref{eq:second-time-closed-high-m} controls the first term.  The remaining
terms are controlled by Proposition~\ref{prop:optimal-derivative-convergence}
and the uniform bound for \(w^s\partial_{\eta\eta}w^s\) on the finite slab,
together with Proposition~\ref{prop:dynamical-strict-concavity} and
Proposition~\ref{prop:optimal-derivative-convergence} for large time.

Assume first that \(\tau\geq\tau_0\).  For each fixed \(\tau\), apply to
\eqref{eq:time-normal-ode-high} the
integrating-factor argument following \eqref{eq:ode-pre}.  The mixed-product
estimate above gives
\([1+(m-1)\tau]|\partial_{\tau\eta}w^s|
\leq C(-\overline w_m')\), which supplies the vanishing endpoint term in
that argument.  Integrating from \(\eta\) to \(1\) and using
\eqref{eq:time-normal-source-high} therefore gives
\begin{equation}\label{eq:first-time-normal-high}
 [1+(m-1)\tau]|\partial_{\tau\eta}w^s|
 \leq C\rho_m(\tau)\sigma_m(\eta)
 \leq C\rho_m(\tau)(-\overline w_m'(\eta)).
\end{equation}
Substitution of \eqref{eq:first-time-normal-high} and
\eqref{eq:time-normal-source-high} into
\eqref{eq:time-normal-ode-high}, followed by the profile bounds in
Lemma~\ref{self-similar-properties}, gives
\begin{equation}\label{eq:second-time-normal-high}
 [1+(m-1)\tau]|\partial_{\tau\eta\eta}w^s|
 \leq \frac{C\rho_m(\tau)}{(1-\eta)\sigma_m(\eta)}
 \leq C\rho_m(\tau)(-\overline w_m''(\eta)).
\end{equation}
On \(0\leq\tau\leq\tau_0\), the same two estimates follow from the
finite-time weighted parabolic regularity based on \eqref{eq:initial-high};
enlarging the constant combines the two time ranges.

It remains to prove the third- and fourth-order normal estimates.
Differentiating the steady equation once and twice, and using
Lemma~\ref{self-similar-properties}, gives
\begin{equation}\label{eq:steady-higher-normal-weights}
 \begin{aligned}
 |\overline w_m''(\eta)|
 &\leq \frac{C}{(1-\eta)\sigma_m(\eta)},\qquad
 |\overline w_m'''(\eta)|
 \leq \frac{C}{(1-\eta)^2\sigma_m(\eta)},\\
 |\overline w_m''''(\eta)|
 &\leq \frac{C}{(1-\eta)^3\sigma_m(\eta)}.
 \end{aligned}
\end{equation}
For example, after one differentiation the only terms on the right-hand side
of the equation for \(\overline w_m^2\overline w_m'''\) have size at most
\(C\sigma_m(\eta)\); after a second differentiation the corresponding terms have
size at most \(C\sigma_m(\eta)/(1-\eta)\).  Since
\(\overline w_m^2\geq C^{-1}(1-\eta)^2\sigma_m(\eta)^2\), these are precisely the bounds in
the last two parts of \eqref{eq:steady-higher-normal-weights}.

The estimates already proved imply
\begin{equation}\label{eq:normal-weight-summary}
\begin{aligned}
 &C^{-1}(1-\eta)\sigma_m(\eta)\leq w^s
 \leq C(1-\eta)\sigma_m(\eta),
 \qquad |\partial_\eta w^s|\leq C\sigma_m(\eta),\\
 &|\partial_{\eta\eta}w^s|
 \leq\frac{C}{(1-\eta)\sigma_m(\eta)},
 \qquad |\widetilde w|\leq C\rho_m(\tau)(1-\eta)\sigma_m(\eta),\\
 &|\partial_\eta\widetilde w|\leq C\rho_m(\tau)\sigma_m(\eta),
 \qquad |\partial_{\eta\eta}\widetilde w|
 \leq\frac{C\rho_m(\tau)}{(1-\eta)\sigma_m(\eta)}.
\end{aligned}
\end{equation}
Subtracting the steady equation from the equation for \(w^s\), we have
\begin{equation}\label{eq:normal-difference-high}
 \begin{aligned}
 &(w^s)^2\partial_{\eta\eta}\widetilde w
 +m(\eta^2-1)\partial_\eta\widetilde w\\
 &\qquad=[1+(m-1)\tau\eta]\partial_\tau w^s
 +\frac{3m-1}{2}\eta\widetilde w
 -(w^s+\overline w_m)\overline w_m''\widetilde w.
 \end{aligned}
\end{equation}
Equations~\eqref{eq:steady-higher-normal-weights} and
~\eqref{eq:normal-weight-summary} give
\begin{equation}\label{eq:coefficient-normal-weights}
 \begin{aligned}
 \left|(w^s+\overline w_m)\overline w_m''\right|&\leq C,\\
 \left|\partial_\eta\bigl[(w^s+\overline w_m)\overline w_m''\bigr]\right|
 &\leq\frac{C}{1-\eta},\\
 \left|\partial_{\eta\eta}\bigl[(w^s+\overline w_m)
 \overline w_m''\bigr]\right|&\leq\frac{C}{(1-\eta)^2}.
 \end{aligned}
\end{equation}
Direct differentiation of the right-hand side of
\eqref{eq:normal-difference-high}, together with
\((m-1)\tau\leq1+(m-1)\tau\), \eqref{eq:m-time-decay},
\eqref{eq:first-time-normal-high}, \eqref{eq:second-time-normal-high},
\eqref{eq:normal-weight-summary}, and \eqref{eq:coefficient-normal-weights}.
Here \eqref{eq:m-time-decay} controls
\((m-1)\tau\partial_\tau w^s\), while
\eqref{eq:first-time-normal-high}--\eqref{eq:second-time-normal-high} control
the differentiated time terms.  Consequently,
\begin{align}
 &\left|\partial_\eta\left\{
 \begin{aligned}
 &[1+(m-1)\tau\eta]\partial_\tau w^s
 +\frac{3m-1}{2}\eta\widetilde w\\[-2pt]
 &\quad -(w^s+\overline w_m)\overline w_m''\widetilde w
 \end{aligned}\right\}\right|
 \leq C\rho_m(\tau)\sigma_m(\eta),
 \label{eq:first-source-normal-weight}\\
 &\left|\partial_{\eta\eta}\left\{
 \begin{aligned}
 &[1+(m-1)\tau\eta]\partial_\tau w^s
 +\frac{3m-1}{2}\eta\widetilde w\\[-2pt]
 &\quad -(w^s+\overline w_m)\overline w_m''\widetilde w
 \end{aligned}\right\}\right|
 \leq\frac{C\rho_m(\tau)\sigma_m(\eta)}{1-\eta}.
 \label{eq:second-source-normal-weight}
\end{align}

Differentiating \eqref{eq:normal-difference-high} once yields the exact
identity
\begin{align}
 (w^s)^2\partial_\eta^3\widetilde w
 ={}&\partial_\eta\left\{
 \begin{aligned}
 &[1+(m-1)\tau\eta]\partial_\tau w^s
 +\frac{3m-1}{2}\eta\widetilde w\\[-2pt]
 &\quad -(w^s+\overline w_m)\overline w_m''\widetilde w
 \end{aligned}\right\}
 \nonumber\\
 &
 -\bigl(2w^s\partial_\eta w^s+m(\eta^2-1)\bigr)
  \partial_{\eta\eta}\widetilde w
 -2m\eta\partial_\eta\widetilde w.
 \label{eq:third-normal-identity}
\end{align}
Here \(|m(\eta^2-1)|\leq C(1-\eta)\) and \(|2m\eta|\leq C\).  Thus
\eqref{eq:normal-weight-summary} and \eqref{eq:first-source-normal-weight}
show that
the right-hand side of \eqref{eq:third-normal-identity} is bounded by
\(C\rho_m(\tau)\sigma_m(\eta)\).  Dividing by
\((w^s)^2\geq C^{-1}(1-\eta)^2\sigma_m(\eta)^2\) gives
\begin{equation}\label{eq:third-normal-high}
 |\partial_\eta^3\widetilde w|
 \leq\frac{C\rho_m(\tau)}{(1-\eta)^2\sigma_m(\eta)}.
\end{equation}

Differentiating \eqref{eq:normal-difference-high} twice gives
\begin{align}
 (w^s)^2\partial_\eta^4\widetilde w={}&
 \partial_{\eta\eta}\Bigl\{[1+(m-1)\tau\eta]\partial_\tau w^s
 +\frac{3m-1}{2}\eta\widetilde w
 -(w^s+\overline w_m)\overline w_m''\widetilde w\Bigr\}
 \nonumber\\
 &
 -\bigl(4w^s\partial_\eta w^s+m(\eta^2-1)\bigr)
  \partial_\eta^3\widetilde w
 \nonumber\\
 &-\bigl(2(\partial_\eta w^s)^2
 +2w^s\partial_{\eta\eta}w^s
 +4m\eta\bigr)\partial_{\eta\eta}\widetilde w
 -2m\partial_\eta\widetilde w.
 \label{eq:fourth-normal-identity}
\end{align}
The coefficients multiplying \(\partial_\eta^3\widetilde w\) and
\(\partial_{\eta\eta}\widetilde w\) are bounded, respectively, by
\[
 C(1-\eta)\sigma_m(\eta)^2
 \quad\hbox{and}\quad
 C\sigma_m(\eta)^2.
\]
Consequently, \eqref{eq:second-source-normal-weight},
\eqref{eq:normal-weight-summary}, and \eqref{eq:third-normal-high} bound the
right-hand side of \eqref{eq:fourth-normal-identity} by
\[
 \frac{C\rho_m(\tau)\sigma_m(\eta)}{1-\eta}.
\]
Dividing once more by
\((w^s)^2\geq C^{-1}(1-\eta)^2\sigma_m(\eta)^2\) proves
\begin{equation}\label{eq:fourth-normal-high}
 |\partial_\eta^4\widetilde w|
 \leq\frac{C\rho_m(\tau)}{(1-\eta)^3\sigma_m(\eta)}.
\end{equation}
This proves all the estimates in \eqref{eq:ws-high-large-time}.

For completeness, the differentiations above can first be justified by
standard difference quotients or parabolic boundary regularity on compact
sub-strips bounded away from \(\eta=1\), where the equation is uniformly
parabolic.  At \(\eta=0\), differentiating the Robin condition gives
\[
 \left(\partial_{\tau\eta}w^s
 -\frac{m}{(w^s)^2}\partial_\tau w^s\right)_{\eta=0}=0,
\]
and a second time differentiation gives
\[
 \left(\partial_{\tau\tau\eta}w^s
 -\frac{m}{(w^s)^2}\partial_{\tau\tau}w^s\right)_{\eta=0}
 =-\left.\frac{2m(\partial_\tau w^s)^2}{(w^s)^3}\right|_{\eta=0}.
\]
After time derivatives at \(\tau=0\) are eliminated by the equation, the
corresponding corner identities are exactly the higher-order compatibility
conditions assumed in the statement.  These conditions give regularity up to
\((\tau,\eta)=(0,0)\).  The estimates above are uniform under exhaustion of
the whole domain and hence yield
\(w^s\in C^{2,4}_{\mathrm{par}}([0,+\infty)\times[0,1))\).
}
\end{proof}

{
\subsection{Finite-time loss of weighted edge control without the initial time-monotonicity condition}
\label{sec:weighted-c2-counterexample}

We prove Theorem~\ref{thm:weighted-c2-blowup}.  We first write the equation in
an edge coordinate and compare the weighted norms in the two coordinate
systems.  We then choose a periodic datum near the edge and study its
translated limit.  The characteristic flow of the limit equation shows that
the solution cannot remain in the weighted $C^2$ edge class for all time.

\medskip
\noindent\emph{The time and edge variables.}
Set
\begin{equation}\label{eq:edge-common-time}
 s=\int_0^\tau\frac{d\theta}{1+(m-1)\theta}.
\end{equation}
Thus \(s=\tau\) if \(m=1\), whereas
\[
 s=\frac1{m-1}\log[1+(m-1)\tau],
 \qquad 1+(m-1)\tau=e^{(m-1)s}\quad(m>1).
\]
Equation~\eqref{eq:semi-self-similar-crocco-m} becomes
\begin{equation}\label{eq:edge-common-time-equation}
 w_s=\frac1{\alpha(s,\eta)}
 \left[w^2w_{\eta\eta}+m(\eta^2-1)w_\eta
 -\frac{3m-1}{2}\eta w\right],
\end{equation}
where $\alpha(s,\eta)=\eta+(1-\eta)e^{-(m-1)s}$. Introduce
\begin{equation}\label{eq:edge-coordinate-chi}
 \chi=\log\bigl(2\sqrt{-\log(1-\eta)}\bigr),
 \qquad
 h(s,\chi)=\frac{w(s,\eta)}{\overline w_m(\eta)}.
\end{equation}
We use \(\chi\) for the normal edge coordinate to distinguish it from the
tangential coordinate \(\xi=\log x\) used in Section~3.  By definition,
\[
 e^{2\chi}=4[-\log(1-\eta)],\qquad
 \chi_\eta=\frac1{2(1-\eta)[-\log(1-\eta)]},
\]
and
\[
 \chi_{\eta\eta}
 =\frac{-\log(1-\eta)-1}
 {2(1-\eta)^2[-\log(1-\eta)]^2}.
\]
Substituting \(w=\overline w_m h\) into
\eqref{eq:edge-common-time-equation} and using the steady equation, we obtain
\begin{equation}\label{eq:edge-relative-equation}
 h_s=\nu h^2h_{\chi\chi}+(d_1h^2+d_0)h_\chi
      +g h(h^2-1),
\end{equation}
where
\begin{equation}\label{eq:edge-relative-coefficients}
\begin{aligned}
 \nu&=\frac{\overline w_m^2\chi_\eta^2}{\alpha},\qquad
 d_1=\frac1\alpha\bigl(\overline w_m^2\chi_{\eta\eta}
           +2\overline w_m\overline w_m'\chi_\eta\bigr),\\
 d_0&=\frac{m(\eta^2-1)\chi_\eta}{\alpha},\qquad
 g=\frac{\overline w_m\overline w_m''}{\alpha}.
\end{aligned}
\end{equation}
Here \(\eta\) is a function of \(\chi\).  The estimates in
Appendix~\ref{app:steady-profile} and the formulas above imply, uniformly for
\(0\leq s\leq T\),
\begin{equation}\label{eq:edge-coefficient-limits}
\begin{aligned}
 \nu(s,\chi)&=(m+1)e^{-2\chi}
       +O_T((1+\chi)e^{-4\chi}),
 &\nu_\chi(s,\chi)&=O_T((1+\chi)e^{-2\chi}),\\
 d_1(s,\chi)&=-\frac{m+1}{2}+O_T((1+\chi)e^{-2\chi}),
 &d_0(s,\chi)&=O_T(e^{-2\chi}),\\
 g(s,\chi)&=-\frac{m+1}{2}+O_T((1+\chi)e^{-2\chi})
\end{aligned}
\end{equation}
as \(\chi\to+\infty\).  These estimates, including the bound for
\(\nu_\chi\), are proved in Appendix~\ref{app:edge-counterexamples} from the
exact formulas in \eqref{eq:edge-relative-coefficients}.

\medskip
\noindent\emph{Equivalence of the weighted norms.}
Fix \(\eta_b\in(0,1)\) sufficiently close to \(1\), and set
\begin{equation}\label{eq:Y2-norm-definition}
 \|w\|_{\mathcal Y_m^2}
 :=\left\|\frac{w}{\overline w_m}\right\|_\infty
 +\left\|\frac{w_\eta}{-\overline w_m'}\right\|_\infty
 +\left\|\frac{w_{\eta\eta}}{-\overline w_m''}\right\|_\infty.
\end{equation}
{
We require interior continuity and finiteness of the weighted norm above; no
trace at \(\eta=1\) is imposed on the weighted derivative ratios.
}

\begin{lemma}
\label{lem:natural-edge-norm-equivalence}
There exists \(C=C(m,\eta_b)>0\) such that every
\(w\in C^2([0,1))\cap C([0,1])\) with \(w(1)=0\) satisfies
\begin{equation}\label{eq:natural-edge-norm-equivalence}
\begin{aligned}
 C^{-1}\|w\|_{\mathcal Y_m^2}
 \leq{}&\|w\|_{C^2([0,\eta_b])}
 +\|h\|_\infty+\|h_\chi\|_\infty
 +\|e^{-2\chi}h_{\chi\chi}\|_\infty\leq C\|w\|_{\mathcal Y_m^2}.
\end{aligned}
\end{equation}
\end{lemma}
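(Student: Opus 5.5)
The plan is to split $[0,1)$ into the compact piece $[0,\eta_b]$ and the edge piece $[\eta_b,1)$. On $[0,\eta_b]$ all three Oleinik weights $\overline w_m$, $-\overline w_m'$, $-\overline w_m''$ are bounded above and below by positive constants depending on $m,\eta_b$, by Lemma~\ref{self-similar-properties}. Hence the weighted quantities restricted to $[0,\eta_b]$ are equivalent to $\|w\|_{C^2([0,\eta_b])}$. On that piece, $\chi$ is a smooth diffeomorphism with $\chi_\eta,\chi_{\eta\eta}$ bounded and $\chi_\eta$ bounded below. The chain-rule identities below then also bound $h,h_\chi,h_{\chi\chi}$ there by the $C^2$ norm, using $w=\overline w_m h$ and the smoothness of $\overline w_m$ on $[0,\eta_b]$. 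Conversely, $\|w\|_{C^2([0,\eta_b])}$ is trivially part of the middle quantity. The substance is therefore the edge region $\eta\geq\eta_b$, where $\chi\geq\chi_b$ is large.

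On the edge region I will write the chain rule explicitly:
\[
 w_\eta=\overline w_m' h+\overline w_m\chi_\eta h_\chi,\qquad
 w_{\eta\eta}=\overline w_m'' h+\bigl(2\overline w_m'\chi_\eta+\overline w_m\chi_{\eta\eta}\bigr)h_\chi+\overline w_m\chi_\eta^2 h_{\chi\chi}.
\]
Let $L=-\log(1-\eta)$, so that $\sigma_m^2=L-\log\mu_m\sim L$ and $e^{2\chi}=4L$. The asymptotics \eqref{eq:steady-edge-curvature-family} give the following sizes:
\[
 \overline w_m\chi_\eta\sim\frac{\sqrt{m+1}}{2\sqrt L},\qquad
 \frac{\overline w_m\chi_\eta}{-\overline w_m'}\sim\frac{1}{2L},\qquad
 \frac{\overline w_m\chi_\eta^2}{-\overline w_m''}\sim\frac{1}{2L},\qquad
 \frac{2\overline w_m'\chi_\eta+\overline w_m\chi_{\eta\eta}}{-\overline w_m''}=O(1).
\]
The last ratio is $O(1)$ because both terms in the numerator are of order $1/((1-\eta)\sqrt L)$, as is the denominator. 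Since $1/L\sim 4e^{-2\chi}$, the chain rule yields $|w_\eta|/(-\overline w_m')\leq C(|h|+e^{-2\chi}|h_\chi|)$ and $|w_{\eta\eta}|/(-\overline w_m'')\leq C(|h|+|h_\chi|+e^{-2\chi}|h_{\chi\chi}|)$. Together with $|w|/\overline w_m=|h|$, this gives the left inequality of \eqref{eq:natural-edge-norm-equivalence}.

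For the right inequality, $\|h\|_\infty$ is exactly the first term of $\|w\|_{\mathcal Y_m^2}$. Solving the first chain-rule identity gives
\[
 h_\chi=\frac{w_\eta-\overline w_m'h}{\overline w_m\chi_\eta},
\]
and both $|w_\eta|$ and $|\overline w_m' h|$ are bounded by $C\sigma_m\|w\|_{\mathcal Y_m^2}$. Dividing by $\overline w_m\chi_\eta\sim c/\sqrt L$ produces a factor $\sigma_m\sqrt L\sim L$, which is unbounded. So the naive inversion loses a factor $L$, and this is the main obstacle.

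To remove this loss I will use cancellation. By $\overline w_m^2(h/1)'=\overline w_m w_\eta-\overline w_m' w$, we have
\[
 \overline w_m^2 h_\eta=\overline w_m w_\eta-\overline w_m' w=\int_\eta^1\bigl(\overline w_m''w-\overline w_m w_{\eta\eta}\bigr)\,d\zeta,
\]
since the boundary term $\overline w_m w_\eta-\overline w_m'w$ tends to $0$ at $\eta=1$ by the weighted bounds ($\overline w_m|w_\eta|\lesssim(1-\eta)\sigma_m^2\to0$). The integrand is bounded by $C\|w\|_{\mathcal Y_m^2}\,\overline w_m|\overline w_m''|\leq C\|w\|_{\mathcal Y_m^2}$ by \eqref{eq:steady-self-similar-properties-m}, so $|\overline w_m^2h_\eta|\leq C(1-\eta)\|w\|_{\mathcal Y_m^2}$. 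Then $|h_\chi|=|h_\eta|/\chi_\eta\leq C(1-\eta)\cdot 2(1-\eta)L/((1-\eta)^2\sigma_m^2)\,\|w\|_{\mathcal Y_m^2}\leq C\|w\|_{\mathcal Y_m^2}$. Finally I will solve the second chain-rule identity for $h_{\chi\chi}$. Here $e^{-2\chi}|h_{\chi\chi}|$ is bounded by $e^{-2\chi}/(\overline w_m\chi_\eta^2)$ times $|w_{\eta\eta}|+|\overline w_m''h|+|2\overline w_m'\chi_\eta+\overline w_m\chi_{\eta\eta}||h_\chi|$. Each of these terms is $O(|\overline w_m''|)\|w\|_{\mathcal Y_m^2}$, and $e^{-2\chi}|\overline w_m''|/(\overline w_m\chi_\eta^2)\sim (1/4L)\cdot 2L=O(1)$. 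This closes the right inequality, with $C$ depending on $m,\eta_b$ through Lemma~\ref{self-similar-properties}.
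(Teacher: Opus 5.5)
Your argument is correct, but it obtains the key bound on $h_\chi$ by a different mechanism from the paper's. The paper sets $p_1=w_\eta/\overline w_m'$ and $p_2=w_{\eta\eta}/\overline w_m''$. It shows that $(p_1)_\chi=\frac{\overline w_m''}{\overline w_m'\chi_\eta}(p_2-p_1)$ is bounded, and uses $w(1)=0$ to write $h=\overline w_m^{-1}\int_\eta^1(-\overline w_m')p_1\,d\zeta$. It then proves $|p_1-h|\le Ce^{-2\chi}$ through an explicit logarithmic integral after the substitution $\ell=-\log(1-\zeta)$. Dividing by the coefficient $\overline w_m\chi_\eta/\overline w_m'\approx-2e^{-2\chi}$ bounds $h_\chi$.

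You instead integrate the exact identity $\overline w_m^2h_\eta=\overline w_m w_\eta-\overline w_m'w$ from the edge. Its derivative $\overline w_m w_{\eta\eta}-\overline w_m''w$ is bounded by $C\|w\|_{\mathcal Y_m^2}$ because $|\overline w_m\overline w_m''|\le M_5$. This gives $|h_\eta|\le C/((1-\eta)\sigma_m^2)$, hence $|h_\chi|=2(1-\eta)L|h_\eta|\le CL/\sigma_m^2\le C$, and the bound holds on all of $[0,1)$. That step uses only the two-sided profile bounds and $\sigma_m^2\ge L$, with no logarithmic integral. Vanishing of the boundary term at $\eta=1$ replaces the paper's use of $w(1)=0$; both $\overline w_m w_\eta$ and $\overline w_m'w$ are $O((1-\eta)\sigma_m^2)$ by the weighted bounds.

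The paper's route yields the finer structural statement $p_1=h+O(e^{-2\chi})$. Yours is shorter and more elementary.

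One correction on the wall side. As $\eta\to0^+$ you have $\chi\to-\infty$, so on $[0,\eta_b]$ the map $\chi$ does not have bounded derivatives: $\chi_\eta=[2(1-\eta)L]^{-1}\sim(2\eta)^{-1}$ and $\chi_{\eta\eta}$ are unbounded. Also $e^{-2\chi}=1/(4L)$ blows up there, so bounding $h_{\chi\chi}$ alone would not control $e^{-2\chi}h_{\chi\chi}$. The conclusion still holds through the exact identities $h_\chi=2(1-\eta)Lh_\eta$ and $e^{-2\chi}h_{\chi\chi}=L(1-\eta)^2h_{\eta\eta}+(1-L)(1-\eta)h_\eta$. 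Their right-hand sides are bounded by $C\|w\|_{C^2([0,\eta_b])}$ because $\overline w_m\ge c>0$ on $[0,\eta_b]$. The paper also passes over this point, citing the usual $C^2$ estimates.
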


\begin{proof}
On \([0,\eta_b]\), the result follows from the usual \(C^2\) estimates.  We
only need to consider \(\eta_b\leq\eta<1\).  The chain rule gives
\begin{equation}\label{eq:Y2-chain-rule}
 \frac{w_\eta}{\overline w_m'}
 =h+\frac{\overline w_m\chi_\eta}{\overline w_m'}h_\chi,
\end{equation}
and
\begin{equation}\label{eq:Y2-chain-rule-second}
 \frac{w_{\eta\eta}}{\overline w_m''}
 =h+
 \frac{2\overline w_m'\chi_\eta+
       \overline w_m\chi_{\eta\eta}}{\overline w_m''}h_\chi
 +e^{2\chi}\frac{\overline w_m\chi_\eta^2}
                       {\overline w_m''}
       \bigl(e^{-2\chi}h_{\chi\chi}\bigr).
\end{equation}
The coefficients of the derivatives of \(h\) satisfy
\begin{equation}\label{eq:Y2-chain-coefficients}
 \frac{\overline w_m\chi_\eta}{\overline w_m'}
 =-2e^{-2\chi}+o(e^{-2\chi}),
 \qquad
 \frac{2\overline w_m'\chi_\eta+
       \overline w_m\chi_{\eta\eta}}{\overline w_m''}\longrightarrow1,
 \qquad
 e^{2\chi}\frac{\overline w_m\chi_\eta^2}
                   {\overline w_m''}\longrightarrow-2.
\end{equation}
These limits give the first inequality in
\eqref{eq:natural-edge-norm-equivalence}.

For the reverse inequality, set
\[
 p_1=\frac{w_\eta}{\overline w_m'},\qquad
 p_2=\frac{w_{\eta\eta}}{\overline w_m''}.
\]
Differentiating \(p_1\) gives
\[
 (p_1)_\chi=\frac{\overline w_m''}{\overline w_m'\chi_\eta}(p_2-p_1),
 \qquad
 \|(p_1)_\chi\|_{L^\infty(\eta_b,1)}
 \leq C(\|p_1\|_\infty+\|p_2\|_\infty).
\]
Since \(w(1)=\overline w_m(1)=0\), we also have
\[
 h(\eta)=\frac1{\overline w_m(\eta)}
 \int_\eta^1(-\overline w_m'(\zeta))p_1(\zeta)\,d\zeta.
\]
It follows that
\begin{align*}
 |p_1(\eta)-h(\eta)|
 &\leq\frac{\|(p_1)_\chi\|_\infty}{\overline w_m(\eta)}
 \int_\eta^1(-\overline w_m'(\zeta))
 [\chi(\zeta)-\chi(\eta)]\,d\zeta\\
 &\leq Ce^{-2\chi(\eta)}
 (\|p_1\|_\infty+\|p_2\|_\infty).
\end{align*}
For the last inequality, set $\ell=-\log(1-\zeta)$.  The integral divided by
$\overline w_m(\eta)$ is then bounded by
\[
 \frac{C}{(1-\eta)\sqrt{-\log(1-\eta)}}
 \int_{-\log(1-\eta)}^\infty
 e^{-\ell}\sqrt\ell
 \log\frac{\ell}{-\log(1-\eta)}\,d\ell
 \leq\frac C{-\log(1-\eta)}=4Ce^{-2\chi(\eta)}.
\]
Since the coefficient of \(h_\chi\) in \eqref{eq:Y2-chain-rule} is comparable
to \(-e^{-2\chi}\), we obtain
\[
 \|h_\chi\|_{L^\infty(\eta_b,1)}
 \leq C(\|p_1\|_\infty+\|p_2\|_\infty).
\]
Equation~\eqref{eq:Y2-chain-rule-second} and the last limit in
\eqref{eq:Y2-chain-coefficients} now give a bound for
\(e^{-2\chi}h_{\chi\chi}\).  The usual \(C^2\) estimates apply on
\([0,\eta_b]\).  Since all the identities are linear in \(w\), the same
argument applies to differences.
\end{proof}

We use the standard local existence and continuation result for the Crocco
equation.  Every positive smooth datum satisfying the wall compatibility
condition gives a positive strong solution for a short time.  The solution
can be continued past a finite time $S$ if
\[
 0<c\leq h(s,\chi)\leq C
\]
and
\[
 \sup_{0\leq s<S}
 \left(
  \|h_\chi(s,\cdot)\|_{L^\infty(\mathbb R)}
  +\|e^{-2\chi}h_{\chi\chi}(s,\cdot)\|_{L^\infty(\mathbb R)}
 \right)<+\infty.
\]
We use this continuation criterion without repeating the local construction.

\medskip
\noindent\emph{Choice of the initial datum.}
Choose a smooth cutoff \(\vartheta\) such that
\[
 \vartheta(\chi)=0\quad(\chi\leq R),\qquad
 \vartheta(\chi)=1\quad(\chi\geq R+1),
 \qquad0\leq\vartheta\leq1,
\]
and define
\begin{equation}\label{eq:periodic-tail-datum}
 h_0(\chi)=1+\vartheta(\chi)[1+\sin(3\chi)],
 \qquad
 w_0^s(\eta)=\overline w_m(\eta)h_0(\chi(\eta)).
\end{equation}
Then \(1\leq h_0\leq3\).  For \(\chi\geq R+1\),
\begin{equation}\label{eq:periodic-tail-derivatives}
 h_0=2+\sin(3\chi),\qquad
 h_{0,\chi}=3\cos(3\chi),\qquad
 e^{-2\chi}h_{0,\chi\chi}=-9e^{-2\chi}\sin(3\chi).
\end{equation}
The datum agrees with \(\overline w_m\) near \(\eta=0\).  Thus it satisfies
the same Robin condition and corner compatibility conditions as the steady
solution.  Lemma~\ref{lem:natural-edge-norm-equivalence} gives
\(\|w_0^s\|_{\mathcal Y_m^2}<+\infty\).

At \(s=0\), equation~\eqref{eq:edge-relative-equation} gives
\begin{equation}\label{eq:periodic-tail-initial-time}
 \frac{\partial_\tau w^s(0,\eta)}{\overline w_m(\eta)}
 =\nu h_0^2h_{0,\chi\chi}+(d_1h_0^2+d_0)h_{0,\chi}
  +g h_0(h_0^2-1).
\end{equation}
The right-hand side is bounded because \(e^{2\chi}\nu\) and
\(e^{-2\chi}h_{0,\chi\chi}\) are bounded.  By
\eqref{eq:edge-coefficient-limits}, its value at
\(3\chi=\pi+2k\pi\) is \(\frac{3}{2}(m+1)+o(1)>0\), while its value at
\(3\chi=2k\pi\) is \(-9(m+1)+o(1)<0\).  Therefore the initial time derivative
changes sign infinitely many times near \(\eta=1\).

\medskip
\noindent\emph{The translated limit and finite-time loss.}
Let \(s_{\max}>0\) be the maximal existence time in
\(\mathcal Y_m^2\) given by the local theory.  We show that
\begin{equation}\label{eq:edge-blowup-time-s}
 s_{\max}\leq s_*:=\frac1{m+1}\log\frac32.
\end{equation}
Assume instead that \(s_{\max}>s_*\), and choose
\(T\in(s_*,s_{\max})\).
{
The constants \(h=1\) and \(h=3\) are respectively a solution and a strict
supersolution of \eqref{eq:edge-relative-equation}, since \(g<0\).  At
\(\eta=0\), the boundary condition gives
\[
 h_\eta=\frac{m(h^2-1)}{\overline w_m^2h},
\]
which excludes a boundary minimum below \(1\) or a boundary maximum above
\(3\).  The standard comparison argument, with a cutoff or strict-barrier
approximation on the unbounded edge domain, therefore gives
}
\begin{equation}\label{eq:periodic-zero-order-bound}
 1\leq h(s,\chi)\leq3,
 \qquad 0\leq s\leq T,\quad\chi\in\mathbb R.
\end{equation}
Since \(T<s_{\max}\), the definition of \(s_{\max}\),
\eqref{eq:Y2-norm-definition}, and
Lemma~\ref{lem:natural-edge-norm-equivalence} give, for some \(M<\infty\),
\begin{equation}\label{eq:periodic-assumed-bound}
 \sup_{0\leq s\leq T,\ \chi\in\mathbb R}
 \bigl(|h|+|h_\chi|+|e^{-2\chi}h_{\chi\chi}|\bigr)\leq M.
\end{equation}
{
Indeed, \eqref{eq:edge-relative-coefficients}--\eqref{eq:edge-coefficient-limits}
show that \(e^{2\chi}\nu\), \(d_1\), \(d_0\), and \(g\) are uniformly bounded
on every fixed tail.  Hence \eqref{eq:periodic-assumed-bound} gives there
\[
 |\nu h^2h_{\chi\chi}|
 \leq C|e^{-2\chi}h_{\chi\chi}|,
 \qquad
 |(d_1h^2+d_0)h_\chi|+|gh(h^2-1)|\leq C.
\]
Equation~\eqref{eq:edge-relative-equation} therefore yields a uniform bound
for \(h_s\) on the product of \([0,T]\) with that tail.

Set
\[
 X_n=\frac{2\pi n}{3},\qquad h_n(s,z)=h(s,X_n+z).
\]
On each compact cylinder, \(X_n+z\) lies in a fixed tail for all sufficiently
large \(n\).  Since \((h_n)_s=h_s(s,X_n+z)\) and
\((h_n)_z=h_\chi(s,X_n+z)\), the preceding bounds are uniform in \(n\) and
show that the family \((h_n)\) is equi-Lipschitz jointly in \(s\) and \(z\).  Passing to a
diagonal subsequence, we obtain
\begin{equation}\label{eq:periodic-compactness}
 h_n\longrightarrow H\quad\hbox{locally uniformly on }[0,T]\times\mathbb R,
 \qquad H\in W^{1,\infty}_{\rm loc},\qquad \|H_z\|_\infty\leq M,
\end{equation}
and the choice of \(X_n\) gives \(H(0,z)=2+\sin(3z)\).
}

{
Write
\[
 \nu_n(s,z)=\nu(s,X_n+z),\qquad
 d_{j,n}(s,z)=d_j(s,X_n+z)\quad(j=0,1),\qquad
 g_n(s,z)=g(s,X_n+z).
\]
On every compact cylinder, \(\nu_n\to0\), \((\nu_n)_z\to0\),
\(d_{1,n}\to-(m+1)/2\), \(d_{0,n}\to0\), and
\(g_n\to-(m+1)/2\), all uniformly.  We cannot pass to the limit in the
diffusion term pointwise.  Instead, for
\(\varphi\in C_c^\infty((0,T)\times\mathbb R)\), integration by parts gives
\begin{align}\label{eq:periodic-diffusion-vanishes}
 \iint\nu_nh_n^2(h_n)_{zz}\varphi
 =-\iint(h_n)_z\bigl[(\nu_n)_zh_n^2\varphi
 +2\nu_nh_n(h_n)_z\varphi+\nu_nh_n^2\varphi_z\bigr]
 \longrightarrow0.
\end{align}
Here we used the uniform bounds for \(h_n\) and \((h_n)_z\), together with the
uniform convergence of \(\nu_n\) and \((\nu_n)_z\) on the support of
\(\varphi\).

The distributional limit of the principal transport factor is obtained
without any convergence assertion for \((h_n)_z\).  Indeed,
\begin{align*}
 \iint h_n^2(h_n)_z\varphi
 &=-\frac13\iint h_n^3\varphi_z
 \longrightarrow-\frac13\iint H^3\varphi_z
 =\iint H^2H_z\varphi.
\end{align*}
Thus \(h_n^2(h_n)_z\to H^2H_z\) in distributions.  To treat the remaining
transport terms, set \(r_{1,n}=d_{1,n}+(m+1)/2\).  If \(Q\) is a compact
cylinder containing the support of \(\varphi\), then
\[
 \left|\iint
   \bigl(r_{1,n}h_n^2+d_{0,n}\bigr)(h_n)_z\varphi\right|
 \leq C_Q\bigl(\|r_{1,n}\|_{L^\infty(Q)}
              +\|d_{0,n}\|_{L^\infty(Q)}\bigr)\|\varphi\|_{L^1}
 \longrightarrow0.
\]
Consequently,
\[
 (d_{1,n}h_n^2+d_{0,n})(h_n)_z
 \longrightarrow-\frac{m+1}{2}H^2H_z
 \qquad\hbox{in }\mathcal D'((0,T)\times\mathbb R).
\]
Finally, \(g_nh_n(h_n^2-1)\to-(m+1)H(H^2-1)/2\) locally uniformly, while
local uniform convergence of \(h_n\) implies
\((h_n)_s\to H_s\) in distributions.  Passing term by term in the equation
therefore gives
\begin{equation}\label{eq:periodic-limit-equation}
 H_s+\frac{m+1}{2}H^2H_z=\frac{m+1}{2}H(1-H^2)
\end{equation}
in the sense of distributions.  Since \(H_z\in L^\infty\), the equation also
gives \(H_s\in L^\infty\).  Hence \(H\) is Lipschitz in space and time.
}

Let \(\Gamma\) solve the characteristic equation
\[
 \frac{d}{ds}\Gamma(s,z)=\frac{m+1}{2}H(s,\Gamma(s,z))^2,
 \qquad \Gamma(0,z)=z.
\]
The velocity is Lipschitz in \(z\), so for each \(s\) the map
\(z\mapsto\Gamma(s,z)\) is strictly increasing.  Set
\(H_0(z)=2+\sin(3z)\).
{
The characteristic identity holds first for almost every initial point and,
by continuity in \(z\), for every \(z\).  It gives
}
\[
 H(s,\Gamma(s,z))
 =\frac{e^{(m+1)s/2}H_0(z)}
 {\sqrt{1+H_0(z)^2(e^{(m+1)s}-1)}},
 \qquad
 \Gamma(s,z)=z+\frac12\log[1+H_0(z)^2(e^{(m+1)s}-1)].
\]
Therefore
\begin{equation}\label{eq:periodic-flow-derivative-explicit}
 \Gamma_z(s,z)=
 \frac{1+(e^{(m+1)s}-1)H_0(z)[H_0(z)+H_0'(z)]}
 {1+H_0(z)^2(e^{(m+1)s}-1)}.
\end{equation}
At \(z_*=\pi/3\), \(H_0(z_*)=2\) and \(H_0'(z_*)=-3\), so
\[
 \Gamma_z(s,z_*)=
 \frac{3-2e^{(m+1)s}}{1+4(e^{(m+1)s}-1)}<0
 \qquad\hbox{for }s>\frac1{m+1}\log\frac32.
\]
This contradicts the strict monotonicity of the flow map and proves
\eqref{eq:edge-blowup-time-s}.

Returning to the variable \(\tau\), let \(T^*\) be the maximal weighted
existence time.  Then
\begin{equation}\label{eq:edge-blowup-time-tau}
 T^*\leq
 \begin{cases}
  \displaystyle\frac12\log\frac32,&m=1,\\[2mm]
  \displaystyle\frac{(3/2)^{(m-1)/(m+1)}-1}{m-1},&m>1.
 \end{cases}
\end{equation}
The bounds in \eqref{eq:periodic-zero-order-bound} show that the amplitude
remains bounded.
If
\[
 \limsup_{\tau\uparrow T^*}
 \|w^s(\tau,\cdot)\|_{\mathcal Y_m^2}<+\infty,
\]
then Lemma~\ref{lem:natural-edge-norm-equivalence} and
\eqref{eq:periodic-zero-order-bound} give all the bounds in the continuation
criterion.  The solution could then be extended past \(T^*\), which
contradicts its maximality.  Therefore
\[
\limsup_{\tau\uparrow T^*}
 \|w^s(\tau,\cdot)\|_{\mathcal Y_m^2}=+\infty,
\]
which proves Theorem~\ref{thm:weighted-c2-blowup}.
{
Thus the obstruction is a steepening of the edge profile in the weighted
derivative topology.  It is not an amplitude blow-up, since
\eqref{eq:periodic-zero-order-bound} remains valid up to the maximal weighted
existence time.
}
}

\section{Nonlinear stability in Crocco variables}

Theorem~\ref{thm:semi-self-similar-stability} is proved by a bootstrap
argument.  Throughout the section, we write
\[
 \widehat w(\tau,\xi,\eta)=w(\tau,\xi,\eta)-w^s(\tau,\eta).
\]
By Theorem~\ref{thm:ws-regularity}, there exist \(C_*=C_*(m,E^*)>0\) and \(\tau_0>0\) such that
\[
 w^s\partial_{\eta\eta}w^s\leq-C_*,
 \qquad \tau\geq\tau_0.
\]
The decay exponent in \(\rho_m\) is decreased, if necessary, so that
\begin{equation}\label{eq:stability-decay-choice-m}
 0\leq-[1+(m-1)\tau\eta]\frac{\rho_m'(\tau)}{\rho_m(\tau)}
 \leq\frac{C_*}{4},
 \qquad \tau\geq\tau_0.
\end{equation}

Assume that there exist $N_1,\ldots,N_4>0$ such that
\begin{equation}\label{eq:bootstrap-m}
    \begin{aligned}
        \left|\frac{w(\tau,\xi,\eta)-w^s(\tau,\eta)}{w^s(\tau,\eta)}\right|&\leq N_1\delta\rho_m(\tau),\\
        \left|\frac{\partial_\xi w(\tau,\xi,\eta)}{w^s(\tau,\eta)}\right|&\leq N_1\delta\rho_m(\tau),\\
        \left|\frac{\partial_\eta w(\tau,\xi,\eta)-\partial_\eta w^s(\tau,\eta)}{-\overline w_m'(\eta)}\right|&\leq N_2\delta\rho_m(\tau),\\
         \left|\frac{[1+(m-1)\tau](\partial_\tau w(\tau,\xi,\eta)-\partial_\tau w^s(\tau,\eta))}{w^s(\tau,\eta)}\right|&\leq N_3\delta\rho_m(\tau),\\
         \left|\frac{\partial_{\eta\eta}w(\tau,\xi,\eta)-\partial_{\eta\eta}w^s(\tau,\eta)}{-\overline w_m''(\eta)}\right|&\leq N_4\delta\rho_m(\tau).
    \end{aligned}
\end{equation}
Here $N_i\delta\leq1/2$, $1\leq i\leq4$.
By the estimates for $w^s$, the weights generated by $w^s$ and \(\overline w_m\) are equivalent up to fixed constants.  
The estimates below improve \eqref{eq:bootstrap-m}.  
All constants below depend at most on $m$, $\tau_0$ and $E^*$, and are independent of $N_i$, $\delta$, and the variables unless explicitly stated otherwise.

\begin{proposition}\label{prop:stability-zero}
Let \(w\) be a smooth solution of \eqref{eq:prandtl-crocco-m} satisfying
\eqref{eq:bootstrap-m}.  There exists
\(K_1=K_1(m,E^*,\tau_0)>0\) such that
\begin{equation}\label{eq:zero-output-m}
 |\widehat w(\tau,\xi,\eta)|
 \leq K_1\delta\rho_m(\tau)w^s(\tau,\eta).
\end{equation}
\end{proposition}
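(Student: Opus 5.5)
We derive the equation for $\widehat w=w-w^s$ by subtracting the semi-similar equation from \eqref{eq:prandtl-crocco-m}. Since $w^s$ is independent of $\xi$, this gives
\[
 w^2\partial_{\eta\eta}\widehat w+m(\eta^2-1)\partial_\eta\widehat w
 +\Bigl[(w+w^s)\partial_{\eta\eta}w^s-\tfrac{3m-1}{2}\eta\Bigr]\widehat w
 -\eta\partial_\xi\widehat w-[1+(m-1)\tau\eta]\partial_\tau\widehat w=0.
\]
The boundary conditions are $\widehat w|_{\eta=1}=0$ and $\bigl(\partial_\eta\widehat w-\frac{m}{w\,w^s}\widehat w\bigr)|_{\eta=0}=0$. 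At $\xi\to-\infty$ we have $\widehat w\to0$, since $w^-=w^s$. Initially, $|\widehat w|_{\tau=0}|\leq\delta\overline w_m\leq\delta w^s$, because $w^s\geq\overline w_m$.

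This is an ultraparabolic equation. The term $-\eta\partial_\xi$ is a transport term with inflow at $\xi=-\infty$, so the maximum principle applies in the standard Oleinik way, with the data at $\xi\to-\infty$ vanishing. The zero-order coefficient equals $2w^s\partial_{\eta\eta}w^s+\widehat w\,\partial_{\eta\eta}w^s-\frac{3m-1}{2}\eta$. We control $\widehat w\,\partial_{\eta\eta}w^s$ by $N_1\delta\rho_m\,w^s|\partial_{\eta\eta}w^s|$ using \eqref{eq:bootstrap-m}. Since $N_1\delta\leq1/2$, the coefficient is bounded above by $C(1+\tau)e^{2m\tau}$ on $[0,\tau_0]$ and by $-\frac{3}{2}C_*$ for $\tau\geq\tau_0$, by Proposition~\ref{prop:dynamical-strict-concavity} and \eqref{eq:ws-estimates-all}.

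\textbf{Finite slab $[0,\tau_0]$.} We use the barrier $\psi=\delta e^{\Lambda\tau}w^s$. The identity for $w^s$ gives
\[
 L\psi=\delta e^{\Lambda\tau}\Bigl[(w^2-(w^s)^2)\partial_{\eta\eta}w^s+(w+w^s)\partial_{\eta\eta}w^s\,w^s+\dots\Bigr].
\]
It is cleaner to note that $w^s$ solves $(w^s)^2\partial_{\eta\eta}w^s+\dots=[1+(m-1)\tau\eta]\partial_\tau w^s$, so that
\[
 L(e^{\Lambda\tau}w^s)=e^{\Lambda\tau}\bigl[(w^2-(w^s)^2)\partial_{\eta\eta}w^s+(w+w^s)\partial_{\eta\eta}w^s\,w^s-\Lambda[1+(m-1)\tau\eta]w^s\bigr]
\]
up to the sign bookkeeping. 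All the terms other than the $\Lambda$ term are $O(1)\,w^s$ on the slab, because $|w^s\partial_{\eta\eta}w^s|\leq C$ there and $|w-w^s|\leq w^s/2$. Choosing $\Lambda$ large therefore makes $\psi$ a strict supersolution. At $\eta=0$ the Robin inequality holds in the right direction: a direct computation gives $\partial_\eta\psi-\frac{m}{ww^s}\psi=\delta e^{\Lambda\tau}\bigl(\partial_\eta w^s-\frac{m}{w}\bigr)\leq0$, and by the Robin condition for $w^s$ this equals $-m\delta e^{\Lambda\tau}\bigl(\frac1{w^s}+\frac1{w}\bigr)<0$. The maximum principle applied to $\pm\widehat w-\psi$ then gives $|\widehat w|\leq\delta e^{\Lambda\tau_0}w^s$ on the slab. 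Because $\rho_m$ is bounded below on $[0,\tau_0]$, this is of the form $K\delta\rho_m w^s$.

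\textbf{Large times $\tau\geq\tau_0$.} We use $\psi=K\delta\rho_m(\tau)w^s$. Computing as above, $L\psi$ equals $K\delta\rho_m$ times
\[
 \bigl[(w^2-(w^s)^2)\partial_{\eta\eta}w^s+(w+w^s)w^s\partial_{\eta\eta}w^s\bigr]
\]
minus the term $[1+(m-1)\tau\eta]\frac{\rho_m'}{\rho_m}w^s$. The first bracket is at most $(2+O(N_1\delta))w^s\partial_{\eta\eta}w^s\leq-\frac32C_*$ times $w^s$, while the $\rho_m'$ term contributes at most $\frac{C_*}4w^s$ by \eqref{eq:stability-decay-choice-m}. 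Hence $L\psi\leq -C_*\psi<0$. The boundary behaviour is as before, and the value at $\tau=\tau_0$ is handled by the slab bound once $K\geq e^{\Lambda\tau_0}/\rho_m(\tau_0)$. The maximum principle then yields \eqref{eq:zero-output-m} with $K_1$ independent of $N_i$.

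\textbf{Main obstacle.} The main obstacle is justifying the maximum principle on the unbounded, degenerate domain. The domain is unbounded in $\xi$, and the equation degenerates at $\eta=1$, where $w^2\to0$. To handle this, we would work on $\xi\in[-R,\ln L)$ with inflow data at $\xi=-R$ that tend to $0$, using the convention that the transport $\eta\partial_\xi$ points in the increasing-$\xi$ direction so that no condition is needed at $\xi=\ln L$. At $\eta=1$ we would exploit that $\pm\widehat w-\psi\to0$ and that the ratio $\widehat w/w^s$ is bounded by the bootstrap. The standard device is to add a small strict supersolution, such as $\epsilon e^{\Lambda'\tau}$ times a positive weight, which turns any interior maximum into a contradiction, and then let $\epsilon\to0$ and $R\to\infty$.
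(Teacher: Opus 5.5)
Your proposal is correct and follows the paper's proof essentially step for step: the same perturbation equation, the barrier $\delta e^{\kappa\tau}w^s$ with large $\kappa$ on $[0,\tau_0]$ and $K_1\delta\rho_m w^s$ afterwards, the same Robin computation $-m\delta e^{\kappa\tau}(1/w^s+1/w)<0$ at $\eta=0$, and the same matching constant $K_1=e^{\kappa\tau_0}/\rho_m(\tau_0)$. One small correction: $N_1\delta\leq 1/2$ only guarantees that the factor $w(w+w^s)/(w^s)^2$ is at least $3/4$, not $3/2$, so the large-time bracket is bounded by $-\frac34 C_*w^s$ (as in the paper) rather than $-\frac32 C_*w^s$; this bound still dominates the $C_*/4$ contribution from $\rho_m'$, so the argument closes unchanged.
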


\begin{proof}
Subtracting \eqref{eq:semi-self-similar-crocco-m} from
\eqref{eq:prandtl-crocco-m} gives
\begin{equation}\label{eq:perturbation-direct-m}
\left\{
\begin{aligned}
 &w^2\partial_{\eta\eta}\widehat w
 +m(\eta^2-1)\partial_\eta\widehat w
 -\frac{3m-1}{2}\eta\widehat w
 -\eta\partial_\xi\widehat w
 -[1+(m-1)\tau\eta]\partial_\tau\widehat w\\
 &\qquad +(w+w^s)\partial_{\eta\eta}w^s\widehat w=0,\\
 &\widehat w|_{\eta=1}=0,
 \qquad
 \left.\left(\partial_\eta\widehat w
 -\frac{m}{ww^s}\widehat w\right)\right|_{\eta=0}=0,\\
 &\widehat w|_{\tau=0}=w_0-w_0^s,
 \qquad \left.\widehat w\right|_{\xi\to-\infty}=0.
\end{aligned}
\right.
\end{equation}
Let \(L_1\) denote the differential operator in the first line of \eqref{eq:perturbation-direct-m}, including its zero-order term.
Set
\[
 \psi_1=
 \begin{cases}
  \delta e^{\kappa_1\tau}w^s,
  &0\leq\tau\leq\tau_0,\\
  K_1\delta\rho_m(\tau)w^s,
  &\tau_0<\tau<+\infty.
 \end{cases}
\]

For \(0\leq\tau\leq\tau_0\), substitution gives
\begin{align*}
 L_1 \psi_1 =&\delta e^{\kappa_1\tau}
 \left[(w^2-(w^s)^2)\partial_{\eta\eta}w^s
 +(w+w^s)w^s\partial_{\eta\eta}w^s
 -\kappa_1[1+(m-1)\tau\eta]w^s\right]\\
 =&\delta e^{\kappa_1\tau}w^s
 \left[
 \frac{w(w+w^s)}{(w^s)^2}w^s\partial_{\eta\eta}w^s
 -\kappa_1[1+(m-1)\tau\eta]
 \right].
\end{align*}
Theorem~\ref{thm:ws-regularity} and the bootstrap bounds \eqref{eq:bootstrap-m} imply that
\[
\frac{w(w+w^s)}{(w^s)^2}
w^s\partial_{\eta\eta}w^s
\]
is uniformly bounded. Choose \(\kappa_1>0\) sufficiently large. Then
\[
L_1 \psi_1 \leq-\delta e^{\kappa_1\tau}w^s, \qquad 0\leq\tau\leq\tau_0.
\]
At \(\eta=0\),
\[
 \left.\left(\partial_\eta-\frac{m}{ww^s}\right)
 \psi_1\right|_{\eta=0}
 =-m\delta e^{\kappa_1\tau}
 \left(\frac1{w^s}+\frac1w\right)<0.
\]
Set \(\Phi_{1,\pm}=\psi_1\pm\widehat w\).
The comparison functions satisfy
\[
\left\{
\begin{aligned}
 &L_1\Phi_{1,\pm}
 \leq-\delta e^{\kappa_1\tau}w^s<0,\\
 &\left.\Phi_{1,\pm}\right|_{\eta=1}=0,\qquad
 \left.\left(\partial_\eta\Phi_{1,\pm}
 -\frac{m}{ww^s}\Phi_{1,\pm}\right)\right|_{\eta=0}<0,\\
 &\left.\Phi_{1,\pm}\right|_{\xi\to-\infty}
 =\delta e^{\kappa_1\tau}w^s\geq0,\\
 &\left.\Phi_{1,\pm}\right|_{\tau=0}
 =\delta w_0^s\pm(w_0-w_0^s)\geq0.
\end{aligned}
\right.
\]
The zero-order coefficient is bounded on the finite time slab, so an
exponential conjugation permits the maximum principle there.  Hence
\begin{equation}\label{eq:zero-finite-m}
 |\widehat w|\leq\delta e^{\kappa_1\tau}w^s,
 \qquad0\leq\tau\leq\tau_0.
\end{equation}

For \(\tau\geq\tau_0\), substitution gives
\begin{align*}
 L_1 \psi_1 &=K_1\delta\rho_m w^s
 \left[
 \frac{w(w+w^s)}{(w^s)^2}w^s\partial_{\eta\eta}w^s
 -[1+(m-1)\tau\eta]\frac{\rho_m'}{\rho_m}
 \right]\\
 &\leq \psi_1 \left(-\frac{3}{4}C_*+\frac{C_*}{4}\right)\leq 0,
\end{align*}
using \eqref{eq:stability-decay-choice-m} and \eqref{eq:bootstrap-m}.
At \(\eta=0\),
\[
 \left.\left(\partial_\eta-\frac{m}{ww^s}\right)
 \psi_1\right|_{\eta=0}
 =-mK_1\delta\rho_m
 \left(\frac1{w^s}+\frac1w\right)<0.
\]
The comparison functions satisfy
\[
\left\{
\begin{aligned}
 &L_1\Phi_{1,\pm}
 \leq 0,\\
 &\left.\Phi_{1,\pm}\right|_{\eta=1}=0,\qquad
 \left.\left(\partial_\eta\Phi_{1,\pm}
 -\frac{m}{ww^s}\Phi_{1,\pm}\right)\right|_{\eta=0}<0,\\
 &\left.\Phi_{1,\pm}\right|_{\xi\to-\infty}
 =K_1\delta\rho_m w^s\geq0,\\
 &\left.\Phi_{1,\pm}\right|_{\tau=\tau_0}
 \geq\left[K_1\rho_m(\tau_0)-e^{\kappa_1\tau_0}\right]
 \delta w^s=0
\end{aligned}
\right.
\]
for $K_1=e^{\kappa_1\tau_0} \rho_m^{-1}(\tau_0)$.
The maximum principle gives
\[
 |\widehat w|\leq K_1\delta\rho_m(\tau)w^s,
 \qquad\tau\geq\tau_0.
\]
Together with the definition of \(K_1\), this proves \eqref{eq:zero-output-m}.
\end{proof}

\begin{proposition}\label{prop:stability-tangential}
Let \(w\) be a smooth solution of \eqref{eq:prandtl-crocco-m} satisfying
\eqref{eq:bootstrap-m}.  There exists
\(K_2=K_2(m,E^*,\tau_0)>0\) such that
\begin{equation}\label{eq:tangential-output-m}
 |\partial_\xi\widehat w(\tau,\xi,\eta)|
 \leq K_2\delta\rho_m(\tau)w^s(\tau,\eta).
\end{equation}
\end{proposition}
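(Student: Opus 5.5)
Since $w^s$ does not depend on $\xi$, we have $\partial_\xi\widehat w=\partial_\xi w$. We differentiate the full Crocco equation \eqref{eq:prandtl-crocco-m} in $\xi$ and compare $q:=\partial_\xi w$ with barriers of the same form as in Proposition~\ref{prop:stability-zero}. All coefficients of \eqref{eq:prandtl-crocco-m} are independent of $\xi$. Hence $q$ satisfies the linear equation
\[
 w^2\partial_{\eta\eta}q+m(\eta^2-1)\partial_\eta q-\frac{3m-1}{2}\eta q-\eta\partial_\xi q-[1+(m-1)\tau\eta]\partial_\tau q+2w\partial_{\eta\eta}w\,q=0 .
\]
Differentiating the boundary conditions gives $q|_{\eta=1}=0$ and $\bigl(\partial_\eta q-\tfrac{m}{w^2}q\bigr)|_{\eta=0}=0$. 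The entrance condition $w\to w^s$ as $\xi\to-\infty$ gives $q\to0$ there. The initial value $q|_{\tau=0}=\partial_\xi w_0$ satisfies $|q|_{\tau=0}|\le\delta\overline w_m\le\delta w^s$, because $\partial_\xi w_0^s=0$ and the $\mathbb X_m$-norm contains the $\partial_\xi$-term. Call $L_2$ the operator above, with zero-order coefficient $2w\partial_{\eta\eta}w-\frac{3m-1}{2}\eta$.

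The barrier is $\psi_2=\delta e^{\kappa_2\tau}w^s$ on $[0,\tau_0]$ and $\psi_2=K_2\delta\rho_m(\tau)w^s$ for $\tau>\tau_0$, with $K_2=e^{\kappa_2\tau_0}\rho_m(\tau_0)^{-1}$. Using the equation for $w^s$ and the fact that $w^s$ is independent of $\xi$, we compute
\[
 L_2(\rho w^s)=\rho w^s\Big[(w^2-(w^s)^2)\frac{\partial_{\eta\eta}w^s}{w^s}+2w\partial_{\eta\eta}w-[1+(m-1)\tau\eta]\frac{\rho'}{\rho}\Big].
\]
We estimate the bracket with the bootstrap bounds \eqref{eq:bootstrap-m}. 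The first term equals $\frac{w+w^s}{w^s}\frac{\widehat w}{w^s}\,w^s\partial_{\eta\eta}w^s$, which is $O(N_1\delta)$. For the second term we write
\[
 2w\partial_{\eta\eta}w=2w^s\partial_{\eta\eta}w^s+2\widehat w\,\partial_{\eta\eta}w+2w^s\partial_{\eta\eta}\widehat w ,
\]
and the last two pieces are $O((N_1+N_4)\delta)$, because $w^s|\overline w_m''|\le C$ by Lemma~\ref{self-similar-properties}. On $[0,\tau_0]$ the whole bracket is bounded, since $w^s\partial_{\eta\eta}w^s$ is bounded there by Theorem~\ref{thm:ws-regularity}, so a large $\kappa_2$ makes $L_2\psi_2<0$. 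For $\tau\ge\tau_0$, Proposition~\ref{prop:dynamical-strict-concavity} gives $2w^s\partial_{\eta\eta}w^s\le-2C_*$. Together with \eqref{eq:stability-decay-choice-m} this gives bracket $\le -2C_*+C_*/4+C(N_1+N_4)\delta<0$, once $\delta$ is small relative to the $N_i$; this is consistent with the bootstrap requirement $N_i\delta\le 1/2$, after shrinking $\delta_0$. At the wall, $(\partial_\eta-\frac{m}{w^2})w^s=-\frac{m}{w^s}-\frac{mw^s}{w^2}<0$. Therefore $\Phi_{2,\pm}=\psi_2\pm q$ satisfies the Robin inequality in the right direction. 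It also vanishes at $\eta=1$, is nonnegative as $\xi\to-\infty$, and is nonnegative at $\tau=0$ and, by the choice of $K_2$, at the matching time $\tau=\tau_0$.

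Applying the maximum principle for the ultraparabolic operator $L_2$ on each slab, exactly as in Proposition~\ref{prop:stability-zero}, gives $|q|\le\psi_2$ and hence \eqref{eq:tangential-output-m}. On the finite slab we use exponential conjugation to absorb the bounded zero-order term. The transport term $-\eta\partial_\xi$ has a sign that makes $\xi=-\infty$ the inflow boundary, which is where $q\to0$ is prescribed. The step I expect to be the main obstacle is controlling the zero-order coefficient $2w\partial_{\eta\eta}w$. Unlike in the zero-order estimate, it involves the second derivative of the perturbed solution. Its sign after $\tau_0$ must therefore be inherited from the strict concavity of $w^s$ through the bootstrap bound with $N_4$, rather than read off directly. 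This forces the ordering in which $\tau_0$ is fixed first, then $K_2$, and finally $\delta$ is chosen small depending on $N_4$. A secondary technical point is that the maximum principle on the unbounded $\xi$-domain, near the degenerate edge $\eta=1$, needs a cutoff or strict-barrier approximation, as in the earlier propositions.
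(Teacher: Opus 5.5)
Your proof is correct and is essentially the paper's argument. It uses the same $\xi$-differentiated equation with zero-order coefficient $2w\partial_{\eta\eta}w-\frac{3m-1}{2}\eta$, the same two-piece barrier $\psi_2$ with $K_2=e^{\kappa_2\tau_0}\rho_m(\tau_0)^{-1}$, and the same Robin, entrance, and matching checks. The one difference is in how you absorb the perturbative part of $2w\partial_{\eta\eta}w$ for $\tau\geq\tau_0$. You take $\delta$ small relative to $N_1,N_4$, which means strengthening $N_i\delta\leq 1/2$ when $\delta_0$ is chosen. The paper instead keeps the factor $\rho_m(\tau)\leq\rho_m(\tau_0)$ and enlarges $\tau_0$, so the proposition holds under the bare bootstrap assumption $N_i\delta\leq 1/2$. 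Either ordering closes the bootstrap.
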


\begin{proof}
Set \(\widehat q=\partial_\xi\widehat w\).  Differentiation of
\eqref{eq:prandtl-crocco-m} with respect to \(\xi\) gives
\begin{equation}\label{eq:tangential-direct-m}
\left\{
\begin{aligned}
 &w^2\partial_{\eta\eta}\widehat q
 +m(\eta^2-1)\partial_\eta\widehat q
 -\eta\partial_\xi\widehat q -[1+(m-1)\tau\eta]\partial_\tau\widehat q\\
 &\qquad -\frac{3m-1}{2}\eta\widehat q +2w\partial_{\eta\eta}w\widehat q=0,\\
 &\widehat q|_{\eta=1}=0,
 \qquad
 \left.\left(\partial_\eta\widehat q
 -\frac{m}{w^2}\widehat q\right)\right|_{\eta=0}=0,\\
 &\widehat q|_{\tau=0}=\partial_\xi w_0,
 \qquad
 \left.\widehat q\right|_{\xi\to-\infty}=0.
\end{aligned}
\right.
\end{equation}
Let \(L_2\) denote the differential operator in the first line of
\eqref{eq:tangential-direct-m}, including its zero-order term.
Set
\[
 \psi_2=
 \begin{cases}
  \delta e^{\kappa_2\tau}w^s,
  &0\leq\tau\leq\tau_0,\\
  K_2\delta\rho_m(\tau)w^s,
  &\tau_0<\tau<+\infty.
 \end{cases}
\]

For \(0\leq\tau\leq\tau_0\), substitution gives
\begin{align*}
 L_2 \psi_2 =&\delta e^{\kappa_2\tau}
 \left[(w^2-(w^s)^2)\partial_{\eta\eta}w^s
 +2w\partial_{\eta\eta}w\,w^s
 -\kappa_2[1+(m-1)\tau\eta]w^s\right]\\
 \leq & \psi_2 \left[\frac{w^2-(w^s)^2}{(w^s)^2} w^s \partial_{\eta\eta}w^s +2w\partial_{\eta\eta}w
 -\kappa_2\right].
\end{align*}
Theorem~\ref{thm:ws-regularity} and the bootstrap bounds \eqref{eq:bootstrap-m} imply that
\[
\frac{w^2-(w^s)^2}{(w^s)^2} w^s \partial_{\eta\eta}w^s +2w\partial_{\eta\eta}w
\]
is uniformly bounded. Choose \(\kappa_2>0\) sufficiently large. Then
\[
L_2 \psi_2 \leq-\delta e^{\kappa_2\tau}w^s, \qquad 0\leq\tau\leq\tau_0.
\]
At \(\eta=0\),
\[
 \left.\left(\partial_\eta-\frac{m}{w^2}\right)
 \psi_2\right|_{\eta=0}
 =-m\delta e^{\kappa_2\tau}
 \left(\frac1{w^s}+\frac{w^s}{w^2}\right)<0.
\]
Set \(\Phi_{2,\pm}=\psi_2\pm\widehat q\). The comparison functions satisfy
\[
\left\{
\begin{aligned}
 &L_2\Phi_{2,\pm}
 \leq-\delta e^{\kappa_2\tau}w^s<0,\\
 &\left.\Phi_{2,\pm}\right|_{\eta=1}=0,\qquad
 \left.\left(\partial_\eta\Phi_{2,\pm}
 -\frac{m}{w^2}\Phi_{2,\pm}\right)\right|_{\eta=0}<0,\\
 &\left.\Phi_{2,\pm}\right|_{\xi\to-\infty}
 =\delta e^{\kappa_2\tau}w^s\geq0,\\
 &\left.\Phi_{2,\pm}\right|_{\tau=0}
 =\delta w_0^s\pm\partial_\xi w_0\geq0.
\end{aligned}
\right.
\]
The zero-order coefficient is bounded on the finite time slab.  The maximum
principle, after an exponential conjugation, gives
\begin{equation}\label{eq:tangent-finite-m}
 |\partial_\xi\widehat w|=|\widehat q|
 \leq\delta e^{\kappa_2\tau}w^s,
 \qquad0\leq\tau\leq\tau_0.
\end{equation}

For \(\tau\geq\tau_0\), \eqref{eq:bootstrap-m},
\eqref{eq:stability-decay-choice-m}, and Proposition~\ref{prop:stability-zero}
give, after increasing \(\tau_0\) if necessary,
\[
 w\partial_{\eta\eta}w
 =\frac{w}{w^s}w^s\partial_{\eta\eta}w^s
 +w\partial_{\eta\eta}\widehat w
 \leq-\frac{C_*}{2}+CN_4\delta\rho_m(\tau)
 \leq-\frac{C_*}{4}
\]
and
\[
 \left|(w^2-(w^s)^2)\partial_{\eta\eta}w^s\right| \leq\frac{C_*}{8}w^s.
\]
Substitution into \eqref{eq:tangential-direct-m} yields
\begin{align*}
 L_2 \psi_2 &=K_2\delta\rho_m w^s
 \left[
 \frac{w^2-(w^s)^2}{(w^s)^2}w^s\partial_{\eta\eta}w^s +2w \partial_{\eta\eta} w
 -[1+(m-1)\tau\eta]\frac{\rho_m'}{\rho_m}
 \right]\\
 &\leq \psi_2 \left(\frac{C_*}{8}-\frac{C_*}{2}+\frac{C_*}{4}\right)\leq 0.
\end{align*}
At \(\eta=0\),
\[
 \left.\left(\partial_\eta-\frac{m}{w^2}\right)
 \psi_2\right|_{\eta=0}
 =-mK_2\delta\rho_m
 \left(\frac1{w^s}+\frac{w^s}{w^2}\right)<0.
\]
The comparison functions satisfy
\[
\left\{
 \begin{aligned}
 &L_2\Phi_{2,\pm} \leq 0,\\
 &\left.\Phi_{2,\pm}\right|_{\eta=1}=0,\qquad
 \left.\left(\partial_\eta\Phi_{2,\pm}
 -\frac{m}{w^2}\Phi_{2,\pm}\right)\right|_{\eta=0}<0,\\
 &\left.\Phi_{2,\pm}\right|_{\xi\to-\infty}
 =K_2\delta\rho_m w^s\geq0,\\
 &\left.\Phi_{2,\pm}\right|_{\tau=\tau_0}
 \geq\left[K_2\rho_m(\tau_0)-e^{\kappa_2\tau_0}\right]
 \delta w^s=0 
 \end{aligned}
\right.
\]
for $K_2=e^{\kappa_2\tau_0} \rho_m^{-1}(\tau_0)$. The maximum principle and \eqref{eq:tangent-finite-m} give
\[
 |\partial_\xi\widehat w|
 \leq K_2\delta\rho_m(\tau)w^s,
 \qquad\tau\geq\tau_0.
\]
Together with the definition of \(K_2\), this proves
\eqref{eq:tangential-output-m}.
\end{proof}

\begin{proposition}\label{prop:stability-normal}
Let \(w\) be a smooth solution of \eqref{eq:prandtl-crocco-m} satisfying
\eqref{eq:bootstrap-m}.  There exist constants
\[
 K_3=K_3(m,E^*,\tau_0)>0,\qquad K_4=K_4(m,E^*)>0,
\]
\[
 K_5=K_5(m,E^*,\tau_0)>0,\qquad K_6=K_6(m,E^*)>0
\]
such that
\begin{equation}\label{eq:normal-output-m}
 |w\partial_\eta\widehat w|
 \leq (K_3+K_4N_3)\delta\rho_m(\tau)
 (-\overline w_m\overline w_m'),
\end{equation}
and
\begin{equation}\label{eq:time-output-m}
 [1+(m-1)\tau]|\partial_\tau\widehat w|
 \leq[K_5+K_6\rho_m(\tau_0)N_2]
 \delta\rho_m(\tau)w^s.
\end{equation}
\end{proposition}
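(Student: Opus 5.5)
We treat the two estimates in the order dictated by their dependence structure, which is visible in the constants. The bound \eqref{eq:normal-output-m} for $w\partial_\eta\widehat w$ is allowed to depend on the bootstrap constant $N_3$ of the time derivative. The bound \eqref{eq:time-output-m} for $\partial_\tau\widehat w$ may depend on $N_2$ only through $\rho_m(\tau_0)N_2$. This suggests deriving both from the perturbation equation \eqref{eq:perturbation-direct-m} by treating the time and tangential derivatives as sources, as in Proposition~\ref{prop:optimal-derivative-convergence}. Propositions~\ref{prop:stability-zero} and~\ref{prop:stability-tangential} are taken as already available, so $\widehat w$ and $\partial_\xi\widehat w$ are bounded by $C\delta\rho_m w^s$ with $N$-independent constants.

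\textbf{Normal derivative.} For fixed $(\tau,\xi)$, rewrite \eqref{eq:perturbation-direct-m} as
\[
 w^2\partial_{\eta\eta}\widehat w+m(\eta^2-1)\partial_\eta\widehat w=G,
\]
with
\[
 G=\frac{3m-1}{2}\eta\widehat w+\eta\partial_\xi\widehat w+[1+(m-1)\tau\eta]\partial_\tau\widehat w-(w+w^s)\partial_{\eta\eta}w^s\,\widehat w .
\]
By Propositions~\ref{prop:stability-zero},~\ref{prop:stability-tangential}, Theorem~\ref{thm:ws-regularity}, and the fourth line of \eqref{eq:bootstrap-m},
\[
 |G|\leq (C+N_3)\delta\rho_m\overline w_m .
\]
Next use the integrating factor $\varrho=\exp\int_0^\eta m(\zeta^2-1)w^{-2}\,d\zeta$, so that $\partial_\eta(\varrho\partial_\eta\widehat w)=\varrho w^{-2}G$. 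Then integrate from $\eta$ to $1$, as in \eqref{eq:ode-pre}. Here the bootstrap bound on $\partial_\eta\widehat w$ (line three of \eqref{eq:bootstrap-m}) makes the boundary term $\varrho\partial_\eta\widehat w$ vanish at $\eta=1$. The key inequality $-\partial_\eta(\varrho\sigma_m)\geq c\varrho/((1-\eta)\sigma_m)$ near $\eta=1$ follows as before, since $w$ is comparable to $\overline w_m$ with constants close to those of $w^s$. This gives $|\partial_\eta\widehat w|\leq (K_3+K_4N_3)\delta\rho_m\sigma_m$, and multiplying by $w\leq C\overline w_m$ and using Lemma~\ref{self-similar-properties} yields \eqref{eq:normal-output-m}. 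This argument is uniform in $\tau$, so no split at $\tau_0$ is needed here. Alternatively, one could run a maximum-principle barrier on $w\partial_\eta\widehat w$ mirroring Proposition~\ref{prop:weighted-first-derivative}, but the ODE route keeps $K_4$ independent of $\tau_0$.

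\textbf{Time derivative.} Set $\widehat p=\partial_\tau\widehat w$ and differentiate \eqref{eq:prandtl-crocco-m} and \eqref{eq:semi-self-similar-crocco-m} in $\tau$ and subtract. The equation for $\widehat p$ has principal part $L=w^2\partial_{\eta\eta}+m(\eta^2-1)\partial_\eta-\eta\partial_\xi-[1+(m-1)\tau\eta]\partial_\tau$ and zero-order term $2w\partial_{\eta\eta}w-\frac{5m-3}{2}\eta$. It also carries the sources $(w^2-(w^s)^2)\partial_{\eta\eta}\partial_\tau w^s$ and $2(w\partial_{\eta\eta}w-w^s\partial_{\eta\eta}w^s)\partial_\tau w^s$. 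The wall condition is linearized Robin with an inhomogeneity quadratic in $\widehat w$. The second source contains $w\partial_{\eta\eta}\widehat w\,\partial_\tau w^s$, which is controlled via the equation by $w^{-1}[G+m(1-\eta^2)\partial_\eta\widehat w]$. This is where $N_2$ enters, multiplied by $|\partial_\tau w^s|\leq C\rho_m(\tau)w^s/(1+(m-1)\tau)$. For $\tau\geq\tau_0$, this provides the extra small factor $\rho_m(\tau_0)$ and the product $\rho_m(\tau)^2\leq\rho_m(\tau_0)\rho_m(\tau)$. The first source is bounded by $C\delta\rho_m w^s$ using Theorem~\ref{thm:higher-order-regularity}, namely $|\partial_{\tau\eta\eta}w^s|$. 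Note that the term $\partial_{\eta\eta}\widehat w$ would otherwise need $N_4$; eliminating it through the equation avoids this, at the cost of reintroducing $\partial_\tau\widehat w$ times a small factor $\rho_m/(1+(m-1)\tau)$, which can be absorbed into the zero-order term.

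\textbf{Barrier argument.} Then compare with the barrier $\psi=\delta e^{\kappa\tau}w^s$ on $[0,\tau_0]$ and with $\psi=K\delta\rho_m(\tau)w^s/(1+(m-1)\tau)$ for $\tau\geq\tau_0$, exactly as in Propositions~\ref{prop:stability-zero} and~\ref{prop:optimal-derivative-convergence}. On the late interval, the zero-order term satisfies $2w\partial_{\eta\eta}w\leq-C_*/2$, by strict concavity plus the bootstrap. Together with \eqref{eq:stability-decay-choice-m}, this leaves a margin $-cC_*\psi$ that absorbs the sources of size $(C+CN_2\rho_m(\tau_0))\delta\rho_m w^s/(1+(m-1)\tau)$. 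The main obstacle is to verify that every $N$-dependent source term on $[\tau_0,\infty)$ indeed carries either $\rho_m(\tau_0)$ or a factor that vanishes with $\delta$. The boundary inhomogeneity at $\eta=0$ and the term $(m-1)\tau\partial_\tau\widehat w$ weight need care, since the barrier's time factor $1/(1+(m-1)\tau)$ produces the additional positive term $(m-1)(1-\eta)/(1+(m-1)\tau)$, which again must be dominated by $C_*/4$ after enlarging $\tau_0$.
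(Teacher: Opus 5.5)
There is a genuine gap on the initial slab $[0,\tau_0]$. Your large-time arguments are sound. On $[\tau_0,\infty)$ the slice-wise integrating-factor bound is even a legitimate alternative to a barrier for $w\partial_\eta\widehat w$, and it needs no matching at $\tau_0$.

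\textbf{The integrating-factor route is not uniform in $\tau$.} Near $\eta=1$,
\[
 -\partial_\eta(\varrho\sigma_m)=\frac{\varrho}{(1-\eta)\sigma_m}\Bigl(\frac{m(1+\eta)(1-\eta)^2\sigma_m^2}{w^2}-\frac12\Bigr).
\]
So the lower bound you need holds only if asymptotically $w<2\sqrt m\,(1-\eta)\sigma_m$. Since $\overline w_m\sim\sqrt{m+1}\,(1-\eta)\sigma_m$, this means $w/\overline w_m$ must stay below $2\sqrt{m/(m+1)}$ at the edge; comparability with $\overline w_m$ is not enough.

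During the transient this fails in general. Take $w_0^s=\overline w_mG$ with $G=\lambda-A(1-\eta)^2$ and $A\in(0,\lambda-1)$ fixed by the Robin condition. This datum satisfies all hypotheses for every $\lambda\geq1$. Moreover, $\partial_\tau w^s\geq-D_3w^s$ gives $w^s\geq e^{-D_3\tau}w_0^s$, so the edge ratio stays near $\lambda$ for small $\tau$. If $w\approx a(1-\eta)\sigma_m$ at the edge, then $\varrho\approx C\sigma_m^{-4m/a^2}$. For $a\geq2\sqrt m$ the integral $\int_\eta^1\varrho\,d\zeta/((1-\zeta)\sigma_m)$ diverges, and the bootstrap no longer forces $\varrho\,\partial_\eta\widehat w\to0$ at $\eta=1$. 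This is exactly why Proposition~\ref{prop:optimal-derivative-convergence} uses this argument only for $\tau\geq\tau_0$, after enlarging $\tau_0$.

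\textbf{On $[0,\tau_0]$ the two estimates are coupled.} This is the more fundamental problem. The equation for $\widehat z=w\partial_\eta\widehat w$ contains the source $(m-1)\tau w\,\partial_\tau\widehat w$. The equation for $\widehat p=\partial_\tau\widehat w$ contains $\frac{2m(1-\eta^2)(-\partial_\tau w^s)}{w^2}\widehat z$, which is your $N_2$-source.

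Before $\tau_0$ this source carries no small factor, so the barrier $\delta e^{\kappa\tau}w^s$ cannot absorb it. You would need $(A+BN_2)\delta e^{\kappa\tau}w^s$ instead. After matching with the late barrier at $\tau_0$, the coefficient of $N_2$ is then of order $e^{\kappa\tau_0}[1+(m-1)\tau_0]/\rho_m(\tau_0)$, not $K_6\rho_m(\tau_0)$ with $K_6$ independent of $\tau_0$. Symmetrically, bounding $\widehat p$ by $N_3$ in a slab estimate for the normal derivative makes the $N_3$-coefficient depend on $\tau_0$. Either way the stated form of the constants is lost, and with it the closing condition $16C_0^2K_4K_6\rho_m(\tau_0)\leq\frac12$.

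\textbf{The missing idea is cooperativity.} Both coupling coefficients, $(m-1)\tau w$ and $2m(1-\eta^2)(-\partial_\tau w^s)/w^2$, are nonnegative, so $(\widehat z,\widehat p)$ is a cooperative system. Compare the four functions $D\delta e^{\kappa\tau}(-\overline w_m\overline w_m')\pm\widehat z$ and $D\delta e^{\kappa\tau}w\pm\widehat p$ simultaneously, and argue at the first time any one of them touches zero. This gives $N$-independent bounds $C_{\tau_0}\delta$ on $[0,\tau_0]$. These bounds are the matching data at $\tau_0$ for the large-time barriers, and only there do $N_3$ and $\rho_m(\tau_0)N_2$ enter.
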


\begin{proof}
\medskip\noindent\textbf{Step 1. Equations for the normal and time derivatives.}
Set
\[
 \widehat z=w\partial_\eta\widehat w,
 \qquad \widehat p=\partial_\tau\widehat w.
\]
Differentiating \eqref{eq:perturbation-direct-m} with respect to \(\eta\)
and multiplying by \(w\) give
\begin{equation}\label{eq:normal-identity-m}
\left\{
\begin{aligned}
 &w^2\partial_{\eta\eta}\widehat z
 +m(\eta^2-1)\partial_\eta\widehat z
 -\eta\partial_\xi\widehat z
 -[1+(m-1)\tau\eta]\partial_\tau\widehat z\\
 &\qquad+
 \left[2w\partial_{\eta\eta}w^s-(m-1)\eta\right]\widehat z
 =g_3,\\
 &\widehat z|_{\eta=1}=0,
 \qquad \widehat z|_{\eta=0}
 =\left.\frac{m}{w^s}\widehat w\right|_{\eta=0},\\
 &\widehat z|_{\tau=0}=w_0\partial_\eta(w_0-w_0^s),
 \qquad \left.\widehat z\right|_{\xi\to-\infty}=0,
\end{aligned}
\right.
\end{equation}
where
\begin{equation*}
 g_3=w\partial_\xi\widehat w+(m-1)\tau w\widehat p-w\left[
 2\partial_\eta w^s\partial_{\eta\eta}w^s
 +(w+w^s)\partial_{\eta\eta\eta}w^s
 -\frac{3m-1}{2}\right]\widehat w.
\end{equation*}
Differentiating \eqref{eq:perturbation-direct-m} with respect to \(\tau\)
and collecting the terms containing \(\widehat p\) on the left give
\begin{equation}\label{eq:time-identity-m}
\left\{
\begin{aligned}
 &w^2\partial_{\eta\eta}\widehat p
 +m(\eta^2-1)\partial_\eta\widehat p
 -\eta\partial_\xi\widehat p
 -[1+(m-1)\tau\eta]\partial_\tau\widehat p\\
 &\quad+
 \left[2w\partial_{\eta\eta}w-\frac{5m-3}{2}\eta
 +\frac{2[1+(m-1)\tau\eta]\partial_\tau w^s}{w}\right]
 \widehat p=g_4,\\
 &\widehat p|_{\eta=1}=0, \qquad \left.\left(\partial_\eta\widehat p
 -\frac{m}{w^2}\widehat p\right)\right|_{\eta=0}
 =-\left.\frac{m(w+w^s)\widehat w}{w^2(w^s)^2}
 \partial_\tau w^s\right|_{\eta=0},\\
 &\widehat p|_{\tau=0}
 =\partial_\tau w_0 -\partial_\tau w^s_0,
 \qquad \left.\widehat p\right|_{\xi\to-\infty}=0,
\end{aligned}
\right.
\end{equation}
where
\begin{align*}
 g_4={}&-\frac{2\partial_\tau w^s}{w}
 \left\{\eta\partial_\xi\widehat w
 -m(\eta^2-1)\partial_\eta\widehat w
 -\left[(w+w^s)\partial_{\eta\eta}w^s
 -\frac{3m-1}{2}\eta\right]\widehat w\right\}\\
 &-\left[2\partial_\tau w^s\partial_{\eta\eta}w^s
 +(w+w^s)\partial_{\tau\eta\eta}w^s\right]\widehat w.
\end{align*}
Let \(L_3\) and \(L_4\) denote the differential operators on the left-hand
sides of \eqref{eq:normal-identity-m} and \eqref{eq:time-identity-m},
respectively.

\medskip\noindent\textbf{Step 2. The finite time interval.}
For \(0\leq \tau \leq \tau_0\), separate the coupling terms as
\[
 g_3=(m-1)\tau w\widehat p+\widetilde g_3,
 \qquad
 g_4=
 \frac{2m(1-\eta^2)(-\partial_\tau w^s)}{w^2}
 \widehat z+\widetilde g_4.
\]
The two coupling coefficients are nonnegative. Propositions~
\ref{prop:stability-zero} and~\ref{prop:stability-tangential}, and
\eqref{eq:bootstrap-m} give
\[
 |\widetilde g_3|
 \leq C_{\tau_0}\delta
 (-\overline w_m\overline w_m'),
 \qquad
 |\widetilde g_4|
 \leq C_{\tau_0}\delta w.
\]
Take
\[
 \psi_3=D\delta e^{\kappa\tau}
 (-\overline w_m\overline w_m'),
 \qquad
 \psi_4=D\delta e^{\kappa\tau}w,
\]
where \(D\geq1\) and \(\kappa>0\) will be chosen below. Direct substitution gives
\begin{align*}
 L_3\psi_3={}&D\delta e^{\kappa\tau}
 \biggl\{
 w^2(-\overline w_m\overline w_m')''
 +m(\eta^2-1)(-\overline w_m\overline w_m')'\\
 &+
 \bigl[
 2w\partial_{\eta\eta}w^s-(m-1)\eta
 -\kappa(1+(m-1)\tau\eta)
 \bigr]
 (-\overline w_m\overline w_m')
 \biggr\}\\
 ={}&D\delta e^{\kappa\tau}
 \biggl\{(w^2-\overline w_m^2) (-\overline w_m\overline w_m')'' -\frac{3m-1}{2} \overline w_m^2\\
 &+[2w\partial_{\eta\eta}w^s-\kappa(1+(m-1)\tau\eta)] (-\overline w_m\overline w_m')\biggr\}\\
 \leq{}&-D\delta e^{\kappa\tau}(\kappa-C_{\tau_0})
 (-\overline w_m\overline w_m'),
\end{align*}
and, using the equation for \(w\),
\begin{align*}
 L_4\psi_4
 ={}&D\delta e^{\kappa\tau}w
 \left[2w\partial_{\eta\eta}w-(m-1)\eta
 +\frac{2[1+(m-1)\tau\eta]\partial_\tau w^s}{w}
 -\kappa[1+(m-1)\tau\eta]\right]\\
 \leq{}&-D\delta e^{\kappa\tau}(\kappa-C_{\tau_0})w.
\end{align*}
The profile estimates also give
\[
\begin{aligned}
 (m-1)\tau w\psi_4
 &\leq C_{\tau_0}D\delta e^{\kappa\tau}
 (-\overline w_m\overline w_m'),\\
 \frac{2m(1-\eta^2)(-\partial_\tau w^s)}{w^2}\psi_3
 &\leq C_{\tau_0}D\delta e^{\kappa\tau}w.
\end{aligned}
\]
Choosing $\kappa\geq \max\{\kappa_1, 3C_{\tau_0}+1\}$ gives
\begin{align*}
  &L_3\psi_3+(m-1)\tau w\psi_4\pm\widetilde g_3\leq-D\delta e^{\kappa\tau} (-\overline w_m\overline w_m')<0,\\
  &L_4\psi_4+\frac{2m(1-\eta^2)(-\partial_\tau w^s)}{w^2}\psi_3\pm\widetilde g_4\leq-D\delta e^{\kappa\tau}w<0.
\end{align*}
Choose \(D=D(m,E^*,\tau_0)\) sufficiently large so that
\[
 \psi_3|_{\tau=0}\geq|\widehat z|_{\tau=0},
 \qquad
 \psi_4|_{\tau=0}\geq|\widehat p|_{\tau=0},
\]
and so that the wall inequalities below hold.

Set
\[
 \Phi_{3,\pm}=\psi_3\pm\widehat z,
 \qquad
 \Phi_{4,\pm}=\psi_4\pm\widehat p.
\]
The equations for \(\widehat z\) and \(\widehat p\) give
\[
\left\{
\begin{aligned}
 &L_3\Phi_{3,+} +(m-1)\tau w\Phi_{4,-}<0,\\
 &L_3\Phi_{3,-}+(m-1)\tau w\Phi_{4,+}<0,\\
 &L_4\Phi_{4,+}+\frac{2m(1-\eta^2)(-\partial_\tau w^s)}{w^2}\Phi_{3,-}<0,\\
 &L_4\Phi_{4,-}+\frac{2m(1-\eta^2)(-\partial_\tau w^s)}{w^2}\Phi_{3,+}<0.
\end{aligned}
\right.
\]
Moreover,
\[
\left\{
\begin{aligned}
 &\left.\Phi_{3,\pm}\right|_{\eta=1}=0, \qquad  \left.\Phi_{3,\pm}\right|_{\eta=0}\geq m\delta\bigl(De^{\kappa\tau}-e^{\kappa_1\tau}\bigr)\geq0,\\
 &\left.\Phi_{3,\pm}\right|_{\tau=0}\geq0, \qquad \left.\Phi_{3,\pm}\right|_{\xi\to-\infty}
 =\psi_3\geq0,\\
 &\left.\Phi_{4,\pm}\right|_{\eta=1}=0, \qquad \left.\left(\partial_\eta\Phi_{4,\pm}-\frac{m}{w^2}\Phi_{4,\pm} \right)\right|_{\eta=0} \leq
\left.
\frac{\delta}{w}
\left(
-2mDe^{\kappa\tau}
+C_{\tau_0}e^{\kappa_1\tau}
\right)\right|_{\eta=0}\leq 0,\\
 &\left.\Phi_{4,\pm}\right|_{\tau=0}\geq0, \qquad \left.\Phi_{4,\pm}\right|_{\xi\to-\infty} =\psi_4\geq0.
\end{aligned}
\right.
\]
Assume, to the contrary, that one of the four comparison functions becomes negative. Let \(\tau_*\) be the first time at which any one of them loses nonnegativity. The boundary inequalities imply that the first contact is interior in \(\eta\) and may occur either in the interior of the \(\xi\)-interval or at its outflow endpoint \(\xi=\ln L\). Suppose, for example, that
\[
\Phi_{3,+}(\tau_*,\xi_*,\eta_*)=0.
\]
By the definition of \(\tau_*\), all four comparison functions are nonnegative at \(\tau=\tau_*\); in particular,
\[
\Phi_{4,-}(\tau_*,\xi_*,\eta_*)\geq0.
\]
At the fixed time \(\tau_*\), the contact point is a spatial interior minimum of \(\Phi_{3,+}\). Hence
\[
\partial_{\eta\eta}\Phi_{3,+}\geq0,
\qquad
\partial_\eta\Phi_{3,+}
=0,\qquad \partial_\xi\Phi_{3,+}\leq 0.
\]
Since \(\tau_*\) is the first contact time,
\[
\partial_\tau\Phi_{3,+}\leq0.
\]
The zero-order term vanishes because \(\Phi_{3,+}=0\). Therefore, at the contact point,
\[
L_3\Phi_{3,+}=w^2\partial_{\eta\eta}\Phi_{3,+}-\eta_* \partial_\xi\Phi_{3,+} -[1+(m-1)\tau_*\eta_*]\partial_\tau\Phi_{3,+}\geq0.
\]
Since \((m-1)\tau_*w\geq0\) and \(\Phi_{4,-}\geq0\), it follows that
\[
L_3\Phi_{3,+}
+(m-1)\tau_*w\Phi_{4,-}\geq0,
\]
contradicting the first inequality of the comparison system. The other three cases are identical. Therefore,
\[
 |\widehat z|\leq\psi_3,
 \qquad
 |\widehat p|\leq\psi_4,
 \qquad 0\leq\tau\leq\tau_0.
\]
Consequently,
\begin{equation}\label{eq:finite-slab-normal-time-m}
 \frac{|\widehat z|}
 {-\overline w_m\overline w_m'}
 +\frac{|\widehat p|}{w^s}
 \leq C_{\tau_0}\delta,
 \qquad 0\leq\tau\leq\tau_0.
\end{equation}

\medskip\noindent\textbf{Step 3. The normal derivative for large time.}
Propositions~\ref{prop:stability-zero} and~
\ref{prop:stability-tangential} give
\[
 |w\partial_\xi\widehat w|
 +\left|w\left[
 2\partial_\eta w^s\partial_{\eta\eta}w^s
 +(w+w^s)\partial_{\eta\eta\eta}w^s
 -\frac{3m-1}{2}\right]\widehat w\right|
 \leq C_{\tau_0}\delta\rho_m
 (-\overline w_m\overline w_m').
\]
The bootstrap estimate for \(\widehat p\) gives
\[
 (m-1)\tau w|\widehat p|
 \leq\frac{(m-1)\tau}{1+(m-1)\tau}
 N_3\delta\rho_m ww^s
 \leq CN_3\delta\rho_m
 (-\overline w_m\overline w_m').
\]
Thus
\begin{equation}\label{eq:normal-source-bound-m}
 |g_3|\leq (C_{\tau_0}+CN_3)\delta\rho_m (-\overline w_m\overline w_m').
\end{equation}
Here the coefficient \(C\) of \(N_3\) is independent of \(\tau_0\).

Take
\[
 \psi_5=(K_3+K_4N_3)\delta\rho_m
 (-\overline w_m\overline w_m'),
\]
where \(K_3,K_4>0\) will be chosen below. 
Substitution gives
\begin{align*}
 L_3\psi_5
 ={}&(K_3+K_4N_3)\delta\rho_m\left[
 (w^2-\overline w_m^2)(-\overline w_m\overline w_m')''
 -\frac{3m-1}{2}\overline w_m^2\right.\\
 &\left.\quad
 +2w\partial_{\eta\eta}w^s(-\overline w_m\overline w_m')
 -[1+(m-1)\tau\eta]\frac{\rho_m'}{\rho_m}
 (-\overline w_m\overline w_m')\right]\\
 \leq{}&-c(K_3+K_4N_3)\delta\rho_m
 (-\overline w_m\overline w_m'),
\end{align*}
for \(\tau\geq\tau_0\), after increasing \(\tau_0\), where
\(c=c(m,E^*)>0\).  Here
\[
 |(w^2-\overline w_m^2)
 (-\overline w_m\overline w_m')''|
 \leq C\rho_m(-\overline w_m\overline w_m'),
 \qquad
 2w\partial_{\eta\eta}w^s\leq-C_*,
\]
and \eqref{eq:stability-decay-choice-m} controls the term containing
\(\rho_m'\).

Choose $K_4\geq\frac{2C}{c}$, and then choose \(K_3=K_3(m,E^*,\tau_0)\) so that
\[
 cK_3\geq2C_{\tau_0},\qquad
 K_3\rho_m(\tau_0)\geq C_{\tau_0},
 \qquad K_3\geq K_1.
\]
Then \eqref{eq:normal-source-bound-m} and the preceding calculation give
\begin{align*}
   L_3 \psi_5 \pm g_3 & \leq- c (K_3+K_4N_3)\delta\rho_m (-\overline w_m\overline w_m')+(C_{\tau_0}+CN_3)\delta\rho_m (-\overline w_m\overline w_m')\\
   & \leq [(-cK_3+C_{\tau_0})+(-cK_4+C)N_3]\delta\rho_m (-\overline w_m\overline w_m')<0.
\end{align*}
Set \(\Phi_{5,\pm}=\psi_5\pm\widehat z\). 
For \(\tau\geq\tau_0\), the comparison functions satisfy
\[
\left\{
\begin{aligned}
 &L_3\Phi_{5,\pm} = L_3 \psi_5 \pm g_3 <0,\\
 &\left.\Phi_{5,\pm}\right|_{\eta=1}=0, \qquad \left.\Phi_{5,\pm}\right|_{\eta=0} \geq m\delta\rho_m(K_3-K_1)\geq0,\\
 &\left.\Phi_{5,\pm}\right|_{\xi\to-\infty}
 =\psi_5\geq0, \qquad \left.\Phi_{5,\pm}\right|_{\tau=\tau_0}
 \geq\left[K_3\rho_m(\tau_0)-C_{\tau_0}\right]
 \delta(-\overline w_m\overline w_m')\geq0.
\end{aligned}
\right.
\]
Here the wall inequality follows from Proposition~\ref{prop:stability-zero},
and the matching condition follows from
\eqref{eq:finite-slab-normal-time-m}.  The maximum principle gives
\eqref{eq:normal-output-m}.

\medskip\noindent\textbf{Step 4. The time derivative for large time.}
Propositions~\ref{prop:stability-zero} and~\ref{prop:stability-tangential}, together with Theorem~\ref{thm:higher-order-regularity}, give
\[
\begin{aligned}
 &\frac{|\partial_\tau w^s|}{w}
 \left\{\eta|\partial_\xi\widehat w|
 +\left|(w+w^s)\partial_{\eta\eta}w^s
 -\frac{3m-1}{2}\eta\right||\widehat w|\right\}\\
 &\qquad+
 \left|2\partial_\tau w^s\partial_{\eta\eta}w^s
 +(w+w^s)\partial_{\tau\eta\eta}w^s\right||\widehat w|\\
 &\leq
 C_{\tau_0}\delta
 \frac{\rho_m(\tau)}{1+(m-1)\tau}w^s.
\end{aligned}
\]
For the remaining term, the bootstrap estimate for $\partial_\eta\widehat w$ gives
\[
 \frac{|\partial_\tau w^s|}{w}m(1-\eta^2)
 |\partial_\eta\widehat w|\leq C N_2\delta
 \frac{\rho_m^2(\tau)}{1+(m-1)\tau}w^s.
\]
Therefore,
\begin{equation}\label{eq:time-source-bound-m}
 |g_4|\leq
 [C_{\tau_0}+C\rho_m(\tau)N_2]\delta
 \frac{\rho_m(\tau)}{1+(m-1)\tau}w^s.
\end{equation}
Here the coefficient \(C\) of \(\rho_m(\tau_0)N_2\) is independent of \(\tau_0\). Moreover, Proposition~\ref{prop:stability-zero} and
Theorem~\ref{thm:ws-regularity} give
\[
 \left|
 \frac{m(w+w^s)\widehat w}{w^2(w^s)^2}
 \partial_\tau w^s\right|_{\eta=0}
 \leq C_{\tau_0}\delta
 \frac{\rho_m}{[1+(m-1)\tau]w},
\]

Take
\[
 \psi_6=[K_5+K_6\rho_m(\tau_0)N_2]\delta
 \frac{\rho_m(\tau)}{1+(m-1)\tau}w,
\]
where \(K_5,K_6>0\) will be chosen below.  Substitution gives
\begin{align*}
 L_4\psi_6
 ={}&[K_5+K_6\rho_m(\tau_0)N_2]\delta
 \frac{\rho_m}{1+(m-1)\tau}w\\
 &\quad\times\left[
 2w\partial_{\eta\eta}w
 +\frac{2[1+(m-1)\tau\eta]\partial_\tau w^s}{w}
 +\frac{(m-1)(1-\eta)}{1+(m-1)\tau}
 -[1+(m-1)\tau\eta]\frac{\rho_m'}{\rho_m}
 \right]\\
 \leq{}& [K_5+K_6\rho_m(\tau_0)N_2]\delta \frac{\rho_m}{1+(m-1)\tau}w \left(-\frac{C_*}{2}+\frac{(m-1)(1-\eta)}{1+(m-1)\tau}+\frac{C_*}{4}\right)\\
 \leq{}&-c[K_5+K_6\rho_m(\tau_0)N_2]\delta
 \frac{\rho_m}{1+(m-1)\tau}w,
\end{align*}
for \(\tau\geq\tau_0\), after increasing \(\tau_0\), where \(c=c(m,E^*)>0\).

Choose \(K_6\geq2C/c\), and then choose
\(K_5=K_5(m,E^*,\tau_0)\) so that
\[
 cK_5\geq2C_{\tau_0},\qquad
 K_5\frac{\rho_m(\tau_0)}{1+(m-1)\tau_0}
 \geq 2 C_{\tau_0},\qquad
 mK_5\geq C_{\tau_0}.
\]
Then \eqref{eq:time-source-bound-m} and the preceding calculation give
\begin{align*}
 L_4\psi_6\pm g_4
 &\leq-c[K_5+K_6\rho_m(\tau_0)N_2]\delta
 \frac{\rho_m}{1+(m-1)\tau}w +[C_{\tau_0}+C\rho_m(\tau_0)N_2]\delta
 \frac{\rho_m}{1+(m-1)\tau}w^s\\
 &\leq\left[
 (-cK_5+2C_{\tau_0})
 +(-cK_6+2C)\rho_m(\tau_0)N_2\right]\delta
 \frac{\rho_m}{1+(m-1)\tau}w\leq 0.
\end{align*}
At \(\eta=0\),
\begin{align*}
   &[K_5+K_6\rho_m(\tau_0)N_2]\delta \left(\partial_\eta-\frac{m}{w^2}\right) \left(\frac{\rho_m}{1+(m-1)\tau}w\right) \pm \left| \frac{m(w+w^s)\widehat w}{w^2(w^s)^2} \partial_\tau w^s\right|\\
   \leq & -[K_5+K_6\rho_m(\tau_0)N_2]\delta \frac{2m\rho_m}{[1+(m-1)\tau]w} + C_{\tau_0}\delta\frac{\rho_m}{[1+(m-1)\tau]w}\\
   \leq & (-2mK_5+C_{\tau_0})\delta\frac{\rho_m}{[1+(m-1)\tau]w}\leq 0.
\end{align*}

Set \(\Phi_{6,\pm}=\psi_6\pm\widehat p\).
For \(\tau\geq\tau_0\), the comparison functions satisfy
\[
\left\{
\begin{aligned}
 &L_4\Phi_{6,\pm}=L_4\psi_6\pm g_4<0,\\
 &\left.\Phi_{6,\pm}\right|_{\eta=1}=0,\qquad \left.\left(\partial_\eta\Phi_{6,\pm} -\frac{m}{w^2}\Phi_{6,\pm}\right)\right|_{\eta=0}<0,\\
 &\left.\Phi_{6,\pm}\right|_{\xi\to-\infty}=\psi_6\geq0,\qquad \left.\Phi_{6,\pm}\right|_{\tau=\tau_0}
 \geq\left[
 K_5\frac{\rho_m(\tau_0)}{1+(m-1)\tau_0}
 -2 C_{\tau_0}\right]\delta w |_{\tau=\tau_0}\geq0.
\end{aligned}
\right.
\]
The maximum principle gives \eqref{eq:time-output-m}.
\end{proof}

\begin{proposition}\label{prop:stability-second-normal}
Let \(w\) be a smooth solution of \eqref{eq:prandtl-crocco-m} satisfying
\eqref{eq:bootstrap-m}.  There exists \(C_0=C_0(m,E^*)>0\) such that
\begin{equation}\label{eq:second-output-m}
 |\partial_{\eta\eta}\widehat w|
 \leq C_0\{K_1+K_2+K_3+K_4N_3
 +K_5+K_6\rho_m(\tau_0)N_2\}
 \delta\rho_m(\tau)(-\overline w_m'').
\end{equation}
\end{proposition}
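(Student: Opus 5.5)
The estimate will be read off algebraically from the perturbation equation \eqref{eq:perturbation-direct-m}; no new comparison argument is needed. Solving that equation for the second normal derivative gives
\[
 w^2\partial_{\eta\eta}\widehat w
 =-m(\eta^2-1)\partial_\eta\widehat w
 +\frac{3m-1}{2}\eta\widehat w
 +\eta\partial_\xi\widehat w
 +[1+(m-1)\tau\eta]\partial_\tau\widehat w
 -(w+w^s)\partial_{\eta\eta}w^s\,\widehat w .
\]
The plan is to bound each term on the right by a constant multiple of $\delta\rho_m(\tau)\overline w_m(\eta)$, with the constant linear in the output constants of Propositions~\ref{prop:stability-zero}--\ref{prop:stability-normal}. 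Then divide by $w^2$ and convert the weight.

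The terms are handled one at a time.

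\emph{Normal derivative.} By \eqref{eq:normal-output-m},
\[
|\partial_\eta\widehat w|\leq (K_3+K_4N_3)\delta\rho_m(-\overline w_m\overline w_m')/w .
\]
Since $w\geq \tfrac12 w^s\geq \tfrac12\overline w_m$ by \eqref{eq:bootstrap-m} and Theorem~\ref{thm:ws-regularity}, this gives $|\partial_\eta\widehat w|\leq 2(K_3+K_4N_3)\delta\rho_m(-\overline w_m')$. Lemma~\ref{self-similar-properties} gives $(1-\eta^2)(-\overline w_m')\leq C\overline w_m$, so the term $m(1-\eta^2)|\partial_\eta\widehat w|$ is at most $C(K_3+K_4N_3)\delta\rho_m\overline w_m$.

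\emph{Zero-order and tangential terms.} These are bounded by $CK_1\delta\rho_m w^s$ and $K_2\delta\rho_m w^s$, using \eqref{eq:zero-output-m} and \eqref{eq:tangential-output-m}. For the last term one also needs $|(w+w^s)\partial_{\eta\eta}w^s|\leq C$, which follows from Theorem~\ref{thm:ws-regularity} together with $|\overline w_m\overline w_m''|\leq M_5$. A uniform-in-$\tau$ bound is required here. On $[\tau_0,\infty)$ it comes from \eqref{eq:ws-large-time}. On $[0,\tau_0]$ it comes from \eqref{eq:ws-estimates-all}, at the cost of a constant depending on $\tau_0$. The paper's constants $K_1,K_2$ already absorb that $\tau_0$-dependence.

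\emph{Time derivative.} Since $1+(m-1)\tau\eta\leq 1+(m-1)\tau$, \eqref{eq:time-output-m} bounds this term by $[K_5+K_6\rho_m(\tau_0)N_2]\delta\rho_m w^s$.

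Collecting these, with $w^s\leq C\overline w_m$, the right-hand side is at most
\[
C\{K_1+K_2+K_3+K_4N_3+K_5+K_6\rho_m(\tau_0)N_2\}\delta\rho_m\overline w_m .
\]
Dividing by $w^2\geq\tfrac14\overline w_m^2$ leaves a factor $\delta\rho_m/\overline w_m$. The third estimate in \eqref{eq:steady-self-similar-properties-m} gives $1/\overline w_m\leq M_6^{-1}(-\overline w_m'')$, which yields \eqref{eq:second-output-m}.

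The main point to watch is the bookkeeping of constants rather than any analytic difficulty. $C_0$ must depend only on $m$ and $E^*$, and not on $\tau_0$ or the $N_i$. This holds because every $\tau_0$-dependence enters only through $K_1,K_2,K_3,K_5$. The lower bound $w\geq\tfrac12\overline w_m$ uses only $N_1\delta\leq 1/2$, and the weight conversions use only Lemma~\ref{self-similar-properties}. The one place to be careful is near $\eta=1$ in the term $(1-\eta^2)\partial_\eta\widehat w$, where one must use $(-\overline w_m')(1-\eta)\sim\overline w_m$ exactly, not a lossy bound. Since $\overline w_m\sim(1-\eta)\sigma_m$ and $-\overline w_m'\sim\sigma_m$, the logarithmic weights match and no loss occurs.
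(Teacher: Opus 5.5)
Your proposal is correct and is the paper's proof: it solves \eqref{eq:perturbation-direct-m} for $w^2\partial_{\eta\eta}\widehat w$ exactly as in \eqref{eq:second-recovery-m}. It then bounds each term by the outputs of Propositions~\ref{prop:stability-zero}--\ref{prop:stability-normal}, using $1+(m-1)\tau\eta\leq1+(m-1)\tau$ for the time term, divides by $w^2\geq\frac14\overline w_m^2$, and converts $1/\overline w_m$ into $-\overline w_m''$ through $-\overline w_m\overline w_m''\geq M_6$. One small correction to your bookkeeping: $|(w+w^s)\partial_{\eta\eta}w^s|\leq C(m,E^*)$ already holds for every $\tau\geq0$ by \eqref{eq:ws-large-time}, which is stated on the whole half-line, so no $\tau_0$-dependent factor arises---it is not that $K_1,K_2$ absorb one, since a $\tau_0$-dependent factor multiplying $K_1$ would contradict $C_0=C_0(m,E^*)$.
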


\begin{proof}
The perturbation equation gives
\begin{equation}\label{eq:second-recovery-m}
\begin{aligned}
 w^2\partial_{\eta\eta}\widehat w
={}&[1+(m-1)\tau\eta]\widehat p
 +\eta\partial_\xi\widehat w
 -m(\eta^2-1)\partial_\eta\widehat w\\
 &-\left[(w+w^s)\partial_{\eta\eta}w^s
 -\frac{3m-1}{2}\eta\right]\widehat w.
\end{aligned}
\end{equation}
Dividing \eqref{eq:second-recovery-m} by \(w^2\) gives
\begin{align*}
 |\partial_{\eta\eta}\widehat w|
 \leq{}&\frac{[1+(m-1)\tau\eta]|\partial_\tau\widehat w|}{w^2}
 +\frac{\eta|\partial_\xi\widehat w|}{w^2}
 +\frac{m|\eta^2-1|\,|\partial_\eta\widehat w|}{w^2}\\
 &+\frac{\left|(w+w^s)\partial_{\eta\eta}w^s
 -\frac{3m-1}{2}\eta\right||\widehat w|}{w^2}.
\end{align*}
Since
\[
 1+(m-1)\tau\eta\leq1+(m-1)\tau,
\]
Lemma~\ref{self-similar-properties}, Theorem~\ref{thm:ws-regularity},
and~\eqref{eq:bootstrap-m} give
\[
 \frac{w^s}{w^2}
 +\frac{m|\eta^2-1|
 (-\overline w_m\overline w_m')}{w^3}
 +\frac{\left|(w+w^s)\partial_{\eta\eta}w^s
 -\frac{3m-1}{2}\eta\right|w^s}{w^2}
 \leq C(-\overline w_m'').
\]
Consequently, Propositions~\ref{prop:stability-zero},~
\ref{prop:stability-tangential}, and~\ref{prop:stability-normal}, together
with \eqref{eq:time-output-m}, give
\begin{equation*}
  |\partial_{\eta\eta}\widehat w|\leq C_0\{K_5+K_6\rho_m(\tau_0)N_2+K_2
 +K_3+K_4N_3+K_1\} \delta\rho_m(\tau)(-\overline w_m''),
\end{equation*}
which is \eqref{eq:second-output-m}.
\end{proof}

\subsection*{Proof of Theorem~\ref{thm:semi-self-similar-stability}}

\begin{proof}
Enlarge \(C_0=C_0(m,E^*)\geq1\) so that it covers the equivalence of all
weights used below. Since $K_4$ and $K_6$ are independent of
\(\tau_0\), increase \(\tau_0\), if necessary, so that all large-time
comparisons above hold and
\begin{equation}\label{eq:bootstrap-coupling-small-m}
 16C_0^2K_4K_6\rho_m(\tau_0)\leq\frac12.
\end{equation}
With this final choice of \(\tau_0\), fix \(K_1,K_2,K_3\), and \(K_5\)
as above. Set
\begin{align*}
   N_1&=4C_0(K_1+K_2),\\
 N_2&=8C_0\{K_3+1+4C_0K_4(K_5+1)\},\\
 N_3&=4C_0\{K_5+K_6\rho_m(\tau_0)N_2+1\}.
\end{align*}
Indeed, \eqref{eq:bootstrap-coupling-small-m} gives
\begin{equation*}
   4C_0(K_3+K_4N_3+1)=\frac{N_2}{2}+16C_0^2K_4K_6\rho_m(\tau_0)N_2\leq N_2.
\end{equation*}
Finally, set
\[
 N_4=4C_0\{K_1+K_2+K_3+K_4N_3
 +K_5+K_6\rho_m(\tau_0)N_2+1\}.
\]
The estimates above give
\begin{align*}
 \left|\frac{\widehat w}{w^s}\right|
 +\left|\frac{\partial_\xi\widehat w}{w^s}\right|
 &\leq\frac{N_1}{4}\delta\rho_m(\tau),\\
 \left|\frac{\partial_\eta\widehat w}{-\overline w_m'}\right|
 &\leq\frac{N_2}{4}\delta\rho_m(\tau),\\
 \left|\frac{[1+(m-1)\tau]\partial_\tau\widehat w}{w^s}\right|
 &\leq\frac{N_3}{4}\delta\rho_m(\tau),\\
 \left|\frac{\partial_{\eta\eta}\widehat w}{-\overline w_m''}\right|
 &\leq\frac{N_4}{4}\delta\rho_m(\tau).
\end{align*}
Choose \(\delta_0=\delta_0(m,E^*)>0\) so small that
\[
 N_i\delta_0\leq\frac12,
 \qquad1\leq i\leq4.
\]
Since \(N_i\geq4\) and the initial \(\mathbb X_m\)-norm is at most
\(\delta\), the initial data lie strictly inside the bootstrap bounds.
The standard local Oleinik theory gives a smooth solution on a
nontrivial time interval on which \eqref{eq:bootstrap-m} holds.  Let \(T^*\)
be the maximal time on which these bounds hold.  The preceding estimates
improve every bootstrap bound by a factor of at least \(1/2\), so the bounds
cannot be saturated at a finite \(T^*\).  They also preserve positivity and
control the weighted derivatives in \(\mathbb X_m\); hence the standard
continuation criterion extends the solution past any finite \(T^*\).
Therefore \(T^*=+\infty\), and \eqref{eq:smallness-condition} follows.
\end{proof}

\begin{remark}\label{rem:relaxed-entrance}
The entrance condition \(w^-=w^s\) in
\eqref{eq:prandtl-crocco-m} can be relaxed.  Suppose that
\[
 w_0^-(\eta)=\lim_{\xi\to-\infty}w_0(\xi,\eta)
\]
is compatible with the semi-similar Crocco problem and satisfies
\[
 \|w_0^--w_0^s\|_{\mathbb X_m}\leq\delta.
\]
Repeating the preceding estimates with the \(\xi\)-derivatives omitted
gives a global semi-similar solution \(w^-(\tau,\eta)\) such that
\[
 \|w^-(\tau,\cdot)-w^s(\tau,\cdot)\|_{\mathbb X_m}
 \leq C\delta\rho_m(\tau).
\]
Using this estimate at \(\xi\to-\infty\) in the comparison problems of
Propositions~\ref{prop:stability-zero}--\ref{prop:stability-normal}
gives the same conclusion as in
Theorem~\ref{thm:semi-self-similar-stability}.
\end{remark}

\section{Returning to Eulerian variables}

This section translates the Crocco estimates into Eulerian variables.
For $U(x)=x^m$, introduce the scaling factor
\[
 {\Delta_m}=\Delta_m(x)=x^{\frac{1-m}{2}},
\]
and the Crocco variables
\[
 \tau=tx^{m-1},\qquad \xi=\ln x,\qquad
 r=x^{\frac{m-1}{2}}y,
\]
\[
 \eta=\frac{u(t,x,y)}{x^m},\qquad
 w(\tau,\xi,\eta)=x^{-\frac{3m-1}{2}}\partial_yu(t,x,y).
\]
The inverse Crocco map is
\begin{equation}\label{eq:general-eulerian-inverse-m}
 r=\int_0^{u/x^m}\frac{d\zeta}{w(\tau,\xi,\zeta)}.
\end{equation}
Thus, as long as $w>0$ for $0\leq\eta<1$, the map $\eta\mapsto r$ is
invertible and defines a monotone Eulerian profile.  The identities needed
below are
\[
 \partial_yu=x^m{\Delta_m^{-1}}w,\qquad
 \partial_y={\Delta_m^{-1}}w\partial_\eta,
\]
\[
 \partial_y^\ell u
 =x^m{\Delta_m^{-\ell}}(w\partial_\eta)^{\ell-1}w,
 \qquad \ell\geq1,
\]
and, at fixed $x$ and fixed Crocco coordinate,
\[
 \partial_t={\Delta_m^{-2}}\partial_\tau.
\]
At fixed $(t,y)$, a function of $(\tau,\xi,r)$ satisfies
\begin{equation}\label{eq:general-x-operator-m}
 x\partial_x=\partial_\xi+(m-1)\tau\partial_\tau
 +\frac{m-1}{2}r\partial_r.
\end{equation}
These identities follow directly from the Crocco transformation.

\subsection*{Proof of Theorem~\ref{thm:eulerian-semi-self-similar}}

\begin{proof}
Let
\[
 \eta^s=\frac{u^s(t,x,y)}{x^m},\qquad
 \overline\eta=\frac{\overline u_m(x,y)}{x^m}.
\]
At the same Eulerian point, the inverse Crocco relations are
\begin{equation}\label{eq:two-inverse-m}
 x^{\frac{m-1}{2}}y
 =\int_0^{\eta^s}\frac{d\zeta}{w^s(\tau,\zeta)}
 =\int_0^{\overline\eta}\frac{d\zeta}{\overline w_m(\zeta)}.
\end{equation}
Since $\overline w_m\leq w^s\leq C\overline w_m$, one has
$\eta^s\geq\overline\eta$.  Moreover, \eqref{eq:ws-large-time} gives
\begin{equation*}
   0\leq\int_{\overline\eta}^{\eta^s}\frac{d\zeta}{\overline w_m(\zeta)}=\int_0^{\eta^s}
 \frac{w^s(\tau,\zeta)-\overline w_m(\zeta)}
 {w^s(\tau,\zeta)\overline w_m(\zeta)}\,d\zeta\leq C\rho_m(\tau)
 \int_0^{\overline\eta}\frac{d\zeta}{\overline w_m(\zeta)}.
\end{equation*}
Since $\partial_\eta\sigma_m=[2(1-\eta)\sigma_m]^{-1}$, the steady-profile
bounds imply
\[
 \frac{2}{M_2}[\sigma_m(b)-\sigma_m(a)]
 \leq\int_a^b\frac{d\zeta}{\overline w_m(\zeta)}
 \leq\frac{2}{M_1}[\sigma_m(b)-\sigma_m(a)],
 \qquad0\leq a\leq b<1.
\]
Consequently,
\begin{equation}\label{eq:coordinate-shift-m}
 \begin{gathered}
 0\leq\sigma_m(\eta^s)-\sigma_m(\overline\eta)
 \leq C\rho_m(\tau)\sigma_m(\overline\eta),\\
 \sigma_m(\eta^s)\leq C\sigma_m(\overline\eta),
 \qquad1-\eta^s\leq1-\overline\eta.
 \end{gathered}
\end{equation}
The identity $1-\eta=\mu_m^{-1}e^{-\sigma_m(\eta)^2}$ and the inequality
$1-e^{-z}\leq z$ for $z\geq0$ give
\[
 0\leq\eta^s-\overline\eta
 \leq C\rho_m(\tau)(1-\overline\eta)
 \sigma_m(\overline\eta)^2,
\]
which proves the assertion for $\ell=0$.

For $1\leq\ell\leq5$, the Crocco transformation gives
\[
 \partial_y^\ell u^s
 =x^m{\Delta_m^{-\ell}}(w^s\partial_\eta)^{\ell-1}w^s(\tau,\eta^s),
 \qquad
 \partial_y^\ell\overline u_m
 =x^m{\Delta_m^{-\ell}}
 (\overline w_m\partial_\eta)^{\ell-1}\overline w_m(\overline\eta).
\]
Repeated differentiation of \eqref{eq:steady-self-similar-m} gives
\[
 \left|\partial_\eta^k\overline w_m(\eta)\right|
 \leq\frac{C}{(1-\eta)^{k-1}\sigma_m(\eta)},
 \qquad2\leq k\leq5.
\]
Together with \eqref{eq:ws-large-time} and the higher-order derivative
estimates \eqref{eq:ws-high-large-time} in
Theorem~\ref{thm:higher-order-regularity}, this yields
\begin{align*}
 \left|(w^s\partial_\eta)^{\ell-1}w^s
 -(\overline w_m\partial_\eta)^{\ell-1}\overline w_m\right|
 &\leq C\rho_m(\tau)(1-\eta)\sigma_m(\eta)^\ell,\\
 \left|\frac d{d\sigma_m}
 \left[(\overline w_m\partial_\eta)^{\ell-1}\overline w_m\right]
 \right|&\leq C(1-\eta)\sigma_m(\eta)^{\ell+1}.
\end{align*}
Splitting the difference at the common coordinate $\eta^s$ and using
\eqref{eq:coordinate-shift-m} therefore gives
\[
 |\partial_y^\ell(u^s-\overline u_m)|
 \leq C\rho_m(\tau)x^{\frac{(m-1)\ell}{2}}
 (x^m-\overline u_m)\overline\sigma_m^{\ell+2},
 \qquad0\leq\ell\leq5.
\]

It remains to estimate time derivatives at fixed $y$.  Set
\[
 J(\tau,\eta)=\int_0^\eta
 \frac{\partial_\tau w^s(\tau,\zeta)}{(w^s(\tau,\zeta))^2}\,d\zeta.
\]
Differentiating \eqref{eq:two-inverse-m} gives
\begin{equation}\label{eq:Qt-m}
 \partial_t\eta^s={\Delta_m^{-2}}w^s(\tau,\eta^s)J(\tau,\eta^s).
\end{equation}
The estimates for $w^s$ and $\partial_\tau w^s$ imply
\[
 [1+(m-1)\tau]|J(\tau,\eta)|
 \leq C\rho_m(\tau)\sigma_m(\eta),
\]
and successive applications of $w^s\partial_\eta$ give
\[
 [1+(m-1)\tau]|(w^s\partial_\eta)^\ell(w^sJ)(\tau,\eta)|
 \leq C\rho_m(\tau)(1-\eta)\sigma_m(\eta)^{\ell+2},
 \qquad0\leq\ell\leq3.
\]
Since $\partial_y={\Delta_m^{-1}}w^s\partial_\eta$ along the inverse map,
\eqref{eq:Qt-m} and \eqref{eq:coordinate-shift-m} give
\[
 [1+(m-1)\tau]|\partial_t\partial_y^\ell u^s|
 \leq C\rho_m(\tau)x^{\frac{(m-1)(\ell+2)}2}
 (x^m-\overline u_m)\overline\sigma_m^{\ell+2},
 \qquad0\leq\ell\leq3.
\]

For the second time derivative, set
\[
 K(\tau,\eta)=\int_0^\eta\left[
 \frac{\partial_{\tau\tau}w^s}{(w^s)^2}
 -2\frac{(\partial_\tau w^s)^2}{(w^s)^3}\right](\tau,\zeta)\,d\zeta.
\]
The mixed time-normal derivative estimates
\eqref{eq:ws-high-large-time} in
Theorem~\ref{thm:higher-order-regularity} give
\[
 [1+(m-1)\tau]^2|K(\tau,\eta)|
 \leq C\rho_m(\tau)\sigma_m(\eta),
 \qquad
 [1+(m-1)\tau]^2|\partial_\eta K(\tau,\eta)|
 \leq\frac{C\rho_m(\tau)}{(1-\eta)\sigma_m(\eta)}.
\]
Differentiating \eqref{eq:Qt-m} once more gives
\begin{equation}\label{eq:Qtt-m}
 \partial_{tt}\eta^s={\Delta_m^{-4}}
 \left[2\partial_\tau w^sJ+w^s\partial_\eta w^sJ^2+w^sK\right](\tau,\eta^s).
\end{equation}
Using the bounds above for $J$, $K$, and the derivatives of $w^s$, and applying
$w^s\partial_\eta$ at most once to the bracketed expression, we obtain
\[
 [1+(m-1)\tau]^2
 \left|(w^s\partial_\eta)^\ell
 \left(2\partial_\tau w^sJ+w^s\partial_\eta w^sJ^2+w^sK\right)\right|
 \leq C\rho_m(\tau)(1-\eta)\sigma_m(\eta)^{\ell+4},
 \qquad0\leq\ell\leq1.
\]
Equations~\eqref{eq:Qtt-m} and~\eqref{eq:coordinate-shift-m} prove the last
line of \eqref{eq:eulerian-high}.

Let $\psi^s(\tau,r)$ be normalized by
$\psi^s(\tau,0)=0$ and $\partial_r\psi^s=\eta^s$.  Then
\[
 \Psi^s(t,x,y)=x^m{\Delta_m}\psi^s(\tau,r),
 \qquad v^s=-\partial_x\Psi^s,
\]
satisfies $u^s=\partial_y\Psi^s$ and
$\partial_xu^s+\partial_yv^s=0$.  Since $w^s>0$ for
$0\leq\eta<1$ and the semi-similar Crocco equation is obtained from the
tangential Prandtl equation by the invertible Crocco transformation, the
inverse reconstruction also recovers the tangential momentum equation.  The
wall, outer-flow, and initial conditions follow from the inverse relations.

If $m>1$, then $r=x^{(m-1)/2}y\to0$ as $x\downarrow0$, and the three
right-hand sides in \eqref{eq:eulerian-high} contain, respectively, the
powers
\[
 x^{m+\ell(m-1)/2},\qquad
 x^{m+(\ell+2)(m-1)/2},\qquad
 x^{m+(\ell+4)(m-1)/2}.
\]
Moreover, $\psi^s(\tau,r)=O(r^2)$ at the wall, so
$\Psi^s=O(x^{(3m-1)/2}y^2)$ and $v^s\to0$.  If $m=1$, then
\[
 u^s(t,x,y)=x\eta^s(t,y),\qquad
 \Psi^s(t,x,y)=x\psi^s(t,y),\qquad
 v^s(t,x,y)=-\psi^s(t,y).
\]
Thus the reconstructed fields and all the stated quantities extend
continuously to $x=0$.
\end{proof}

\subsection*{Proof of Theorem~\ref{thm:eulerian-stability}}

\begin{proof}
Let
\[
 \eta=\frac{u(t,x,y)}{x^m},\qquad
 \eta^s=\frac{u^s(t,x,y)}{x^m},\qquad
 r=x^{\frac{m-1}{2}}y.
\]
At the same Eulerian point $(t,x,y)$, the inverse Crocco relations are
\begin{equation}\label{eq:full-background-inverse-m}
 r=\int_0^\eta\frac{d\zeta}{w(\tau,\ln x,\zeta)}
 =\int_0^{\eta^s}\frac{d\zeta}{w^s(\tau,\zeta)}.
\end{equation}
Theorem~\ref{thm:semi-self-similar-stability} and
$\overline w_m\leq w^s\leq C\overline w_m$ give
\begin{equation}\label{eq:full-steady-comparison-m}
 (1-C\delta\rho_m)\overline w_m\leq w
 \leq C(1+C\delta\rho_m)\overline w_m.
\end{equation}
Here and below $w$ is evaluated at $(\tau,\ln x,\zeta)$ and $w^s$ at
$(\tau,\zeta)$ unless otherwise indicated.  We decrease $\delta_0$ so that
$C\delta_0\leq\frac12$.

For any $\theta$ between $\eta$ and $\eta^s$, set
\[
    \overline r_\theta
    :=
    \int_0^\theta \frac{d\zeta}{\overline w_m(\zeta)}.
\]
The inverse relations and the preceding comparison imply, after
decreasing $\delta_0$ if necessary, that
\[
    \bigl(1-C\delta\rho_m(\tau)\bigr)r
    \leq
    \overline r_\theta
    \leq
    C\bigl(1+C\delta\rho_m(\tau)\bigr)r.
\]
At the steady Eulerian height $\overline r_\theta$, the Crocco
    coordinate is $\theta$.  Hence the steady estimate~\eqref{eq:oleinik-weighted-tail} gives
\[
    (1-\theta)\sigma_m(\theta)^j
    \leq
    C(1+\overline r_\theta)^{
        j-\frac{5m+1}{m+1}}
    \exp\left(
        -\frac{m+1}{4}\overline r_\theta^2
        +\frac{m+1}{2}\bar d_m\,\overline r_\theta
    \right),
    \qquad 0\leq j\leq 7.
\]

The two-sided comparison between $\overline r_\theta$ and $r$ implies,
uniformly for $0\leq j\leq 7$, that
\[
    (1+\overline r_\theta)^{
        j-\frac{5m+1}{m+1}}
    \leq
    C(1+r)^{
        j-\frac{5m+1}{m+1}}.
\]
Moreover, the lower comparison for $\overline r_\theta$, together
with an elementary completion of the square, yields
\begin{equation}
\label{eq:gaussian-completion-m}
    -\frac{m+1}{4}\overline r_\theta^2
    +\frac{m+1}{2}\bar d_m\,\overline r_\theta
    \leq
    -\left(
        \frac{m+1}{4}-C\delta\rho_m(\tau)
    \right)r^2
    +\frac{m+1}{2}\bar d_m\,r
    +C.
\end{equation}
Combining these estimates, and absorbing the last constant in
\eqref{eq:gaussian-completion-m} into $C$, we obtain
\begin{equation}
\label{eq:gaussian-coordinate-tail-m}
\begin{aligned}
    (1-\theta)\sigma_m(\theta)^j
    &\leq
    C(1+r)^{
        j-\frac{5m+1}{m+1}}                             \\
    &\quad\times
    \exp\left[
        -\left(
            \frac{m+1}{4}-C\delta\rho_m(\tau)
        \right)r^2
        +\frac{m+1}{2}\bar d_m\,r
    \right],
    \qquad 0\leq j\leq 7,
\end{aligned}
\end{equation}
for $\theta=\eta$, $\theta=\eta^s$, and every intermediate
coordinate.

Subtracting the two inverse relations yields
\[
 \left|\int_{\eta^s}^{\eta}\frac{d\zeta}{w^s(\tau,\zeta)}\right|
 \leq C\delta\rho_m(\tau)
 \int_0^\eta\frac{d\zeta}{w(\tau,\ln x,\zeta)}
 =C\delta\rho_m(\tau)r.
\]
The comparison between $d\zeta/w^s$ and $d\sigma_m$ therefore gives
\begin{equation}\label{eq:sigma-shift-m}
 |\sigma_m(\eta)-\sigma_m(\eta^s)|
 \leq C\delta\rho_m(\tau)r.
\end{equation}
The mean value theorem and \eqref{eq:gaussian-coordinate-tail-m} now imply
\begin{equation*}
   |u-u^s|\leq C\delta\rho_m(\tau)x^m r(1+r)\exp\left[-\left(\frac{m+1}{4}
 -C\delta\rho_m(\tau)\right)r^2+\frac{m+1}{2}\bar d_mr\right].
\end{equation*}

For the normal derivatives,
\[
 \partial_y^\ell u
 =x^m{\Delta_m^{-\ell}}(w\partial_\eta)^{\ell-1}w(\tau,\ln x,\eta),
 \qquad
 \partial_y^\ell u^s
 =x^m{\Delta_m^{-\ell}}(w^s\partial_\eta)^{\ell-1}w^s(\tau,\eta^s).
\]
The Crocco estimates give, for $1\leq\ell\leq3$,
\begin{align*}
 \left|(w\partial_\eta)^{\ell-1}w
 -(w^s\partial_\eta)^{\ell-1}w^s\right|
 &\leq C\delta\rho_m(\tau)(1-\theta)\sigma_m(\theta)^\ell,\\
 \left|\frac d{d\sigma_m}
 \left[(w^s\partial_\eta)^{\ell-1}w^s\right]\right|
 &\leq C(1-\theta)\sigma_m(\theta)^{\ell+1}.
\end{align*}
Splitting at the common coordinate $\eta$ and using
\eqref{eq:gaussian-coordinate-tail-m}--\eqref{eq:sigma-shift-m} yields the
normal-derivative estimate in the theorem.

Differentiating \eqref{eq:full-background-inverse-m} at fixed $(x,y)$ gives
\begin{align*}
 \partial_tu&=x^m{\Delta_m^{-2}}w(\tau,\ln x,\eta)
 \int_0^\eta\frac{\partial_\tau w(\tau,\ln x,\zeta)}
 {w(\tau,\ln x,\zeta)^2}\,d\zeta,\\
 \partial_tu^s&=x^m{\Delta_m^{-2}}w^s(\tau,\eta^s)
 \int_0^{\eta^s}\frac{\partial_\tau w^s(\tau,\zeta)}
 {(w^s(\tau,\zeta))^2}\,d\zeta.
\end{align*}
At the common coordinate $\theta$, the Crocco estimates imply
\begin{align*}
 \left|w\int_0^\theta\frac{\partial_\tau w}{w^2}\,d\zeta
 -w^s\int_0^\theta\frac{\partial_\tau w^s}{(w^s)^2}\,d\zeta\right|
 &\leq\frac{C\delta\rho_m(\tau)}{1+(m-1)\tau}
 (1-\theta)\sigma_m(\theta)^2,\\
 \left|\frac d{d\sigma_m}\left[
 w^s\int_0^\theta\frac{\partial_\tau w^s}{(w^s)^2}\,d\zeta\right]\right|
 &\leq\frac{C\rho_m(\tau)}{1+(m-1)\tau}
 (1-\theta)\sigma_m(\theta)^3.
\end{align*}
Splitting at $\eta$ and using
\eqref{eq:gaussian-coordinate-tail-m}--\eqref{eq:sigma-shift-m} gives
\begin{align*}
 |\partial_tu-\partial_tu^s|\leq{}&\frac{C\delta\rho_m(\tau)}{1+(m-1)\tau}
 x^m{\Delta_m^{-2}}(1+r)^4\\
 &\times\exp\left[-\left(\frac{m+1}{4}
 -C\delta\rho_m(\tau)\right)r^2
 +\frac{m+1}{2}\bar d_mr\right].
\end{align*}

For the mixed derivative, the same-coordinate estimates and the mean value
theorem give
\begin{align*}
 |\partial_\tau w(\tau,\ln x,\eta)
 -\partial_\tau w^s(\tau,\eta^s)|
 &\leq\frac{C\delta\rho_m(\tau)}{1+(m-1)\tau}(1+r)^3\\
 &\quad\times\exp\left[-\left(\frac{m+1}{4}
 -C\delta\rho_m(\tau)\right)r^2
 +\frac{m+1}{2}\bar d_mr\right],\\
 |\partial_\eta w(\tau,\ln x,\eta)
 -\partial_\eta w^s(\tau,\eta^s)|
 &\leq C\delta\rho_m(\tau)(1+r).
\end{align*}
Substituting these estimates into
\begin{align*}
 \partial_t\partial_yu-\partial_t\partial_yu^s={}&x^m{\Delta_m^{-3}}
 [\partial_\tau w(\tau,\ln x,\eta)
 -\partial_\tau w^s(\tau,\eta^s)]\\
 &+{\Delta_m^{-1}}(\partial_tu-\partial_tu^s)\partial_\eta w(\tau,\ln x,\eta)\\
 &+{\Delta_m^{-1}}\partial_tu^s[\partial_\eta w(\tau,\ln x,\eta)
 -\partial_\eta w^s(\tau,\eta^s)]
\end{align*}
gives
\begin{align*}
 |\partial_t\partial_yu-\partial_t\partial_yu^s|\leq{}&\frac{C\delta\rho_m(\tau)}{1+(m-1)\tau}
 x^m{\Delta_m^{-3}}(1+r)^5\\
 &\times\exp\left[-\left(\frac{m+1}{4}
 -C\delta\rho_m(\tau)\right)r^2
 +\frac{m+1}{2}\bar d_mr\right].
\end{align*}

Finally, the derivatives of the inverse coordinate at fixed $r$ are
\[
 \partial_\xi\eta=w(\eta)\int_0^\eta\frac{\partial_\xi w}{w^2}\,d\zeta,
 \qquad
 \partial_\tau\eta=w(\eta)\int_0^\eta\frac{\partial_\tau w}{w^2}\,d\zeta,
 \qquad \partial_r\eta=w(\eta),
\]
and the same formulas without the $\xi$-term hold for $\eta^s$.  Hence
\begin{align*}
 x\partial_xu&=x^m\left[m\eta+\partial_\xi\eta+(m-1)\tau\partial_\tau\eta
 +\frac{m-1}{2}r\partial_r\eta\right],\\
 x\partial_xu^s&=x^m\left[m\eta^s+(m-1)\tau\partial_\tau\eta^s
 +\frac{m-1}{2}r\partial_r\eta^s\right].
\end{align*}
The estimates for $\partial_\xi w$ and $\partial_\tau w$, together with
$0\leq(m-1)\tau/[1+(m-1)\tau]\leq1$, give
\begin{align*}
 |\partial_x(u-u^s)|\leq{}&C\delta\rho_m(\tau)x^{m-1}(1+r)^2\\*
 &\times\exp\left[-\left(\frac{m+1}{4}
 -C\delta\rho_m(\tau)\right)r^2
 +\frac{m+1}{2}\bar d_mr\right].
\end{align*}
Moreover,
\begin{align*}
 x\partial_x\partial_yu=x^m{\Delta_m^{-1}}\bigg\{&\frac{3m-1}{2}w+\partial_\xi w
 +(m-1)\tau\partial_\tau w\\
 &+\partial_\eta w\left[\partial_\xi\eta+(m-1)\tau\partial_\tau\eta
 +\frac{m-1}{2}r\partial_r\eta\right]\bigg\},
\end{align*}
with the corresponding formula for $x\partial_x\partial_yu^s$ obtained by deleting the
$\xi$-terms.  Therefore,
\begin{align*}
 |\partial_x\partial_y(u-u^s)|
 \leq{}&C\delta\rho_m(\tau)x^{m-1}{\Delta_m^{-1}}(1+r)^3\\*
 &\times\exp\left[-\left(\frac{m+1}{4}
 -C\delta\rho_m(\tau)\right)r^2
 +\frac{m+1}{2}\bar d_mr\right].
\end{align*}

The estimates are first proved for $x>0$.  Choose $\delta_0$ smaller, if
necessary, so that the quadratic coefficient in the Gaussian is positive,
and define
\[
 v(t,x,y)=v^s(t,x,y)
 -\int_0^y\partial_x\bigl(u(t,x,z)-u^s(t,x,z)\bigr)\,dz.
\]
The tangential estimate makes this integral convergent and gives
$\partial_xu+\partial_yv=0$ and $v=0$ at the wall.  The invertibility of the
Crocco transformation yields the tangential Prandtl equation, the initial
datum $u_0$ at $t=0$, and the outer velocity as $\eta\to1$.

If $m>1$, the four right-hand sides in
Theorem~\ref{thm:eulerian-stability} contain, respectively, the powers
\[
 x^{\frac{3m-1}{2}},\qquad
 x^{m+\ell(m-1)/2},\qquad
 x^{m-1+\ell(m-1)/2},\qquad
 x^{m+(\ell+2)(m-1)/2},
\]
which are positive in their stated ranges.  Moreover, integration of the
$\partial_xu-\partial_xu^s$ estimate gives
$|v-v^s|\leq C\delta\rho_m x^{(m-1)/2}$ for fixed $(t,y)$.
If $m=1$, then $\tau=t$, $r=y$, and $x\partial_x=\partial_\xi$.
The entrance conditions imply
\[
 w(\tau,\xi,\eta)\longrightarrow w^s(\tau,\eta),\qquad
 \partial_\xi w(\tau,\xi,\eta)\longrightarrow0
 \quad\text{as }\xi\to-\infty,
\]
together with the corresponding weighted derivatives used above.
Therefore $u=x\eta\to0$ and
$v-v^s=-\int_0^y(\partial_xu-\partial_xu^s)\,dz\to0$.
This completes the proof.
\end{proof}
\appendix

{
\section{{Steady favorable profiles for fixed $m\geq1$}}
\label{app:steady-profile}

{The classical construction of Oleinik yields a positive favorable
Falkner--Skan profile; see, for example, \cite{OS}.  In this appendix, we
derive directly from the steady equation all the estimates required in
Lemma~\ref{self-similar-properties} and conclude with a short uniqueness
proof.  Throughout, the constants may depend on the fixed exponent $m$.}

\begin{proof}[Proof of Lemma~\ref{self-similar-properties}]
{Since $\eta=f_m'(r)$ and $\overline w_m(\eta)=f_m''(r)$,
differentiating \eqref{eq:falkner-skan-family} with respect to $\eta$ yields
\eqref{eq:steady-self-similar-m}.  Evaluating
\eqref{eq:falkner-skan-family} at $r=0$ gives $f_m'''(0)+m=0$, which is
equivalent to $\overline w_m(0)\overline w_m'(0)+m=0$.
For the remainder of the proof, we abbreviate
$\overline w=\overline w_m$ and $\sigma=\sigma_m$.}

{We begin with the bounds in
\eqref{eq:steady-self-similar-properties-m}.  The wall condition implies
$\overline w'(0)<0$.  If $\overline w'$ vanished at an interior point, the
steady equation would imply}
\[
 \overline w''=\frac{3m-1}{2}
 \frac{\eta}{\overline w}>0
\]
{at that point.  Thus any zero of $\overline w'$ would correspond to a
crossing from negative to positive.  Such a crossing is impossible because
$\overline w>0$ on $[0,1)$ and $\overline w(1)=0$.  Therefore
$\overline w'<0$.}

{Set $X=1-\eta$.  For a positive test function $\phi$, denote the
left-hand side of the steady equation by}
\[
 \mathcal Q[\phi]
 :=\phi^2\phi''+m(\eta^2-1)\phi'
   -\frac{3m-1}{2}\eta\phi.
\]
{The comparison principle used below follows directly by subtraction:}
\begin{align*}
 -\mathcal Q[\phi]
 ={}&\overline w^2(\overline w-\phi)''
 +m(\eta^2-1)(\overline w-\phi)'\\
 &+\left[(\overline w+\phi)\phi''
 -\frac{3m-1}{2}\eta\right](\overline w-\phi).
\end{align*}
{For the barriers considered below, $\phi''<0$, so the zero-order
coefficient is negative.  Consequently, $\mathcal Q[\phi]\leq0$ makes
$\phi$ an upper barrier, whereas $\mathcal Q[\phi]\geq0$ makes it a lower
barrier.  The same conclusions hold at the wall provided, respectively, that}
$\phi(0)\phi'(0)+m\leq0$ or
$\phi(0)\phi'(0)+m\geq0$.  {For example, suppose that
$\overline w(0)>\phi(0)$ at a wall maximum of
$\overline w-\phi$.  Then
$\overline w'(0)=-m/\overline w(0)>-m/\phi(0)\geq\phi'(0)$,
contradicting the required sign of the right derivative at that maximum.}

{Choose $-\log\mu_m=(3m+1)/[2(m+1)]$, so that
$\sigma^2=-\log(\mu_mX)$, and define}
$\Phi=\sqrt{m+1}\,X\sigma$.  A direct differentiation gives
\begin{equation}\label{eq:appendix-upper-barrier}
 \frac{\mathcal Q[\Phi]}{\Phi}
 =\frac{m-1}{2}X
 -\frac{5m+1-2mX}{4\sigma^2}\leq0.
\end{equation}
{To verify the final inequality, observe that
$X\sigma^2=X[-\log\mu_m-\log X]\leq-\log\mu_m$ on $(0,1]$.
Moreover, $\Phi(0)\Phi'(0)+m=0$.  The comparison principle established above
therefore yields}
\begin{equation}\label{eq:appendix-upper-profile}
 \overline w(\eta)\leq\sqrt{m+1}(1-\eta)\sigma(\eta).
\end{equation}

{For $A<\sqrt{m+1}$, the same calculation shows that
$\mathcal Q[A X\sigma]>0$ when $\eta$ is sufficiently close to $1$.
After decreasing $A$ if necessary to satisfy the lower-barrier condition at
the left endpoint of this terminal interval, the comparison principle gives}
\[
 A(1-\eta)\sigma(\eta)\leq\overline w(\eta).
\]
{On the remaining compact interval, positivity and continuity give the
same inequality after possibly decreasing $A$ once more.  This proves the
first two-sided bound in \eqref{eq:steady-self-similar-properties-m}.}

{Dividing the steady equation by $\overline w$ and integrating once from
the wall gives the exact identity}
\begin{equation}\label{eq:appendix-profile-derivative}
 \overline w'
 =-\frac{m(1-\eta^2)}{\overline w}
 -\frac{m+1}{2}\int_0^\eta\frac{s}{\overline w(s)}\,ds.
\end{equation}
{Multiplying by $\overline w$ gives the equivalent identity for
$\overline w\overline w'$.  Combining the preceding two-sided bound with}
$\int[(1-\eta)\sigma(\eta)]^{-1}\,d\eta=2\sigma(\eta)$
{in \eqref{eq:appendix-profile-derivative}, we obtain}
\[
 -M_3\sigma\leq\overline w'\leq-M_4\sigma.
\]
{Substitution into the steady equation now gives
$|\overline w\overline w''|\leq C$.}

{We next determine the sign of $\overline w\overline w''$.  At the wall,}
$\overline w(0)\overline w''(0)=-m^2/\overline w(0)^2<0$.
{At any zero of $\overline w\overline w''$, differentiation of the steady
equation gives}
\[
 (\overline w\overline w'')'
 =\frac{3m-1}{2}
 +\frac{3m-1}{2}\frac{\eta^2}{1-\eta^2}
   \frac{m+1}{2m}>0.
\]
{Thus any zero would be a crossing from negative to positive.  Once such a
crossing occurred, $\overline w''$ would remain positive and $\overline w'$
would be increasing, contradicting
$\overline w'(\eta)\leq-M_4\sigma(\eta)\to-\infty$.  We conclude that
$\overline w\overline w''<0$ on $[0,1)$.}

{We now prove the refined edge estimate.  Retaining $X=1-\eta$, define}
$W(X)=\overline w(1-X)$,
$\Sigma=-\log(\mu_mX)=\sigma^2$, and $R=XW_X/W$.
{The bounds established above imply}
\[
 0<C^{-1}\leq R\leq C,\qquad
 |WW_{XX}|\leq C,\qquad WW_{XX}<0.
\]
{The steady equation, together with a direct differentiation of $R$, gives}
\begin{align}
 WW_{XX}&=\frac{3m-1}{2}(1-X)-m(2-X)R,
 \label{eq:appendix-curvature-ratio}\\
 R_\Sigma&=R(R-1)-\left(\frac{X}{W}\right)^2WW_{XX}.
 \label{eq:appendix-ratio-equation}
\end{align}
{The final term in \eqref{eq:appendix-ratio-equation} is nonnegative and
bounded above by $C/\Sigma$.  If}
$R-1>K\Sigma^{-1/2}$ for a sufficiently large value of $\Sigma$, then
$R_\Sigma\geq C^{-1}(R-1)>0$ as long as this inequality holds; since
$K\Sigma^{-1/2}$ decreases, this contradicts the upper bound for $R$.
{On the other hand, if}
$R-1<-K\Sigma^{-1/2}$, then, after increasing the initial value of $\Sigma$,
the $C/\Sigma$ term is at most one half of $C^{-1}|R-1|$, and hence
$R_\Sigma\leq\tfrac12C^{-1}(R-1)<0$.  Since $-K\Sigma^{-1/2}$ increases, this
contradicts the positive lower bound for $R$.  {Therefore,}
\[
 R=1+O(\Sigma^{-1/2}).
\]
{Substituting this estimate into
\eqref{eq:appendix-curvature-ratio} yields}
\[
 \overline w\overline w''
 =-\frac{m+1}{2}+O(\sigma^{-1}).
\]
{  Since}
$d(W_X^2)/d\Sigma=-2RWW_{XX}$, integration first gives
$W_X=\sqrt{m+1}\sqrt\Sigma+O(1)$ and
$W/X=\sqrt{m+1}\sqrt\Sigma+O(1)$.  {It follows that}
\[
 -\left(\frac{X}{W}\right)^2WW_{XX}
 =\frac{1}{2\Sigma}+O(\Sigma^{-3/2}).
\]
{Repeating the preceding comparison in
\eqref{eq:appendix-ratio-equation} with the barriers
$\pm K\Sigma^{-1}$ sharpens the estimate to $R=1+O(\Sigma^{-1})$.
Substitution into \eqref{eq:appendix-curvature-ratio}, followed by one more
integration, yields}
\begin{equation}\label{eq:appendix-fs-tail}
 \begin{aligned}
 WW_{XX}&=-\frac{m+1}{2}+O(\Sigma^{-1}),\\
 W_X&=\sqrt{m+1}\sqrt\Sigma
       \left[1+O\left(\frac{1+\log\Sigma}{\Sigma}\right)\right],\\
 \frac{W}{X}&=\sqrt{m+1}\sqrt\Sigma
       \left[1+O\left(\frac{1+\log\Sigma}{\Sigma}\right)\right],
\end{aligned}
\end{equation}
{which is \eqref{eq:steady-edge-curvature-family}. Consequently, there exist $K_m>0$ and $\eta_0<1$ such that}
\[
 \sqrt{m+1}(1-\eta)(\sigma-K_m)
 \leq\overline w(\eta),\qquad \eta_0<\eta<1.
\]
{Together with \eqref{eq:appendix-upper-profile}, this proves the terminal
two-sided estimate with $M_2=\sqrt{m+1}$.  The refined limit and strict
negativity near the edge, combined with compactness on $[0,\eta_0]$, give
$-M_5\leq\overline w\overline w''\leq-M_6$.}

{It remains to prove uniqueness.  Let $\overline w^{(1)}$ and
$\overline w^{(2)}$ be two positive solutions of
\eqref{eq:steady-self-similar-m}, and put
$z=\overline w^{(1)}-\overline w^{(2)}$.  Subtracting their equations gives}
\[
 (\overline w^{(1)})^2z''+m(\eta^2-1)z'
 +\left[(\overline w^{(1)}+\overline w^{(2)})
 (\overline w^{(2)})''
 -\frac{3m-1}{2}\eta\right]z=0,
\]
with
\[
 z(1)=0,\qquad
 z'(0)-\frac{m}{\overline w^{(1)}(0)\overline w^{(2)}(0)}z(0)=0.
\]
{The zero-order coefficient is strictly negative because
$(\overline w^{(2)})''<0$.  Therefore, a positive maximum of $z$ can occur
neither in $(0,1)$ nor at $\eta=0$, where the Robin condition forces the
right derivative to have the opposite sign.  Hence $z\leq0$.  Applying the
same argument to $-z$ yields $z\equiv0$ and completes the proof.}
\end{proof}
}

{
\section{Critical edge packets and nonuniform dependence}
\label{app:edge-counterexamples}

In this appendix, we construct counterexamples to uniform continuity of the
solution map in the weighted $C^2$ topology.  We use the notation and the
weighted edge space from Section~\ref{sec:weighted-c2-counterexample}.

{
\begin{theorem}[Nonuniform dependence in the weighted $C^2$ topology]
\label{thm:edge-nonuniformity-eta}
Fix $m\geq1$.  There exist $\overline\tau>0$ and a sequence $\delta_n\to0$,
together with positive smooth initial data $w_{0,n}^{(i)}(\eta)$, $i=1,2$,
satisfying the hypotheses of
Theorem~\ref{thm:ws-regularity}, with the constants $E_1,E_2,E_3$ independent
of $n$ and $i$.  Let $w_n^{(i)}(\tau,\eta)$ be the corresponding global
solutions.  Then, for some constant $C>0$ independent of $n$ and $i$,
\begin{equation*}
 1\leq\frac{w_n^{(i)}(\tau,\eta)}{\overline w_m(\eta)}
 \leq1+C\delta_n,
 \qquad \tau\geq0,\quad0\leq\eta<1.
\end{equation*}
Define
\begin{align*}
 \mathcal D_n(\tau):={}&
 \left\|\frac{w_n^{(2)}(\tau,\cdot)-w_n^{(1)}(\tau,\cdot)}
 {\overline w_m}\right\|_{L^\infty(0,1)}\\
 &+\left\|\frac{\partial_\eta w_n^{(2)}(\tau,\cdot)
 -\partial_\eta w_n^{(1)}(\tau,\cdot)}
 {-\overline w_m'}\right\|_{L^\infty(0,1)}\\
 &+\left\|\frac{\partial_{\eta\eta}w_n^{(2)}(\tau,\cdot)
 -\partial_{\eta\eta}w_n^{(1)}(\tau,\cdot)}
 {-\overline w_m''}\right\|_{L^\infty(0,1)}.
\end{align*}
Then
\begin{equation*}
 \mathcal D_n(0)\longrightarrow0.
\end{equation*}
Moreover, for every fixed $\tau_0\in(0,\overline\tau]$, there exists
$c_0=c_0(\tau_0)>0$ such that
\begin{equation*}
 \mathcal D_n(\tau_0)\geq c_0
\end{equation*}
for all sufficiently large $n$.  Thus the solution map is not uniformly
continuous at any fixed time $\tau_0\in(0,\overline\tau]$ on this bounded
weighted $C^2$ class.
\end{theorem}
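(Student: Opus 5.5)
The plan is to work in the edge variables $(s,\chi)$ of Section~\ref{sec:weighted-c2-counterexample}, with $h=w/\overline w_m$ satisfying \eqref{eq:edge-relative-equation}, and to use Lemma~\ref{lem:natural-edge-norm-equivalence} so that $\mathcal D_n$ is comparable to the $C^2([0,\eta_b])$ norm of the difference plus $\|h^{(2)}-h^{(1)}\|_\infty+\|\partial_\chi(h^{(2)}-h^{(1)})\|_\infty+\|e^{-2\chi}\partial_{\chi\chi}(h^{(2)}-h^{(1)})\|_\infty$. The idea is to use \emph{critical edge packets}: small-amplitude, oscillation-scale-one perturbations of $h\equiv1$ placed at $\chi\approx X_n\to\infty$. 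These have $h_\chi$ of size $\delta_n\cdot\lambda$, so that the $h_\chi$ component of the norm is order one. I will take
\[
 h_{0,n}^{(i)}(\chi)=1+\delta_n\,\vartheta_n(\chi)\bigl[a_i+\sin(\lambda_n\chi)\bigr],
\]
with $\lambda_n\delta_n=1$, $\vartheta_n$ a cutoff supported in a long window beyond $X_n$, and $a_2-a_1=\epsilon_n\to0$ (or phases differing by $\epsilon_n$). With the two profiles differing by $\epsilon_n$ in an appropriate sense, I get $\mathcal D_n(0)\to0$. The data remain within $1\le h\le1+C\delta_n$, which gives the amplitude bound via the same constant barriers $h=1$ and $h=1+C\delta_n$ as in \eqref{eq:periodic-zero-order-bound}.

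Time monotonicity must be arranged at $\tau=0$, since the hypotheses of Theorem~\ref{thm:ws-regularity} require $\partial_\tau w^s|_{\tau=0}\le0$. By \eqref{eq:periodic-tail-initial-time}, the initial time derivative equals $\nu h^2h_{\chi\chi}+(d_1h^2+d_0)h_\chi+gh(h^2-1)$. The packet should therefore be chosen so that the transport term $-\frac{m+1}{2}h^2h_\chi$ plus the reaction term $-\frac{m+1}{2}h(h^2-1)\approx-(m+1)(h-1)$ is nonpositive. A convenient choice is $h-1=\delta_n e^{-\lambda\,\cdot}$-type profiles with $h_\chi\ge -(h-1)\cdot$const. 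Concretely, I would take $h-1=\delta_n\phi(\lambda_n(\chi-X_n))$ with $\phi\ge0$ and $\phi'\ge -c\phi$ scaled appropriately. I would also keep the diffusion term $\nu h^2h_{\chi\chi}\sim e^{-2X_n}\lambda_n^2\delta_n$ negligible by taking $X_n$ large relative to $\log\lambda_n$. I would verify the weighted bounds with constants $E_1,E_2,E_3$ independent of $n$ through Lemma~\ref{lem:natural-edge-norm-equivalence}, using $|h_\chi|\lesssim\lambda_n\delta_n=1$ and $e^{-2\chi}|h_{\chi\chi}|\lesssim e^{-2X_n}\lambda_n\to0$. Theorem~\ref{thm:ws-regularity} then supplies global solutions.

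For the lower bound at $\tau_0>0$, I would pass to translated, rescaled limits as in \eqref{eq:periodic-compactness}--\eqref{eq:periodic-limit-equation}. Writing $h=1+\delta_n H_n(s,\lambda_n(\chi-X_n))$, the diffusion vanishes in the limit. Linearized about $h=1$, the equation becomes transport at speed $\frac{m+1}{2}$ in $\chi$ with damping $-(m+1)$, while the nonlinear correction $\frac{m+1}{2}(h^2-1)h_\chi$ has speed difference $\approx(m+1)\delta_n H$. In the rescaled variable $z=\lambda_n\chi$ this produces an order-one Burgers-type term $(m+1)HH_z$. The limit dynamics $H_s+(m+1)HH_z=-(m+1)H$, after removing the transport, are a damped Burgers equation. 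For two data with slightly different amplitudes, characteristics separate, and $\partial_z$ of the solutions differs by order one at a fixed positive time before shock formation. This is the standard Burgers non-uniformity mechanism, since derivatives of transported profiles with slightly different speeds become out of phase. For this to work, the packets must have length $\gg1$ in $z$ (many oscillations), so that a phase shift of $\epsilon_n\cdot s\cdot\lambda_n$-order accumulates. I would therefore set $\epsilon_n\lambda_n\to\infty$ slowly while $\epsilon_n\to0$, and take $\overline\tau$ below the limit shock time.

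The main obstacle is to make this quantitative on $[0,\overline\tau]$ uniformly in $n$. That requires a stability estimate comparing $\partial_\chi h_n^{(i)}$ with the explicit characteristic solution of the limit equation, including the vanishing diffusion and the $O((1+\chi)e^{-2\chi})$ coefficient errors of \eqref{eq:edge-coefficient-limits}, in $L^\infty$ of the derivative rather than merely in distributions. I would first try to obtain this with a maximum-principle estimate for $\partial_\chi h$, differentiating \eqref{eq:edge-relative-equation} and using the a priori boundedness of $h_\chi$ from Theorem~\ref{thm:ws-regularity} up to $\overline\tau$. Comparing along characteristics would then give an error $o(1)$. The order-one phase mismatch then bounds $\mathcal D_n(\tau_0)$ below through the $h_\chi$ component of Lemma~\ref{lem:natural-edge-norm-equivalence}.
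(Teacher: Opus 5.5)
Your separation step does not work with the data you propose. Your own rescaled equation has transport $(m+1)HH_z$, so the two packets move with speeds differing by $(m+1)(a_2-a_1)=(m+1)\epsilon_n$ in $z$. Equivalently, the speed difference in $\chi$ is $(m+1)\delta_n\epsilon_n$ and the packet scale is $\lambda_n^{-1}=\delta_n$. The phase shift accumulated by time $s$ is therefore $(m+1)\epsilon_n s\to0$, not of order $\epsilon_n s\lambda_n$. The factor $\lambda_n$ is already cancelled by the amplitude $\delta_n$, so letting $\epsilon_n\lambda_n\to\infty$ changes nothing, and before the shock time there is no amplification. The mechanism thus gives $\|\partial_z(H^{(2)}-H^{(1)})\|_\infty=O(\epsilon_n)$ and no lower bound for $\mathcal D_n(\tau_0)$.

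The missing observation is that no $\epsilon_n$ is needed. A slowly varying difference of size $\delta_n$ in $h$ costs only $O(\delta_n)$ in the weighted $C^2$ norm, yet it is of order one after rescaling. The paper takes background levels $c_1=2$, $c_2=3$ and the \emph{same} packet $\varepsilon_*\delta_n\Phi((\chi-X_n)/\delta_n)$ for both data. The limits then start from $2+\varepsilon_*\Phi$ and $3+\varepsilon_*\Phi$, and $\partial_s(H^{(2)}_\zeta-H^{(1)}_\zeta)|_{s=0}=-(m+1)\varepsilon_*\Phi''$ is of order one. A single monotone front suffices; no oscillation is required.

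Second, your data are not admissible for Theorem~\ref{thm:ws-regularity}. In \eqref{eq:periodic-tail-initial-time} the transport contribution $-\frac{m+1}{2}h_\chi$ is of order one and changes sign with $\cos(\lambda_n\chi)$, while the damping $-(m+1)(h-1)$ is only $O(\delta_n)$. Your fallback condition $\phi'\geq-c\phi$ reads $\phi'\geq-2\delta_n\phi$ in the variable $z$, so $\phi$ must be essentially nondecreasing. That excludes the many oscillations your mechanism relies on.

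Moreover, if $\partial_\tau w^s|_{\tau=0}\leq0$, then $w_0^s-\overline w_m\geq0$ is a supersolution of a linear elliptic equation (see the proof of Proposition~\ref{prop:estimate-ws}). By the strong maximum principle it cannot vanish on an open set unless it vanishes identically. A cutoff with $h\equiv1$ on an open set, such as your region $\chi<X_n$, is therefore never time-monotone, however small $\nu$ is. This is why the paper uses the background $G_{c,\delta}=1+c\delta-A_{c,\delta}(1-\eta)^2$, with $A_{c,\delta}$ fixed by the implicit function theorem so that the Robin condition still holds. This gives $h-1\geq b\delta$ everywhere and the margin \eqref{eq:edge-background-time-margin}, which absorbs the $O(\varepsilon_*\delta_n)$ diffusion error at the packet.

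Finally, your choice $X_n\gg\log\lambda_n$ makes the rescaled diffusion vanish. You would then need an $L^\infty$ stability estimate for derivatives in a vanishing-viscosity limit, which you do not supply. The paper instead uses the critical scaling $\delta_n=e^{-X_n}$ in the moving frame $\zeta=(\chi-X_n-\frac{m+1}{2}s)/\delta_n$. There $\nu/\delta_n^2\to(m+1)e^{-(m+1)s}$, so the rescaled equations stay uniformly parabolic on compact cylinders, and Schauder estimates give $C^1_{\mathrm{loc}}$ convergence directly.
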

}

We first recall the equation for the relative profile.  Set
\[
 s=\int_0^\tau\frac{d\theta}{1+(m-1)\theta},\qquad
 \alpha(s,\eta)=\eta+(1-\eta)e^{-(m-1)s},
\]
and
\[
 \chi=\log\bigl(2\sqrt{-\log(1-\eta)}\bigr),\qquad
 h(s,\chi)=\frac{w(s,\eta)}{\overline w_m(\eta)}.
\]
Then \eqref{eq:edge-relative-equation} reads
\begin{equation*}
 h_s=\nu h^2h_{\chi\chi}+(d_1h^2+d_0)h_\chi
      +g h(h^2-1),
\end{equation*}
where
\begin{equation*}
\begin{aligned}
 \nu&=\frac{\overline w_m^2\chi_\eta^2}{\alpha},\qquad
 d_1=\frac1\alpha\bigl(\overline w_m^2\chi_{\eta\eta}
           +2\overline w_m\overline w_m'\chi_\eta\bigr),\\
 d_0&=\frac{m(\eta^2-1)\chi_\eta}{\alpha},\qquad
 g=\frac{\overline w_m\overline w_m''}{\alpha}.
\end{aligned}
\end{equation*}

\begin{proof}[Proof of \eqref{eq:edge-coefficient-limits}]
Write
\[
 X=1-\eta,\qquad L=-\log X,\qquad
 \Sigma=-\log(\mu_mX)=L-\log\mu_m,
\]
and set
\[
 W(X)=\overline w_m(1-X),\qquad
 B=\frac{W}{X},\qquad R=\frac{XW_X}{W}.
\]
As $L\to+\infty$, the estimates obtained above and in
\eqref{eq:appendix-fs-tail} give
\begin{equation*}
\begin{aligned}
 B^2&=(m+1)L+O(1+\log L),\\
 R&=1+O(L^{-1}),\\
 WW_{XX}&=-\frac{m+1}{2}+O(L^{-1}).
\end{aligned}
\end{equation*}
Also,
\[
 e^{2\chi}=4L,\qquad
 \chi_\eta=\frac1{2XL},\qquad
 \chi_{\eta\eta}=\frac{L-1}{2X^2L^2},\qquad
 X_\chi=-2XL.
\]
For $0\leq s\leq T$,
\[
 \alpha=1-\bigl(1-e^{-(m-1)s}\bigr)X,\quad
 \alpha^{-1}=1+O_T(X),\qquad
 \alpha_\chi=O_T(LX).
\]
Since $X=e^{-L}$, every term containing $X$ can be absorbed into the error
terms below.

For the diffusion coefficient, we have
\begin{align*}
 \nu
 &=\frac{B^2}{4\alpha L^2}\\
 &=\frac{m+1}{4L}
   +O_T\left(\frac{1+\log L}{L^2}\right)\\
 &=(m+1)e^{-2\chi}
   +O_T\bigl((1+\chi)e^{-4\chi}\bigr).
\end{align*}
To estimate its derivative, we differentiate the exact formula for \(\nu\):
\begin{equation*}
 \frac{\nu_\chi}{\nu}
 =2\frac{W_\chi}{W}
  +2\frac{(\chi_\eta)_\chi}{\chi_\eta}
  -\frac{\alpha_\chi}{\alpha}
 =4L(1-R)-4-\frac{\alpha_\chi}{\alpha}.
\end{equation*}
Thus
\[
 \nu_\chi=O_T(e^{-2\chi})
 =O_T\bigl((1+\chi)e^{-2\chi}\bigr).
\]
Next,
\begin{align*}
 d_1
 &=\frac{B^2}{\alpha}
   \left(\frac{L-1}{2L^2}-\frac{R}{L}\right)\\
 &=-\frac{m+1}{2}
   +O_T\bigl((1+\chi)e^{-2\chi}\bigr),
\end{align*}
while
\begin{equation*}
 d_0=-\frac{m(2-X)}{2\alpha L}=O_T(e^{-2\chi}).
\end{equation*}
Finally,
\begin{equation*}
 g=\frac{WW_{XX}}{\alpha}
 =-\frac{m+1}{2}+O_T(e^{-2\chi}).
\end{equation*}
These bounds prove \eqref{eq:edge-coefficient-limits}.
\end{proof}

\begin{proof}[Proof of Theorem~\ref{thm:edge-nonuniformity-eta}]
We now construct the counterexample.  Fix $c>0$.  For sufficiently small
$\delta>0$, the implicit function theorem gives a unique small number
$A_{c,\delta}>0$ satisfying
\begin{equation}\label{eq:edge-background-robin-equation}
 2A_{c,\delta}(1+c\delta-A_{c,\delta})\overline w_m(0)^2
 =m\bigl[(1+c\delta-A_{c,\delta})^2-1\bigr].
\end{equation}
It also gives
\begin{equation}\label{eq:edge-background-A-expansion}
 A_{c,\delta}=\frac{cm}{\overline w_m(0)^2+m}\delta+O(\delta^2).
\end{equation}
Set
\[
 G_{c,\delta}(\eta)=1+c\delta-A_{c,\delta}(1-\eta)^2.
\]
By \eqref{eq:edge-background-robin-equation},
$\overline w_mG_{c,\delta}$ satisfies the Robin condition.  Moreover,
\[
 G_{c,\delta}>1,\qquad G_{c,\delta}'\geq0,\qquad
 G_{c,\delta}''<0.
\]
For any positive function $G$, substituting
$w^s(0,\eta)=\overline w_m(\eta)G(\eta)$ gives
\begin{equation}\label{eq:edge-background-residual}
 \frac{\partial_\tau w^s(0,\eta)}{\overline w_m}
 =G^2\overline w_m^2G''+2G^2\overline w_m\overline w_m'G'
   +m(\eta^2-1)G'
   +G(G^2-1)\overline w_m\overline w_m''.
\end{equation}
For $G=G_{c,\delta}$, all four terms are nonpositive.  Since
$\inf(-\overline w_m\overline w_m'')>0$, the last term together with
\eqref{eq:edge-background-A-expansion} gives
\begin{equation}\label{eq:edge-background-time-margin}
 \partial_\tau w^s(0,\eta)
 \leq-c_{\mathrm{tm}}\delta\overline w_m(\eta),
 \qquad w^s(0,\eta)=\overline w_m(\eta)G_{c,\delta}(\eta),
\end{equation}
where $c_{\mathrm{tm}}>0$ is independent of $\delta$ when $\delta$ is sufficiently small.

Choose a fixed smooth nondecreasing function $\Phi$ such that
\[
 0\leq\Phi\leq1,\qquad \Phi(z)=0\ (z\leq-1),\qquad
 \Phi(z)=1\ (z\geq1),\qquad \Phi'\geq0,
\]
and choose $z_*$ with $\Phi''(z_*)\neq0$.  Let
\[
 X_n\to+\infty,\qquad \delta_n=e^{-X_n},\qquad c_1=2,\quad c_2=3,
\]
and, for a fixed sufficiently small $\varepsilon_*>0$, set
\begin{equation}\label{eq:edge-packet-data}
 h_{0,n}^{(i)}(\chi)
 =G_{c_i,\delta_n}(\eta(\chi))
 +\varepsilon_*\delta_n
 \Phi\left(\frac{\chi-X_n}{\delta_n}\right),\qquad
 w_{0,n}^{(i)}=\overline w_mh_{0,n}^{(i)},\qquad i=1,2.
\end{equation}
The added packet is zero near the wall, so the Robin condition is unchanged.
On the support of the derivatives of the packet,
$e^{-2}\delta_n^2\leq e^{-2\chi}\leq e^2\delta_n^2$, while
\[
 (h_{0,n}^{(i)})_\chi=(G_{c_i,\delta_n})_\chi
       +\varepsilon_*\Phi'(z),\qquad
 (h_{0,n}^{(i)})_{\chi\chi}=(G_{c_i,\delta_n})_{\chi\chi}
       +\frac{\varepsilon_*}{\delta_n}\Phi''(z),\qquad
 z=\frac{\chi-X_n}{\delta_n}.
\]
Lemma~\ref{lem:natural-edge-norm-equivalence} shows that the two sequences are
uniformly bounded in $\mathcal Y_m^2$.  Also,
$(h_{0,n}^{(i)})_\chi\geq0$ and $h_{0,n}^{(i)}-1\geq b\delta_n$.  On
$\chi\geq X_n-\delta_n$, \eqref{eq:edge-coefficient-limits} and the preceding
identities give
\[
 \frac{\partial_\tau w_n^{(i)}(0,\eta)}{\overline w_m(\eta)}
 \leq C\varepsilon_*\delta_n+o(\delta_n)
       -\frac{m+1}{4}b\delta_n.
\]
For $\chi\leq X_n-\delta_n$, the packet is zero, so
\eqref{eq:edge-background-time-margin} applies.  We first choose
$\varepsilon_*$ sufficiently small and then take $n$ sufficiently large.  We
obtain
\begin{equation}\label{eq:edge-packet-time-sign}
 \partial_\tau w_{n}^{(i)}(0,\eta)\leq0,\qquad i=1,2.
\end{equation}
{
Thus, in essential contrast with the oscillatory datum in the first
counterexample, both sequences constructed here do satisfy the one-sided
initial time-monotonicity condition.  The nonuniform dependence below
therefore occurs within the time-monotone class, rather than through a failure
of that condition.
}
Proposition~\ref{prop:time-monotonicity} and
Theorem~\ref{thm:ws-regularity} then give global solutions.
After the time change in \eqref{eq:edge-common-time}, we write
\[
 w_n^{(i)}(s,\eta):=w_n^{(i)}(\tau(s),\eta),\qquad
 \tau(s)=
 \begin{cases}
  s,&m=1,\\
  \displaystyle\frac{e^{(m-1)s}-1}{m-1},&m>1.
 \end{cases}
\]
These solutions satisfy
\begin{equation}\label{eq:edge-packet-zero-order}
 \overline w_m\leq w_n^{(i)}(s,\eta)\leq w_{0,n}^{(i)}(\eta)
 \leq(1+C\delta_n)\overline w_m.
\end{equation}
Since the added packets are the same for $i=1,2$,
\eqref{eq:edge-background-A-expansion} and
Lemma~\ref{lem:natural-edge-norm-equivalence} give
\begin{equation}\label{eq:edge-packet-initial-distance}
 \|w_{0,n}^{(2)}-w_{0,n}^{(1)}\|_{\mathcal Y_m^2}\leq C\delta_n\to0.
\end{equation}
To show separation at a positive time, set
\begin{equation}\label{eq:edge-packet-rescaling}
 \zeta=\frac{\chi-X_n-\frac{m+1}{2}s}{\delta_n},\qquad
 h_n^{(i)}(s,\chi)=1+\delta_n H_n^{(i)}(s,\zeta).
\end{equation}
By \eqref{eq:edge-packet-zero-order}, the functions $H_n^{(i)}$ are uniformly
bounded.  They satisfy
\begin{align}\label{eq:edge-packet-rescaled-equation}
 (H_n)_s={}&\frac{\nu}{\delta_n^2}(1+\delta_nH_n)^2(H_n)_{\zeta\zeta}
 +\frac{\frac{m+1}{2}+d_1(1+\delta_nH_n)^2+d_0}{\delta_n}
   (H_n)_\zeta\nonumber\\
 &+g(1+\delta_nH_n)(2H_n+\delta_nH_n^2).
\end{align}
Here the coefficients are evaluated at
$\chi=X_n+\frac{m+1}{2}s+\delta_n\zeta$.  Uniformly on compact cylinders,
\[
 \frac{\nu}{\delta_n^2}\to(m+1)e^{-(m+1)s},\qquad
 \frac{\frac{m+1}{2}+d_1(1+\delta_nH_n)^2+d_0}{\delta_n}
 =-(m+1)H_n+o(1),\qquad
 g\to-\frac{m+1}{2}.
\]
{
To justify the compactness step, fix \(0<S,R<\infty\) and
consider \(Q_{S,R}=[0,S]\times[-R,R]\).  The uniform bound for \(H_n^{(i)}\),
the first convergence above, and \(1+\delta_nH_n^{(i)}\to1\) show that, for all
sufficiently large \(n\), the principal coefficient satisfies
\[
 0<\lambda_{S,R}\leq
 \frac{\nu}{\delta_n^2}(1+\delta_nH_n^{(i)})^2
 \leq\Lambda_{S,R}<\infty
 \qquad\hbox{on }Q_{S,R}.
\]
Hence the rescaled equations are uniformly parabolic on every compact
cylinder.  After the cancellation in the middle coefficient displayed above,
the exact coefficient formulas and their tail asymptotics also give uniform
local bounds for the drift and reaction terms.  Regarded as functions of
\((s,\zeta,r)\), the coefficients have uniform local Lipschitz bounds for
\(r\) in the bounded range of \(H_n^{(i)}\).  Local parabolic H\"older estimates
first give a uniform \(C^{\alpha/2,\alpha}\) bound; inserting this regularity
into the coefficients and applying the interior Schauder estimates then
yields, for some \(\alpha\in(0,1)\),
\[
 \|H_n^{(i)}\|_{C^{1+\alpha/2,\,2+\alpha}(Q_{S,R})}
 \leq C_{S,R}.
\]
At \(s=0\), the same estimate up to the initial face follows from the smooth
local convergence of the rescaled initial data; one applies the estimate on a
slightly larger spatial cylinder.  The compact embedding of these parabolic
H\"older spaces and a diagonal argument now allow us to pass to a subsequence
such that
\[
 H_n^{(i)}\longrightarrow H^{(i)}\quad\hbox{in }C^1_{\rm loc},
\]
}
and each limit satisfies
\begin{equation}\label{eq:edge-packet-limit-equation}
 H_s=(m+1)e^{-(m+1)s}H_{\zeta\zeta}
 -(m+1)HH_\zeta-(m+1)H,
\end{equation}
with
\begin{equation}\label{eq:edge-packet-limit-data}
 H^{(1)}(0,\zeta)=2+\varepsilon_*\Phi(\zeta),\qquad
 H^{(2)}(0,\zeta)=3+\varepsilon_*\Phi(\zeta).
\end{equation}
Uniqueness of bounded classical solutions shows that each of the two full sequences
converges to its corresponding limit.

Differentiating \eqref{eq:edge-packet-limit-equation} at $s=0$ and using
\eqref{eq:edge-packet-limit-data} gives
\[
 \left.\partial_s\bigl(H_\zeta^{(2)}-H_\zeta^{(1)}\bigr)\right|_{s=0}
 =-(m+1)\varepsilon_*\Phi''(\zeta).
\]
Set
\[
 F(s)=H_\zeta^{(2)}(s,z_*)-H_\zeta^{(1)}(s,z_*),
 \qquad
 K=(m+1)\varepsilon_*|\Phi''(z_*)|>0.
\]
By \eqref{eq:edge-packet-limit-data}, $F(0)=0$, and the preceding identity
gives $|F'(0)|=K$.  Hence there exists $\overline s>0$ such that
\begin{equation}\label{eq:edge-packet-small-time-separation}
 |F(s)|\geq\frac K2s,\qquad 0<s\leq\overline s.
\end{equation}
The $C^1_{\rm loc}$ convergence and the identity
\[
 (h_n^{(i)})_\chi=(H_n^{(i)})_\zeta
\]
imply that, for every fixed $s_0\in(0,\overline s]$ and all sufficiently
large $n$,
\[
 \|(h_n^{(2)})_\chi(s_0)-(h_n^{(1)})_\chi(s_0)\|_\infty
 \geq\frac K4s_0.
\]
Lemma~\ref{lem:natural-edge-norm-equivalence} then gives
\begin{equation}\label{eq:edge-packet-positive-distance}
 \|w_n^{(2)}(s_0)-w_n^{(1)}(s_0)\|_{\mathcal Y_m^2}
 \geq c_{\mathrm{sep}}s_0
\end{equation}
for all sufficiently large $n$, where $c_{\mathrm{sep}}>0$ is independent of $n$ and
$s_0$.  Define
\begin{equation*}
 \overline\tau=
 \begin{cases}
  \overline s,&m=1,\\[1mm]
  \displaystyle\frac{e^{(m-1)\overline s}-1}{m-1},&m>1.
 \end{cases}
\end{equation*}
The construction, \eqref{eq:edge-packet-time-sign}, and
Lemma~\ref{lem:natural-edge-norm-equivalence} show that the initial data
satisfy the hypotheses of Theorem~\ref{thm:ws-regularity} with uniform
constants.  After returning to the variable $\tau$,
\eqref{eq:edge-packet-zero-order} gives the pointwise bound in the theorem.
Moreover, \eqref{eq:edge-packet-initial-distance} gives
$\mathcal D_n(0)\to0$.  For any fixed
$\tau_0\in(0,\overline\tau]$, let
\begin{equation*}
 s_0=
 \begin{cases}
  \tau_0,&m=1,\\[1mm]
  \displaystyle\frac{\log[1+(m-1)\tau_0]}{m-1},&m>1.
 \end{cases}
\end{equation*}
Then $s_0\in(0,\overline s]$, and
\eqref{eq:edge-packet-positive-distance} gives
$\mathcal D_n(\tau_0)\geq c_{\mathrm{sep}}s_0$.  Taking
$c_0(\tau_0)=c_{\mathrm{sep}}s_0$ proves the theorem and the failure of uniform continuity
stated in Remark~\ref{rem:nonuniform-solution-map}.
\end{proof}
}

\section*{Acknowledgments}
Z. Xin is supported in part by the Zheng Ge Ru Foundation; the GS Charity Foundation Limited project ``Fundamental Mathematics Talent Development of IMS''; and Hong Kong RGC Earmarked Research Grants CUHK-14304325, CUHK-14301423, CUHK-14301421, and CUHK-14302819.
T. Tao is supported in part by the NSF of China under Grant 12371234.
C. Gao is supported in part by the NSF of China under Grant 12494541.
We used Eureka, an AI agent developed at the University of Science and
Technology of China, in constructing the two counterexamples presented in
this paper. Eureka also initially suggested a more delicate approach to the one-sided derivative estimate for $m>1$; the authors subsequently found a simpler argument, which is the one used in the paper. We used OpenAI's GPT-5.6 Sol to proofread and polish the manuscript.

\section*{Data Availability Statement}
No datasets were generated or analyzed in this study because the work is purely theoretical.

\section*{Conflict of Interest}
On behalf of all authors, the corresponding author states that there is no conflict of interest.


\begin{thebibliography}{99}

\bibitem{Akamatsu} T. Akamatsu, \textit{On the incompressible laminar boundary layer for impulsive motion of a semi-infinite flat plate}, Bull. JSME 10 (1967), no. 40, 641--646.

\bibitem{AWXY} R. Alexandre, Y.-G. Wang, C.-J. Xu, and T. Yang, \textit{Well-posedness of the Prandtl equation in Sobolev spaces}, J. Amer. Math. Soc. 28 (2015), 745--784.

\bibitem{B} H. Blasius, \textit{Grenzschichten in Fl\"{u}ssigkeiten mit kleiner Reibung}, Z. Math. Phys. 56 (1908), 1--37.

\bibitem{Dennis} S. C. R. Dennis, \textit{The motion of a viscous fluid past an impulsively started semi-infinite flat plate}, IMA J. Appl. Math. 10 (1972), no. 1, 105--117.

\bibitem{EE} W. E and B. Engquist, \textit{Blowup of solutions of the unsteady Prandtl's equation}, Comm. Pure Appl. Math. 50 (1997), no. 12, 1287--1293.

\bibitem{FS} V. M. Falkner and S. W. Skan, \textit{Some approximate solutions of the boundary-layer equations}, Philos. Mag. (7) 12 (1931), no. 80, 865--896.

\bibitem{GX} {C. Gao and Z. Xin, \textit{Prandtl boundary layers in an infinitely long convergent channel}, Comm. Math. Phys. 407 (2026), Paper No.~26.}

\bibitem{GZ} C. Gao and C. Zhao, \textit{Sharp asymptotic behavior of the steady pressure-free Prandtl system}, arXiv:2504.11870 (2025).

\bibitem{GZZ} C. Gao, L. Zhang, and C. Zhao, \textit{Symmetric stationary boundary layer}, arXiv:2303.17781 (2023).

\bibitem{GD} D. G\'{e}rard-Varet and E. Dormy, \textit{On the ill-posedness of the Prandtl equation}, J. Amer. Math. Soc. 23 (2010), 591--609.

\bibitem{GM} D. G\'{e}rard-Varet and N. Masmoudi, \textit{Well-posedness for the Prandtl system without analyticity or monotonicity}, Ann. Sci. \'{E}c. Norm. Sup\'{e}r. (4) 48 (2015), no. 6, 1273--1325.

\bibitem{GWZ} {Y. Guo, Y. Wang, and Z. Zhang, \textit{Dynamic stability for steady Prandtl solutions}, Ann. PDE 9 (2023), no.~2, Paper No.~16, 33 pp., doi:10.1007/s40818-023-00160-x.}

\bibitem{Hartman2002} P. Hartman, \textit{Ordinary Differential Equations}, 2nd ed., Classics in Applied Mathematics, vol. 38, Society for Industrial and Applied Mathematics, Philadelphia, PA, 2002; unabridged, corrected republication of the 1982 Birkh\"{a}user edition.

\bibitem{Hassan} H. A. Hassan, \textit{On unsteady laminar boundary layers}, J. Fluid Mech. 9 (1960), no. 2, 300--304.

\bibitem{Hayasi1} N. Hayasi, \textit{On similar solutions of the unsteady quasi-two-dimensional incompressible laminar boundary-layer equations}, J. Phys. Soc. Japan 16 (1961), no. 11, 2316--2328.

\bibitem{Hayasi2} N. Hayasi, \textit{On semi-similar solutions of the unsteady quasi-two-dimensional incompressible laminar boundary-layer equations}, J. Phys. Soc. Japan 17 (1962), no. 1, 194--203.

\bibitem{Iyer1} S. Iyer, \textit{On global-in-x stability of Blasius profiles}, Arch. Ration. Mech. Anal. 237 (2020), no. 2, 951--998.

\bibitem{Iyer2} S. Iyer, \textit{Stability of the favorable Falkner--Skan profiles for the stationary Prandtl equations}, Adv. Math. 487 (2026), Paper No. 110749, 78 pp.

\bibitem{JLY} H. Jia, Z. Lei, and C. Yuan, \textit{Sharp asymptotic stability of Blasius profile in the steady Prandtl equation}, Adv. Math. 480 (2025), part C, Paper No. 110533, 65 pp.

\bibitem{JLY-dyn} H. Jia, Z. Lei, and C. Yuan, \textit{Global well-posedness and regularity of the dynamical Prandtl equation}, arXiv:2606.13134 (2026).

\bibitem{LY1} W.-X. Li and T. Yang, \textit{Well-posedness in Gevrey function spaces for the Prandtl equations with non-degenerate critical points}, J. Eur. Math. Soc. (JEMS) 22 (2020), no. 3, 717--775.

\bibitem{LMY} W.-X. Li, N. Masmoudi, and T. Yang, \textit{Well-posedness in Gevrey function space for 3D Prandtl equations without structural assumption}, Comm. Pure Appl. Math. 75 (2022), no. 8, 1755--1797.

\bibitem{LWY2} C.-J. Liu, Y.-G. Wang, and T. Yang, \textit{On the ill-posedness of the Prandtl equations in three-dimensional space}, Arch. Ration. Mech. Anal. 220 (2016), no. 1, 83--108.

\bibitem{LWY1} C.-J. Liu, Y.-G. Wang, and T. Yang, \textit{A well-posedness theory for the Prandtl equations in three space variables}, Adv. Math. 308 (2017), 1074--1126.

\bibitem{Maekawa} Y. Maekawa, \textit{On the inviscid limit problem of the vorticity equations for viscous incompressible flows in the half-plane}, Comm. Pure Appl. Math. 67 (2014), no. 7, 1045--1128.

\bibitem{MW} N. Masmoudi and T. K. Wong, \textit{Local-in-time existence and uniqueness of solutions to the Prandtl equations by energy methods}, Comm. Pure Appl. Math. 68 (2015), 1683--1741.

\bibitem{Nanbu} {K. Nanbu, \textit{Unsteady Falkner--Skan flow}, Z. Angew. Math. Phys. 22 (1971), 1167--1172, doi:10.1007/BF01590885.}

\bibitem{OS} O. A. Oleinik and V. N. Samokhin, \textit{Mathematical models in boundary layer theory}, Applied Mathematics and Mathematical Computation, vol.~15, Chapman \& Hall/CRC, Boca Raton, FL, 1999.

\bibitem{Peletier} L. A. Peletier, \textit{On the asymptotic behaviour of velocity profiles in laminar boundary layers}, Arch. Ration. Mech. Anal. 45 (1972), no. 2, 110--119.

\bibitem{SC} M. Sammartino and R. E. Caflisch, \textit{Zero viscosity limit for the analytic solutions of the Navier-Stokes equations on a half-space. I. Existence for the Euler and Prandtl equations}, Comm. Math. Phys. 192 (1998), 433--461.

\bibitem{SG} H. Schlichting and K. Gersten, \textit{Boundary-Layer Theory}, 9th ed., Springer, Berlin, Heidelberg, 2017.

\bibitem{Serrin} J. Serrin, \textit{Asymptotic behavior of velocity profiles in the Prandtl boundary layer theory}, Proc. Roy. Soc. London Ser. A 299 (1967), 491--507.

\bibitem{Stewartson1} K. Stewartson, \textit{On the impulsive motion of a flat plate in a viscous fluid}, Q. J. Mech. Appl. Math. 4 (1951), no. 2, 182--198.

\bibitem{Stewartson2} K. Stewartson, \textit{The theory of unsteady laminar boundary layers}, Advances in Applied Mechanics 6 (1960), 1--37.

\bibitem{Sun} B. H. Sun, \textit{Similarity solutions of a class of unsteady laminar boundary layer}, Physics of Fluids 36 (2024), 083616.

\bibitem{Tokuda} N. Tokuda, \textit{On the impulsive motion of a flat plate in a viscous fluid}, J. Fluid Mech. 33 (1968), no. 4, 657--675.

\bibitem{DS} L. L. van Dommelen and S. F. Shen, \textit{The spontaneous generation of the singularity in a separating laminar boundary layer}, J. Comput. Phys. 38 (1980), 125--140.

\bibitem{WZ2} Y. Wang and Z. Zhang, \textit{Asymptotic behavior of the steady Prandtl equation}, Math. Ann. 387 (2023), nos.~3--4, 1289--1331.

\bibitem{Wang1985} {J. C. T. Wang, \textit{Renewed studies on the unsteady boundary layers governed by singular parabolic equations}, J. Fluid Mech. 155 (1985), 413--427, doi:10.1017/S0022112085001872.}

\bibitem{W} H. Weyl, \textit{On the differential equations of the simplest boundary-layer problems}, Ann.\ of Math. (2) 43 (1942), 381--407.

\bibitem{WilliamsJohnson} {J. C. Williams III and W. D. Johnson, \textit{Semi-similar solutions to unsteady boundary layer flows including separation}, AIAA J. 12 (1974), no.~10, 1388--1393, doi:10.2514/3.49494.}

\bibitem{WilliamsRhyne} {J. C. Williams III and T. B. Rhyne, \textit{Boundary layer development on a wedge impulsively set into motion}, SIAM J. Appl. Math. 38 (1980), no.~2, 215--224, doi:10.1137/0138019.}

\bibitem{XZ1} Z. Xin and L. Zhang, \textit{On the global existence of solutions to the Prandtl system}, Adv. Math. 181 (2004), 88--133.

\bibitem{XZZ} Z. Xin, L. Zhang, and J. Zhao, \textit{Global well-posedness and regularity of weak solutions to the Prandtl's system}, SIAM J. Math. Anal. 56 (2024), no. 3, 3042--3081.

\end{thebibliography}
\end{document}